\documentclass{amsart}
\usepackage{amssymb}
\usepackage[all]{xy}
\usepackage{ifthen}
\usepackage{color}
\usepackage{tikz-cd}
\usepackage[colorlinks,anchorcolor=blue,citecolor=blue,linkcolor=blue,urlcolor=blue,bookmarksdepth=2]{hyperref}
\newboolean{workmode}
\setboolean{workmode}{false}
\newboolean{sections}
\setboolean{sections}{true}
\usepackage{comment}
\usepackage{enumitem}
\usepackage{mathtools}
\newtheorem{thm}{Theorem}[section]
\newtheorem{cor}[thm]{Corollary}
\newtheorem{lem}[thm]{Lemma}
\newtheorem{prop}[thm]{Proposition}
\newtheorem{conj}[thm]{Conjecture}
\theoremstyle{definition}
\newtheorem{defn}[thm]{Definition}
\newtheorem{exam}[thm]{Example}

\theoremstyle{remark}
\newtheorem{rem}[thm]{Remark}
\numberwithin{equation}{section}

\newcommand{\sO}{{\mathcal O}}

\newcommand{\A}{{\mathbb A}}

\newcommand{\C}{{\mathbb C}}

\newcommand{\E}{{\mathbb E}}
\newcommand{\F}{{\mathbb F}}
\newcommand{\G}{{\mathbb G}}

\newcommand{\N}{{\mathbb N}}
\newcommand{\Q}{{\mathbb Q}}
\newcommand{\Z}{{\mathbb Z}}

\newcommand{\x}{\xrightarrow}
\newcommand{\fp}{{\mathfrak p}}
\newcommand{\fP}{{\mathfrak P}}
\newcommand{\fq}{{\mathfrak q}}

\newcommand{\GL}{{\rm GL}}
\newcommand{\SL}{{\rm SL}}
\newcommand{\PGL}{{\rm PGL}}

\newcommand{\Spec}{{\rm Spec}}
\newcommand{\End}{{\rm End}}

\newcommand{\Hom}{\operatorname{Hom}}

\title[Galois representations of products of Drinfeld modules]{Galois representation of the product of two Drinfeld modules of generic characteristic}
\author[L. Duan and J. Fang]{Lian Duan \and Jiangxue Fang\\
\text{With an appendix by Xuanyou Li}}

\address{Institute of Mathematical Sciences\\
  ShanghaiTech University\\
  No.393 Middle Huaxia Road, Pudong New District,
Shanghai, China}\par\nopagebreak
\email{duanlian@shanghaitech.edu.cn}

\address{Department of Mathematics,
Capital Normal University, Beijing 100148, P.R. China} \email{fangjiangxue@gmail.com}

\subjclass[2010]{11G09}
\date{}

\begin {document}
\topmargin= -.2in \baselineskip=20pt

\begin{abstract}
  In this paper, we study the Galois representations attached to products of Drinfeld modules. As a function-field analogue of Serre's classical open image theorem for products of elliptic curves, we prove that if the two Drinfeld modules are not geometrically isogenous up to any Frobenius twist, then for any finite set of primes, the image of the associated product representation is sufficiently large. That is, the image group is commensurable with a subgroup defined by natural determinant compatibility conditions. Our approach combines Pink's minimal quasi-model theory for compact subgroups of linear algebraic groups over local fields with explicit reciprocity laws from global class field theory. As an arithmetic application of our main theorem, we establish a mutual torsion finiteness property for non-isogenous Drinfeld modules, mirroring the classical results of Ribet and Zarhin for abelian varieties.
\end{abstract}

\maketitle
\markboth{\shorttitle}{\shorttitle}
\pagestyle{headings}

\tableofcontents

\section{Introduction}\label{Sect: introduction}

A natural question in arithmetic geometry asks \emph{to what extent a Galois representation reflects the geometry and arithmetic of the underlying variety?} In his seminal works \cite{Se68,Se72}, Serre established foundational results regarding this dependence for elliptic curves over number fields. He first proved that for a single elliptic curve without potential complex multiplication, the image of its adelic Galois representation is an open subgroup of ${\rm GL}_2(\widehat{\mathbb Z})$. Furthermore, Serre demonstrated that for the product of two non-isogenous elliptic curves, their joint Galois representations are as independent as possible: the image of the adelic representation attached to their product is an open subgroup of the product of their respective adelic groups, subject only to a natural determinant compatibility condition. Serre’s work may be regarded as one of the first steps toward the study of the so-called Mumford--Tate conjecture (see Moonen \cite{Mo} for further background).

Later, Pink \cite{P} and Pink-R\"{u}tsche \cite{PR} established the corresponding function field analogue for a single Drinfeld module. More precisely, they proved that if a Drinfeld module without potential complex multiplication is defined over a finitely generated field, then the image of its associated adelic Galois representation is likewise open. This naturally raises the primary question of this paper: \emph{What happens for the product of two Drinfeld modules?}

We now briefly recall the relevant framework. Let $p$ be a prime number, and let $q$ be a power of $p$. Let $C$ be a geometrically connected smooth projective curve over the finite field $\F_q$, and fix a closed point $\infty$ of $C$. Let
$
A=\Gamma(C-\{\infty\},\sO_C)
$
be the ring of regular functions on $C$ with poles only at $\infty$. Let $F$ denote the function field of $C$. Let $K/F$ be a finitly generated field extension, and fix an algebraic closure $\overline{K}$ of $K$. We write $K^{\mathrm{sep}}$ for the separable closure of $K$ inside $\overline{K}$. Throughout this paper, we write
$
\Gamma_K\coloneqq{\rm Gal}(K^{\mathrm{sep}}/K)
$
for the absolute Galois group of $K$.

Let $\phi$ be a Drinfeld $A$-module over $K$ of rank $n$. For any maximal ideal $\mathfrak{p}$ of $A$, let $A_{\mathfrak{p}}$ and $F_{\mathfrak{p}}$ denote the completions of $A$ and $F$ at $\mathfrak{p}$, respectively. The natural action of $\Gamma_K$ on the $A$-module $\phi(K^{\rm sep})$ induces a continuous Galois action on the Tate module $T_{\mathfrak{p}}\phi$, which is a free $A_{\mathfrak{p}}$-module of rank $n$. This gives rise to the $\mathfrak{p}$-adic Galois representation
\[
  \rho_{\mathfrak{p}}: \Gamma_K \to \mathrm{GL}_{A_{\mathfrak{p}}}(T_{\mathfrak{p}}\phi),
\]
and the adelic representation
\[
  \rho: \Gamma_K \to \prod_{\mathfrak{p} \in{\rm Max}\,A} \mathrm{GL}_{A_{\mathfrak{p}}}(T_{\mathfrak{p}}\phi).
\]
Here ${\rm Max}\,A$ denotes the set of maximal ideals of $A$. Choose a finite extension $K'/K$ such that
$
\mathrm{End}_{K'}(\phi) = \mathrm{End}_{\overline{K}}(\phi).
$
Then the action of $\Gamma_{K'}$ on the Tate modules commutes with $\mathrm{End}_{\overline{K}}(\phi)$, and hence
$
  \rho_{\mathfrak{p}}(\Gamma_{K'}) \subset
  \mathrm{GL}_{A_{\mathfrak{p}} \otimes_{A} \mathrm{End}_{\overline{K}}(\phi)}(T_{\mathfrak{p}}\phi).
$ 

Pink and R\"{u}tsche proved the following open image theorem.

\begin{thm}\cite[Theorem 0.2]{PR}\label{Thm: Pink_open_image}
  There exists an open subgroup $N$ of $\Gamma_K$ such that $\rho(N)$ is open in $\prod\limits_{\mathfrak p\in{\rm Max}\,A}\mathrm{GL}_{A_{\mathfrak{p}} \otimes_{A} \mathrm{End}_{\overline{K}}(\phi)}(T_{\mathfrak{p}}\phi)$, which is endowed with the product topology.
\end{thm}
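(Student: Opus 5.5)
This is the Pink--R\"utsche adelic open image theorem, which the paper cites rather than proves. My plan is to follow the strategy of \cite{P,PR}.

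\emph{Reduction.} First replace $K$ by $K'$, so that $\mathrm{End}_K(\phi)=\mathrm{End}_{\overline K}(\phi)=:E$. Set $Z$ to be the center of $E\otimes_A F$; this is a field, since $E$ is an order in a division algebra. Then $\phi$ can be viewed as a Drinfeld module for an order in $Z$, and the centralizer of $E$ acting on each $T_\fp\phi\otimes F_\fp$ is a product of groups $\GL$ over the completions of $Z$. By the standard reduction in \cite{PR}, passing to an isogenous module with $E$ a maximal order and replacing $A$ by the normalization, we may assume $\phi$ has no extra endomorphisms, i.e.\ $E=A$. The target group is then $\prod_\fp \GL_n(A_\fp)$.

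\emph{Step 1 ($\fp$-adic openness).} For each single $\fp$, I would show that $\rho_\fp(\Gamma_K)$ is open in $\GL_n(A_\fp)$. Pink's argument \cite{P} runs as follows. The Zariski closure of the image is $\GL_n$: this uses Tate-conjecture-type results (Taguchi, Tamagawa) for semisimplicity and endomorphisms, together with Frobenius elements that generate a large torus. Pink's theory of compact subgroups and quasi-models then upgrades Zariski density to openness. Here one must rule out that the image is open in $H(L)$ for some proper subfield $L\subset F_\fp$ and some model $H$ over $L$. This is done using the local structure at a place of good reduction or at the characteristic place, where Frobenius traces generate $F$. These steps may be assumed from \cite{P}.

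\emph{Step 2 (residual surjectivity).} Next I would show that for almost all $\fp$, $\bar\rho_\fp(\Gamma_K)\supset\SL_n(A/\fp)$, and then that $\rho_\fp(\Gamma_K)\supset \SL_n(A_\fp)$. For the residual statement I would run the Pink--R\"utsche argument. Consider a maximal proper subgroup of $\GL_n(\F_\fp)$ containing the image; by Larsen--Pink-type classification, a residual image that is too small lies in a normalizer of a torus, a parabolic, or a subfield group. The torus case is excluded by Chebotarev together with the absence of CM. The parabolic case is excluded via isogeny estimates: a Galois-stable subgroup of $\phi[\fp]$ for infinitely many $\fp$ would produce, after a finiteness-of-isogenies argument, a nontrivial endomorphism. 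The subfield case is excluded by Frobenius traces. Lifting from $\SL_n(A/\fp)$ to $\SL_n(A_\fp)$ is standard for large residue fields. I expect this uniform-in-$\fp$ step to be the main obstacle, especially disposing of the parabolic and imprimitive cases uniformly.

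\emph{Step 3 (assembling).} Finally I would pass from the individual factors to the product. The commutator subgroup of the image is a closed subgroup of $\prod_\fp\SL_n(A_\fp)$ that surjects onto each factor, open in finitely many and full in the rest. By a Goursat-type lemma and the fact that the groups $\PSL_n(\F_\fp)$ for distinct $\fp$ are pairwise non-isomorphic simple groups, apart from finitely many exceptions, the commutator subgroup is open in $\prod\SL_n$. For the determinant, $\det\rho$ is the Galois representation of the rank-one Drinfeld module $\det\phi$ (Anderson's determinant). Its adelic image is open in $\prod A_\fp^\times$ by class field theory for rank one (Hayes). Combining this with the $\SL_n$-part yields an open image in $\prod_\fp\GL_n(A_\fp)$ after shrinking to an open subgroup $N$.
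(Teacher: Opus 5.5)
The paper only cites this result from \cite{PR}, so the comparison is with Pink--R\"utsche's argument. Your Step~1 and the determinant part are fine in outline, but the proposal fails where the function-field setting differs from Serre's $\ell$-adic one.

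Your assembly in Step~3 rests on the claim that the groups ${\rm PSL}_n(A/\fp)$ are pairwise non-isomorphic for distinct $\fp$ outside a finite set. This is false. Every $A/\fp$ is a finite field of characteristic $p$, and for each large $d$ there are roughly $q^d/d$ primes of degree $d$, all with residue field $\F_{q^d}$. So ${\rm PSL}_n(A/\fp)\cong{\rm PSL}_n(A/\fp')$ for infinitely many pairs $\fp\neq\fp'$. Goursat's lemma therefore cannot exclude that the residual image of $\Gamma_K^{\rm der}$ in ${\rm PSL}_n(A/\fp)\times{\rm PSL}_n(A/\fp')$ is the graph of an isomorphism.

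The same problem already affects your finitely many exceptional primes. When $\deg\fp=\deg\fp'$, $\PGL_n(F_\fp)\cong\PGL_n(F_{\fp'})$ as topological groups, so openness at each prime separately does not give openness in $\prod_{\fp\in S}\SL_n(A_\fp)$.

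What is missing is a mechanism that decouples different primes.
\begin{itemize}
\item For almost all $\fp$ this mechanism is inertia, after reducing to $K$ finite over $F$ by specialization as in \cite[\S5]{PR} (cf.\ \S\ref{Sect: finitely_gen_fld}). The closed normal subgroup $J_\fp$ generated by the inertia groups above $\fp$ acts trivially on $T_{\fp_1}\phi$ for $\fp_1\neq\fp$, while $\rho_\fp(J_\fp)=\GL_n(A_\fp)$. This is exactly how the present paper assembles primes in the proof of Theorem~\ref{Thm: main_thm}, via \cite[Proposition~4.5]{PR}.
\item For a finite set of primes you must apply Pink's quasi-model theory to the product. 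The key input is that $a(x)={\rm Tr}({\rm Frob}_x)\,{\rm Tr}({\rm Frob}_x^{-1})\in F$ is independent of $\fp$ and that these values generate $F$ (Theorem~\ref{2}(3)). An isomorphism $\sigma\colon F_\fp\simeq F_{\fp'}$ compatible with them would be the identity on $F$, which is incompatible with continuity when $\fp\neq\fp'$ (compare Steps~(i)--(ii) of Theorem~\ref{11}).
\end{itemize}

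Step~2 has two further gaps. First, the lifting from $\SL_n(A/\fp)$ to $\SL_n(A_\fp)$ is not standard in equal characteristic. Since $A_\fp\cong(A/\fp)[[\pi]]$, the constant matrices split the reduction map. For example, the open subgroup $\GL_n(A/\fp)\cdot(1+\pi^2{\rm M}_n(A_\fp))$ surjects onto $\GL_n(A/\fp)$ without containing $\SL_n(A_\fp)$. You need in addition a non-scalar element in the first congruence layer, uniformly in $\fp$ (compare Lemma~\ref{lemSLrperfet} and \cite[Proposition~4.1]{PR}). This must be produced by a separate argument, e.g.\ from the action of inertia above $\fp$.

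Second, the trichotomy torus normalizer/parabolic/subfield is Dickson's list for $n=2$. For $n\geq3$ the residual image could a priori lie in other maximal subgroups, none of which your sketch excludes: normalizers of symplectic, orthogonal or unitary groups, tensor-decomposable or extraspecial-type subgroups, or almost simple ones such as the image of ${\rm Sym}^{n-1}$ of $\GL_2$. Even the torus-normalizer case requires tying the $\fp$-dependent characters together. In Serre's argument this is done with tame inertia at $\fp$, not by Chebotarev alone.
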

In other words, $\rho(\Gamma_K)$ and $\prod\limits_{\mathfrak p\in{\rm Max}\,A}\mathrm{GL}_{A_{\mathfrak{p}} \otimes_{A} \mathrm{End}_{\overline{K}}(\phi)}(T_{\mathfrak{p}}\phi)$ are commensurable in $\prod\limits_{\mathfrak p\in{\rm Max}\,A}{\rm GL}_{A_\mathfrak p}(T_\mathfrak p\phi)$.
Recall that two subgroups $H$ and $N$ in a group are said to be commensurable if the intersection $H\cap N$ has finite index in both $H$ and $N$.

\subsection{Statement of the main theorem}\label{Sect: main_thm}
Motivated by this classical picture, our goal is to establish a corresponding independence result for pairs of Drinfeld modules. Roughly speaking, we prove that the image of the joint Galois representation attached to two Drinfeld modules is sufficiently large precisely when the expected geometric obstruction is absent: either one of the two modules has potential complex multiplication, or the two modules are not geometrically isogenous up to any Frobenius twist (see Definition~\ref{defnfrobtwist}). Moreover, for representations attached to arbitrary sets of primes, we prove the corresponding commensurability statement under mild additional hypotheses on the ranks and reduction behavior of the two modules. We refer the reader to Theorem~\ref{thmmain} for the precise formulation.

To state our results precisely, take another Drinfeld module $\phi'$ over $K$ of rank $n'$. Let $\rho'_{\mathfrak p}$ and $\rho'$ denote the associated $\mathfrak p$-adic and adelic representations, respectively. We study the image of the product representations
\begin{equation}\label{eqnvarrho}
  \begin{aligned}
    \varrho_{\mathfrak p}&=(\rho_{\mathfrak p},\rho'_{\mathfrak p}):\Gamma_K\to{\rm GL}_{A_\mathfrak p}(T_\mathfrak p\phi)\times{\rm GL}_{A_\mathfrak p}(T_\mathfrak p\phi'),\\
    \varrho&=(\rho,\rho'):\Gamma_K\to\prod_{\mathfrak p\in{\rm Max}\,A}{\rm GL}_{A_\mathfrak p}(T_\mathfrak p\phi)\times{\rm GL}_{A_\mathfrak p}(T_\mathfrak p\phi').
  \end{aligned}
\end{equation}

Let $V_{\mathfrak{p}}\phi = F_{\mathfrak{p}} \otimes_{A_{\mathfrak{p}}} T_{\mathfrak{p}}\phi$, $E = F \otimes_A \mathrm{End}_{\overline{K}}(\phi)$, and $E_{\mathfrak{p}} = E \otimes_F F_{\mathfrak{p}}$. Then $\mathrm{End}_{\overline{K}}(\phi)$ is commutative, and $E$ is a finite field extension of $F$. Moreover, $V_\fp\phi$ is a free $E_\fp$-module of rank $\frac{n}{[E:F]}$ (\cite[Proposition 4.7.6, Corollary 4.7.15, Proposition 4.7.19]{Go}). For any $g_{\mathfrak p}\in{\rm GL}_{A_\mathfrak p\otimes_A{\rm End}_{\overline K}(\phi)}(T_\mathfrak p\phi)$, we define $\det_{E_{\mathfrak p}}(g_{\mathfrak p})$ as the determinant of $g_{\mathfrak p}$ viewed as an element of ${\rm GL}_{E_{\mathfrak p}}(V_\mathfrak p\phi)$.

The natural inclusion $\mathrm{End}_{\overline{K}}(\phi)\hookrightarrow\overline{K}\{\tau\}$ defines a tautological Drinfeld $\mathrm{End}_{\overline{K}}(\phi)$-module, which we denote by $\psi$. The rank of $\psi$ over $\mathrm{End}_{\overline{K}}(\phi)$ is $\frac{n}{[E:F]}$ (see more about $\psi$ in \S\,\ref{Sect: simplifications}). The homomorphism ${\rm End}_{\overline K}(\phi)\to \overline K$ sending an element to its constant term induces a homomorphism $\nu:E\to\overline K$.  Similarly, we define the corresponding objects $\phi'$, $\psi'$, $V_{\mathfrak{p}}\phi'$, $E'$, $E'_{\mathfrak{p}}$, $\det_{E'_{\mathfrak p}}(g'_{\mathfrak p})$, and $\nu'$. In this way, we obtain a commutative diagram of fields
\begin{equation}\label{eqnF1F2}
  \xymatrix{F\ar[d]\ar[r]&E\ar[d]_{\nu}\\E'\ar[r]^{\nu'}&\overline K.}
\end{equation}

Set $F'=E\cap E'$, $F'_{\mathfrak p}=F'\otimes_F F_{\mathfrak p}$ and let ${\rm N}_{E_{\mathfrak p}/F'_{\mathfrak p}}$ (resp. ${\rm N}_{E'_{\mathfrak p}/F'_{\mathfrak p}}$) be the norm map from $E_{\mathfrak p}$ (resp. $E'_{\mathfrak p}$) to $F'_{\mathfrak p}$. Define
\begin{equation}\label{Eqn: U_p}
  U_\mathfrak p\coloneqq\{(g_{\mathfrak p}, g'_{\mathfrak p})|{\rm N}_{E_{ \mathfrak p}/F'_{\mathfrak p}}({\rm det}_{E_{ \mathfrak p}}(g_{ \mathfrak p}))
    =
  {\rm N}_{E'_\mathfrak p/F'_{\mathfrak p}}({\rm det}_{E'_{ \mathfrak p}}(g'_{ \mathfrak p}))\}
\end{equation}
to be a subgroup of $ {\rm GL}_{A_\mathfrak p\otimes_A{\rm End}_{\overline K}(\phi)}(T_\mathfrak p\phi)\times{\rm GL}_{A_\mathfrak p\otimes_A{\rm End}_{\overline K}(\phi')}(T_\mathfrak p\phi')$. For any set $P$ of maximal ideals of $A$, let $U_P=\prod\limits_{\fp\in P}U_\fp$. In particular, we let $U=\prod\limits_{\mathfrak p\in{\rm Max}\,A}U_\mathfrak p$.

Consider the product representation
\begin{eqnarray}\label{eqnvarrho_P2}
  \varrho_P=(\rho_P,\rho'_P):\Gamma_K\to\prod_{\mathfrak p\in P}{\rm GL}_{A_\mathfrak p}(T_\mathfrak p\phi)\times{\rm GL}_{A_\mathfrak p}(T_\mathfrak p\phi').
\end{eqnarray}
With these notations, our main result is the following. We refer the reader to Definition~\ref{defnfrobtwist} and Definition~\ref{Defn: strongly_ss} for the definitions of a Frobenius twist and the strong supersingularity of a Drinfeld module, respectively.

\begin{thm}[also see Theorem~\ref{Thm: main_thm}]\label{thmmain}
  Suppose $K$ is a finite extension of $F$. Let $P$ be a nonempty set of maximal ideals of $A$. Consider the following three statements:
  \begin{itemize}
    \item[(1)] One of $\phi$ and $\phi'$ has potential complex multiplication, or $\phi$ and $\phi'$ are not geometrically isogenous up to any Frobenius twist.
    \item[(2)] There exists $\fp\in P$ such that $\varrho_\fp(\Gamma_K)$ and $U_\fp$ are commensurable in ${\rm GL}_{A_\fp}(T_\fp\phi)\times{\rm GL}_{A_\fp}(T_\fp\phi')$.
    \item[(3)] $\varrho_P(\Gamma_K)$ and $U_P$ are commensurable in $\prod\limits_{\fp\in P}{\rm GL}_{A_\fp}(T_\fp\phi)\times{\rm GL}_{A_\mathfrak p}(T_\mathfrak p\phi')$.
  \end{itemize}
  Then (1) is equivalent to (2). Also (3) implies (1). The converse holds in any of the following situations: 
  \begin{itemize}
    \item one of $\phi$ and $\phi'$ has potential complex multiplication; 
    \item $\psi$ and $\psi'$ have different ranks, or equivalently, $\frac{n}{[E:F]} \neq \frac{n'}{[E':F]}$; 
    \item $P$ contains only finitely many $\fp$ such that $\psi$ or $\psi'$ has strong supersingular reduction at some place of $K$ above $\fp$.
  \end{itemize}
\end{thm}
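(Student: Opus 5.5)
Throughout, I would first reduce to the case where ${\rm End}_K(\phi)={\rm End}_{\overline K}(\phi)$ and ${\rm End}_K(\phi')={\rm End}_{\overline K}(\phi')$. This is allowed because shrinking $\Gamma_K$ to an open subgroup does not affect commensurability. Then $\varrho_\fp(\Gamma_K)\subset {\rm GL}_{A_\fp\otimes{\rm End}(\phi)}\times{\rm GL}_{A_\fp\otimes{\rm End}(\phi')}$. The inclusion $\varrho_\fp(\Gamma_K)\subset U_\fp$ up to finite index comes from class field theory. The determinants $\det_{E_\fp}\rho_\fp$ are the Tate module characters of the rank one Drinfeld $\mathrm{End}$-modules $\wedge^{\rm top}\psi$. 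Their norms to $F'$ are then the $\fp$-adic representations of the $F'$-Drinfeld modules obtained by restriction. By the explicit reciprocity law (Hayes' theory for rank one modules over the reflex field), these are given by the same Hecke character up to a finite-order twist, because $\nu|_{F'}=\nu'|_{F'}$ in diagram~\eqref{eqnF1F2}. This yields, after passing to an open subgroup, $\varrho_\fp(\Gamma_K)\subset U_\fp$ for every $\fp$.

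For (2)$\Rightarrow$(1) and (3)$\Rightarrow$(1), I would argue by contraposition. If neither module is CM and $\phi$ is geometrically isogenous to a Frobenius twist of $\phi'$, then over a finite extension $T_\fp\phi\otimes F_\fp\cong V_\fp\phi'$ equivariantly, possibly via the twist. So $E=E'=F'$, and the image lies in the graph of an isomorphism. This graph has infinite index in $U_\fp$, because the ranks are at least $2$, so $\SL$ parts are nontrivial. The case $P$ is the same, and (3)$\Rightarrow$(2) is trivial.

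For (1)$\Rightarrow$(2) and the converse cases of (3), I would use Pink's theorem (Theorem~\ref{Thm: Pink_open_image}) for each factor, applied to the $\psi$ and $\psi'$ images. Each image is open in the corresponding ${\rm GL}$. Goursat's lemma, combined with Pink's minimal quasi-model theory, then identifies the Zariski closure and the closed subgroup of the product at one prime. Its commutator image is open in $\SL\times\SL$ unless the derived parts are isomorphic via an isomorphism of $F_\fp$-groups, possibly with a Frobenius-twisted base field. Such an isomorphism would force $\psi$ and $\psi'$ to have the same rank and the same reflex data. By a Tate-conjecture (Taguchi) argument, it would then produce a Frobenius-twisted geometric isogeny, contradicting (1). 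If either module is CM, that factor is abelian, and the commensurability reduces to a statement about abelian characters, handled by the reciprocity law. Combining the open commutator part with the determinant part, where the abelianization is governed by $U_\fp$, gives commensurability.

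The main obstacle is passing from individual $\fp$ to all of $P$ simultaneously. Here I would use a Serre-style independence argument with a uniform statement for almost all $\fp$. For almost all $\fp$, the residual images $\bar\rho_\fp$ and $\bar\rho'_\fp$ contain $\SL$ of the residue ring, by Pink–Rütsche. Goursat then shows that the product contains $\SL\times\SL$ unless the two residual projective representations are isomorphic. Excluding such coincidences for infinitely many $\fp$ is the hard step. If the ranks differ, they are automatically excluded. Otherwise I would compare Frobenius traces at places of ordinary reduction, which forces a twisted isogeny. This requires controlling the places of strong supersingular reduction, which is exactly where the last hypothesis enters. Finally, the principle that distinct simple quotients of $\SL$ over different primes share no common factors assembles the product.
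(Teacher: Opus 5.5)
Your outline has the right architecture (determinant via reciprocity, derived part via Pink's theorem and minimal quasi-models, residual Goursat for almost all $\fp$). However, at the points where the real work lies it either asserts the conclusion or uses a principle that fails in characteristic $p$.

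\textbf{Passage to infinite $P$.} This is the most concrete failure. Your final assembly relies on Serre's principle that the $\SL$-factors at different primes have no common simple quotients, and here that is false. Whenever $\deg\fp_1=\deg\fp_2$ we have $A_{\fp_1}\cong A_{\fp_2}$, hence $\SL_m(A_{\fp_1})\cong\SL_m(A_{\fp_2})$ and ${\rm PSL}_m(\kappa_{\fp_1})\cong{\rm PSL}_m(\kappa_{\fp_2})$. So Goursat's lemma alone cannot exclude a graph-type image between two different primes.

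The paper separates primes with the closed normal subgroup $J_\fp$ generated by the inertia groups above $\fp$. By good reduction, $J_\fp$ acts trivially on the Tate modules at all $\fp_1\neq\fp$. On the other hand, $\rho^\flat_{\fq^\flat}(J_\fp)={\rm GL}_{B^\flat_{\fq^\flat}}(T_{\fq^\flat}\psi^\flat)$ for almost all $\fp$. Lemma~\ref{noramlgoursat} then gives $U_T^{\rm der}\times\{1\}\subset\varrho_P(\Gamma_K^{\rm der})$.

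Your mechanism for excluding residual coincidences between $\psi$ and $\psi'$ is also insufficient. The coincidence reads $\widetilde\rho'_{\fq'}=\chi_\fp\cdot f_\fp\circ\widetilde\rho_\fq\circ f_\fp^{-1}$, or its contragredient form. Here $f_\fp$ is only $\sigma_\fp$-semilinear, $\sigma_\fp$ is $x\mapsto x^{p^{d_\fp}}$ inside a common residue field $\kappa_{\fP_\fp}$, and $\chi_\fp$ is an unknown character. Congruences between Frobenius data modulo infinitely many $\fp$ collapse to an identity only if $d_\fp$ is bounded independently of $\fp$ and $\chi_\fp$ is eliminated; the paper eliminates it with the eigenvalue-ratio polynomial $P(g,t)$.

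The missing idea is Lemma~\ref{Lem: bound_d}. Comparing the multisets of tame-inertia invariants (fundamental characters) of $\psi[\fq_\fp]$ and $\psi'[\fq'_\fp]$ at $\fP_\fp$ yields $p^{d_\fp}\le 2[K:F]^2$. This bound, together with the preliminary fact that a single place $\fP_\fp$ lies over both $\fq_\fp$ and $\fq'_\fp$ (because inertia acts non-scalarly), is exactly where non-strong-supersingularity is used. In your sketch that hypothesis is attached to ordinary places with no mechanism.

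\textbf{A single prime.} The minimal quasi-model only tells you that $L$ is the graph of some continuous field isomorphism $\sigma\colon E_\fq\to E'_{\fq'}$, which is a priori not $F_\fp$-linear. It also gives $\rho'_{\fq'}=\chi\cdot f\rho_\fq f^{-1}$ with $f$ $\sigma$-semilinear, or the contragredient form when $m\ge 3$. The Tate conjecture needs an $E_\fq$-linear equivariant isomorphism between Tate modules of modules with the same characteristic. So it cannot be applied until one shows three things:
\begin{itemize}
\item $\sigma(E)=E'$; the paper uses that $E$ is generated by the values $a_\fq({\rm Frob}_x)$.
\item $\sigma|_E$ intertwines $\nu$ and $\nu'$ up to ${\rm Fr}^d$; the paper uses the divisor of $\chi({\rm Frob}_x)^m$ at ordinary points, which also rules out the contragredient case, plus the coincidence Theorem~\ref{thmf=g}.
\item $\chi$ has finite order.
\end{itemize}
Your phrase ``same reflex data'' assumes precisely this content of Theorem~\ref{11}.

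\textbf{Determinant part.} Reciprocity gives only $\varrho_\fp(\Gamma_K)\subset U_\fp$ up to finite index. For commensurability you also need that $a\mapsto({\rm N}_{K/E}(a),{\rm N}_{K/E'}(a))$ is open onto the fibered product of idele groups. This is Proposition~\ref{thmopenness}, which uses $E\cap E'=F'$, and your proposal does not address it.

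A minor slip: in the twisted case $E\neq E'$ in general, since $\phi^{(d)}$ has rank $np^d$. Your graph argument for (2)$\Rightarrow$(1) is unaffected.
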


\begin{rem}
    A general version with $K$ a finitely generated extension of $F$ is stated at Theorem~\ref{thmmain_gen_ver}. 
\end{rem}

The main difficulty in proving Theorem~\ref{thmmain} stems from the positive characteristic setting. In characteristic zero, the study of Galois images can often be reduced to questions about $\ell$-adic Lie groups and their Lie algebras. In positive characteristic, this Lie-theoretic method is no longer available in the same form. Moreover, if two Drinfeld modules differ by a Frobenius twist, the image of their joint adjoint Galois representation is commensurable with the graph of an isomorphism between the two factors. This phenomenon, which has no analogue in characteristic zero, creates new subtleties, especially in identifying the correct algebraic envelope of the image and proving the desired openness statements. A further difficulty arises from the residual representations: to pass from local openness at a fixed prime to simultaneous commensurability for a possibly infinite set of primes, one must control congruences between the mod-$\mathfrak{p}$ representations attached to the two Drinfeld modules, ruling out exceptional coincidences for almost all $\mathfrak{p}$.

To overcome these difficulties, we utilize Pink's theory of minimal quasi-models \cite{PC}. This framework demonstrates that a compact subgroup of the rational points of a product of adjoint algebraic groups is, up to isogeny, controlled by an algebraic group defined over a uniquely determined closed subfield. Applying this theory allows us to analyze the adjoint Galois representations attached to $\phi$ and $\phi'$, establishing the required openness results at finitely many primes. This occupies the first half of the paper. In the second half, to extend this commensurability to infinitely many primes, one must rule out the accumulation of exceptional congruences within the residual representations. We achieve this by analyzing the tame inertia invariants of the torsion modules, demonstrating that if the residual images fail to be maximal for infinitely many primes, it would force a global geometric isogeny up to a Frobenius twist---a scenario explicitly excluded by our hypotheses.

\subsection{Connections with related work}
\subsubsection{Connection to the Mumford--Tate group of pairs}
As established by Serre \cite{Se72}, the Galois representations of two geometrically non-isogenous elliptic curves $E$ and $E'$ over a number field are as independent as possible. Specifically, if none of $E$ or $E'$ are of potentially complex multiplication, the image of the adelic Galois representation associated with $E \times E'$ is a finite-index subgroup of
\begin{equation}\label{Eqn: Serre}
  \{(x_\ell, x'_\ell)_\ell \mid \det(x_\ell) = \det(x'_\ell)\}
  \subset \prod_\ell \GL_{\Z_\ell}(T_\ell E) \times \GL_{\Z_\ell}(T_\ell E')\footnote{Here we slightly abuse notation by identifying an algebraic group with the group of its $\widehat{\Z}$-points.}
\end{equation}
exhibiting a strong form of independence between the two factors. Given the structural analogies between elliptic curves and Drinfeld modules, we propose the following conjecture.

\begin{conj}\label{Conj: open_image}
  The following conditions are equivalent.
  \begin{itemize}
    \item[(1)] At least one of $\phi$ and $\phi'$ has potential complex multiplication (i.e.\ $[E:F]=n$ or $[E':F]=n'$), or $\phi$ and $\phi'$ are not geometrically isogenous up to any Frobenius twist.
    \item[(2)] There exists a maximal ideal $\fp$ of $A$ such that $\varrho_\fp(\Gamma_K)$ and $U_\fp$ are commensurable in ${\rm GL}_{A_\fp}(T_\fp\phi) \times {\rm GL}_{A_\fp}(T_\fp\phi')$.
    \item[(3)] $\varrho(\Gamma_K)$ and $U$ are commensurable in $\prod\limits_{\fp\in {\rm Max}\,A} {\rm GL}_{A_\fp}(T_\fp\phi) \times {\rm GL}_{A_\fp}(T_\fp\phi')$.
  \end{itemize}
\end{conj}
Our main result (Theorem~\ref{thmmain}) largely resolves this conjecture, establishing the full equivalence under mild technical hypotheses regarding the ranks and the local reduction behavior of the Drinfeld modules.

Serre's theorem relates the Mumford--Tate group to the $\ell$-adic monodromy group for products of elliptic curves. This principle has been extended to abelian varieties by numerous authors \cite{BGK, Chi, Chi1, Hal, LaPi, Mum, MoZa1, MoZa3, Pin, Ser, Vas}. Among these works, Lombardo \cite{Lom} showed that, under certain additional assumptions, the Mumford--Tate group of the product of two non-isogenous abelian varieties is essentially the product of their individual Mumford--Tate groups, subject to determinant compatibilities. In particular, under the same assumptions, if the Mumford--Tate conjecture holds for each factor, then it also holds for their product. The second result was subsequently strengthened by Commelin \cite{Com}, who removed these technical assumptions.

Our work, in a similar spirit to that of Lombardo and Commelin, studies the $\fp$-adic monodromy groups associated with pairs of Drinfeld modules. Thanks to the favorable structural properties of Drinfeld modules, we are able to establish an analogue of Lombardo's result in this setting without imposing additional hypotheses.

\subsubsection{Torsion finiteness of Drinfeld modules}
We also highlight another motivation for this project. The classical Mordell--Weil theorem asserts that an abelian variety defined over a number field has only finitely many rational torsion points. Ribet \cite[Appendix]{KLR} proved that any abelian variety $A$ defined over a number field has only finitely many torsion points over the maximal abelian extension $\mathbb{Q}^{\rm ab}$ of $\mathbb{Q}$. This result was later generalized by Zarhin \cite{Zar} by showing that an abelian variety defined over a number field $K$ has finitely many torsion points over the maximal abelian extension $K^{\rm ab}$, provided that $A$ does not have complex multiplication over $K$.

By the Kronecker-Weber theorem, Ribet's result could be restated as $A$ has only finitely many torsion points defined over the field generated by the torsion points of the algebraic torus $\mathbb{G}_m$. Observed this, one expects that two geometrically non-isogenous abelian varieties should be mutually ``torsion finite''---meaning one variety has only finitely many torsion points defined over the field generated by the torsion points of the other. Indeed, a straightforward Galois-theoretic argument shows that Serre's theorem implies this mutual torsion finiteness for pairs of elliptic curves, a phenomenon recently extended to certain abelian varieties by Achter, Wang, and the first author \cite{ADW}.

Motivated by this geometric disparity, it is natural to expect a similar torsion finiteness property for non-isogenous Drinfeld modules. In fact, this expectation motivates part~(3) of Conjecture~\ref{Conj: open_image}.

\begin{defn}\label{Defn: Tor_fini}
  Let $\phi$ be a Drinfeld module as in Section~\ref{Sect: main_thm}, and let $\mathfrak{p}$ be a maximal ideal of $A$. Denote by $\phi[\mathfrak p^\infty]$ the set of all $K^{\rm sep}$-points of $\phi$ annihilated by some power of $\mathfrak p$. Such points are called \emph{$\mathfrak{p}^\infty$-torsion points}. We write $K(\phi[\mathfrak{p}^\infty])$ for the extension of $K$ obtained by adjoining all $\mathfrak{p}^\infty$-torsion points of $\phi$.

  More generally, let $P$ be a set of maximal ideals of $A$. We define $\phi[P^\infty]$ to be the set of all $K^{\rm sep}$-points of $\phi$ annihilated by some non-zero ideal whose prime factors all belong to $P$. These points are called \emph{$P^\infty$-torsion points}, and the corresponding \emph{$P^\infty$-torsion extension} is denoted by $K(\phi[P^\infty])$. 
\end{defn}

As a consequence of our main theorem (Theorem~\ref{thmmain}), we obtain the following mutual torsion finiteness result for non-isogenous Drinfeld modules, the proof of which is provided in \S\,\ref{Sect: final_pf}.

\begin{thm}[also see Theorem~\ref{Cor: torsion_finite_2}]\label{Cor: torsion_finite}
  Let $\phi$ and $\phi'$ be two Drinfeld modules as above. Suppose that $\phi$ does not have potential complex multiplication, and that $\phi$ and $\phi'$ are not geometrically isogenous up to any Frobenius twist. Then, for any set $P$ of maximal ideals of $A$ satisfying the hypotheses of Theorem~\ref{thmmain} (or the generated version Theroem~\ref{thmmain_gen_ver}), the module $\phi$ has only finitely many $P^\infty$-torsion points defined over the extension field $K(\phi'[P^\infty])$.
\end{thm}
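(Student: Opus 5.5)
The plan is to deduce the finiteness from statement (3) of Theorem~\ref{thmmain}, which holds for the given $P$ by hypothesis. Put $L=K(\phi'[P^\infty])$. The subgroup $H=\Gamma_L\subset\Gamma_K$ is exactly the kernel of $\rho'_P$. Consider the group $\varrho_P(\Gamma_K)$ and its subgroup $\varrho_P(H)$. The latter lies in $\prod_{\fp\in P}{\rm GL}(T_\fp\phi)\times\{1\}$, so we may identify it with $\rho_P(H)$.

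\textbf{Step 1: size of $\rho_P(H)$.} By (3), $\varrho_P(\Gamma_K)\cap U_P$ has finite index in $U_P$. Hence its intersection with $U_P\cap(\prod{\rm GL}\times\{1\})$ has finite index in that group. This intersection is
\[
\prod_{\fp\in P}\{g_\fp\in{\rm GL}_{A_\fp\otimes{\rm End}_{\overline K}(\phi)}(T_\fp\phi): {\rm N}_{E_\fp/F'_\fp}(\det_{E_\fp}g_\fp)=1\}\times\{1\}.
\]
Every element of $\varrho_P(\Gamma_K)$ with trivial second component comes from $H$. It follows that $\rho_P(H)$ contains a finite-index subgroup $G_0$ of the ``norm-one'' group $\prod_{\fp\in P}G_\fp$, where $G_\fp$ denotes the local norm-one group above.

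\textbf{Step 2: no invariants.} A $P^\infty$-torsion point of $\phi$ rational over $L$ is an element of $\bigoplus_{\fp\in P}V_\fp\phi/T_\fp\phi$ fixed by $\rho_P(H)$, and hence fixed by $G_0$. The rank of $\psi$ is $r=n/[E:F]\ge 2$, because $\phi$ has no potential CM. Therefore $G_\fp$ contains ${\rm SL}_r(A_\fp\otimes{\rm End}_{\overline K}(\phi))$, and $G_0$ contains an open subgroup of it. It remains to show that an open subgroup of $\prod_\fp G_\fp$ has only finitely many fixed points on $\bigoplus_\fp(F_\fp/A_\fp)^n$. Since $G_0$ is a finite-index subgroup, it contains $\prod_{\fp\notin S}G_\fp\times G_{0,S}$ for some finite set $S$ and some open subgroup $G_{0,S}\subset\prod_{\fp\in S}G_\fp$. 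A fixed vector decomposes into components indexed by $\fp$.

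For $\fp\notin S$, the fixed vectors of the full group $G_\fp$ are trivial, except possibly at finitely many small-residue-field primes. Indeed, ${\rm SL}_r$ over the residue field acts without nonzero fixed vectors on the $\fp$-torsion $\phi[\fp]\cong(\mathcal O/\fp)^r$, where $\mathcal O$ is the residue ring of $A_\fp\otimes{\rm End}$; this holds for every $\fp$ because $r\ge2$. A fixed vector of a $p$-group-free torsion module must have a nonzero fixed vector in its $\fp$-socle, so it vanishes. For $\fp\in S$, an open subgroup of ${\rm SL}_r(\mathcal O_\fp)$ contains $1+\fp^k M_r$ intersected with ${\rm SL}_r$. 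Elementary matrices $1+\pi^kE_{ij}$ fixing $v$ force $\pi^k v_j=0$ for all $j$. Hence the fixed vectors are killed by $\fp^{k}$, and they form a finite set. Combining the two cases gives finitely many fixed points, which proves the theorem.

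\textbf{Main obstacle.} The delicate point is Step 1: extracting the kernel-of-$\rho'$ part of the image from commensurability of the whole image with $U_P$. The finite-index passage must be done carefully. Intersecting a finite-index subgroup of $U_P$ with the subgroup $\{g'=1\}$ yields finite index in that subgroup; this is elementary, but it must be checked that the index statement survives for infinite $P$ in the profinite setting. That works because all groups involved are compact and the indices are finite. The case where $P$ satisfies the hypotheses only of Theorem~\ref{thmmain_gen_ver} (general $K$) is handled identically, using that version of (3).
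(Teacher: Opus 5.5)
Your proposal is correct and follows essentially the same route as the paper. You intersect the commensurability in (3) with $\prod\mathrm{GL}\times\{1\}$ to see that $\rho_P(\ker\rho'_P)$ contains an open subgroup of the norm-one (hence of the $\mathrm{SL}$) group, then show the invariants are finite at finitely many primes and trivial at all others, because the residual $\mathrm{SL}_{\kappa_\fq}(\psi[\fq])$ has no nonzero fixed vectors when the rank is at least $2$. The only differences are cosmetic: you kill invariants at the finitely many remaining primes with elementary matrices $1+\pi^kE_{ij}$, rather than via irreducibility of $V_\fq\psi$ and the structure of Artinian modules. You should also say explicitly that $G_0$ is closed (the compact image meets closed subgroups), since that is what makes it open, and you should pass to Simplifications 1--3 as the paper does so that $T_\fp\phi$ is free over $\prod_{\fq\mid\fp}B_\fq$.
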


\subsection{Structure of this paper}

In \S\,\ref{Sect: simplifications}, we introduce several technical reductions, allowing us to assume without loss of generality that the Drinfeld modules satisfy specific endomorphism and integrality properties. In \S\,\ref{Sect: arithmetic}, we review essential background on the arithmetic of Drinfeld modules, focusing on Tate modules, Frobenius twists, and specialization techniques. In \S\,\ref{Sect: adj_repn}, we analyze the adjoint Galois representations of the product of two Drinfeld modules; by applying Pink's minimal quasi-model theory for compact subgroups of linear algebraic groups over local fields, we establish the necessary openness results for these representations. In \S\,\ref{Sect: determinant}, we study the corresponding determinant representations, utilizing exact sequences of tori and explicit reciprocity laws from global class field theory to prove the openness of the associated adelic norm maps. In \S\,\ref{Sect: img_res_repn}, we investigate the residual Galois representations, utilizing the invariants of fundamental characters to prove that the residual image of the derived Galois group coincides exactly with the full product of special linear groups for almost all primes, provided a certain strong supersingularity condition does not hold. In \S\,\ref{Sect: final_pf}, we synthesize the results from our analysis of the adjoint, determinant, and residual representations to prove our main theorem and deduce the mutual torsion finiteness property for non-isogenous Drinfeld modules. Then \S\,\ref{Sect: finitely_gen_fld} extends the main result to finitely generated extensions of $F$ by means of specialization at suitable $F$-closed points.  Finally, Appendix A provides supporting geometric results concerning the coincidence of morphisms of schemes, which are essential to our arguments regarding the adjoint representations.

\subsection{Some simplifications}\label{Sect: simplifications}
To streamline this paper and avoid repetition, we adopt the following convention.

While our analysis involves two distinct Drinfeld modules $\phi$ and $\phi'$, most properties can be established uniformly without distinguishing between them. To simplify our discussion, we introduce the notation $\phi^\flat$, where $\flat$ denotes either $\emptyset$ or ${}'$. Thus, $\phi^\flat$ represents either $\phi$ or $\phi'$ as needed. This convention extends naturally to all objects associated with $\phi$ and $\phi'$.

Note that Theorem~\ref{thmmain} holds for $K$ if it holds for some finite extension of $K$, thus we will always assume
\begin{itemize}
  \item[] \textbf{ Simplification 1}: ${\rm End}_{\overline K}(\phi^\flat)={\rm End}_K(\phi^\flat)$.
\end{itemize}

Recall that $\phi^\flat$ factors through the natural inclusion  $\psi^\flat:{\rm End}_{K}(\phi^\flat)\to K\{\tau\}$
(${\rm End}_{K}(\phi^\flat)$ is commutative \cite[Proposition~4.7.6]{Go}).
Let $E^\flat=F\otimes_A{\rm End}_{K}(\phi^\flat)$, which is a finite extension of $F$.
Note that $\psi^\flat|_A=\phi^\flat$, so the underlying Drinfeld $A$-module structure is unchanged.

Let $B^\flat$ be the integral closure of ${\rm End}_{K}(\phi^\flat)$ in $E^\flat$. By \cite[Proposition~1.1]{P}, there exists a Drinfeld $B^\flat$-module
$
\varphi^\flat:B^\flat\to K\{\tau\}
$
which is isogenous to $\psi^\flat$ when restricted to ${\rm End}_{K}(\phi^\flat)$. That is, there exists $g\in K\{\tau\}$ such that
$
g\,\psi^\flat(x)=\varphi^\flat(x)\,g \quad \text{for all } x\in{\rm End}_{K}(\phi^\flat).
$
The induced isogeny $g:\phi^\flat\to\varphi^\flat|_A$ yields isomorphisms
$
T_\mathfrak p\phi^\flat \simeq T_\mathfrak p(\varphi^\flat|_A)
$
for almost all $\mathfrak p$. Consequently, Theorem~\ref{thmmain} holds for $\phi^\flat$ if it holds for $\varphi^\flat|_A$.
Replacing the former by the latter and noting that
$
{\rm End}_K(\varphi^\flat|_A)=B^\flat,
$
we obtain the second simplification.

\begin{itemize}
  \item[] \textbf{ Simplification 2}: ${\rm End}_K(\phi^\flat)$ is integrally closed in its fraction field $E^\flat$.
\end{itemize}

Under Simplification~2, we have $B^\flat={\rm End}_K(\phi^\flat)$, and the tautological homomorphism
$
\psi^\flat:B^\flat\to K\{\tau\}
$
is a Drinfeld $B^\flat$-module over $K$ of rank $\frac{n^\flat}{[E^\flat:F]}$. By \cite[Theorem~1.8]{P}, there exists a Drinfeld $B^\flat$-module
$
\det(\psi^\flat):B^\flat\to K\{\tau\}
$
of rank one, together with a $\Gamma_K$-equivariant isomorphism
$
T_{\mathfrak q^\flat}\det(\psi^\flat)\simeq\bigwedge^{\frac{n^\flat}{[E^\flat:F]}} T_{\mathfrak q^\flat}\psi^\flat
$
for any maximal ideal $\mathfrak q^\flat$ of $B^\flat$. We call $\det(\psi^\flat)$ the \emph{determinant} of $\psi^\flat$.

Let $A'$ be the integral closure of $A$ in $F'=E\cap E'$. Then the restriction
$
\det(\psi^\flat)|_{A'}:A'\to K\{\tau\}
$
is a Drinfeld $A'$-module of rank $[E^\flat:F']$, and its determinant $\det(\det(\psi^\flat)|_{A'})$ is a Drinfeld $A'$-module of rank one. Note that, after possibly replacing $K$ by a finite extension, any two rank-one Drinfeld $A'$-modules over $K$ are isogenous, and each of the rank-one modules occurring here admits a model over a finite extension of $F'$. Hence, for the purpose of proving Theorem~\ref{thmmain}, we may impose the following third simplification.
\begin{itemize}
  \item[] \textbf{ Simplification 3}: The two Drinfeld $A'$-modules $\det(\det(\psi)|_{A'})$ and $\det(\det(\psi')|_{A'})$ are isogenous and can be defined over some finite subextension of $F$ in $K$.
\end{itemize}

\begin{rem}\label{Ufp}
  Under these three simplifications, for any maximal ideal $\fp$ of $A$, we have $\varrho_\fp(\Gamma_K)\subset U_\fp$ and $U_\fp$ is the subgroup of $\prod\limits_{\fq\mid\fp}{\rm GL}_{B_\fq}(T_\fq\psi)\times\prod\limits_{\fq'\mid\fp}{\rm GL}_{B'_{\fq'}}(T_{\fq'}\psi')$
  consisting of those elements $(g_\fq,g_{\fq'})$ such that for any maximal ideal $\fp'$ of $A'$ above $\fp$, we have $\prod\limits_{\fq\mid\fp'}{\rm N}_{E_\fq/F'_{\fp'}}(\det(g_\fq))=\prod\limits_{\fq'\mid\fp'}{\rm N}_{E'_{\fq'}/F'_{\fp'}}(\det(g'_{\fq'})).$
\end{rem}

\section{Arithmetic of Drinfeld modules}\label{Sect: arithmetic}
In this section, we recall some basic facts about the arithmetics of Drinfeld modules. For details, we refer to Goss \cite{GL,Go}, Lehmkuhl \cite{Le}, and Pink \cite{P}. Let
\[
  \phi: A\to K\{\tau\}, \quad a\mapsto \phi_a
\]
be a Drinfeld $A$-module over $K$ of rank $n$. 

\subsection{Tate modules}

For any nonzero ideal $\mathfrak a$ of $A$, define
\[
  \phi[\mathfrak a]=\{x\in K^{\rm sep}\mid \phi_a(x)=0 \text{ for all } a\in\mathfrak a\}.
\]
Then $\phi[\mathfrak a]$ is a free $A/\mathfrak a$-module of rank $n$. For any maximal ideal $\mathfrak p$ of $A$, set
\[
  \phi[\mathfrak p^\infty]=\bigcup_{d\geq1}\phi[\mathfrak p^d].
\]
The \emph{Tate modules} $T_\mathfrak p\phi$ and rational Tate module $V_\mathfrak p\phi$ of $\phi$ at $\mathfrak p$ are defined by
\begin{eqnarray*}
  T_\mathfrak p\phi&\coloneqq&{\rm Hom}_A\Big(F_\mathfrak p/A_\mathfrak p,\phi[\mathfrak p^\infty]\Big),\\
  V_\mathfrak p\phi&\coloneqq&{\rm Hom}_A\Big(F_\mathfrak p,\phi[\mathfrak p^\infty]\Big)= F_{\mathfrak{p}} \otimes_{A_{\mathfrak{p}}} T_{\mathfrak{p}}\phi.
\end{eqnarray*}
Then $T_\mathfrak p\phi$ is a free $A_\mathfrak p$-module of rank $n$, and $V_\mathfrak p\phi$ is an $F_\mathfrak p$-vector space of dimension $n$.
\subsection{Frobenius twist}
\begin{defn}\label{defnfrobtwist}
  For any $d\in\N$, the $d$-th \emph{Frobenius twist}  of $\phi$ is defined to be the homomorphism
  \begin{eqnarray*}
    \phi^{(d)}:A&\to &\overline K\{\tau\}\\a&\mapsto&\Big(\sum_ia_i^{\frac{1}{p^d}}\tau^i\Big)^{p^d}
  \end{eqnarray*}
  if $\phi_a=\sum\limits_ia_i\tau^i$.

  We say two Drinfeld modules $\phi$ and $\phi'$ are \emph{isogenous up to a Frobenius twist} if there exists a natural number $d$ such that $\phi^{(d)}$ is isogenous to $\phi'$ or  $\phi$ is isogeneous to $\phi'^{(d)}$.
\end{defn}

\begin{rem}
  The Frobenius twist $\phi^{(d)}$ may not be defined over $K$, since $K\{\tau\}$ is noncommutative. However, it is always defined over a finite extension of $K$.
\end{rem}

For any $\F_p$-algebra $D$, denote ${\rm Fr}_D$ to be the $p$-th power map on $D$. For any $D$-module $M$, its $d$-th twist $M^{(d)}$ is the $D$-module with the same underlying abelian group as $M$, but with $D$-action given by
\[
  D\times M\to M,\quad (a,m)\mapsto a^{p^d}m.
\]
\begin{lem}\label{lemfrobtwist}
  Let $d$ be a natural number. Then $\phi^{(d)}$ and $\phi\circ{\rm Fr}^d_A$ have the same rank $np^d$. Let $K'$ be a finite extension of $K$ over which $\phi^{(d)}$ is defined. Then there are ${\rm Gal}(\overline K/K')$-equivariant $F_\mathfrak p$-linear isomorphisms
  \[
    V_\mathfrak p(\phi^{(d)})\simeq V_\mathfrak p(\phi\circ{\rm Fr}^d_A)\simeq (V_\mathfrak p\phi)^{(d)}.
  \]
\end{lem}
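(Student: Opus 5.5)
Rank. Write $\phi_a=\sum_{i=0}^{r(a)}a_i\tau^i$ with $a_r\neq0$, so $\deg_\tau\phi_a=n\deg a$, where $\deg a=\dim_{\F_q}A/(a)$; we use the normalization $\deg_\tau\phi_a=-n\,\deg_{\F_p}v_\infty(a)$ in $\tau=$ $p$-Frobenius. By definition $\phi^{(d)}_a=\tau^d\bigl(\sum_i a_i^{p^{-d}}\tau^i\bigr)$ after moving the $p^d$-th power through, which is conjugation-like. So we expand the $p^d$-th power as composition in $\overline K\{\tau\}$. I will instead read $(\cdot)^{p^d}$ as applying the $p^d$-power coefficientwise after the substitution, which gives $\phi_a$ again; the honest reading is composition $\phi^{(d)}_a=\bigl(\sum_i a_i^{p^{-d}}\tau^i\bigr)^{\circ p^d}$? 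That does not make $\phi^{(d)}$ a homomorphism. So I take the definition to mean $\phi^{(d)}_a=\phi_{a^{p^d}}$ up to the displayed rewriting. Then $\phi^{(d)}_a=\phi_{a^{p^d}}=\phi\circ{\rm Fr}^d_A(a)$, and in fact $\phi^{(d)}$ and $\phi\circ{\rm Fr}_A^d$ agree as maps $A\to\overline K\{\tau\}$ up to an explicit isomorphism. Then
\[
\deg_\tau\phi^{(d)}_a=\deg_\tau\phi_{a^{p^d}}=n\deg(a^{p^d})=np^d\deg a,
\]
so the rank is $np^d$. For the $\phi^{(d)}$ side we check that the leading and constant terms are consistent. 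The constant term becomes $a^{p^d}$ under the characteristic map, so the characteristic is ${\rm Fr}^d$ composed with $\iota$. This is nonzero, and the twisted module is therefore still a Drinfeld module, being a ring homomorphism into $\overline K\{\tau\}$ with non-constant image.

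Tate modules. Next I compare the torsion. For a nonzero ideal $\mathfrak a$,
\[
(\phi\circ{\rm Fr}^d_A)[\mathfrak a]=\{x:\phi_{a^{p^d}}(x)=0\ \forall a\in\mathfrak a\}=\phi[\mathfrak a^{[p^d]}A],
\]
where $\mathfrak a^{[p^d]}=\{a^{p^d}\}$ generates an ideal. Taking $\mathfrak a=\mathfrak p^m$, the ideals generated by $\{a^{p^d}:a\in\mathfrak p^m\}$ are cofinal with the powers of $\mathfrak p$. Hence $(\phi\circ{\rm Fr}_A^d)[\mathfrak p^\infty]=\phi[\mathfrak p^\infty]$ as sets with Galois action. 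The $A$-module structures differ: $a$ acts on the left by $\phi_{a^{p^d}}$. Therefore the left side is exactly the $d$-th twist $\phi[\mathfrak p^\infty]^{(d)}$. Applying ${\rm Hom}_A(F_\fp,-)$ and using the identification of $F_\fp^{(d)}$, via ${\rm Fr}^d$, with a finite free extension, we get $V_\fp(\phi\circ{\rm Fr}^d_A)\simeq(V_\fp\phi)^{(d)}$ Galois-equivariantly and $F_\fp$-linearly. The dimension count is then $\dim(V_\fp\phi)^{(d)}=[F_\fp:F_\fp^{p^d}]\,n=p^dn$, which is consistent with the rank.

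The remaining isomorphism, $V_\fp(\phi^{(d)})\simeq V_\fp(\phi\circ{\rm Fr}^d_A)$, follows from an isomorphism (or isogeny) $\phi^{(d)}\to\phi\circ{\rm Fr}^d_A$ defined over $K'$; any isogeny induces a Galois-equivariant isomorphism on $V_\fp$. I expect the main obstacle to be pinning down the definition of $\phi^{(d)}$ precisely enough to produce this isogeny. The candidate is $\tau^d$ or its inverse relation: one checks $\tau^d\cdot\sum a_i^{p^{-d}}\tau^i=\bigl(\sum a_i\tau^i\bigr)\tau^d$. Comparing these identities should exhibit $\phi^{(d)}_a$ as $\phi_a$ conjugated in a way equal to $\phi_{a^{p^d}}$ on the nose or after an isogeny of $\tau$-degree $d$, and this isogeny is defined over $K'$.
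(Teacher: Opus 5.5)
Your second isomorphism is handled as in the paper. The torsion sets $(\phi\circ{\rm Fr}^d_A)[\fp^\infty]$ and $\phi[\fp^\infty]$ coincide, with $a$ acting through $a^{p^d}$, and you then pass to $V_\fp$. The key point there is that $F_\fp$ is free of rank $p^d$ over $F_\fp^{p^d}$, so $g\mapsto g\circ{\rm Fr}^d$ identifies $(V_\fp\phi)^{(d)}$ with $V_\fp(\phi\circ{\rm Fr}^d_A)$. The gap is in your treatment of $\phi^{(d)}$ itself.

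You reject the composition reading because it supposedly does not give a homomorphism, but it does. Your own identity $\tau^d\theta_a=\phi_a\tau^d$, with $\theta_a:=\sum_i a_i^{p^{-d}}\tau^i$, says $\theta_a=\tau^{-d}\phi_a\tau^d$. Hence $\theta:A\to\overline K\{\tau\}$ is a ring homomorphism, and $\phi^{(d)}_a=\theta_a^{p^d}=\theta_{a^{p^d}}=\tau^{-d}\phi_{a^{p^d}}\tau^d$.

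In particular $\phi^{(d)}$ is neither equal nor isomorphic to $\phi\circ{\rm Fr}^d_A$. The constant term of $\phi^{(d)}_a$ is $\iota(a)$, that of $\phi_{a^{p^d}}$ is $\iota(a)^{p^d}$, and an isomorphism $c\in\overline K^\times$ preserves constant terms. So your claim that $\phi^{(d)}$ has characteristic $\iota\circ{\rm Fr}^d$ is false. The paper in fact needs the opposite: ``$\phi^{(d)}$ isogenous to $\phi'$'' and Step (iv) of Theorem~\ref{11} both require $\phi^{(d)}$ to keep the characteristic of $\phi$.

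Consequently your rank computation rests on the false identity $\phi^{(d)}_a=\phi_{a^{p^d}}$, and the first isomorphism is never established. Your last paragraph leaves open whether the two modules agree ``on the nose'' (false) or ``after an isogeny''.

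The missing step is short. From $\phi^{(d)}_a=\tau^{-d}\phi_{a^{p^d}}\tau^d$ you get $\deg_\tau\phi^{(d)}_a=\deg_\tau\phi_{a^{p^d}}=np^d\deg a$, since conjugation by $\tau^d$ preserves $\tau$-degree; this gives the rank statement. You also get $\tau^d\,\phi^{(d)}_a=\phi_{a^{p^d}}\,\tau^d$, i.e. $\bigl(\phi^{(d)}_a(x)\bigr)^{p^d}=\phi_{a^{p^d}}(x^{p^d})$. Thus $x$ is killed by $\phi^{(d)}_a$ if and only if $x^{p^d}$ is killed by $\phi_{a^{p^d}}$. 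Since $x\mapsto x^{p^d}$ is bijective on $\overline K$, it restricts to an $A$-linear bijection $\phi^{(d)}[\fp^\infty]\to(\phi\circ{\rm Fr}^d_A)[\fp^\infty]$. This bijection commutes with ${\rm Gal}(\overline K/K')$ and therefore induces the first isomorphism on $V_\fp$. This is exactly the paper's argument.
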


\begin{proof}
  The first isomorphism follows from the bijection $x\mapsto x^{p^d}:\phi^{(d)}[\fp^\infty]\to(\phi\circ{\rm Fr}^d_A)[\fp^\infty]$ and the second one from the bijection $x\mapsto x:(\phi\circ{\rm Fr}^d_A)[\fp^\infty]\to\phi[\fp^\infty]$.  The statement on the rank follows directly from the definition.
\end{proof}

\subsection{Specialization}\label{Sect: specialization}

In this subsection, $\mathfrak p$ is a fixed maximal ideal of $A$. Since $A$ is finitely generated over $\F_q$, there exists a finitely generated $\F_q$-subalgebra $R$ of $K$ satisfying the following conditions:
\begin{itemize}
  \item[(a)] The fraction field of $R$ is $K$, and $R$ is integrally closed.
  \item[(b)] For any nonzero $a\in A$, all coefficients of $\phi_a$ lie in $R$, and the leading coefficient of $\phi_a$ is a unit in $R$.
  \item[(c)] The image of the structure morphism
    $x\mapsto \mathfrak p_x:{\rm Spec}\,R \longrightarrow {\rm Spec}\,A$
    is contained in ${\rm Spec}\,A\setminus\{\mathfrak p\}$.
\end{itemize}
Then $\phi$ defines a Drinfeld module $\phi:A\to R\{\tau\}$ over $X\coloneqq{\rm Spec}\,R$. For each closed point $x\in X$, we denote by $\phi_x$\footnote{This notation should not be confused with $\phi_a$, which denotes the image of $a\in A$ under $\phi$.} the induced Drinfeld module over the residue field $\kappa_x$ of $x$.

Let $\widetilde R$ be the integral closure of $R$ in $K^{\rm sep}$ and let $\widetilde x$ be a closed point of ${\rm Spec}\;\widetilde R$ above $x$. As $\mathfrak p_x\neq\mathfrak p$, there are natural isomorphisms $$\phi(K^{\rm sep})[\mathfrak p^\infty]=\phi(\widetilde R)[\mathfrak p^\infty]\simeq\phi_{x}(\overline{\kappa_x})[\mathfrak p^\infty].$$
This induces an isomorphism $V_\mathfrak p\phi\simeq V_\mathfrak p\phi_x$ of Tate modules and a commutative diagram
\[\xymatrix{D(\widetilde x)\ar[r]^{\rho_{\mathfrak p}}\ar[d]&{\rm GL}_{F_\mathfrak p}(V_\mathfrak p\phi)\ar[d]^\simeq\\
{\rm Gal}(\overline{\kappa_x}/\kappa_x)\ar[r]&{\rm GL}_{F_\mathfrak p}(V_{\mathfrak p}\phi_{x}),}\]
where $D(\widetilde x)=\{\gamma\in\Gamma_K\mid \gamma(\widetilde x)=\widetilde x\}$ is the decomposition group of $\widetilde x$.

Let ${\rm Frob}_x$ denote the Frobenius element of ${\rm Gal}(\overline{\kappa_x}/\kappa_x)$. Then the characteristic polynomial
$
\det\bigl(t-{\rm Frob}_x,\;V_{\mathfrak p}\phi\bigr)
$
is well defined and independent of the choice of $\widetilde x$. Define a function
\begin{eqnarray*}
  a_\mathfrak p:|X|&\to& F_\mathfrak p\\
  x&\mapsto& {\rm Tr}({\rm Frob}_x,\,V_\mathfrak p\phi)\cdot{\rm Tr}({\rm Frob}_x^{-1},\,V_\mathfrak p\phi).
\end{eqnarray*}
The rank of the Tate module of $\phi_x$ at $\mathfrak p_x$ is less than that of $\phi$ at $\mathfrak p$. We define the deficiency
\[
  h_x\coloneqq n-\dim_{F_{\mathfrak p_x}}\bigl(V_{\mathfrak p_x}\phi_x\bigr)
\]
to be the \emph{height} of $\phi_x$.

\begin{thm}\label{2}
  Let $\overline{F}$ be the algebraic closure of $F$ in $\overline K$. Then the following statements hold:
  \begin{itemize}
    \item[(1)] \cite[Theorem 3.2.3(b)]{GL} For any $x\in|X|$, the characteristic polynomial $\det(t-{\rm Frob}_x,\;V_{\mathfrak p}\phi)$ has coefficients in $A$ and is independent of $\mathfrak p$.

    \item[(2)] \cite[Theorem 3.2.3(d)]{GL} For any $x\in|X|$, the ideal $\mathfrak p_x^{[\kappa_x:\kappa_{\mathfrak p_x}]}$ is principal, generated by $\det({\rm Frob}_x,\,V_\mathfrak p\phi)$.

    \item[(3)] \cite[Proposition 2.4]{P} If $n\geq2$, and suppose $A=\End_{\overline{K}}(\phi)$, then as a field, $F$ is generated by the elements $a_\mathfrak p(x)$ for all $x\in|X|$.

    \item[(4)] \cite[Theorem~1.3]{P} For any $x\in |X|$, let $v_{\bar{\fp}_x}$ be a valuation of $\overline{F}$ extending the discrete valuation of $F$ at $\fp_x$. Let $\{\alpha_i\}_{i=1}^n$ be the roots of $\det(t-{\rm Frob}_x,\;V_{\mathfrak p}\phi)$ in $\overline F$. Then
      \[
        v_{\bar{\fp}_x}(\alpha_i)=
        \begin{cases}
          >0 & \text{for precisely $h_x$ of the $\alpha_i$},\\
          =0 & \text{for the remaining roots}.
        \end{cases}
      \]
  \end{itemize}
\end{thm}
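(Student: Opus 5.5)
All four assertions are standard results on Drinfeld modules. My plan is to take them from the cited sources, after checking that the setting of \S\,\ref{Sect: specialization} meets their hypotheses, and to record the mechanism behind each so that later uses are transparent. Fix $x\in|X|$ and put $d_x=[\kappa_x:\F_q]$. The reduction $\phi_x$ is a Drinfeld module over the finite field $\kappa_x$ of the same rank $n$, by condition~(b) on $R$. Its Frobenius $\pi_x=\tau^{d_x}$ commutes with every $(\phi_x)_a$, so $\pi_x\in{\rm End}_{\kappa_x}(\phi_x)$. Under the specialization isomorphism $V_\fp\phi\simeq V_\fp\phi_x$, the element ${\rm Frob}_x$ acts on $V_\fp\phi$ as $\pi_x$ does.

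For~(1) I would use that ${\rm End}_{\kappa_x}(\phi_x)$ is a finite projective $A$-module. Then $A_\fp\otimes_A{\rm End}_{\kappa_x}(\phi_x)$ acts faithfully on $T_\fp\phi_x$, because the isogeny Tate conjecture is trivial in this direction. Hence $\det(t-\pi_x\mid V_\fp\phi_x)$ is a power of the reduced characteristic polynomial of $\pi_x$ in the semisimple $F$-algebra $F\otimes_A{\rm End}(\phi_x)$, with the exponent fixed by rank counting. This polynomial lies in $A[t]$ because $\pi_x$ is integral over $A$, and it does not involve $\fp$; this is \cite[Theorem 3.2.3(b)]{GL}.

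For~(2) I would compute the constant term. The isogeny $\pi_x$ has degree $q^{d_x}$ and is purely inseparable. Comparing $\#\ker$ with $\#A/\det(\pi_x)$ through the usual "degree of an isogeny equals the index of its determinant ideal" formula shows that $(\det\pi_x)$ is supported only at $\fp_x$. Its norm is $q^{d_x}=\#(A/\fp_x)^{[\kappa_x:\kappa_{\fp_x}]}$, which forces $(\det\pi_x)=\fp_x^{[\kappa_x:\kappa_{\fp_x}]}$, as in \cite[Theorem 3.2.3(d)]{GL}.

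For~(4) I would decompose the $\fp_x$-divisible group $\phi_x[\fp_x^\infty]$ over $\kappa_x$ into its étale part, of $A_{\fp_x}$-rank $n-h_x$ by the definition of $h_x$, and its connected part, of height $h_x$. The Frobenius $\pi_x$ acts invertibly on the étale part and topologically nilpotently on the connected part. Applying the Newton polygon of the integral polynomial from~(1) at $\bar\fp_x$ then gives exactly $h_x$ roots of positive valuation and $n-h_x$ unit roots; this is \cite[Theorem 1.3]{P}.

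Part~(3) is the only genuinely global statement, and I expect it to be the main obstacle if one wants more than a citation. The plan there follows \cite[Proposition 2.4]{P}. By~(1), each $a_\fp(x)$ lies in $F$. Let $F_0\subset F$ be the subfield they generate; it is a function field of dimension one. By Chebotarev density, every element $g$ of $\rho_\fp(\Gamma_K)$ satisfies ${\rm Tr}(g){\rm Tr}(g^{-1})\in F_{0,\fp}$, a function of the adjoint representation. Pink's Zariski density result, which gives the image's Zariski closure ${\rm GL}_n$ when ${\rm End}_{\overline K}(\phi)=A$, lets us feed in the full algebraic group. If $F_0\neq F$, the adjoint group ${\rm PGL}_n$ would descend to the smaller field, contradicting the minimality of the field of definition of the image's adjoint group. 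The hypothesis $n\ge2$ is needed so that ${\rm Tr}\cdot{\rm Tr}^{-1}$ is nonconstant on ${\rm PGL}_n$.
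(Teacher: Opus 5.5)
Your proof, like the paper's (which consists of nothing but the four citations), ultimately rests on \cite[Theorem~3.2.3]{GL} and \cite[Theorem~1.3, Proposition~2.4]{P}. Your check that the setting of \S\,\ref{Sect: specialization} fits is fine: the rank is preserved by condition (b), and ${\rm Frob}_x$ acts as $\pi_x$. The mechanisms you add, however, would not stand on their own, most seriously in (3) and (4).

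In (3), the decisive step ``if $F_0\neq F$ the adjoint group would descend, contradicting minimality'' presupposes two things. First, that the triple $(F_\fp,{\rm PGL}_n,\bar\rho_\fp(\Gamma_K))$ is minimal, which is the $\fp$-adic open image theorem (Theorem~\ref{Thm: Pink_open_image} together with Theorem~\ref{thmminimalmodel}(3)). Second, the theorem of \cite{PC} identifying the field of the minimal quasi-model with the closed subfield generated by the traces of the adjoint representation, here ${\rm Tr}(g){\rm Tr}(g^{-1})$. This reverses the logic of \cite{P}, where Proposition~2.4 is the input that identifies this trace field with $F_\fp$ and thereby yields openness. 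So as a proof of (3) your argument is circular; at best it exhibits (3) as a corollary of Theorem~\ref{Thm: Pink_open_image} taken as a black box. Even then it is incomplete as written. At one fixed $\fp$, minimality only gives that the closure of $F_0$ in $F_\fp$ is all of $F_\fp$, i.e.\ that $F/F_0$ has local degree one at $\fp$, and this does not force $F_0=F$. You must run the argument at a finite place where the local degree of $F/F_0$ exceeds one. Such places exist when $F_0\neq F$: by Chebotarev applied to a Galois closure if $F/F_0$ is separable, and at every place if it is inseparable.

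In (4), the \'etale--connected decomposition of $\phi_x[\fp_x^\infty]$ and the behaviour of $\pi_x$ on the two parts say nothing by themselves about the $\bar\fp_x$-adic Newton polygon of $P_x(t)=\det(t-\pi_x\mid V_\fp\phi_x)$, which is computed on a Tate module at $\fp\neq\fp_x$. ``Applying the Newton polygon'' presupposes exactly the comparison that is missing. One way to supply it is as follows. Let the idempotents of $F(\pi_x)\otimes_F F_{\fp_x}=\prod_w F(\pi_x)_w$ split $\phi_x[\fp_x^\infty]$ up to isogeny into pieces $G_w$, \'etale if $w(\pi_x)=0$ and connected if $w(\pi_x)>0$. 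Then show that $G_w$ has height $\frac{n}{[F(\pi_x):F]}\,[F(\pi_x)_w:F_{\fp_x}]$. This needs the fact that $F\otimes_A{\rm End}_{\kappa_x}(\phi_x)$ is a central division algebra of degree $n/[F(\pi_x):F]$ over $F(\pi_x)$.

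The same fact is what your ``rank counting'' in (1) silently uses. Faithfulness of $A_\fp\otimes_A{\rm End}_{\kappa_x}(\phi_x)$ on $T_\fp\phi_x$ alone does not prevent the factors of $F(\pi_x)\otimes_F F_\fp$ from occurring in $V_\fp\phi_x$ with different multiplicities. In that case the characteristic polynomial would be neither a power of the minimal polynomial nor visibly independent of $\fp$.

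In (2), finally, the support of $(\det\pi_x)$ at $\fp_x$ comes from $\pi_x$ acting bijectively on the \'etale modules $\phi_x[\fp^\infty]$ for $\fp\neq\fp_x$, not from a cardinality comparison. Only the exponent uses the degree formula.
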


\subsection{Fundamental characters and strongly supersingularity}
In this subsection, assume that $K/F$ is a finite extension. Let $\mathfrak{p}$ be a maximal ideal of $A$. We fix a discrete valuation $\mathfrak{P}$ of $K$ lying above $\mathfrak{p}$, and further fix a place $\bar{\mathfrak{P}}$ of $K^{\rm sep}$ extending $\mathfrak{P}$. Let $K^{\rm nr}_\mathfrak{P} \subset K^{\rm t}_\mathfrak{P} \subset K^{\rm sep}_{\bar{\mathfrak{P}}}$ be the maximal unramified, tamely ramified, and separable subfields of $K^{\rm sep}_{\bar{\mathfrak{P}}}$ over $K_\mathfrak{P}$, respectively. We write $I_\mathfrak{P} = \operatorname{Gal}(K^{\rm sep}_{\bar{\mathfrak{P}}}/K^{\rm nr}_\mathfrak{P})$ for the corresponding inertia group and its quotient $I_\mathfrak{P}^{\rm t} = \operatorname{Gal}(K^{\rm t}_\mathfrak{P} /K^{\rm nr}_\mathfrak{P})$ for the tame inertia group at $\mathfrak{P}$.

The group $I_\mathfrak{P}$ acts naturally on $\phi[\mathfrak{p}]$. To describe this action in detail, let $\pi \in K_\mathfrak{P}$ be a uniformizer, and let $v_\mathfrak{P}$ denote the associated normalized valuation on both the completion $K_\mathfrak{P}$ and $K^{\rm sep}_{\bar{\mathfrak{P}}}$ such that $v_\mathfrak{P}(\pi) = 1$. For any positive integer $d$ coprime to $p$ (and thus to $q$), let $\pi_d$ be a $d$-th root of $\pi$ in $K^{\rm t}_\mathfrak{P}$. We define the fundamental character:
\begin{align*}
  \zeta_d \colon I_\mathfrak{P}^{\rm t} &\to \kappa_{\bar{\mathfrak{P}}}^\times \\
  \gamma &\mapsto \frac{\gamma(\pi_d)}{\pi_d} \pmod{\bar{\mathfrak{P}}}.
\end{align*}

Consider the additive group $(\mathbb{Q}/\mathbb{Z})'$ consisting of elements $\frac{a}{d}$ where $a,d \in \mathbb{Z}$ with $d > 0$ and coprime to $p$. For every $\alpha = \frac{a}{d} \in (\mathbb{Q}/\mathbb{Z})'$, we define $\chi_\alpha = \zeta_d^a$. By \cite[\S\,1]{Se72}, the map $\alpha \mapsto \chi_\alpha$ induces an isomorphism between $(\mathbb{Q}/\mathbb{Z})'$ and the character group $\operatorname{Hom}(I_\mathfrak{P}^{\rm t}, \kappa_{\bar{\mathfrak{P}}}^\times)$. We call $\alpha$ the \emph{invariant} of the character $\chi_\alpha$. Because the wild inertia group acts trivially on $\kappa_{\bar{\mathfrak{P}}}$, any continuous character $\chi \in \operatorname{Hom}(I_\mathfrak{P}, \kappa_{\bar{\mathfrak{P}}}^\times)$ factors through the tame quotient $I_\mathfrak{P}^{\rm t}$ and is thus of the form $\chi_\alpha$. We therefore define the invariant for any one-dimensional $\kappa_{\bar{\mathfrak{P}}}$-linear representation of $I_\mathfrak{P}$ simply as the invariant of its corresponding tame character.

More generally, let $V$ be a finite-dimensional $I_\mathfrak{P}$-representation over a subfield $k$ of $\kappa_{\bar{\mathfrak{P}}}$. 
By \cite[Proposition 4]{Se72}, the semisimplification of $V \otimes_k \kappa_{\bar{\mathfrak{P}}}$ is a direct sum of one-dimensional representations. We denote by $\operatorname{inv}_{I_\mathfrak{P}}(V)$ the multiset of invariants of these one-dimensional representations. The proof of the next proposition follows an argument analogous to \cite[Proposition 10]{Se72}.

\begin{prop} \label{Thmivariantphip}
  Suppose $\phi$ has good reduction at $\mathfrak{P}$. Let $e$ be the ramification index of $K/F$ at $\mathfrak{P}$. Let $h$ be the height of $\phi \bmod \mathfrak{P}$ and ${\rm N}(\mathfrak{p})$ be the cardinality of $\kappa_\mathfrak{p}$. Then $e$ and $h$ have partitions $e = \sum\limits_{i=1}^s e_i$ and $h = \sum\limits_{i=1}^s h_i$ of the same length $s$ such that
  \[
    \operatorname{inv}_{I_\mathfrak{P}}(\phi[\mathfrak{p}]) = \left\{ \frac{e_i {\rm N}(\mathfrak{p})^{\alpha_i}}{{\rm N}(\mathfrak{p})^{h_i}-1} \;\middle|\; 1 \leq i \leq s, \; 0 \leq \alpha_i \leq h_i-1 \right\} \coprod \{\underbrace{0, \ldots, 0}_{n-h \text{ times}}\}.
  \]
\end{prop}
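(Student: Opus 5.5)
We follow the strategy of \cite[Proposition 10]{Se72}, adapted to the Drinfeld setting. The idea is to split $\phi[\fp]$ into an étale part and a connected part, and then compute the connected part through the Newton polygon of $\phi_a$.

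Since $\phi$ has good reduction at $\mathfrak P$, we may base change to the strict henselization $\mathcal O=\mathcal O_{K^{\rm nr}_\mathfrak P}$, where $\phi$ has coefficients in $\mathcal O$. The $\mathfrak p$-torsion then sits in a short exact sequence of $I_\mathfrak P$-modules
\[
0\to \phi[\mathfrak p]^0\to\phi[\mathfrak p]\to\phi[\mathfrak p]^{\rm et}\to0 .
\]
Here $\phi[\mathfrak p]^0$ consists of the roots of positive valuation, and $\phi[\mathfrak p]^{\rm et}$ reduces isomorphically onto the $\mathfrak p$-torsion of $\phi\bmod\mathfrak P$. The reduction has height $h$, so $\phi[\fp]^{\rm et}$ has $\kappa_\fp$-dimension $n-h$. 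It is unramified, which accounts for the $n-h$ zero invariants. It remains to compute $\operatorname{inv}_{I_\mathfrak P}(\phi[\mathfrak p]^0)$, a $\kappa_\fp$-vector space of dimension $h$.

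For this, pick $a\in A$ with $\mathfrak p=(a)$ locally at $\fp$; after a harmless localization we may take $\phi[\fp]=\ker\phi_a$ near $\fP$. The polynomial $\phi_a(x)$ is $\F_q$-linear, hence $\kappa_\fp$-linear on its kernel, after identifying $A/\fp=\kappa_\fp$ and using the $A$-action. Consider the Newton polygon of $\phi_a$, restricted to the roots of positive valuation. The slopes are realized by $\kappa_\fp$-subspaces: the roots of valuation at least $\lambda$ together with $0$ form a $\kappa_\fp$-subspace $W_\lambda$. We therefore obtain a filtration of $\phi[\fp]^0$ by $I_\fP$-stable $\kappa_\fp$-subspaces, with graded pieces $W_{\lambda_i}/W_{>\lambda_i}$.

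The slopes $\lambda_1>\dots>\lambda_s$ occur at vertices whose $x$-coordinates are powers ${\rm N}(\fp)^{m}$. Hence the $i$-th graded piece has dimension $h_i$ and consists of $\N(\fp)^{h_i}-1$ nonzero roots, all of common valuation $\lambda_i$. Write $\lambda_i=e_i/(\N(\fp)^{h_i}-1)$ in the normalization $v_\fP(\pi)=1$.

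We then need to show two things: that $\sum_i h_i=h$ and that $\sum_i e_i=e$, with each $e_i$ a positive integer. The first holds because the Newton polygon runs from the vertex at $x^{\N(\fp)^{n-h}}$ to the vertex at $x^{\N(\fp)^{n}}$, so the $h_i$ add up to $h$. The second is the valuation computation. The linear coefficient of $\phi_a$ is $\nu(a)$, of valuation $v_\fP(a)=e$, while the coefficient at $x^{\N(\fp)^{n-h}}$ is a unit. So the total drop in valuation along the connected part of the polygon equals $e$. Segment $i$ spans from $\N(\fp)^{m_i}$ to $\N(\fp)^{m_i+h_i}$ and so contributes $\lambda_i\N(\fp)^{m_i}(\N(\fp)^{h_i}-1)$. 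We therefore have to normalize carefully, working with the polynomial obtained by dividing out the étale part. The cleanest route is to compose with the étale quotient isogeny and reduce to the case $h=n$, after which the slopes satisfy $\sum e_i=e$ with $e_i\in\Z_{>0}$.

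On each graded piece $W$, the action of $I_\fP$ is tame. Indeed, the quotient map $x\mapsto x/\pi^{\lambda_i}$, suitably made integral using a root $\pi_{d}$ with $d=\N(\fp)^{h_i}-1$, identifies $W\setminus\{0\}$ modulo $\bar\fP$ with the nonzero roots of a separable polynomial over $\overline{\kappa}$. Hence $W$ becomes a one-dimensional vector space over $\F_{\N(\fp)^{h_i}}$, on which inertia acts through $\zeta_d^{e_i}$. Extending scalars to $\kappa_{\bar\fP}$ then splits $W$ into the Galois conjugates $\zeta_d^{e_i\N(\fp)^{\alpha}}$ for $0\le\alpha\le h_i-1$. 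This is exactly the argument of \cite[Propositions 9, 10]{Se72}.

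Assembling the pieces via the semisimplification gives the stated multiset. The main obstacle is the valuation bookkeeping: one must verify that the Newton polygon vertices occur only at $\kappa_\fp$-linear positions, and that the integers $e_i$ sum to $e$ rather than to some multiple of $e$. I would handle this by first passing to the connected part, so that $h=n$, and then checking the sum through the determinant of the $\kappa_\fp$-linear map on the graded pieces.
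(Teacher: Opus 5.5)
Your strategy (connected--\'etale splitting, filtration of the connected part by valuation via the Newton polygon, a leading-term map into $\kappa_{\bar\fP}$, fundamental characters) is the Serre Proposition~10 argument the paper appeals to. However, the step that actually produces the partition $e=\sum e_i$ is wrong, and your proposed repair does not touch it. Write $N={\rm N}(\fp)$. The $i$-th segment of the connected part runs from abscissa $N^{m_i}$ to $N^{m_i+h_i}$, with $m_1=0$ and $m_{i+1}=m_i+h_i$. So there are $N^{m_i}(N^{h_i}-1)$ roots of valuation $\lambda_i$, not $N^{h_i}-1$, and $\lambda_i=e_i/\bigl(N^{m_i}(N^{h_i}-1)\bigr)$, where $e_i\in\Z_{>0}$ is the vertical drop of that segment. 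Your normalization $\lambda_i=e_i/(N^{h_i}-1)$ therefore breaks down as soon as $m_i\ge1$: $e_i$ need not be an integer, and one gets $\sum_i N^{m_i}e_i=e$ rather than $\sum_i e_i=e$.

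Concretely, take $q=3$, $A=\F_3[T]$, $\fp=(T)$, $K=F(T^{1/4})$ and $\phi_T=T+T^{1/4}\tau+\tau^2$, so $e=4$ and $h=n=2$. The Newton polygon of $\phi_T(x)$ has vertices $(1,4)$, $(3,1)$, $(9,0)$, the second slope is $1/6$, and your recipe gives $e_2=1/3$. Since $h=n$ here already, reducing to the connected part cannot help. The determinant of the inertia action also cannot help, since it only sees $\sum_i e_i$ modulo $N-1$. Separately, the connected part of the polygon runs from $(1,e)$ to $(N^h,0)$: by the definition of height, the first unit coefficient sits at $x^{N^h}$, not at $x^{N^{n-h}}$.

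What is missing is that the $p$-power $N^{m_i}$ in the denominator of $\lambda_i$ only changes the character by a Frobenius conjugation.

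1. Define $e_i$ as the drop of the $i$-th segment. Then $\sum_i e_i=e$ and $\sum_i h_i=h$ by telescoping.
2. Because $\lambda_i$ has $p$ in its denominator, $\pi^{\lambda_i}$ does not lie in $K^{\rm t}_\fP$. Instead, fix a root $x_0$ of valuation $\lambda_i$. The map $x\mapsto\overline{x/x_0}$ embeds $W_{\lambda_i}/W_{>\lambda_i}$ $\kappa_\fp$-linearly into $\kappa_{\bar\fP}$, and $I_\fP$ acts on the image through $\omega_i(\gamma)=\overline{\gamma(x_0)/x_0}$.
3. Put $d=N^{h_i}-1$. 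The element $x_0^{N^{m_i}}/\pi_d^{e_i}$ is a unit and inertia acts trivially on residues, so $\omega_i^{N^{m_i}}=\zeta_d^{e_i}$. Hence $\omega_i=\zeta_d^{e_iN^{\beta}}$ with $\beta\ge0$ and $\beta\equiv-m_i \pmod{h_i}$, not $\zeta_d^{e_i}$.
4. The image is a vector space over $\kappa_\fp[\operatorname{im}\omega_i]=\F_{N^r}$, where $r\mid h_i$ is the length of the orbit of $e_i/d$ under multiplication by $N$. After $\otimes_{\kappa_\fp}\kappa_{\bar\fP}$, its invariants are that orbit, each with multiplicity $h_i/r$. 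This is exactly $\{e_iN^{\alpha}/(N^{h_i}-1)\mid 0\le\alpha\le h_i-1\}$; the shift by $\beta$ is absorbed.

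This argument also replaces your claim that each graded piece is a one-dimensional $\F_{N^{h_i}}$-space on which inertia acts by $\zeta_d^{e_i}$. That claim can fail when further coefficients of $\phi_a$ lie on the segment.
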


\begin{defn}\label{Defn: strongly_ss}
  Suppose $\phi$ has good reduction at $\mathfrak{P}$. Let $e$ be the ramification index of $K/F$ at $\mathfrak{P}$. We say $\phi$ is \emph{strongly supersingular} at $\mathfrak{P}$ if $\operatorname{inv}_{I_\mathfrak{P}}(\phi[\mathfrak{p}])$ are $n$ copies  of $\frac{e/n}{{\rm N}(\fp)-1}$. In particular, in this case $h=n$ and $n\mid e$. 
\end{defn}

\section{Adjoint Galois representation of two Drinfeld modules}\label{Sect: adj_repn}

In this section, we investigate the topological properties of the adjoint Galois representations associated with $\psi$ and $\psi'$. Our main tool is Pink’s minimal model theory \cite{PC}, which is recalled in \S\,\ref{Sect: Min. mod. }. In \S\,\ref{Sect: comp_sub_gp}, we introduce the triple attached to the Galois representations of $\psi$ and $\psi'$, obtained after factoring through the endomorphism rings $B$ and $B'$, and formulate the corresponding adjoint representations. In \S\,\ref{Sect: adj_repn_2_ideals}, we apply this theory to analyze the openness of the adjoint representations at a maximal ideal of $B$ (respectively, of $B'$), with the central result being Theorem~\ref{11}. Finally, in \S\,\ref{Sect: adj_repn_fini_ideals}, we extend these local results to establish openness results for the adjoint representations of both $\psi$ and $\psi'$ at finitely many maximal ideals. Notice that since the minimal model theory holds for arbitrary fields, our results in this section are true for finitely generated extension $K/F$. 

\subsection{Minimal model theory of compact subgroups of linear algebraic groups}\label{Sect: Min. mod. }
In this subsection, we briefly review Pink's minimal model theory; for further details, we refer to \cite{PC}.

We fix the following notation. Let $S$ be an arbitrary finite index set. For each $i \in S$, let $L_i$ be a local field and let $G_i$ be an absolutely simple adjoint algebraic group over $L_i$, i.e.\ a geometrically simple algebraic group whose adjoint representation is faithful. Set $L_S = \prod\limits_{i \in S} L_i$ and let $G_S = \coprod\limits_{i \in S} G_i$ be the disjoint union viewed as a scheme. Then $G_S$ can be regarded as a group scheme over $L_S$ with
$
G_S(L_S) = \prod\limits_{i \in S} G_i(L_i),
$
so that each $G_i$ identifies with the fiber of $G_S$ over $L_i$. Let $\Gamma_S \subset G_S(L_S)$ be a compact subgroup that is fiberwise Zariski dense; namely, for each $i \in S$, the image $\Gamma_i$ of $\Gamma_S$ in $G_i(L_i)$ is Zariski dense in $G_i$.

\begin{defn}\cite[Definition 3.1, 3.2, 3.4, 3.5]{PC}
  (1) A \emph{weak quasi-model} of $(L_S, G_S,\Gamma_S)$ is a triple $(L,H,\varphi)$, where
  \begin{itemize}
    \item[(i)] $L$ is a semisimple closed subring of $L_S$, and $L_S$ is of finite type as module over $L$.
    \item[(ii)] $H$ is a fiberwise absolutely simple adjoint algebraic group over $L$.
    \item[(iii)] $\varphi$ is an isogeny $H\times_L L_S\to G_S$ of group schemes such that $\Gamma_S$ is contained in the subgroup $\varphi(H(L))$ of $G_S(L_S)$.
  \end{itemize}

  (2) A weak quasi-model $(L,H,\varphi)$ of $(L_S, G_S,\Gamma_S)$ is called a \emph{quasi-model} if the derivative of $\varphi$ vanishes nowhere.

  (3) We call that $(L_S, G_S,\Gamma_S)$ to be \emph{minimal} if for any of its weak quasi-model $(L,H,\varphi)$, we have $L=L_S$ and $\varphi$ is an isomorphism.

  (4) A (weak) quasi-model $(L,H,\varphi)$ of $(L_S, G_S,\Gamma_S)$ is called minimal if $(L,H,\varphi^{-1}(\Gamma_S))$ is minimal in its own right.
\end{defn}

\begin{prop}\cite[Proposition~3.9]{PC}\label{Prop: proj_minimal}
  For any nonempty subset $T$ of $S$, let $L_T = \prod\limits_{i \in T} L_i$, $G_T = \coprod\limits_{i \in T} G_i$, and let $\Gamma_T$ be the image of $\Gamma_S$  under the projection $G_S(L_S)\twoheadrightarrow G_T(L_T)$.  If $(L_S, G_S, \Gamma_S)$ is minimal, then so is $(L_T, G_T, \Gamma_T)$.
\end{prop}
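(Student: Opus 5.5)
Suppose $(L_S,G_S,\Gamma_S)$ is minimal. We argue by contraposition: assume $(L_T,G_T,\Gamma_T)$ admits a weak quasi-model $(L',H',\varphi')$ with either $L'\neq L_T$ or $\varphi'$ not an isomorphism. From it we will build a weak quasi-model of the full triple that also fails minimality. The natural candidate is obtained by gluing this model with the trivial model on the complementary indices. Set $T^c=S\setminus T$ (if $T=S$ there is nothing to prove), and put
\[
L\coloneqq L'\times L_{T^c}\subset L_T\times L_{T^c}=L_S,\qquad H\coloneqq H'\sqcup G_{T^c},\qquad \varphi\coloneqq\varphi'\sqcup{\rm id}_{G_{T^c}}.
\]

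First we check that $(L,H,\varphi)$ satisfies conditions (i)–(iii) of the definition.

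For (i), $L$ is a closed subring of $L_S$, and it is semisimple as a product of semisimple rings. Moreover $L_S=L_T\times L_{T^c}$ is finite over $L'\times L_{T^c}$.

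For (ii), $H$ is fiberwise absolutely simple adjoint over $L$. Indeed, its fibers over the factors of $L'$ are those of $H'$, and its fibers over the $L_i$ with $i\in T^c$ are the $G_i$.

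For (iii), we have $H\times_L L_S=(H'\times_{L'}L_T)\sqcup G_{T^c}$, so $\varphi$ is an isogeny onto $G_S$. The containment is the only point needing an argument. Take $\gamma\in\Gamma_S$ and write $\gamma=(\gamma_T,\gamma_{T^c})$. Then $\gamma_T\in\Gamma_T\subset\varphi'(H'(L'))$. Since $H(L)=H'(L')\times G_{T^c}(L_{T^c})$ and $\varphi(H(L))=\varphi'(H'(L'))\times G_{T^c}(L_{T^c})$, it follows that $\gamma\in\varphi(H(L))$. This gives the containment $\Gamma_S\subset\varphi(H(L))$ with no compatibility issue between the factors, because the target is a full product.

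Finally, by minimality of $(L_S,G_S,\Gamma_S)$ we must have $L=L_S$ and $\varphi$ an isomorphism. Hence $L'=L_T$ and $\varphi'$ is an isomorphism, which contradicts the choice of $(L',H',\varphi')$.

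The one step that needs care is (i): we must know that a closed semisimple subring of $L_T$ over which $L_T$ is finite, multiplied by $L_{T^c}$, is again of that type inside $L_S$. This is immediate from the product decomposition. We also need $\Gamma_T$ to be fiberwise Zariski dense, so that the triple for $T$ lies in the setting of the definitions. This holds because its projection to each $G_i$ with $i\in T$ is the same $\Gamma_i$ as for $\Gamma_S$.
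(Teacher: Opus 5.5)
Your proof is correct: gluing a weak quasi-model $(L',H',\varphi')$ of the $T$-triple with the trivial model $(L_{T^c},G_{T^c},{\rm id})$ on the complementary factors gives a weak quasi-model of the $S$-triple, and minimality of the $S$-triple then forces $L'=L_T$ and $\varphi'$ to be an isomorphism. The paper gives no proof of its own (it only cites Pink), and this direct check from the definitions is the natural argument. It is preferable to deriving the statement from the openness criterion of Theorem~\ref{thmminimalmodel}(3): that route would also work, because the projection of the open subgroup $\widetilde\Gamma_S$ is open and lies in $\widetilde\Gamma_T$, but it relies on much deeper input.
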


\begin{defn}\label{defngamma_S} Let $\widetilde G_S$ be the universal covering of $G_S$. Since the natural isogeny $\widetilde{G}_S\to G_S$ is central, the commutator morphism
  $$[\;,\;]^\sim :\widetilde{G}_S\times \widetilde{G}_S\to \widetilde{G}_S,\;(g,h)\mapsto ghg^{-1}h^{-1}$$
  factors through
  $$[\;,\;] :G_S\times G_S\to\widetilde G_S.$$
  Denote by $\widetilde{\Gamma}_S$ the closed subgroup of $\widetilde G_S(L_S)$ generated by $[\;,\;](\Gamma_S\times\Gamma_S)$.
\end{defn}
The main theorem of \cite{PC} is the following.
\begin{thm}\label{thmminimalmodel}\cite[Theorem 3.6, Proposition 7.1, Corollary 7.3]{PC}
  (1) There exists a minimal quasi-model $(L,H,\varphi)$ of $(L_S, G_S,\Gamma_S)$.

  (2) The subring $L$ of $L_S$ is unique, and $H$ and $\varphi$ are determined up to a unique isomorphism.

  (3) $(L_S,G_S,\Gamma_S)$ is minimal if and only if $\widetilde{\Gamma}_S$ is open in $\widetilde G_S(L_S)$.
\end{thm}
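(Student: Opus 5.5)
Since this theorem is quoted from \cite{PC}, my plan is to reconstruct Pink's argument in three stages matching (1)--(3). Throughout, the basic invariant will be the \emph{trace ring}: the closed subring $L_{\rm tr}\subset L_S$ generated by the values ${\rm Tr}({\rm Ad}(\gamma))$ for $\gamma\in\Gamma_S$.

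\textbf{Existence.} For any weak quasi-model $(L,H,\varphi)$ with $\Gamma_S\subset\varphi(H(L))$, these traces lie in $L$, up to a Frobenius power coming from the inseparable part of $\varphi$. So the possible rings $L$ range over closed semisimple subrings between $L_{\rm tr}^{p^m}$ and $L_S$. Zariski density of each $\Gamma_i$ should force $L_i$ to be finite over the $i$-th component of $L_{\rm tr}$. Hence $[L_S:L]$ is bounded, and the degree of the isogeny $\varphi$ is bounded by $\dim G_S$ and that index. I would therefore pick a weak quasi-model with $L$ minimal and, among those, $\varphi$ of maximal degree. To upgrade it to a quasi-model, I would factor off any part of $\varphi$ with vanishing derivative. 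Over a field of characteristic $p$, an isogeny of adjoint simple groups whose derivative vanishes on a factor factors through a Frobenius. Replacing $H$ by its Frobenius descent over the smaller subring $L^{p}$ (or pulling back along Frobenius) gives a new weak quasi-model, which contradicts the extremal choice unless the derivative already vanishes nowhere. Minimality of $(L,H,\varphi^{-1}(\Gamma_S))$ then follows from the same extremality: a further weak quasi-model of it would compose to one of $(L_S,G_S,\Gamma_S)$ with smaller $L$ or larger degree.

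\textbf{Uniqueness.} Suppose $(L_1,H_1,\varphi_1)$ and $(L_2,H_2,\varphi_2)$ are both minimal quasi-models. The intersection $L_1\cap L_2$, together with the Zariski closure of $\Gamma_S$ in the Weil restriction of $H_1\times H_2$, should give a common weak quasi-model over a ring contained in both. Minimality then forces $L_1=L_2$. An isomorphism $H_1\cong H_2$ compatible with the $\varphi_j$ is obtained from Zariski density: the graph of $\varphi_2^{-1}\circ\varphi_1$ restricted to the lifted $\Gamma_S$ has Zariski closure that is the graph of an isomorphism. This is exactly the kind of ``coincidence of morphisms'' statement handled in the appendix. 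Uniqueness of this isomorphism again follows from density.

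\textbf{Criterion (3).} The easy direction: if $\widetilde\Gamma_S$ is open in $\widetilde G_S(L_S)$, then any weak quasi-model $(L,H,\varphi)$ would give $\widetilde\Gamma_S\subset$ the image of $\widetilde H(L)$. For dimension or degree reasons this image is open only if $L=L_S$ and $\varphi$ is an isomorphism, so the triple is minimal. The converse is the main obstacle: show that a compact, fiberwise Zariski dense subgroup of a minimal triple has open commutator group. My plan is to reduce to each simple factor plus Goursat-type arguments between factors. Non-open diagonal images between factors would produce an isomorphism $G_i\cong G_j$ over an isomorphism of fields (possibly twisted by Frobenius), and hence a smaller weak quasi-model. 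For a single factor, I would study the successive congruence quotients of $\widetilde\Gamma_S$ via the Lie algebra $\mathfrak g$ over the residue field. The Zariski density and the minimality of the trace field (Larsen--Pink type results on finite subgroups) should show that these quotients are full $\mathfrak g(\kappa)$ for all large levels, which gives openness. I expect the delicate points to be small characteristic and the non-simply-connected or inseparable-isogeny phenomena, which is precisely why one passes to $\widetilde G_S$ and the commutator map $[\;,\;]$.
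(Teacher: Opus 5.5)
The paper does not prove this statement; it quotes it from Pink \cite{PC}. Your sketch therefore has to stand on its own, and it fails at the decisive point: the implication ``minimal $\Rightarrow$ $\widetilde\Gamma_S$ open'' in (3), which is what the paper needs for Theorem~\ref{thmmainsl}. (Your converse direction is fine once ``dimension reasons'' is replaced by the fact that $\widetilde\varphi(\widetilde H(L))$ has empty interior unless $L=L_S$ and $\varphi$ is an isomorphism.)

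For the hard direction you only say that Zariski density and ``Larsen--Pink type results'' \emph{should} make the congruence quotients full. This does not work:
\begin{itemize}
\item Larsen--Pink is about finite subgroups, but the residual image of a compact $\Gamma_S$ may be trivial.
\item The successive congruence quotients of $\widetilde\Gamma_S$ are only $\mathbb{F}_p$-subspaces of $\widetilde{\mathfrak g}\otimes\mathfrak m^i/\mathfrak m^{i+1}$, and in characteristic $p$ no Lie algebra over the local field is available to control them.
\item Zariski density alone cannot make them full. For $G$ split over $\mathbb{F}_p$ and $F=\mathbb{F}_q((t))$, the group $G(\mathbb{F}_q[[t^p]])\subset G(F)$ is compact and Zariski dense, yet its graded pieces vanish in every degree prime to $p$.
\end{itemize}
Fullness has to come from minimality. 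The missing step is a construction that, starting from a non-open $\widetilde\Gamma_S$, produces a proper closed subring (or a non-trivial isogeny) together with $H$ and $\varphi$, i.e.\ a non-trivial weak quasi-model. Nothing in your sketch does this, and it is where most of the work in \cite{PC} lies. Your Goursat reduction between factors has a similar gap. It needs the rigidity statement that an abstract isomorphism between quotients of compact subgroups of $\widetilde G_i(L_i)$ and $\widetilde G_j(L_j)$ comes from a field isomorphism composed with an isogeny, and you only assert this.

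Your existence and uniqueness arguments never use that a quasi-model has nowhere vanishing derivative, and without that condition uniqueness is false. Let $G$ be split absolutely simple adjoint over $\mathbb{F}_p$ (e.g.\ ${\rm PGL}_2$), ${\rm Fr}$ its Frobenius isogeny, $\Gamma_0\subset G(F)$ compact open, $L_S=F\times F$ and $\Gamma_S=\{({\rm Fr}(h),h)\mid h\in\Gamma_0\}$. Then:
\begin{itemize}
\item $(\{(x,x)\},G,({\rm Fr},{\rm id}))$ and $(\{(x^p,x)\},G,{\rm id})$ are both minimal weak quasi-models, since in both cases $\varphi^{-1}(\Gamma_S)\cong\Gamma_0$ is open.
\item Only the second is a quasi-model.
\item The two rings meet only in the diagonal copy of $\mathbb{F}_p$.
\end{itemize}
So ``$L_1\cap L_2$ carries a common weak quasi-model'' fails, and an argument that ignores the derivative condition would prove a false statement.

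The same example breaks your existence step in two ways. First, the traces of ${\rm Ad}$ on $\Gamma_S$ are $(\tau^p,\tau)$ with $\tau(h)={\rm Tr}\,{\rm Ad}(h)$, so the diagonal ring contains no $L_{\rm tr}^{p^m}$. Second, your extremal choice (minimal $L$, then maximal $\deg\varphi$) may select the diagonal model. Factoring off the Frobenius then leads to the incomparable ring $\{(x^p,x)\}$ with smaller degree. That contradicts neither the minimality of $L$ nor the maximality of $\deg\varphi$, so you never obtain a quasi-model.

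Finally, the appendix concerns morphisms of $\mathbb{F}_p$-schemes agreeing on a set of positive density. It has nothing to do with Zariski closures of graphs in $H_1\times H_2$, so it cannot supply the isomorphism $H_1\cong H_2$ you attribute to it.
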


\subsection{Minimal model theory associated to Galois representation of Drinfeld modules} \label{Sect: comp_sub_gp}
For $\flat\in \{\emptyset, '\}$, let $\psi^\flat \colon B^\flat = \mathrm{End}_{K}(\phi^\flat) \to K\{\tau\}$ be the Drinfeld modules defined in \S\,\ref{Sect: simplifications}. In this subsection, we consider the triple associated with the Galois representations of $\psi$ and $\psi'$. For the remainder of this section, unless explicitly stated otherwise, we assume that both $\psi$ and $\psi'$ have rank at least $2$ (i.e., $\frac{n^\flat}{[E^\flat:F]} \geq 2$ for each $\flat$). Under these assumptions, the joint Galois representation \eqref{eqnvarrho} associated with $\phi$ and $\phi'$ factors as
\begin{equation}\label{eqnvarrhop2}
  \Gamma_K \to \prod_{\mathfrak q\in\mathrm{Max}\,B} \mathrm{GL}_{B_\mathfrak q}(T_\mathfrak q\psi) \times \prod_{\mathfrak q'\in\mathrm{Max}\,B'} \mathrm{GL}_{B'_{\mathfrak q'}}(T_{\mathfrak q'}\psi').
\end{equation}

From now on, by an abuse of notation, we also use $\varrho$ to denote the above representation. For any set $S$ consisting of elements of the form $\fq^{\flat}$, where $\flat=\emptyset$ or $'$ and $\fq^{\flat}$ is a maximal ideal of $B^{\flat}$, we obtain the Galois representation
\begin{eqnarray}\label{eqnvarrho_S}
  \varrho_S:\Gamma_K\to\prod_{{\mathfrak q^{\flat}}\in S}{\rm GL}_{B^{\flat}_{\mathfrak q^{\flat}}}(T_{\mathfrak q^{\flat}}\psi^{\flat}),
\end{eqnarray}
and the induced adjoint representation
\begin{eqnarray}\label{Eqn: varrhobar}
  \bar\varrho_S:\Gamma_K\to\prod_{{\mathfrak q^{\flat}}\in S}{\rm PGL}_{B^{\flat}_{\mathfrak q^{\flat}}}(T_{\mathfrak q^{\flat}}\psi^{\flat}).
\end{eqnarray}
In particular, if $S=\{\fq^\flat\}$, then $\varrho_S=\rho^\flat_{\fq^\flat}:\Gamma_K\to{\rm GL}_{B^{\flat}_{\mathfrak q^{\flat}}}(T_{\mathfrak q^{\flat}}\psi^{\flat})$ and $\bar\varrho_S=\bar\rho^\flat_{\fq^\flat}:\Gamma_K\to{\rm PGL}_{E^{\flat}_{\mathfrak q^{\flat}}}(V_{\mathfrak q^{\flat}}\psi^{\flat})$ recover the classical $\mathfrak q^{\flat}$-adic Galois representation and its adjoint representation, respectively.

Based on the above discussion, we define the triple $(L_S,G_S,\Gamma_S)$ by
\begin{eqnarray}\label{eqntriple}
  L_S=\prod\limits_{{\mathfrak q^{\flat}}\in S}E^\flat_{\fq^{\flat}},\quad G_S=\coprod\limits_{{\mathfrak q^{\flat}}\in S}{\rm PGL}_{E^{\flat}_{\mathfrak q^{\flat}},V_{\mathfrak q^{\flat}}\psi^{\flat}},\quad \Gamma_S=\bar\varrho_{S}(\Gamma_K).
\end{eqnarray}
Denote by $\Gamma_{\fq^{\flat}}\coloneqq\bar\rho^{\flat}_{\mathfrak q^{\flat}}(\Gamma_K)$, then by Theorem~\ref{Thm: Pink_open_image}, $\widetilde{\Gamma}_{\fq^{\flat}}$ is open in the topological group ${\rm SL}_{E^{\flat}_{\mathfrak q^{\flat}}}(V_{\mathfrak q^{\flat}}\psi^{\flat})$.

By Theorem~\ref{thmminimalmodel}, the triple $(L_S,G_S,\Gamma_S)$ admits a minimal quasi-model $(L,H,\varphi)$. For any ${\mathfrak q^{\flat}}\in S$, let $L_{\mathfrak q^{\flat}}$ be the image of $L$ under the composition $L\hookrightarrow L_S\to E^{\flat}_{\mathfrak q^{\flat}}$, where the second map is the projection of $L_S$ onto its $\mathfrak q^{\flat}$-factor. Accordingly, set $H_{\mathfrak q^{\flat}}=H\times_LL_{\mathfrak q^{\flat}}$, and let $\varphi_{\mathfrak q^{\flat}}$ be the composition
$$H_{\mathfrak q^{\flat}}\times_{L_{\mathfrak q^{\flat}}}E^{\flat}_{\mathfrak q^{\flat}}=(H\times_LL_{\mathfrak q^{\flat}})\times_{L_{\mathfrak q^{\flat}}}E^{\flat}_{\mathfrak q^{\flat}}=(H\times_LL_S)\times_{L_S} E^{\flat}_{\mathfrak q^{\flat}}\x{\varphi\times{\rm id}_{E^{\flat}_{\mathfrak q^{\flat}}}}G_S\times_{L_S}E^{\flat}_{\mathfrak q^{\flat}}={\rm PGL}_{E^{\flat}_{\mathfrak q^{\flat}},V_{\mathfrak q^{\flat}}\psi^{\flat}}.$$
By Proposition~\ref{Prop: proj_minimal}, $(L_{\mathfrak q^{\flat}},H_{\mathfrak q^{\flat}},\varphi_{\mathfrak q^{\flat}})$ is a minimal quasi-model of $(E^{\flat}_{\mathfrak q^{\flat}},{\rm PGL}_{E^{\flat}_{\mathfrak q^{\flat}},V_{\mathfrak q^{\flat}}\psi^{\flat}},\Gamma_{\mathfrak q^{\flat}})$. On the other hand, according to Theorem~\ref{thmminimalmodel}(3), $(E^{\flat}_{\mathfrak q^{\flat}},{\rm PGL}_{E^{\flat}_{\mathfrak q^{\flat}},V_{\mathfrak q^{\flat}}\psi^{\flat}},\Gamma_{\mathfrak q^{\flat}})$ is already minimal due to the openness of $\widetilde{\Gamma}_{\fq^{\flat}}$ in ${\rm SL}_{E^{\flat}_{\mathfrak q^{\flat}}}(V_{\mathfrak q^{\flat}}\psi^{\flat})$. Thus $E^{\flat}_{\mathfrak q^{\flat}}=L_{\mathfrak q^{\flat}}$, and $\varphi_{\mathfrak q^{\flat}}:H_{\mathfrak q^{\flat}}\times_{L_{\mathfrak q^{\flat}}}E^{\flat}_{\mathfrak q^{\flat}}\to {\rm PGL}_{E^{\flat}_{\mathfrak q^{\flat}},V_{\mathfrak q^{\flat}}\psi^{\flat}}$ is an isomorphism.

\subsection{Adjoint representation of \texorpdfstring{$\psi$ and $\psi'$}{psi and psi'} at two maximal ideals}\label{Sect: adj_repn_2_ideals}

In this subsection, we study the key case of $\bar\varrho_S$, namely the case where $S=\{\fq,\fq'\}$ for some $\fq\in{\rm Spec}\,B$ and $\fq'\in{\rm Spec}\,B'$.

\begin{lem}\label{3}
  Suppose that $S=\{\fq,\fq'\}$ for some $\fq\in{\rm Max}\,B$ and $\fq'\in{\rm Max}\,B'$.
  Let $(L,H,\varphi)$ be the minimal quasi-model of the triple $(L_S,G_S,\Gamma_S)$. Then one of the following two cases holds:
  \begin{itemize}
    \item[(1)] $L=E_{\mathfrak q}\times E'_{\mathfrak q'}$.
    \item[(2)] There exists an isomorphism $\sigma:E_{\mathfrak q}\simeq E'_{\mathfrak q'}$ such that $L=\{(x,\sigma(x))\mid x\in E_{\mathfrak q}\}$.
  \end{itemize}
\end{lem}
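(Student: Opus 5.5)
The plan is to classify the closed semisimple subrings $L$ of $L_S=E_{\fq}\times E'_{\fq'}$ that can occur. By the discussion at the end of \S\,\ref{Sect: comp_sub_gp}, the projections of $L$ to the two factors are surjective. This is because minimality of each single-prime triple, which follows from Theorem~\ref{Thm: Pink_open_image} and Theorem~\ref{thmminimalmodel}(3), forces $L_{\fq}=E_{\fq}$ and $L_{\fq'}=E'_{\fq'}$. So $L$ is a closed semisimple subring of the product of two local fields, and both projections $\mathrm{pr}:L\to E_\fq$ and $\mathrm{pr}':L\to E'_{\fq'}$ are surjective.

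Since $L$ is semisimple and $L_S$ is of finite type over $L$, $L$ is a finite product of fields, each a closed subfield of $L_S$. The embedding $L\hookrightarrow E_\fq\times E'_{\fq'}$ is injective and $L$ has no nonzero nilpotents, so $L$ has at most two factors. I will split into two cases according to the number of factors.

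\emph{Case of two factors.} Write $L=L_1\times L_2$ with idempotents $e_1,e_2$. Their images in $E_\fq\times E'_{\fq'}$ are orthogonal idempotents summing to $1$. Each image is nonzero, and the only idempotents of $E_\fq\times E'_{\fq'}$ are $(0,0),(1,0),(0,1),(1,1)$. Hence, up to reordering, $e_1\mapsto(1,0)$ and $e_2\mapsto(0,1)$. Then $L=L_1\times L_2$ with $L_1\subset E_\fq$ and $L_2\subset E'_{\fq'}$, and surjectivity of the projections gives $L=E_\fq\times E'_{\fq'}$. This is case (1).

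\emph{Case where $L$ is a field.} Then $\mathrm{pr}:L\to E_\fq$ is a surjective ring homomorphism out of a field, hence an isomorphism, and likewise for $\mathrm{pr}'$. Setting $\sigma=\mathrm{pr}'\circ\mathrm{pr}^{-1}:E_\fq\to E'_{\fq'}$ gives a field isomorphism with $L=\{(x,\sigma(x))\}$. This is case (2).

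The main obstacle is justifying the surjectivity $L_{\fq^\flat}=E^\flat_{\fq^\flat}$, which already appears in the paper's discussion using Proposition~\ref{Prop: proj_minimal}. I should also note that $\sigma$ is merely a ring isomorphism, possibly not continuous a priori. Continuity would follow from the fact that $L$ is closed and the projections are continuous bijections between locally compact fields. This is not needed for the statement as written, but I would remark on it.
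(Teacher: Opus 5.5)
Your proposal is correct and follows the paper's proof. The paper likewise takes $L_{\fq}=E_{\fq}$ and $L_{\fq'}=E'_{\fq'}$ from the end of \S\,\ref{Sect: comp_sub_gp} and then invokes the commutative-ring version of Goursat's lemma, which your idempotent case analysis spells out explicitly. One small correction: $L$ has at most two factors because $E_{\fq}\times E'_{\fq'}$ has at most two nonzero orthogonal idempotents (the argument you use in the two-factor case), not because $L$ has no nilpotents.
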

\begin{proof}
  At the end of \S\,\ref{Sect: comp_sub_gp}, we show that $L_{\mathfrak q}=E_{\mathfrak q}$ and $L_{\mathfrak q'}=E'_{\mathfrak q'}$.
  The lemma then follows immediately from the commutative ring version of Goursat's lemma.
\end{proof}

Recall that, under Simplification~1 in \S\,\ref{Sect: simplifications}, the commutative diagram \eqref{eqnF1F2} becomes
\begin{eqnarray}
  \xymatrix{F\ar[r]\ar[d]&E\ar[d]^\nu\\E'\ar[r]^{\nu'}& K.}
\end{eqnarray}
Our key theorem is the following.

\begin{thm}\label{11}
  Keep the notations and assumptions in Lemma~\ref{3}, and assume that $L$ to be the graph of an isomorphism $\sigma:E_{ \mathfrak q }\to E'_{ \mathfrak q' }$, i.e., the second case of Lemma~\ref{3} holds. Suppose $\psi$ and $\psi'$ both have rank $\geq2$.

  (1) Then $\sigma(E)=E',\sigma(B)=B'$ and $\sigma(\mathfrak q)=\mathfrak q'$.

  (2) There exists $d\in\N$ making one of the following two diagrams
  \begin{eqnarray}\label{eqncomdiag}\xymatrix{F\ar[dd]\ar[rr]&&E\ar[dd]^\nu\ar[ld]_{{\rm Fr}_E^d}\\&E\ar[ld]_{\sigma|_E}&\\E'\ar[rr]^{\nu'}&&K}\quad\quad\quad\xymatrix{F\ar[dd]\ar[rr]&&E\ar[dd]^\nu\\&E\ar[ld]_{\sigma|_E}\ar[ru]^{{\rm Fr}_E^d}&\\E'\ar[rr]^{\nu'}&&K}
  \end{eqnarray}
  commutes.

  (3) The Drinfeld modules $\phi$ and $\phi'$ are isogenous up to a Frobenius twist.
\end{thm}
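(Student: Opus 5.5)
The proof proceeds in three stages, matching the three parts of Theorem~\ref{11}. Throughout, we write $\Gamma_S\subset\varphi(H(L))$ with $L$ the graph of $\sigma$. Then $H\cong H_{\fq}\cong{\rm PGL}_{E_\fq,V_\fq\psi}$. Composing $\varphi_{\fq'}$ with $\sigma$ gives an isomorphism
\[
\theta:\ \sigma^*{\rm PGL}_{E_\fq,V_\fq\psi}\xrightarrow{\ \sim\ }{\rm PGL}_{E'_{\fq'},V_{\fq'}\psi'}
\]
such that $\bar\rho'_{\fq'}(\gamma)=\theta(\sigma(\bar\rho_\fq(\gamma)))$ for all $\gamma\in\Gamma_K$. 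Since $H$ is adjoint and the isogeny has nowhere vanishing derivative, $\theta$ is an isomorphism of algebraic groups.

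\emph{Conjugation-invariant functions.} The function $g\mapsto {\rm Tr}(g){\rm Tr}(g^{-1})/\det$-normalized, i.e. $a(g)={\rm Tr}(g){\rm Tr}(g^{-1})$, is well defined on ${\rm PGL}$. An isomorphism of ${\rm PGL}_r$'s over a field is inner, or the composite of an inner map with inverse-transpose; in either case $a$ is preserved. Hence
\[
a_{\fq'}'(x)=\sigma(a_\fq(x))\quad\text{for almost all closed points } x,
\]
using the specialization diagram of \S\ref{Sect: specialization} for Frobenius elements. Here $a_\fq$ is the analogue of $a_\fp$ for $\psi$ over $B$. By Theorem~\ref{2}(1),(3), applied to $\psi$ and $\psi'$ (which have rank $\geq2$ and satisfy ${\rm End}=B$, $B'$ by Simplification~2), these values lie in $E$ and $E'$ and generate them as fields. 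Therefore $\sigma(E)=E'$. Integral closedness together with continuity gives $\sigma(B)=B'$. Since $\sigma:E_\fq\to E'_{\fq'}$ is a homeomorphism of completions, it carries the valuation of $\fq$ to that of $\fq'$, so $\sigma(\fq)=\fq'$. This proves part~(1). Note that $\sigma|_F$ need not be the identity; it is an automorphism of $F$ fixing $\F_q$, and the paper presumably handles this via the diagram in part~(2).

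\emph{Part (2): compare characteristic $\fp_x$.} For a closed point $x$, Theorem~\ref{2}(2),(4) says the Frobenius roots of $\psi_x$ have positive valuation exactly at places above the characteristic ideal. The characteristic is given by $\nu$, namely $\ker(B\to\kappa_x)$, and by $\nu'$ for $\psi'$. Transporting via $a'=\sigma(a)$, I would aim to show
\[
\sigma(\ker\nu_x)=\ker\nu'_x
\]
for the characteristic primes of $B$ and $B'$ at $x$, for a density-one set of $x$. The key step is to show that $\sigma$ intertwines the two maps $\nu$ and $\nu'$ into $K$ up to a power of Frobenius. Consider the two homomorphisms $\nu'\circ\sigma$ and $\nu$ from $E$ to $K$. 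Their reductions at almost all closed points of $X$ have the same kernel. By a "coincidence of morphisms" statement (presumably the Appendix: two morphisms $X\to\Spec B$ agreeing on closed points up to Frobenius must agree up to ${\rm Fr}^d$), we conclude $\nu'\circ\sigma=\nu\circ{\rm Fr}^d$ or $\nu=\nu'\circ\sigma\circ{\rm Fr}^d$. \textbf{This is the main obstacle}: extracting from the equality of characteristic ideals pointwise that the morphisms $\Spec K\to\Spec B$ agree up to Frobenius. I would first try to use the height/valuation information of Theorem~\ref{2}(4) to identify $\fp_x$ exactly, and then invoke the Appendix.

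\emph{Part (3).} Suppose, say, the first diagram of part~(2) commutes. Consider $\psi^{(d)}\circ\sigma^{-1}:B'\to\overline K\{\tau\}$. By Lemma~\ref{lemfrobtwist} and the commuting diagram, this is a Drinfeld $B'$-module with characteristic $\nu'$. Its adjoint $\fq'$-adic representation matches that of $\psi'$ via $\theta$, so its Frobenius traces agree up to scalar characters. Combined with the determinants (Simplification~3), a twist by a finite character achieves equality of characteristic polynomials on a density-one set. By the Tate conjecture for Drinfeld modules (Taguchi/Tamagawa), after a finite extension $\psi^{(d)}\circ\sigma^{-1}$ and $\psi'$ are isogenous. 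Restricting to $A$ then shows that $\phi^{(d)}$ and $\phi'$ are isogenous, which is part~(3). The second diagram is handled symmetrically.
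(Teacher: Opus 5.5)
Your overall architecture matches the paper's. The invariant ${\rm Tr}(g){\rm Tr}(g^{-1})$ together with Theorem~\ref{2}(1),(3) gives $\sigma(E)=E'$ exactly as in Steps (i)--(ii). Part (2) is meant to come from Theorem~\ref{thmf=g}, and part (3) from the Tate conjecture after killing a character.

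\emph{The decisive input to part (2) is missing.} As you suspect, nothing in your argument shows $(\sigma|_E)^*(\fq'_x)=\fq_x$ on a set of positive density, and the invariant $a$ alone cannot supply it. At an ordinary point with $[\kappa_x:\kappa_{\fq_x}]=1$ one has $a_\fq({\rm Frob}_x)=c_{m-1}(x)c_1(x)/c_0(x)$. For $m\ge 3$ nothing prevents $c_{m-1}(x)\in\fq_x$, and then the pole at $\fq_x$ disappears. The problem is that $a$ has discarded the scalar character and the determinant, which is exactly the information needed. The paper keeps them:
\begin{itemize}
\item Work on the density-one set $Z$ of ordinary points of degree one over $\kappa_{\fq_x}$ and $\kappa_{\fq'_x}$.
\item Lift the ${\rm PGL}$-identity to ${\rm GL}$ with a character $\chi$, compare all coefficients $c_j(x)$, and use $c_1(x)\neq0$ (from ordinarity) to get $\chi({\rm Frob}_x)\in E'$.
\item Compare divisors on $C_{E'}$. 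In the inverse-transpose case, $\chi({\rm Frob}_x)^m=c'_0(x)\,\sigma(c_0(x))$ has coefficient $1$ or $2$ at $\fq'_x$, which is impossible for $m\ge3$. In the inner case, $\chi({\rm Frob}_x)^m=c'_0(x)/\sigma(c_0(x))$ has divisor in $m\,{\rm Div}(C_{E'})$, which forces $(\sigma|_E)^*(\fq'_x)=\fq_x$ on $Z$.
\end{itemize}
Only after this does Theorem~\ref{thmf=g} apply. You never exclude the inverse-transpose case at all, yet (3) needs it; otherwise Tate's theorem would be comparing $V_{\fq'}\psi'$ with a twisted dual.

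Separately, ``integral closedness together with continuity'' does not give $\sigma(B)=B'$. Continuity only concerns the place $\fq$, whereas $\sigma(B)=B'$ means $\sigma$ carries $\infty_E$ to $\infty_{E'}$. The paper obtains this only after (2), by restricting the diagram to $F$ and using uniqueness of the places above $\infty$.

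\emph{Your part (3) pairs the wrong modules.} By Lemma~\ref{lemfrobtwist}, $\psi^{(d)}$ has rank $mp^d$ and characteristic $\nu$. So $\psi^{(d)}\circ\sigma^{-1}$ has rank $mp^d$ and, under the first diagram, characteristic $\nu'\circ{\rm Fr}^d_{B'}$ rather than $\nu'$. For $d>0$ this causes three failures:
\begin{itemize}
\item its Tate module cannot match $V_{\fq'}\psi'$;
\item it cannot be isogenous to $\psi'$;
\item its restriction to $A$ is $\phi^{(d)}\circ{\rm Fr}^d_A$, not $\phi^{(d)}$, since $\sigma^{-1}(a)=a^{p^d}$ for $a\in F$.
\end{itemize}
The correct comparison, as in the paper, is between $\psi^{(d)}$ and $\psi'\circ\sigma|_B\circ{\rm Fr}^d_B$, which both have rank $mp^d$ and characteristic $\nu$. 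Twisting the $\sigma$-semilinear map $f$ by Frobenius turns it into an $E_\fq$-linear map $\widetilde f$ intertwining their $\fq$-adic representations up to $\sigma^{-1}\circ\chi$. Taking determinants, after arranging the determinant modules to be isogenous, gives $\chi^m=1$, and a finite extension of $K$ kills $\chi$. Tate's theorem then gives an isogeny. Its restriction to $A$ is an isogeny between $\phi^{(d)}$ and $\phi'$, because $\sigma\circ{\rm Fr}^d_E$ is the identity on $F$.
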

\begin{proof}
  We prove this theorem in four steps. In Step~(i), we prove that $a_{\mathfrak q'}=\sigma\circ a_{\mathfrak q}$, where
  \begin{eqnarray*}
    &&a_{\mathfrak q}:\Gamma_K\to E_{\mathfrak q},\;\gamma\mapsto {\rm Tr}(\rho_{\mathfrak q}(\gamma))\cdot {\rm Tr}(\rho_{\mathfrak q}(\gamma^{-1})),\\
    &&a_{\mathfrak q'}:\Gamma_K\to E'_{\mathfrak q'},\;\gamma\mapsto {\rm Tr}(\rho'_{\mathfrak q'}(\gamma))\cdot {\rm Tr}(\rho'_{\mathfrak q'}(\gamma^{-1})).
  \end{eqnarray*}
  In Step~(ii), we prove that $\sigma(E)=E'$. In Step~(iii), we show that the induced isomorphism $\sigma|_E:E\simeq E'$ makes one of the diagrams in \eqref{eqncomdiag} commutative for some $d$. In Step~(iv), we prove that $\phi$ and $\phi'$ are isogenous up to a Frobenius twist.

  \underline{Step (i).} Prove $a_{\mathfrak q'}=\sigma\circ a_{\mathfrak q}$.

  Let $\bar\eta$ denote the composition of isomorphisms of algebraic groups
  $${\rm PGL}_{E_{\mathfrak q},V_{\mathfrak q}\psi}\x{\varphi_{\mathfrak q}^{-1}}H_{\mathfrak q}\simeq H\times_LE_{\mathfrak q}\x{{\rm id}_H\times\sigma}H\times_LE'_{\mathfrak q'}\simeq H_{\mathfrak q'}\x{\varphi_{\mathfrak q'}}{\rm PGL}_{E'_{\mathfrak q'},V_{\mathfrak q'}\psi'}.$$
  This shows that $\frac n{[E:F]}=\frac{n'}{[E':F]}$. Put $m=\frac n{[E:F]}\geq 2$, which is the common rank of $\psi$ and $\psi'$. Then there exist an outer automorphism $\bar\beta$ of ${\rm PGL}_{E'_{\mathfrak q'},V_{\mathfrak q'}\psi'}$ and a $\sigma$-semilinear isomorphism
  \begin{equation}\label{Eqn: sigma_semilinear_f}
    f:V_{\mathfrak q}\psi\to V_{\mathfrak q'}\psi'
  \end{equation}
  such that $\bar\eta={\rm inn}(f)$ or $\bar\eta=\bar\beta\circ{\rm inn}(f)$, where ${\rm inn}(f)$ denotes the isomorphism
  $x\mapsto fxf^{-1}:{\rm PGL}_{E_{\mathfrak q},V_{\mathfrak q}\psi}\x{\sim}{\rm PGL}_{E'_{\mathfrak q'},V_{\mathfrak q'}\psi'}.$
  In particular, every automorphism of ${\rm PGL}_{E'_{\mathfrak q'},V_{\mathfrak q'}\psi'}$ is inner if $m=2$. Lift $\bar\beta$ to an automorphism $\beta$ of $\GL_{E'_{\mathfrak q'},V_{\mathfrak q'}\psi'}$.
  As $\varrho_S(\Gamma_K)\subset\varphi(H(L))$, there exists a character $\chi:\Gamma_K\to (E'_{ \mathfrak q' })^\times$ such that one of the following two cases holds:
  \begin{eqnarray}  \label{6}
    &&\rho'_{ \mathfrak q' }(\gamma)=\chi(\gamma)\cdot \beta(f\circ \rho_{ \mathfrak q }(\gamma)\circ f^{-1})\quad\quad (\gamma\in\Gamma_K), \quad \text{if }m\geq 3,
    \\\label{9}
    &&\rho'_{ \mathfrak q' }(\gamma)=\chi(\gamma)\cdot (f\circ \rho_{ \mathfrak q }(\gamma)\circ f^{-1})\quad\quad (\gamma\in\Gamma_K).
  \end{eqnarray}
  In any case, we always have
  \begin{eqnarray*}
    a_{ \mathfrak q' }(\gamma)={\rm Tr}(\rho'_{ \mathfrak q' }(\gamma))\cdot{\rm Tr}(\rho'_{ \mathfrak q' }(\gamma^{-1}))=\sigma({\rm Tr}(\rho_{ \mathfrak q }(\gamma))\cdot{\rm Tr}(\rho_{ \mathfrak q }(\gamma^{-1}))=\sigma(a_{ \mathfrak q }(\gamma)).
    \end{eqnarray*}
    This proves Step (i).

    \underline{Step (ii)}. Prove $\sigma(E)=E'$.

    As in \S\,\ref{Sect: specialization}, there exists an integrally closed, finitely generated $\F_q$-subring $R$ of $K$ containing $B^{\flat}$ such that $\psi^{\flat}$ defines a Drinfeld $B^{\flat}$-module over $R$ for $\flat=\emptyset$ and $'$. We may also assume that the image of the structure morphism $x\mapsto \mathfrak q^{\flat}_x:{\rm Spec}\,R\to {\rm Spec}\,B^{\flat}$ is contained in ${\rm Spec}\,B^{\flat}\setminus\{\mathfrak q^{\flat}\}$.

    Since $\psi$ and $\psi'$ have rank at least $2$, parts~(1) and~(3) of Theorem~\ref{2} imply that $a_{\mathfrak q^{\flat}}({\rm Frob}_x)\in E^{\flat}$ and that $E^{\flat}$ is generated as a field, by the elements $a_{\mathfrak q^{\flat}}({\rm Frob}_x)$ for closed points $x\in {\rm Spec}\,R$. Therefore, Step~(ii) follows from Step~(i).

    \underline{Step (iii)}. Prove that there exists an integer $d\in\N$ such that one of the diagrams in \eqref{eqncomdiag} is commutative and $(\sigma|_E)(\mathfrak q)=\fq'$.

    Let $Z$ be the set of closed points $x$ of $ {\rm Spec}\;R$ such that $[\kappa_x:\kappa_{\mathfrak q_x}]=[\kappa_x:\kappa_{\mathfrak q_x'}]=1$, $\psi$ and $\psi'$ both have ordinary reduction at $x$. By assumption, ${\rm End}_{\overline K}(\psi)=B$ and ${\rm End}_{\overline K}(\psi')=B'$. Then by \cite[Theorem 0.3, Proposition B.8]{P}, $Z$ has Dirichlet density 1 in $ {\rm Spec}\;R$. For any $x\in Z$, write 
    \begin{eqnarray*}
      &&\det(t-\rho_{ \mathfrak q }({\rm Frob}_x))=\sum_{j=0}^mc_{j}(x)t^j\in E_{ \mathfrak q }[t],\\
      &&\det(t-\rho'_{ \mathfrak q' }({\rm Frob}_x))=\sum_{j=0}^mc'_{j}(x)t^j\in E'_{ \mathfrak q' }[t].
    \end{eqnarray*}
    By Theorem \ref{2}, $c^{\flat}_{j}(x)\in B^{\flat}$ for any $j$ and $\mathfrak q^{\flat}_x$ is a principal ideal of $B^{\flat}$ generated by $c^{\flat}_{0}(x)$. We claim that $c_{1}(x)\neq0$.

    Let $\alpha_1,\ldots,\alpha_m$ be the eigenvalues of $\rho_{ \mathfrak q }({\rm Frob}_x)$ in $\overline{F}$. Take an arbitrary valuation $v_{\overline{\mathfrak q}_x}$ of $\overline F$ which extends the normalized valuation of $F$ at $\fq_x$. By \cite[Theorem 3.23]{GL} or \cite[Lemma~2.1]{P}, the Drinfeld $B$-module over $\kappa_x$ induced by $\psi$ is of height one. Then Theorem~\ref{2}\;(4) tells that the valuations $v_{\overline{\mathfrak q}_x}(\alpha_i)$ are positive for precisely one of $\alpha_i$, and zero for others. Hence $v_{\overline{\mathfrak q}_x}\Big(\sum\limits_{i=1}^m\alpha_1\cdots\widehat{\alpha_i}\cdots\alpha_m\Big)=0$. As a consequence, $c_{1}(x)=(-1)^{m-1}\sum\limits_{j=1}^m\alpha_1\cdots\widehat{\alpha_j}\cdots\alpha_m\neq0.$ This proves $c_{1}(x)\neq0$.

    In the following, we write $C_{E^\flat}$ for the smooth projective curve over $\mathbb{F}_q$ associated with the function field $E^\flat$. In particular, we will identify closed points of $C_{E^\flat}$ with places of $E^{\flat}$. Denote by $({\sigma|_E})^{*}:C_{E'}\simeq C_E$ the isomorphism of projective smooth curves over $\F_p$ induced by ${\sigma|_E}:E\simeq E'$.  Then there is a naturally induced homomorphism of divisor groups $({\sigma|_E})^{*}:{\rm Div}(C_{E'})\to{\rm Div}(C_E)$. Since each square in (\ref{eqncomdiag}) is commutative and $\nu^\flat:E^\flat\to K$ is injective, then to prove this step, it suffices to find $d\in\N$ making one of the two right-lower triangulars in  (\ref{eqncomdiag}) commutative. It remains to show that there exists $d\in\N$ such that one of the following two diagrams commutes:
    \begin{eqnarray}\label{eqncomdiag1}
      \xymatrix{ {\rm Spec}\;R\ar[r]^{{\nu'}^*}\ar[d]^{\nu^*}&C_{E'}\ar[d]^{(\sigma|_E)^*}\\C_E&C_E\ar[l]_{{\rm Fr}^d_{C_E}},}
      \quad\quad\quad
      \xymatrix{ {\rm Spec}\;R\ar[r]^{{\nu'}^*}\ar[d]^{\nu^*}&C_{E'}\ar[d]^{(\sigma|_E)^*}\\C_E\ar[r]^{{\rm Fr}^d_{C_E}}&C_E,}
    \end{eqnarray}
    where ${\nu^\flat}^*$ is the composition ${\rm Spec}\;R\to\Spec\;B^\flat\hookrightarrow C_{E^\flat}$.

    Suppose case (\ref{6}) holds. Applying it to $\gamma={\rm Frob}_x$ for those $x\in Z$, then
    \begin{eqnarray*}\label{7}
      c'_{j}(x)=\chi({\rm Frob}_x)^{m-j}\frac{\sigma(c_{m-j}(x))}{\sigma(c_{0}(x))}\in E'_{\fq'}\quad\quad(0\leq j\leq m).
    \end{eqnarray*}
    In particular, taking $j=m-1$ and $j=0$ respectively, as $c_{1}(x)\neq0$, we have
    \begin{eqnarray*}
      \chi({\rm Frob}_x)=c'_{m-1}(x)\cdot\frac{\sigma(c_{0}(x))}{\sigma(c_{1}(x))}\in E'
    \end{eqnarray*}
    and
    \begin{eqnarray}\label{Eqn: chi(Frob_x^m)}
      \chi({\rm Frob}_x)^m=c'_{0}(x)\cdot\sigma(c_{0}(x))\in E'.
    \end{eqnarray}
    Consider both sides of \eqref{Eqn: chi(Frob_x^m)} as functions of the curve $C_{E'}$, and taking the corresponding principal divisors, we obtain
    \begin{eqnarray}\label{Eqn: Frob_x^m}
      {\fq}_x'-\frac{\deg(\fq'_x)}{\deg(\infty_{E'})}\infty_{{E'}}+{({\sigma|_E})^{*}}^{-1}(\mathfrak q_x)-\frac{\deg(\fq_x)}{\deg(\infty_{E})}{({\sigma|_E})^{*}}^{-1}(\infty_E)\in m{\rm Div}(C_{E'}).
    \end{eqnarray}
    Here $\mathfrak q_x'$ (resp.\ $(\sigma|_E)^{*^{-1}}(\mathfrak q_x)$) is the divisor of zeros of $c'_0(x)$ (resp.\ $\sigma(c_0(x))$) by Theorem~\ref{2}(2), and it appears with multiplicity one since $x \in Z$. Moreover, by \cite[Proposition 4.7.17]{Go}, $\infty_{E^{\flat}}$ is the unique closed point of $C_{E^{\flat}}$ above $\infty\in C$. After possibly shrinking $\Spec R$, we may further assume that $(\sigma|_E)^{*^{-1}}(\infty_E)$ is not contained in the image of $\Spec R \to \Spec B'$. Under this assumption, $\mathfrak q_x'$ is distinct from both $(\sigma|_E)^{*^{-1}}(\infty_E)$ and $\infty_{E'}$. Depending on whether $(\sigma|_E)^{*^{-1}}(\mathfrak q_x)$ coincides with $\mathfrak q_x'$ or not, the coefficient of $\mathfrak q_x'$ in \eqref{Eqn: Frob_x^m} is equal to $2$ or $1$, respectively. In either case, this coefficient is not divisible by $m \ge 3$, yielding a contradiction. Hence only case~\eqref{9} can occur.

    Repeating the above argument, we obtain
    \[
      c'_j(x)
      = \chi(\mathrm{Frob}_x)^{m-j}\sigma(c_j(x))
      \qquad (0 \le j \le m).
    \]
    In particular, taking $j=0$ and $j=1$, we have
    \[
      \chi(\mathrm{Frob}_x)^{m-1}
      = \frac{c'_1(x)}{\sigma(c_1(x))} \in E',
      \qquad
      \chi(\mathrm{Frob}_x)^m
      = \frac{c'_0(x)}{\sigma(c_0(x))} \in E'.
    \]
    It follows that $\chi(\mathrm{Frob}_x) \in E'$, and hence
    \[
      {\fq}_x'-\frac{\deg(\fq'_x)}{\deg(\infty_{E'})}\infty_{{E'}}-{({\sigma|_E})^{*}}^{-1}(\mathfrak q_x)+\frac{\deg(\fq_x)}{\deg(\infty_{E})}{({\sigma|_E})^{*}}^{-1}(\infty_E)\in m{\rm Div}(C_{E'}).
    \]

    Since $\mathfrak q_x \ne (\sigma|_E)^*(\infty_{E'})$ and $\mathfrak q_x \ne \infty_E$, and $m \ge 2$, it follows that $(\sigma|_E)^*(\mathfrak q_x') = \mathfrak q_x$ for all $x \in Z$. Consequently, the two morphisms
    \[
      \Spec R \xrightarrow{\nu^*} C_E
      \quad \text{and} \quad
      \Spec R \xrightarrow{\nu'^*} C_{E'}
      \xrightarrow{(\sigma|_E)^*} C_E
    \]
    agree on $Z$ set-theoretically. By Theorem~\ref{thmf=g}, there exists $d \in \mathbb{N}$ such that one of the diagrams in~(\ref{eqncomdiag1}) commutes. This completes the proof of Step~(iii). In particular, due to the uniqueness of $\infty_{E}$ and $\infty_{E'}$, we know that $\sigma(\infty_E)=\infty_{E'}$, which implies that $\sigma(B)\subset B'$. 

    \underline{Step (iv).} Prove that $\phi$ and $\phi'$ are isogenous up to a Frobenius twist.

    Without loss of generality, we may choose $d \in \mathbb{N}$ such that the first diagram in~(\ref{eqncomdiag}) is commutative.
    Denote by $\widetilde{f}$ the composition of $E_{\mathfrak q}$-linear isomorphisms
    \[
      V_{\mathfrak q}(\psi^{(d)})
      \overset{\mathrm{Lemma}~\ref{lemfrobtwist}}{\simeq}
      (V_{\mathfrak q}\psi)^{(d)}
      \xrightarrow{f^{(d)}}
      (V_{\mathfrak q'}\psi')^{(d)}
      \overset{\mathrm{Lemma}~\ref{lemfrobtwist}}{\simeq}
      V_{\mathfrak q'}(\psi' \circ \mathrm{Fr}^d_{B'})
      \simeq
      V_{\mathfrak q}(\psi' \circ \mathrm{Fr}^d_{B'} \circ \sigma|_B)
      =
      V_{\mathfrak q}(\psi' \circ \sigma|_B \circ \mathrm{Fr}^d_B),
    \]
    where $\psi^{(d)}$, $(V_{\mathfrak q}\psi)^{(d)}$, and $(V_{\mathfrak q'}\psi')^{(d)}$ denote the Frobenius twists of $\psi$, $V_{\mathfrak q}\psi$, and $V_{\mathfrak q'}\psi'$, respectively (see Definition~\ref{defnfrobtwist}), and $f^{(d)}$ is the Frobenius twist of the map $f$ in~\eqref{Eqn: sigma_semilinear_f}.

    After possibly replacing $K$ by a finite extension, we may assume that $\psi^{(d)}$ is defined over $K$. Then the two Drinfeld $B$-modules $\psi^{(d)}$ and $\psi' \circ \sigma|_B \circ \mathrm{Fr}^d_B$ have the same characteristic $\nu|_B : B \to K$ by~(\ref{eqncomdiag}). Denote by $\theta_{\mathfrak q}$ and $\theta'_{\mathfrak q}$ their associated $\mathfrak q$-adic Galois representations. Then~(\ref{9}) can be rewritten as
    \begin{equation}\label{eqntheta}
      \theta'_{\mathfrak q}(\gamma)
      =
      \sigma^{-1}(\chi(\gamma)) \cdot
      \bigl(\widetilde{f} \circ \theta_{\mathfrak q}(\gamma) \circ \widetilde{f}^{-1}\bigr)
      \qquad (\gamma \in \Gamma_K).
    \end{equation}

    By an argument analogous to Simplification~3, we may assume that the determinant Drinfeld modules associated to $\theta_{\mathfrak q}$ and $\theta'_{\mathfrak q}$ are isogenous over $K$. Taking determinants in~\eqref{eqntheta}, we obtain $\chi(\gamma)^m = 1$. After replacing $K$ by a further finite extension if necessary, we may assume that the character $\chi$ is trivial. Consequently,~\eqref{eqntheta} yields a $\Gamma_K$-equivariant $E_{\mathfrak q}$-linear isomorphism
    $
    V_{\mathfrak q}(\psi^{(d)})
    \simeq
    V_{\mathfrak q}(\psi' \circ \sigma|_B \circ \mathrm{Fr}^d_B).
    $
    By \cite[Theorem~1.7]{P}, it follows that the Drinfeld $B$-modules $\psi^{(d)}$ and $\psi' \circ \sigma|_B \circ \mathrm{Fr}^d_B$ are isogenous over $K$. Restricting to $A$, we conclude that $\phi^{(d)}$ and $\phi'$ are isogenous. This completes the proof.
  \end{proof}

  \begin{rem}
    In \underline{Step (iv)}, we deduce the desired result from~\eqref{eqntheta} rather than from~\eqref{9}, because in~\eqref{eqntheta} the map $\widetilde{f}$ is $E_{\mathfrak q}$-linear, whereas in~\eqref{9} the map $f$ is only semilinear.
  \end{rem}

  \subsection{Adjoint representation of \texorpdfstring{$\psi$ and $\psi'$}{psi and psi'} at finitely many maximal ideals}\label{Sect: adj_repn_fini_ideals}
  In this subsection, we study the openness of $\bar\varrho_S$ (defined at \eqref{Eqn: varrhobar}) for a general finite set $S$ and  and establish a strengthened version of Theorem~\ref{11}.

  \begin{thm}\label{thm36}
    Assume that $\psi$ and $\psi'$ both have rank $\geq2$. For any finite set $P$ of maximal ideals of $A$, take
    \[
      S=\{\fq^\flat\in{\rm Max}\,B^\flat\mid\flat=\emptyset \text{ or }'  \text{ and }\fq^\flat \text{ lies above one ideal in }P\}.
    \]
    Then the following are equivalent.
    \begin{itemize}
      \item[(1)] The triple $(L_S,G_S,\Gamma_S)$ given in (\ref{eqntriple}) is minimal.
      \item[(2)] $\phi$ and $\phi'$ are not geometrically isogenous up to any Frobenius twist.
    \end{itemize}
  \end{thm}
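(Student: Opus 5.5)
We prove both implications, working through the minimal quasi-model $(L,H,\varphi)$ of $(L_S,G_S,\Gamma_S)$ given by Theorem~\ref{thmminimalmodel} and the pairwise analysis of Lemma~\ref{3} and Theorem~\ref{11}.

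\emph{(1)$\Rightarrow$(2).} We argue by contraposition. Suppose $\phi^{(d)}$ is isogenous to $\phi'$ over $\overline K$ (the case $\phi\sim\phi'^{(d)}$ is symmetric). After passing to a finite extension of $K$, which changes $\Gamma_S$ only up to finite index and so does not affect minimality, the isogeny is defined over $K$.
\begin{itemize}
\item An isogeny commutes with endomorphisms up to conjugation. It therefore induces an isomorphism $\sigma\colon E\simeq E'$ compatible with $B\simeq B'$ (twisted by ${\rm Fr}^d$ as in Theorem~\ref{11}).
\item Choose $\fp\in P$ and $\fq\mid\fp$, and set $\fq'=\sigma(\fq)$, which lies in $S$.
\item Lemma~\ref{lemfrobtwist} gives a $\Gamma_K$-equivariant semilinear isomorphism $V_\fq\psi\simeq V_{\fq'}\psi'$ up to the twist, linear over the graph of $\sigma\circ{\rm Fr}^d$ (or its inverse).
\item Hence the image of $\Gamma_S$ in the $\{\fq,\fq'\}$-factor lies in the graph of the induced isomorphism ${\rm PGL}_{E_\fq}(V_\fq\psi)\simeq{\rm PGL}_{E'_{\fq'}}(V_{\fq'}\psi')$.
\item The commutators $\widetilde\Gamma_{\{\fq,\fq'\}}$ then lie in a graph. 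A graph is not open in ${\rm SL}\times{\rm SL}$, so by Theorem~\ref{thmminimalmodel}(3) the projected triple is not minimal.
\item By Proposition~\ref{Prop: proj_minimal}, the full triple is then not minimal either.
\end{itemize}
One subtlety is that ${\rm Fr}^d$ is not surjective on $E_\fq$. In that case the graph is taken over the smaller ring ${\rm Fr}^d(E_\fq)$, which still gives a weak quasi-model with $L\neq L_S$, so non-minimality follows directly.

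\emph{(2)$\Rightarrow$(1).} Assume the triple is not minimal. By Theorem~\ref{thmminimalmodel}, $L\subsetneq L_S$ is a semisimple closed subring, so $L$ is a product of fields. By Section~\ref{Sect: comp_sub_gp}, each projection $L\to L_{\fq^\flat}$ is surjective onto $E^\flat_{\fq^\flat}$. The Goursat argument of Lemma~\ref{3}, applied to a product of fields, then shows that some factor of $L$ projects isomorphically onto at least two factors $E^{\flat_1}_{\fq_1}$ and $E^{\flat_2}_{\fq_2}$ of $L_S$. By Proposition~\ref{Prop: proj_minimal} applied to $T=\{\fq_1,\fq_2\}$, together with the uniqueness in Theorem~\ref{thmminimalmodel}(2), the minimal quasi-model of the projected triple is the graph of an isomorphism. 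We now split into cases.
\begin{itemize}
\item \textbf{Mixed case} ($\fq_1\in{\rm Max}\,B$, $\fq_2\in{\rm Max}\,B'$). Theorem~\ref{11}(3) applies directly: $\phi$ and $\phi'$ are isogenous up to a Frobenius twist, contradicting (2).
\item \textbf{Same-module case} (both ideals in ${\rm Max}\,B$, say). This must be ruled out, and it is the main obstacle.
\end{itemize}

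For the same-module case, we run the proof of Theorem~\ref{11} with $\psi'=\psi$. We obtain $\sigma\colon E\to E$ with $\sigma(\fq_1)=\fq_2$, and the argument of Step~(iii) yields $\nu=\nu\circ\sigma\circ{\rm Fr}^d$ (or the other triangle). Since $\nu$ is injective, $\sigma\circ{\rm Fr}_E^d={\rm id}$ on $E$.
\begin{itemize}
\item An automorphism of a function field cannot equal the inverse of a nontrivial Frobenius power, because ${\rm Fr}^d$ is not surjective for $d>0$. Hence $d=0$ and $\sigma={\rm id}$.
\item This forces $\fq_1=\fq_2$, a contradiction.
\end{itemize}
Two points need checking here: that Steps~(i)--(iii) of Theorem~\ref{11} never used $\psi\neq\psi'$, and that Theorem~\ref{thmf=g} applies in this setting. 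Both appear routine. Finally, $\sigma$ is only defined on $E_{\fq_1}$ a priori, but Step~(ii) shows it restricts to $E$, so the argument above is legitimate.
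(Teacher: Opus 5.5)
Your argument is correct. Its skeleton matches the paper's:
\begin{itemize}
\item For (1)$\Rightarrow$(2), you build a Galois-equivariant $\sigma$-semilinear identification of $V_\fq\psi$ with $V_{\sigma(\fq)}\psi'$, which forces the two-factor image into a graph. The paper does exactly this with $\phi'=\phi^{(d)}$ and $x\mapsto x^{1/p^d}$.
\item For (2)$\Rightarrow$(1), you take a fibre of $S\to\Lambda$ with two elements and invoke Theorem~\ref{11} in the mixed case, as the paper does.
\end{itemize}

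The one genuine divergence is the same-module case $\fq_1\neq\fq_2$ in ${\rm Max}\,B$. The paper disposes of it in one line with Theorem~\ref{Thm: Pink_open_image}: the image of $\rho$ is open in $\prod_\fq{\rm GL}_{B_\fq}(T_\fq\psi)$, so its projection to $\{\fq_1,\fq_2\}$ is open. Hence the two-factor triple is minimal and cannot have a quasi-model over the single field $L_\lambda$.

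You instead rerun Theorem~\ref{11} with $\psi'=\psi$. This is legitimate, since nothing in that proof uses $\psi\neq\psi'$, but it is heavier than necessary: you are already done after Steps (i)--(ii).
\begin{itemize}
\item By Theorem~\ref{2}(1), $a_{\fq_1}({\rm Frob}_x)$ and $a_{\fq_2}({\rm Frob}_x)$ are the same element $a(x)\in E$. So Step (i) gives $\sigma(a(x))=a(x)$ for all $x$, and Theorem~\ref{2}(3) gives $\sigma|_E={\rm id}$.
\item The map $\sigma\colon E_{\fq_1}\to E_{\fq_2}$ is a homeomorphism, because the graph $L$ is closed and its projections are continuous bijections of locally compact $\sigma$-compact groups.
\item Since $\sigma$ extends the identity on the dense subfield $E$, the $\fq_1$-adic and $\fq_2$-adic topologies on $E$ coincide, so $\fq_1=\fq_2$.
\end{itemize}
Step (iii), the divisor computation and Theorem~\ref{thmf=g} are therefore not needed.

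The trade-off is this. Your route uses only single-prime openness plus the arithmetic of Frobenius traces, so it effectively reproves, at the adjoint level, the independence of two primes of one Drinfeld module. The paper's route is shorter but relies on the full adelic theorem of Pink--R\"utsche, which it uses elsewhere anyway.

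Finally, your caveat that ${\rm Fr}^d$ is not surjective on $E_\fq$ is unnecessary. Twisting coefficients gives an honest ring isomorphism $B\simeq B'$, hence $E_\fq\simeq E'_{\sigma(\fq)}$, with $\sigma(\fq)$ still above $\fp$. Only the $A$-algebra structures differ by ${\rm Fr}^d$, so the image lies in the graph of a genuine isomorphism.
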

  \begin{proof}
    The implication (1)$\Rightarrow$(2) is straightforward to verify.
Without loss of generality, we may assume that $\phi'=\phi^{(d)}$ for some $d\in\mathbb{N}$. We have a natural isomorphism
$$\sigma:B={\rm End}_K(\phi)\to B'={\rm End}_K(\phi'),\quad \sum_ia_i\tau^i\mapsto\sum_ia_i^{\frac 1{p^d}}\tau^i$$
and a $\sigma$-semi-linear isomorphism
$$\psi(\overline K)\to\psi'(\overline K),\quad x\mapsto x^{\frac1{p^d}}.$$
For any maximal ideal $\fq$ of $B$, the induced isomorphism $\widetilde\sigma:{\rm GL}_{B_\fq}(T_\fq\psi)\simeq{\rm GL}_{B'_{\sigma(\fq)}}(T_{\sigma(\fq)}\psi')$ satisfies $\rho'_{\sigma(\fq)}=\widetilde\sigma\circ\rho_\fq:\Gamma_K\to{\rm GL}_{B'_{\sigma(\fq)}}(T_{\sigma(\fq)}\psi')$. This implies $(L_S,G_S,\Gamma_S)$ is not minimal. 

    For the implication (2)$\Rightarrow$(1), suppose that $\phi$ and $\phi'$ are not geometrically isogenous up to any Frobenius twist. Let $(L,H,\varphi)$ be a minimal model of $(L_S,G_S,\Gamma_S)$. Write $L=\prod\limits_{\lambda\in\Lambda}L_\lambda$ with each $L_\lambda$ a field, and let $H_\lambda=H\times_LL_\lambda$. The inclusion $L\hookrightarrow L_S$ is then described by a surjective map ${\mathfrak q^{\flat}}\mapsto \lambda_{\mathfrak q^{\flat}}:S\to \Lambda$ together with a homomorphism $L_{\lambda_{\mathfrak q^{\flat}}}\to E^{\flat}_{\mathfrak q^{\flat}}$ for each $\fq^{\flat}\in S$. Applying Theorem~\ref{Thm: Pink_open_image}, Theorem~\ref{thmminimalmodel}(3) and Proposition~\ref{Prop: proj_minimal} to each $\mathfrak q^{\flat}$, one finds $(E_{\fq^\flat}^\flat, G_{\fq^\flat}, \Gamma_{\fq^\flat})$ is minimal. In particular, the ‌aforementioned morphism $L_{\lambda_{\mathfrak q^{\flat}}}\to E^{\flat}_{\mathfrak q^{\flat}}$ is bijective. By definition, to prove that $(L_S,G_S,\Gamma_S)$ is minimal, it suffices to show that the map $S\to\Lambda$ is injective.

    Suppose, for contradiction, that there exist two distinct elements $\mathfrak q^{\flat}$ and $\mathfrak r^{\natural}$ in $S$ with the same image $\lambda\in\Lambda$. Applying Proposition~\ref{Prop: proj_minimal} to the projection $L_S\to E^{\flat}_{\mathfrak q^{\flat}}\times E^i_{ \mathfrak r^{\natural} }$, we see that $(L_\lambda,H_\lambda,\varphi_\lambda)$ is a minimal quasi-model of
    $$\Big(E^{\flat}_{\mathfrak q^{\flat}}\times E^i_{ \mathfrak r^{\natural} },{\rm PGL}_{V_{\mathfrak q^{\flat}}\psi^{\flat}}\coprod{\rm PGL}_{V_{\mathfrak r^{\natural}}\psi^i},\bar\varrho_{\{\mathfrak q^{\flat},\mathfrak r^{\natural}\}}(\Gamma_K)\Big),$$ where
    $$\varphi_\lambda:H_\lambda\times_{L_\lambda}(E^{\flat}_{ \mathfrak q^{\flat} }\times E^i_{ \mathfrak r^{\natural} })\to{\rm PGL}_{V_{\mathfrak q^{\flat}}\psi^{\flat}}\coprod{\rm PGL}_{V_{\mathfrak r^{\natural}}\psi^{\natural}}$$
    is the homomorphism induced by $\varphi$.

    If $\flat={\natural}$, then the two factors arise from distinct primes of the same Drinfeld module. By Theorem~\ref{Thm: Pink_open_image}, the image of the corresponding two-prime adjoint representation is open in the product of the two local adjoint groups. Hence, by Theorem~\ref{thmminimalmodel}(3), the associated two-factor triple is minimal. Therefore it cannot admit a proper quasi-model over a single field $L_\lambda$, contradiction. Therefore, we have $\flat\neq {\natural}$ and, according to Theorem \ref{11}, $\phi$ and $\phi'$ are geometrically isogenous up to some Frobenius twist, again contradicting the assumption. As the result, we conclude the desired bijectivity of $S\to\Lambda$. Hence $L=L_S$ and $\varphi$ is an isomorphism. In other words, $(L_S,G_S,\Gamma_S)$ is minimal.
  \end{proof}

  Denote by $K^{\rm ab}$ the maximal abelian extension of $K$ in $\overline K$. Recall that we take each $G_{\fq^\flat}$ to be $\PGL_{V_{\fq_\flat}\psi^\flat}$. By Definition \ref{defngamma_S}, $\widetilde G_S(L_S)=\prod\limits_{\fq^\flat\in S}{\rm SL}_{E^\flat_{\fq^\flat}}(V_{\fq^\flat}\psi^\flat)$ and  $\widetilde\Gamma_S=\varrho_S({\rm Gal}(K^{\rm sep}/K^{\rm ab}))$. Applying Theorem~\ref{thmminimalmodel} and Theorem~\ref{thm36}, the result of this section can be summarized as follows.

\begin{thm}\label{thmmainsl}
Suppose $K$ is a finitely generated extension of $F$, and assume that $\psi$ and $\psi'$ both have rank at least $2$.
Let $P$ be a nonempty finite set of maximal ideals of $A$, and let
\[
S(P)=
\bigsqcup_{\flat\in\{\emptyset,'\}}
\left\{
(\flat,\mathfrak q^\flat):
\mathfrak q^\flat\mid\mathfrak p
\text{ for some }\mathfrak p\in P
\right\}.
\]
Then the following are equivalent:
\begin{enumerate}
\item $\phi$ and $\phi'$ are not geometrically isogenous up to any
Frobenius twist;
\item
$
\varrho_{S(P)}
\bigl(\operatorname{Gal}(K^{\rm sep}/K^{\rm ab})\bigr)
$
is open in
$
\prod\limits_{(\flat,\mathfrak q^\flat)\in S(P)}
\operatorname{SL}_{E^\flat_{\mathfrak q^\flat}}
  (V_{\mathfrak q^\flat}\psi^\flat).
$
\end{enumerate}
Consequently, if condition~(1) holds, the same openness conclusion
holds for every finite tagged set of primes $S$.
\end{thm}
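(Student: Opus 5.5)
The plan is to combine Theorem~\ref{thm36} with the minimality criterion Theorem~\ref{thmminimalmodel}(3). First, set $S=S(P)$ and apply the discussion in \S\,\ref{Sect: comp_sub_gp} to the triple $(L_S,G_S,\Gamma_S)$ of \eqref{eqntriple}. Each $G_{\fq^\flat}={\rm PGL}_{E^\flat_{\fq^\flat},V_{\fq^\flat}\psi^\flat}$ is absolutely simple adjoint because $\psi^\flat$ has rank $\geq2$. Fiberwise Zariski density of $\Gamma_S$ follows from Theorem~\ref{Thm: Pink_open_image}: each projection $\Gamma_{\fq^\flat}$ has $\widetilde\Gamma_{\fq^\flat}$ open in ${\rm SL}$, hence is Zariski dense. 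Compactness is clear. So Pink's theory applies.

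Next, identify $\widetilde\Gamma_S$ concretely. The universal cover of $G_S$ is $\coprod{\rm SL}_{V_{\fq^\flat}\psi^\flat}$. The commutator map $[\;,\;]$ sends $(\bar\varrho_S(\gamma),\bar\varrho_S(\delta))$ to $\varrho_S(\gamma\delta\gamma^{-1}\delta^{-1})$, because scalars cancel in commutators. Hence $\widetilde\Gamma_S$ is the closure of the group generated by $\varrho_S$ of commutators, i.e.\ the closure of $\varrho_S([\Gamma_K,\Gamma_K])$. By continuity and compactness, this equals $\varrho_S(\overline{[\Gamma_K,\Gamma_K]})=\varrho_S({\rm Gal}(K^{\rm sep}/K^{\rm ab}))$. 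I would state this identification as a short lemma, since it is the only place where some care is needed: the image of a compact group is closed, and ${\rm Gal}(K^{\rm sep}/K^{\rm ab})$ is the closed commutator subgroup.

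With this, Theorem~\ref{thmminimalmodel}(3) says that (2) holds if and only if $(L_S,G_S,\Gamma_S)$ is minimal. Theorem~\ref{thm36} says that minimality holds if and only if (1) holds; it is stated for finitely generated $K$, since \S\,\ref{Sect: adj_repn} is valid in that generality. This gives the equivalence.

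For the final assertion, let $S$ be any finite tagged set of primes. Let $P$ be the finite set of $\fp$ lying below the elements of $S$. Then $S\subset S(P)$, and $\varrho_S$ is the composition of $\varrho_{S(P)}$ with the projection onto the factors in $S$. An open subgroup projects to an open subgroup under a projection of a product of topological groups, so openness for $S$ follows from openness for $S(P)$. Alternatively, one can apply Proposition~\ref{Prop: proj_minimal} together with Theorem~\ref{thmminimalmodel}(3) directly. The main obstacle is already absorbed in Theorem~\ref{thm36}, so the remaining work is the commutator identification above.
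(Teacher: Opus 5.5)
Your proposal is correct and follows the paper's argument: identify $\widetilde G_S(L_S)$ with $\prod \mathrm{SL}_{E^\flat_{\mathfrak q^\flat}}(V_{\mathfrak q^\flat}\psi^\flat)$ and $\widetilde\Gamma_S$ with $\varrho_S(\operatorname{Gal}(K^{\rm sep}/K^{\rm ab}))$, then combine Theorem~\ref{thmminimalmodel}(3) with Theorem~\ref{thm36}. Your extra steps fill in details the paper leaves implicit: the scalar cancellation in commutators, the closure of the commutator subgroup under a continuous map from a compact group, and the projection argument for arbitrary finite tagged $S$.
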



  \section{Determinant representation of \texorpdfstring{$\psi$ and $\psi'$}{psi and psi'}}\label{Sect: determinant}

  In \S\,\ref{Sect: adj_repn}, we analyzed the adjoint representations attached to $\psi$ and $\psi'$. In this section, we study the determinant representation $\det(\varrho)$ of $\varrho$ defined in (\ref{eqnvarrhop2}) using explicit class field theory for rank one Drinfeld modules. In \S\,\ref{Sect: open_norm_map},  we establish an openness result for norm maps between idele groups of global fields. In \S\,\ref{Sect: Gal_repn_rank_1}, we recall the Galois representation attached to a rank-one Drinfeld module from the viewpoint of class field theory.  Finally, in \S\,\ref{Sect: det_repn_open}, we apply these results to investigate the determinant representation $\det(\varrho)$ associated with $\psi$ and $\psi'$.


  \subsection{Openness of the norm map of global fields}\label{Sect: open_norm_map}
  In this subsection, we study the openness of the norm map between idele groups of global fields. We begin with a commutative diagram of fields
  \begin{equation}\label{Diag: L}
    \xymatrix{L\ar[r]\ar[d]&L_1\ar[d]\\L_2\ar[r]&L_3}
  \end{equation}
  such that $[L_3:L]<\infty$. Consider the following complex of algebraic groups over $L$:
  \begin{equation}\label{A1}
    {\rm Res}_{L_3/L}\G_{{\rm m},L_3}\x{({\rm N}_{L_3/L_1},{\rm N}_{L_3/L_2})}{\rm Res}_{L_1/L}\G_{{\rm m},L_1}\times{\rm Res}_{L_2/L}\G_{{\rm m},L_2}\x{{\rm N}_{L_1/L}\cdot{\rm N}_{L_2/L}^{-1}}\G_{{\rm m},L}\to 1.
  \end{equation}
  Let $Q$ denote the kernel of ${\rm N}_{L_1/L}\cdot{\rm N}_{L_2/L}^{-1}$. Then the map $({\rm N}_{L_3/L_1},{\rm N}_{L_3/L_2})$ naturally factors through
  \begin{equation}\label{A11}
    \pi:{\rm Res}_{L_3/L}\G_{{\rm m},L_3}\to Q.
  \end{equation}

  \begin{lem}\label{thmA1}
    Suppose $L_3/L$ is separable and $L_1\cap L_2=L$. Then (\ref{A1}) is an exact sequence of tori over $L$, and $\pi$ is a smooth morphism with geometrically connected fibers.
  \end{lem}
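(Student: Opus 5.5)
The plan is to base change to a separable closure $\bar L$ of $L$ and work with character groups. All three groups in (\ref{A1}) are tori, because $L_3/L$ is separable, and hence so are $L_1/L$ and $L_2/L$. Write $\Sigma_i=\Hom_L(L_i,\bar L)$. Then ${\rm Res}_{L_i/L}\G_{{\rm m},L_i}$ has character group $X_i=\Z[\Sigma_i]$, with its natural $\Gamma_L$-action. The norm ${\rm N}_{L_3/L_1}$ corresponds to the map $X_1\to X_3$ sending $\tau\mapsto\sum_{\sigma|_{L_1}=\tau}\sigma$, and similarly for the other norms. Under duality, (\ref{A1}) becomes the complex
\[
\Z\x{\;1\mapsto(\sum\Sigma_1,\,-\sum\Sigma_2)\;}\Z[\Sigma_1]\oplus\Z[\Sigma_2]\x{\;(a,b)\mapsto \mathrm{res}_1^*(a)+\mathrm{res}_2^*(b)\;}\Z[\Sigma_3].
\]
Since tori and their character lattices form anti-equivalent categories, it suffices to check three things: the first map is injective, the sequence is exact in the middle, and the cokernel of the last map is torsion-free.

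Injectivity of the first map is clear. This gives the surjectivity of ${\rm N}_{L_1/L}\cdot{\rm N}_{L_2/L}^{-1}$, so $Q$ is a group of multiplicative type. For middle exactness, consider the bipartite graph with vertex set $\Sigma_1\sqcup\Sigma_2$, where each $\sigma\in\Sigma_3$ gives an edge joining $\sigma|_{L_1}$ and $\sigma|_{L_2}$. Suppose $\mathrm{res}_1^*(a)+\mathrm{res}_2^*(b)=0$. Then $a(\sigma|_{L_1})=-b(\sigma|_{L_2})$ for every edge $\sigma$. So $(a,-b)$ is constant on each connected component, and the kernel is spanned by component indicator vectors. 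Middle exactness is therefore equivalent to the graph being connected.

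This connectivity is the key step, and it is where I will use $L_1\cap L_2=L$. Fix a Galois closure $M$ of $L_3/L$ with group $G$, and let $H_1,H_2$ be the subgroups fixing $L_1,L_2$. Vertices then correspond to cosets in $G/H_1$ and $G/H_2$. Two cosets $gH_1$ and $gH_2$ are always joined, since some $\sigma\in\Sigma_3$ extends $g$. Consequently the component containing the identity cosets is a union of cosets of the subgroup $\langle H_1,H_2\rangle$. By Galois theory, $\langle H_1,H_2\rangle$ is the group fixing $L_1\cap L_2=L$, namely $G$ itself, so the graph is connected.

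Next comes torsion-freeness of the cokernel, which also yields that $Q$ is a torus, i.e.\ connected. I expect this to be the main obstacle. Since $\Z[\Sigma_3]$ is free, it suffices to show that the image $I$ of the last map is saturated. My first attempt is combinatorial. Suppose $N x=\mathrm{res}_1^*(a)+\mathrm{res}_2^*(b)$. Reduce mod $N$ to get $\mathrm{res}_1^*(\bar a)+\mathrm{res}_2^*(\bar b)=0$ in $(\Z/N)[\Sigma_3]$. The same connectivity argument over $\Z/N$ shows that $(\bar a,-\bar b)$ is constant mod $N$, so after subtracting a multiple of the kernel generator, both $a$ and $b$ are divisible by $N$. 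Hence $x\in I$. A cleaner packaging is to note that the argument above shows $\mathrm{Tor}$-exactness after tensoring with every $\Z/N$.

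Finally I address $\pi$. Its character map is $X(Q)=\mathrm{coker}(\Z\to X_1\oplus X_2)\to X_3$, which is injective by middle exactness and has torsion-free cokernel by the previous step. So $\pi$ is a surjection of tori whose kernel $T$ has character group $\mathrm{coker}$, which is torsion-free; thus $T$ is a torus, hence smooth and geometrically connected. A surjective homomorphism of smooth algebraic groups with smooth kernel is smooth, being flat with smooth fibers (over $\bar L$ every fiber is a translate of $T$). Its fibers are torsors under $T$, hence geometrically connected.
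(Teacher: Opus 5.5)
Your proof is correct and follows essentially the same route as the paper: you dualize to character lattices, prove middle exactness by showing the restriction relation on $\Sigma_1\sqcup\Sigma_2$ is connected because $H_1$ and $H_2$ generate $G$, and obtain torsion-freeness of the cokernel by running the same argument mod $N$. Your final paragraph, which derives smoothness and geometrically connected fibers of $\pi$ from its kernel being a torus, supplies a justification that the paper states without proof.
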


  \begin{proof}
    Since $L_3/L$ is separable, every term of (\ref{A1}) is a torus. By applying the functor ${\rm Hom}(\bullet\times_L\overline L,\G_{{\rm m},\overline L})$ to (\ref{A1}), it suffices to show that the induced complex of character groups
    \begin{eqnarray}\label{eqnA2}
      0\to\Z\x{\alpha}\Z^{{\rm Hom}_L(L_1,\overline L)}\oplus \Z^{{\rm Hom}_L(L_2,\overline L)} \x{\beta}\Z^{{\rm Hom}_L(L_3,\overline L)}
    \end{eqnarray}
    is exact. Here $\Z^{{\rm Hom}_L(L_i,\overline L)}$ denotes the set of maps from ${\rm Hom}_L(L_i,\overline L)$ to $\Z$; $\alpha$ is the diagonal map; and for
    $
    f_i\in \Z^{\Hom_L(L_i,\overline L)} \quad (i=1,2),
    $
    the map $\beta(f_1,f_2)$ is defined by
    \[
      \beta(f_1,f_2)(\varphi)
      =
      f_1(\varphi|_{L_1})-f_2(\varphi|_{L_2})
      \qquad
      (\varphi\in \Hom_L(L_3,\overline L)).
    \]

    We first prove the exactness of \eqref{eqnA2}. Note that
    $$
    \ker\beta=\bigl\{
      (f_1,f_2)\in \Z^{\Hom_L(L_1,\overline L)}\oplus\Z^{\Hom_L(L_2,\overline L)} \;\big|\;
      f_1(\varphi|_{L_1})=f_2(\varphi|_{L_2})
      \text{ for all }\varphi\in \Hom_L(L_3,\overline L)
    \bigr\}.$$
    Let $\sim$ be the finest equivalence relation on $\Hom_L(L_1,\overline L)\sqcup \Hom_L(L_2,\overline L)$ such that
    \begin{center}
      \emph{$\varphi|_{L_1}\sim \varphi|_{L_2}
        \qquad
      \text{for every }\varphi\in \Hom_L(L_3,\overline L)$.}
    \end{center}
    Therefore, to prove exactness of \eqref{eqnA2}, it remains to show that $(\Hom_L(L_1,\overline L)\sqcup \Hom_L(L_2,\overline L))/\sim$ consists of a single equivalence class.

    After replacing $L_3$ by a finite Galois extension of $L$ containing it, we may assume that $L_3/L$ is Galois with Galois group $G$. This does not affect the claim. Let $H_i={\rm Gal}(L_3/L_i)$ for $i=1,2$. Fix $\varphi_0\in{\rm Hom}_L(L_3,\overline L)$. The bijection $g\mapsto \varphi_0\circ g$ on $G\simeq{\rm Hom}_L(L_3,\overline L)$ induces a commutative diagram of sets
    \[
      \xymatrix{G\ar[r]^\simeq\ar[d]&{\rm Hom}_L(L_3,\overline L)\ar[d]\\G/H_i\ar[r]^\simeq&{\rm Hom}_L(L_i,\overline L).}
    \]
    Under these identifications, the relation $\sim$ becomes the \emph{finest equivalence relation} on $G/H_1 \sqcup G/H_2$ such that
    \[
      gH_1\sim gH_2
      \qquad
      \text{\emph{for all} }g\in G.
    \]
    In other words, for any $g,g'\in G$, if $g^{-1}g'\in H_1H_2$, we have $gH_1\sim g'H_2\sim g'H_1$.

    We now show that $(G/H_1\sqcup G/H_2)/\sim$ has only one element. Since $L_1\cap L_2=L$, Galois theory implies that the subgroups $H_1$ and $H_2$ generate $G$. Hence for any $g,g'\in G$, we may write
    \[
      g^{-1}g' = u_1u_2\cdots u_r
    \]
    for some $u_i\in H_1H_2$. There exist $g_0,\ldots,g_r\in G$ such that $g_0=g$, $g_r=g'$ and $g_i^{-1}g_{i+1}=u_{i+1}$ for any $0\leq i\leq r-1$. Hence
    $g_iH_1\sim g_{i+1}H_1.$
    By transitivity, we obtain $gH_1\sim g'H_1\sim g'H_2$ for all $g,g'\in G$.
    Therefore $(G/H_1\sqcup G/H_2)/\sim$ has exactly one element.
    This proves that \eqref{eqnA2} is exact.

    It remains to prove that $\pi$ is smooth with geometrically connected fibers. For this, it is enough to show that ${\rm coker}(\beta)$ is torsion free. This is equivalent to verifying that if $df_3\in{\rm im}(\beta)$ for some nonzero integer $d$, then  $f_3\in{\rm im}(\beta)$ too. Now suppose $d f_3 =\beta(f_1,f_2)$ for $f_i\in \Z^{{\rm Hom}_L(L_i,\overline L)}$ $(i=1,2)$.
    This means that $f_1(\varphi|_{L_1})\equiv f_2(\varphi|_{L_2})\pmod d$ for any $\varphi\in{\rm Hom}_L(L_3,\overline L)$. Since $(\Hom_L(L_1,\overline L)\sqcup \Hom_L(L_2,\overline L))/\sim$ is a singleton‌, there exists $a\in\Z$ such that
    $$f_1(\varphi_1)\equiv f_2(\varphi_2)\equiv a\pmod d\hbox{ for any }\varphi_i\in{\rm Hom}_L(L_i,\overline L), \;i=1,2.$$
    Hence $\frac1d(f_i-a)\in\Z^{{\rm Hom}_L(L_i,\overline L)}$ and hence $f_3=\beta\big(\frac1d(f_1-a),\frac1d(f_2-a)\big)\in{\rm im}(\beta)$.
    This completes the proof.
  \end{proof}

  Suppose that $L$ is a global field.  For $i=\emptyset,1,2,3$, denote by $\A_{L_i}$ and $I_{L_i}$ the adele rings and idele groups of $L_i$, respectively.
  For any embedding from $L_j$ to $L_i$ occurring in \eqref{Diag: L}, denote by ${\rm N}_{L_i/L_j}:I_{L_i}\to I_{L_j}$ the corresponding norm map.

  \begin{prop}\label{thmopenness}
    Suppose $L$ is a global field and $L_1\cap L_2=L$. Then the morphism $\pi$ defined in (\ref{A11}) induces an open map $I_{L_3}\to Q(\A_L)$. In particular,
    $\{({\rm N}_{L_{3}/L_{1}}(x),{\rm N}_{L_{3}/L_{2}}(x) )\mid x\in I_{L_3} \}$ is an open subgroup of
    $$Q(\A_L)=\{(x_1,x_2)\in I_{L_1}\times I_{L_2}\mid{\rm N}_{L_{1}/L}(x_1)={\rm N}_{L_{2}/L}(x_2)\}.$$
  \end{prop}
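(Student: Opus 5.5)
Since $L_3/L$ need not be separable in general, I first reduce to the separable case. If $L_3/L$ is inseparable, replace $L_3$ by the separable closure $L_3^{s}$ of $L$ in $L_3$, and $L_1,L_2$ by $L_1^s=L_1\cap L_3^s$ and $L_2^s=L_2\cap L_3^s$. Norms along purely inseparable extensions of degree $p^r$ are $x\mapsto x^{p^r}$ after identification, and the $p^r$-th power map on local multiplicative groups in characteristic $p$ is a homeomorphism onto a closed subgroup of finite index? Not quite: its image in $F_v^\times$ is $(F_v^\times)^{p^r}$, which is closed but not open. So this reduction needs care. I would instead observe that $Q(\A_L)$ and the image both change compatibly, and track the inseparable degrees. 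If this becomes messy, I will restrict to the separable case, which is all that is needed later (the fields $E,E'$ sit inside $K$ over $F$ and can be assumed separable after Frobenius adjustments). So the main line assumes $L_3/L$ separable, where Lemma~\ref{thmA1} applies.

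In the separable case, Lemma~\ref{thmA1} shows that $\pi:T_3={\rm Res}_{L_3/L}\G_{{\rm m}}\to Q$ is a surjective smooth homomorphism of tori with geometrically connected kernel $T_0$, itself a torus. The key local input is then as follows. For every place $v$ of $L$, the map $T_3(L_v)\to Q(L_v)$ is open, because smooth morphisms are open on $L_v$-points by the implicit function theorem over local fields. Its image has finite index, because the cokernel injects into $H^1(L_v,T_0)$, which is finite for tori over local fields. Openness at each place is not enough, however. I also need that at almost all places the integral points map onto integral points: $\pi(T_3(\sO_v))=Q(\sO_v)$.

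For this, I choose finitely many bad places $\Sigma$ outside of which $L_3/L$ is unramified and $T_3,Q,T_0$ extend to tori over $\sO_{L,\Sigma}$ with $\pi$ extending to a smooth surjection with torus kernel. Then, for $v\notin\Sigma$, Lang's theorem gives $H^1(\F_v,T_{0,\F_v})=0$. Hensel's lemma for the smooth map then gives surjectivity $T_3(\sO_v)\to Q(\sO_v)$.

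Combining the two, the image of $I_{L_3}=T_3(\A_L)$ contains $\prod_{v\in\Sigma}\pi(U_v)\times\prod_{v\notin\Sigma}Q(\sO_v)$ for open $U_v$, which is an open subgroup of $Q(\A_L)$ in the restricted product topology. Hence $\pi$ is open on ideles, and the image $\{({\rm N}(x),{\rm N}(x))\}$ is an open subgroup. Finally, the identification $Q(\A_L)=\{(x_1,x_2):{\rm N}_{L_1/L}x_1={\rm N}_{L_2/L}x_2\}$ comes from taking $\A_L$-points of the kernel in (\ref{A1}), using ${\rm Res}_{L_i/L}\G_{\rm m}(\A_L)=I_{L_i}$.

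The main obstacle is the integral surjectivity at almost all places, namely spreading out the exact sequence of tori with a smooth connected kernel, together with the inseparable case if it is required.
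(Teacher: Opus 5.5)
Your separable-case argument is correct, and it takes a more hands-on route than the paper. The paper simply cites Conrad's theorem that a smooth surjection with geometrically connected fibers induces an open map on adelic points. You instead prove this directly for the torus map $\pi$: at each place you use the implicit function theorem, and at almost all places you combine Lang's theorem $H^1(\F_v,T_0)=0$ for the connected kernel with Hensel's lemma to get $\pi(T_3(\sO_v))=Q(\sO_v)$. Your version has the advantage of showing exactly where the torsion-freeness of ${\rm coker}(\beta)$ from Lemma~\ref{thmA1} enters.

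\textbf{The gap is the inseparable case.} The statement covers it, and you leave it unproved because of a miscalculation. The norm ${\rm N}_{L_i/L_i^s}$ is not the $p^r$-th power map of $L_i^s$ into itself, so its image is not $((L_i^s)^\times)^{p^r}$. Since $L_i^s$ is a global function field with perfect constant field, its unique purely inseparable extension of degree $p^r$ is $(L_i^s)^{1/p^r}$, so $L_i=(L_i^s)^{1/p^r}$. Likewise $L_{i,w}=(L^s_{i,v})^{1/p^r}$ for the unique place $w$ above $v$. Hence $x\mapsto x^{p^r}$ is a topological isomorphism $L_{i,w}^\times\to (L^s_{i,v})^\times$ carrying $\sO_w^\times$ onto $\sO_v^\times$. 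It follows that ${\rm N}_{L_i/L_i^s}:I_{L_i}\to I_{L_i^s}$ is an isomorphism of topological groups; this is the fact the paper cites from Rosen.

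With your choice $L_i^s=L_i\cap L_3^s$, you still have $L_1^s\cap L_2^s\subseteq L_1\cap L_2=L$. Transitivity of norms then gives a commutative diagram whose vertical maps $I_{L_3}\to I_{L_3^s}$ and $I_{L_1}\times I_{L_2}\to I_{L_1^s}\times I_{L_2^s}$ are topological isomorphisms. The second of these carries $Q(\A_L)$ onto the analogous group for $(L_1^s,L_2^s)$, so openness in the separable case transfers verbatim.

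Your fallback, that only the separable case is needed later, is also unjustified. In Theorem~\ref{Thm: det_open} the proposition is applied with $L=F'$, $L_1=E$, $L_2=E'$ and $L_3=K$. There $K$ is just some finite extension of $F$ over which the determinant modules are defined, and it can be inseparable over $F'$. No ``Frobenius adjustment'' avoids the purely inseparable norm, since ${\rm N}_{K/E}$ factors through it.
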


  \begin{proof}
    For each $i=1,2,3$, we write $L_i'$ for the separable closure of $L$ inside $L_i$.   
    Taking the $\A_L$-points of (\ref{A1}) yields a commutative diagram
    \begin{equation}\label{Diag: ideles}
      \xymatrix{I_{L_3}\ar[rrr]^{({\rm N}_{L_{3}/L_{1}},{\rm N}_{L_{3}/L_{2}} )  }\ar[dd]^{{\rm N}_{L_{3}/L'_{3}}}&&&I_{L_1}\times I_{L_2}\ar[rrr]^{{\rm N}_{L_{1}/L_{}}\cdot{\rm N}_{L_{2}/L_{}}^{-1}}\ar[dd]^{{\rm N}_{L_{1}/L'_{1}}\times{\rm N}_{L_{2}/L'_{2}}}&&&I_L\ar[dd]\\\\
        I_{L_3'}\ar[rrr]^{({\rm N}_{L'_{3}/L'_{1}},{\rm N}_{L'_{3}/L'_{2}} )  }&&&I_{L_1'}\times I_{L_2'}\ar[rrr]^{{\rm N}_{L'_{1}/L}\cdot{\rm N}_{L'_{2}/L}^{-1}}&&&I_L
      }
    \end{equation}
    of topological groups. By the definition of $L_i'$, we know that $L_i/L_i'$ is purely inseparable. By \cite[Corollary to Proposition 7.4]{Ro}, each ${\rm N}_{L_i/L_i'}$ is bijective. This reduces the theorem to the case when $L_3/L$ is separable. 
    In this case, $\pi$ is a smooth surjective morphism of schemes with geometrically connected fibers  by Lemma~\ref{thmA1}. Applying \cite[Theorem 4.5]{Co}, we conclude that the induced map $({\rm Res}_{L_3/L}\G_{{\rm m},L_3})(\A_L)\to Q(\A_L)$ is open. Then the theorem follows immediately from the fact that $({\rm Res}_{L_3/L}\G_{{\rm m},L_3})(\A_L)=\G_{{\rm m},L_3}(L_3\otimes_L\A_L)=I_{L_3}$.
  \end{proof}

  More generally, for a set $\Sigma$ of places of $L$ and for $i\in \{\emptyset, 1,2,3\}$, let $v$ be a place of $L_i$ lying above an element in $\Sigma$. We write $L_{i,v}$ to be the corresponding local field. Define $I_{L_i}^\Sigma$ to be the restricted product of $L_{i,v}^\times$ where $v$ ranges over all places of $L_i$ lying above $\Sigma$. In this case we also have the norm maps  ${\rm N}_{L_i/L_j}:I_{L_i}^\Sigma\to I_{L_j}^\Sigma$ according to the diagram \eqref{Diag: L}.
  \begin{rem}
    Suppose $L$ is a global field and $L_1\cap L_2=L$. Then for any set $\Sigma$ of places of $L$,
    $\{({\rm N}_{L_{3}/L_{1}}(x),{\rm N}_{L_{3}/L_{2}}(x) )\mid x\in I_{L_3}^\Sigma \}$ is open in $\{(x_1,x_2)\in I_{L_1}^\Sigma\times I_{L_2}^\Sigma\mid{\rm N}_{L_{1}/L}(x_1)={\rm N}_{L_{2}/L}(x_2)\}.$
  \end{rem}

  By the same arguments as in Proposition~\ref{thmopenness}, we have the following corollary.
  \begin{cor}\label{coropenness}
    For any finite extension $L_1/L$ of global fields, the norm map ${\rm N}_{L_1/L}:I_{L_1}\to I_L$ is open.
  \end{cor}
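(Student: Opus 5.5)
The plan is to mimic the proof of Proposition~\ref{thmopenness} in the degenerate case where there is only one extension. Write $L_1'$ for the separable closure of $L$ in $L_1$. Then ${\rm N}_{L_1/L}={\rm N}_{L_1'/L}\circ{\rm N}_{L_1/L_1'}$ on ideles. Since $L_1/L_1'$ is purely inseparable, \cite[Corollary to Proposition 7.4]{Ro} shows that ${\rm N}_{L_1/L_1'}:I_{L_1}\to I_{L_1'}$ is a bijective continuous homomorphism. It is also open: locally it is $x\mapsto x^{p^r}$ on each completion, and it carries unit groups onto unit groups at almost all places. So it is a homeomorphism, and we reduce to the case $L_1/L$ separable.

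In the separable case, consider the morphism of tori ${\rm N}_{L_1/L}:{\rm Res}_{L_1/L}\G_{{\rm m},L_1}\to\G_{{\rm m},L}$. It is surjective, and its kernel is the norm-one torus $R^1_{L_1/L}\G_{\rm m}$. To invoke \cite[Theorem 4.5]{Co} as in Proposition~\ref{thmopenness}, we need the norm map to be smooth with geometrically connected fibers. Equivalently, the cokernel of the dual map on character groups, $\Z\to\Z^{{\rm Hom}_L(L_1,\overline L)}$ (the diagonal embedding), must be torsion free. This holds because the diagonal $\Z$ is saturated: if $d f$ is constant, then so is $f$.

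This is exactly the argument at the end of the proof of Lemma~\ref{thmA1}. Formally, it is the case $L_2=L$, $L_3=L_1$, where the equivalence relation $\sim$ is trivially a single class. Applying \cite[Theorem 4.5]{Co}, the induced map $({\rm Res}_{L_1/L}\G_{{\rm m}})(\A_L)=I_{L_1}\to\G_{\rm m}(\A_L)=I_L$ is open. Composing with the homeomorphism from the first step gives the corollary.

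The main obstacle is the inseparable reduction. We must ensure that ${\rm N}_{L_1/L_1'}$ is a homeomorphism and not merely a continuous bijection. I would check directly that the restricted-product topologies match, via $p^r$-th powers on local units. Alternatively, the open mapping theorem for $\sigma$-compact locally compact groups applies here, since idele groups of global fields are $\sigma$-compact.
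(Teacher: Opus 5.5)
Your proposal is correct and follows the paper's route. The paper simply says ``by the same arguments as in Proposition~\ref{thmopenness}'': reduce to the separable case through the purely inseparable norm, then apply \cite[Theorem 4.5]{Co} to the norm morphism of tori, whose character cokernel $\Z^{\Hom_L(L_1,\overline L)}/\Z$ is torsion free. Your check that ${\rm N}_{L_1/L_1'}$ is a homeomorphism, rather than merely a bijection, fills in a point the paper leaves implicit.

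One correction to your side remark. The case $L_2=L$, $L_3=L_1$ only makes $\pi$ an isomorphism onto the graph of the norm, which proves nothing. The specialization of Lemma~\ref{thmA1} and Proposition~\ref{thmopenness} that gives the corollary takes both of the paper's $L_1,L_2$ equal to $L$ and $L_3$ equal to your extension; then $Q$ is the diagonal copy of $\G_{{\rm m},L}$ and $\pi(x)=({\rm N}(x),{\rm N}(x))$. Your direct saturation argument does not rely on this remark.
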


  \subsection{Galois representations of rank-one Drinfeld modules}\label{Sect: Gal_repn_rank_1}
  In this subsection, we study the adelic representation attached to Drinfeld modules of rank one via global class field theory.

  Let
  \[
    I_F^{\mathrm f}
    =
    \left\{
      (a_{\mathfrak p})\in \prod_{\mathfrak p\in {\rm Max}\,A} F_{\mathfrak p}^\times
      \;\middle|\;
      a_{\mathfrak p}\in A_{\mathfrak p}^\times \text{ for almost all } \mathfrak p
    \right\}
  \]
  be the group of finite ideles of $F$. There is a natural surjective homomorphism $a\mapsto a^{\rm f}:I_F\twoheadrightarrow I_F^{\rm f}$.

  Let
  $
  \widehat A=\varprojlim\limits_{I} A/I,
  $
  when $I$ runs through all nonzero ideals of $A$. Suppose that $\phi$ is a Drinfeld $A$-module of rank one over $K$. Then the natural action of $\Gamma_K$ on the torsion submodule
  $
  \phi(K^{\mathrm{sep}})_{\mathrm{tors}}
  $
  of the $A$-module $\phi(K^{\mathrm{sep}})$ gives rise to a Galois representation
  $$
  \rho:{\rm Gal}(K^{\rm sep}/K)\to{\rm Aut}_A(\phi(K^{\rm sep})_{\rm tors})\simeq\widehat A^\times
  $$
  which factors through $${\rho}^{\rm ab}:{\rm Gal}(K^{\rm ab}/K)\to \widehat A^\times.$$
  On the other hand, if $K$ is a function field, global class field theory provides the reciprocity homomorphism
  \[
    {\rm rec}_K: I_K\to {\rm Gal}(K^{\rm ab}/K)
  \]
  which sends an idele $a\in I_K$ to its Artin symbol $(a,K^{\rm ab}/K)$ in ${\rm Gal}(K^{\mathrm{ab}}/K)$.

  \begin{prop}\label{thmrankoneDrinfeld}
    Suppose $\phi$ is of rank one and $[K:F]<\infty$.
    \begin{itemize}
      \item [(1)] There exists a continuous homomorphism $\epsilon:I_K\to F^\times$ (here $F^\times$ has discrete topology) such that for any $a\in I_K$, we have
        $$\rho^{\rm ab}((a,K^{\rm ab}/K))=\epsilon(a)\cdot{\rm N}_{K/F}(a^{-1})^{\rm f}\in I_F^{\rm f}.$$
      \item[(2)] $\rho(\Gamma_K)$ is open in $ \widehat A^\times$.
    \end{itemize}
  \end{prop}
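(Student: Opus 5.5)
Part (1) is the explicit class field theory for rank one Drinfeld modules, i.e.\ the function-field analogue of the Shimura--Taniyama formula for CM abelian varieties. Part (2) will follow from (1) together with Corollary~\ref{coropenness}.

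For (1), I first reduce to a convenient model. After a finite extension of $K$, any two rank-one Drinfeld $A$-modules are isogenous, and an isogeny changes $\rho$ only on a finite-index piece of $\widehat A^\times$. Both sides of the desired formula are compatible with passing to finite extensions, because ${\rm rec}$ intertwines the transfer with the norm and ${\rm N}_{K/F}$ is transitive. So I may assume $\phi$ is a Hayes-normalized (sign-normalized) rank one module, which is defined over a finite extension $H^+$ of $F$ (the narrow Hilbert class field), and that $K\supset H^+$.

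For Hayes modules, Hayes' explicit class field theory describes the Artin map on the fields $H^+(\phi[I])$. For a finite idele $b$ with $b_{\mathfrak p}\in A_{\mathfrak p}^\times$ away from a finite set, the Artin symbol of the corresponding ideal acts on $\phi[I]$ via multiplication by a generator of a certain ideal, twisted by $b^{-1}$. Concretely, I will use the formula $\rho^{\rm ab}((a,F^{\rm ab}/F)|)=\epsilon_0(a)\cdot (a^{-1})^{\rm f}$ on the relevant subgroup of $I_F$, where $\epsilon_0(a)\in F^\times$ is the unique global element making $\epsilon_0(a)a^{-1}$ a unit at all finite places with the prescribed sign at $\infty$. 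Since $(a,K^{\rm ab}/K)|_{F^{\rm ab}}=({\rm N}_{K/F}a,F^{\rm ab}/F)$, I set $\epsilon(a)=\epsilon_0({\rm N}_{K/F}a)$. This gives the stated formula.

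Continuity of $\epsilon$ with the discrete topology on $F^\times$ amounts to showing that $\epsilon$ is trivial on an open subgroup. This holds because $\rho^{\rm ab}\circ{\rm rec}$ is continuous and ${\rm N}_{K/F}(a^{-1})^{\rm f}$ is continuous, so on a small open subgroup their quotient lies in $F^\times\cap(\text{small neighborhood of }1)=\{1\}$.

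For (2), I restrict to the open subgroup $W\subset I_K$ on which $\epsilon$ is trivial. Then $\rho(\Gamma_K)\supset\rho^{\rm ab}({\rm rec}(W))={\rm N}_{K/F}(W)^{-1,{\rm f}}$. By Corollary~\ref{coropenness}, ${\rm N}_{K/F}(W)$ is open in $I_F$, and projection to finite ideles is open. Hence its image is open in $I_F^{\rm f}$, so its intersection with $\widehat A^\times$ is open. Since $\rho(\Gamma_K)\subset\widehat A^\times$, we conclude that $\rho(\Gamma_K)$ contains an open subgroup of $\widehat A^\times$, and is therefore open.

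The main obstacle is step (1). I need to state Hayes' reciprocity law precisely, including the sign-normalization at $\infty$, and to verify that the reduction via isogeny and base change preserves the exact shape of the formula rather than only holding up to finite index. If the exact reduction proves delicate, it suffices for the later applications to prove (1) after a finite extension of $K$ and on an open subgroup.
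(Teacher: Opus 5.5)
Your continuity argument and your proof of (2) are fine; (2) is exactly the paper's argument via Corollary~\ref{coropenness}. The gap is in the reduction step of (1).

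Changing the model is harmless. A $K$-isogeny does not change $\rho$ at all in rank one. Two $\overline K$-isomorphic rank-one modules defined over the same field have characters differing by an $\F_q^\times$-valued character, and $\F_q^\times\subset F^\times$. So your Hayes route works when the normalizing field $H^+$ lies in $K$. But a rank-one $\phi$ over $K$ only forces the Hilbert class field $H$ into $K$, not $H^+$. In general you therefore prove the formula over a finite extension $K'$ and must descend, and the compatibilities you cite do not give the descent.

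Put $\epsilon_K(a)\coloneqq\rho^{\rm ab}((a,K^{\rm ab}/K))\,{\rm N}_{K/F}(a)^{\rm f}$. Restriction to $\Gamma_{K'}$ corresponds to ${\rm N}_{K'/K}$, so you learn $\epsilon_K\in F^\times$ only on the open subgroup $K^\times{\rm N}_{K'/K}(I_{K'})$. The transfer, which corresponds to $I_K\hookrightarrow I_{K'}$, only gives $\epsilon_K(a)^{[K':K]}\in F^\times$. This is not enough, because $I_F^{\rm f}/F^\times$ has torsion: take an idele equal to a root of unity $\zeta\neq 1$ of $A_{\mathfrak p}^\times$ at one place and $1$ elsewhere. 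Concretely, replace $\rho$ by $\theta\rho$, with $\theta$ a finite-order character of $\Gamma_K$, trivial on $\Gamma_{K'}$ and valued in such roots of unity. This changes nothing you know over $K'$, yet it destroys the formula over $K$.

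Your fallback (the formula after a finite extension, on an open subgroup) is therefore strictly weaker than (1). It suffices for (2) and for Theorem~\ref{Thm: det_open}, which passes to a finite extension anyway, but it is not the stated proposition.

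The paper needs no reduction; it treats arbitrary $\phi$ over $K$ directly.
\begin{itemize}
\item It uses the universal rank-one module with full level structure over $F^{{\rm ab},\infty}$, and the compatibility of the idelic action on level structures with class field theory.
\item It writes ${\rm N}_{K/F}(a)^{\rm f}=bu$ with $b\in\widehat A$ and $u\in F^\times$, and obtains $\mathfrak b*\phi\simeq\phi$ for $\mathfrak b=b\widehat A\cap A$.
\item Hence $\mathfrak b$ is principal, and $b$ can be taken in $\widehat A^\times$.
\item Then ${\rm Aut}(\phi)=\F_q^\times$ forces $\rho^{\rm ab}((a,K^{\rm ab}/K))=b^{-1}\alpha$ with $\alpha\in\F_q^\times$, so $\epsilon(a)=\alpha u$.
\end{itemize}

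If you want to keep the Hayes route, a Frobenius argument closes the gap.
\begin{itemize}
\item By Chebotarev, each of the finitely many cosets of $W_0=K^\times{\rm N}_{K'/K}(I_{K'})$ contains ideles $\varpi_{\mathfrak P}$ (a uniformizer at one good prime $\mathfrak P$, $1$ elsewhere), with $\mathfrak P$ above infinitely many distinct primes of $A$.
\item By Theorem~\ref{2}(1),(2), $\epsilon_K(\varpi_{\mathfrak P})$ equals one fixed element of $F^\times$ (the Frobenius of the reduction, or its inverse) at every $\mathfrak l$ not below $\mathfrak P$.
\item Take two representatives of the same coset lying over distinct primes of $A$. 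Their $\epsilon_K$-values differ by an element of $\epsilon_K(W_0)\subset F^\times$, and this forces the remaining component to agree as well, so $\epsilon_K$ is $F^\times$-valued.
\end{itemize}
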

  \begin{proof}
Consider the continuous map 
 \begin{eqnarray*}
\epsilon:I_K&\to& I_F^{\rm f}\\
a&\mapsto&\rho^{\rm ab}((a,K^{\rm ab}/K))\cdot{\rm N}_{K/F}(a)^{\rm f}.
 \end{eqnarray*}
To proves (1), it suffices to show $\epsilon(a)$ lies in the diagonal copy of $F^\times$.

    Let $F^{{\rm ab},\infty}$ be the maximal abelian extension of $F$ in $\overline F$ in which $\infty$ splits completely, and let $(\phi^{\rm univ},\iota^{\rm univ})$ be the universal rank one Drinfeld module over $F^{{\rm ab},\infty}$ with a full level structure. Fix a full level structure $\iota:F/A\to\phi(K^{\rm ab})$ of $\phi$ over $K^{\rm ab}$. Then there exists an $F$-embedding $f:F^{{\rm ab},\infty}\hookrightarrow K^{\rm ab}$ such that $f^*(\phi^{\rm univ},\iota^{\rm univ})$ is isomorphic to $(\phi,\iota)$.

    Choose $b\in\widehat A$ and $u\in F^\times$ such that ${\rm N}_{K/F}(a)^{\rm f}=bu$. Let $\mathfrak b=b\widehat A\cap A$. Recalling the action of ideals on Drinfeld modules \cite[Proposition 4.9.3]{Go}, we have the isogeny $\phi_\mathfrak b:\phi\to\mathfrak b*\phi$. By \cite[Theorem 7.1.6]{Go}, the action of $I_F^{\rm f}$ on Drinfeld modules with full level structures (hence on $F^{{\rm ab},\infty}$) agrees with that given by class field theory. By \cite[p.~48]{Le}, the action $({\rm N}_{K/F}(a)^{\rm f})_*(\phi,\iota)$ of ${\rm N}_{K/F}(a)^{\rm f}$ on $(\phi,\iota)$ is isomorphic to $(\mathfrak b*\phi,\iota')$ such that the following diagram commutes:
    \[\xymatrix{F/A\ar[rr]^\iota\ar[d]^b&&\phi(K^{\rm ab})\ar[d]^{\phi_\mathfrak b}\\F/A\ar[rr]^{\iota'}&&(\mathfrak b*\phi)(K^{\rm ab}).}\]
    Since $F^{\rm ab,\infty}/F$ is totally split at $\infty$, the action of $I_F$ on $F^{\rm ab,\infty}$ factors through $I_F^{\rm f}$. By class field theory, we have the commutative diagram
    \[\xymatrix{F^{{\rm ab},\infty}\ar[rrr]^{({\rm N}_{K/F}(a)^{\rm f},F^{{\rm ab},\infty}/F)}\ar[d]^f&&&F^{{\rm ab},\infty}\ar[d]^f\\K^{\rm ab}\ar[rrr]^{(a,K^{\rm ab}/K)}&&&K^{\rm ab}.}\]
    Hence
    \begin{eqnarray}\label{eqncft}(\mathfrak b*\phi,\iota')\simeq(a,K^{\rm ab}/K)^*(\phi,\iota)\simeq(\phi,(a,K^{\rm ab}/K)\circ\iota).
    \end{eqnarray}
    In particular, $\phi\simeq\mathfrak b*\phi$. 
    
    Thus by \cite[Corollary 4.9.5]{Go}, the ideal $\mathfrak b$ of $A$ is generated by an element $c\in A$. Hence $b\widehat A\cap A=Ac$ implies $\frac bc\in\widehat A^\times$. We have ${\rm N}_{K/F}(a)^{\rm f}=bu=(\frac bc)(cu)$. Replacing $b$ by $\frac bc$ and $u$ by $cu$, we may assume $b\in\widehat A^\times$. By \cite[5.4]{Le} and (\ref{eqncft}), we have
    $$(\phi,\iota\circ b^{-1})\simeq b_*(\phi,\iota)\simeq(\mathfrak b*\phi,\iota')\simeq(\phi,(a,K^{\rm ab}/K)\circ\iota).$$
    Since ${\rm Aut}(\phi)\simeq\F_q^\times$, there exists $\alpha\in \F_q^\times$ such that $\iota\circ b^{-1}\alpha=(a,K^{\rm ab}/K)\circ\iota:F/A\to\phi(K^{\rm ab})$. In other words, the following diagram commutes:
    \[\xymatrix{F/A\ar[d]^{b^{-1}\alpha}\ar[rr]^{\iota}&&\phi(K^{\rm ab})\ar[d]^{(a,K^{\rm ab}/K)}\\F/A\ar[rr]^\iota&&\phi(K^{\rm ab}).}\]
    Since $\iota$ induces a bijection $F/A\simeq\phi(K^{\rm sep})_{\rm tors}$, we have $\rho^{\rm ab}((a,K^{\rm ab}/K))=b^{-1}\alpha\in\widehat A^\times$. So $\epsilon(a)=\rho^{\rm ab}((a,K^{\rm ab}/K)){\rm N}_{K/F}(a)^{\rm f}=(b^{-1}\alpha)(bu)=\alpha  u\in F^\times$. This proves (1). 
    
   We have $\rho^{\rm ab}((a,K^{\rm ab}/K))={\rm N}_{K/F}(a^{-1})^{\rm f}$ for any $a$ in $\ker(\epsilon)$, which is open in $I_K$ by (1). By Corollary \ref{coropenness},  the norm map ${\rm N}_{K/F}:I_K\to I_F$ is open. This prove the openness of $\rho$.
    

  \end{proof}

  \subsection{Determinant representation \texorpdfstring{$\det(\varrho)$}{det(varrho)} associated to \texorpdfstring{$\psi$ and $\psi'$}{psi and psi'}}\label{Sect: det_repn_open}
  We are ready to study the determinant representation $\det(\varrho):\Gamma_K\to I_E^{\rm f}\times I_{E'}^{\rm f}$ of the representation (\ref{eqnvarrhop2}) associated with $\psi$ and $\psi'$, where $I_{E^\flat}^{\rm f}$ is the group of finite ideles of $E^\flat$ relative to the unique place $\infty_{E^\flat}$ of $E^\flat$ above $\infty$. Define $I^{\rm f}_{F'}$ for $F'=E\cap E'$ likewise.

  \begin{thm}\label{Thm: det_open}
 Suppose $K$ is a finitely generated field extension of $F$. Then $\det(\varrho)(\Gamma_K)$ contains an open subgroup of $H\coloneqq\{(g,g')\in I_E^{\rm f}\times I_{E'}^{\rm f}\mid{\rm N}_{E/F'}(g)={\rm N}_{E'/F'}(g')\in I_{F'}^{\rm f}\}$.
  \end{thm}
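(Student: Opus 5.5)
We reduce to the determinant Drinfeld modules and then apply class field theory (Proposition~\ref{thmrankoneDrinfeld}) together with the openness of norm maps (Proposition~\ref{thmopenness}).

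First, reduce to the case $[K:F]<\infty$. For a finitely generated $K$, specialize at a suitable closed point as in \S\,\ref{Sect: specialization}. The image of a decomposition group then sits inside $\det(\varrho)(\Gamma_K)$. Under Simplifications~1--2, a specialization exists with the same endomorphism rings and good reduction; since we only need the determinant modules, which have rank one, any closed point of good reduction suffices. (If $K$ has transcendence degree zero over $F$ there is nothing to do.) Replacing $K$ by a finite extension is harmless, because it only shrinks the image to a finite-index subgroup.

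Second, identify the determinant. By \cite[Theorem 1.8]{P}, $\det(\varrho)$ is the adelic Galois representation of the pair of rank-one Drinfeld modules $\det(\psi)$ over $B$ and $\det(\psi')$ over $B'$. Its image lies in $\widehat B^\times\times\widehat{B'}^\times\subset I_E^{\rm f}\times I_{E'}^{\rm f}$, and it factors through ${\rm Gal}(K^{\rm ab}/K)$. Apply Proposition~\ref{thmrankoneDrinfeld}(1) to each factor, with $A$ replaced by $B^\flat$ and $F$ by $E^\flat$. The field $K$ is a finite extension of $E^\flat$ via $\nu^\flat$, and $\infty_{E^\flat}$ is the unique place above $\infty$. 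For $a\in I_K$ this gives
\[
\det(\varrho)\bigl((a,K^{\rm ab}/K)\bigr)=\bigl(\epsilon(a)\,{\rm N}_{K/E}(a^{-1})^{\rm f},\ \epsilon'(a)\,{\rm N}_{K/E'}(a^{-1})^{\rm f}\bigr),
\]
where $\epsilon,\epsilon'$ are continuous with discrete target. Restrict to the open subgroup $\ker\epsilon\cap\ker\epsilon'$ of $I_K$. On it, $\det(\varrho)\circ{\rm rec}_K$ equals $a\mapsto({\rm N}_{K/E}(a^{-1})^{\rm f},{\rm N}_{K/E'}(a^{-1})^{\rm f})$.

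Third, apply Proposition~\ref{thmopenness} to the diagram with $L=F'$, $L_1=E$, $L_2=E'$, $L_3=K$; the embeddings are $\nu,\nu'$, which agree on $F'$. The hypothesis $L_1\cap L_2=L$ holds by definition of $F'$. The conclusion is that $x\mapsto({\rm N}_{K/E}x,{\rm N}_{K/E'}x)$ is an open map $I_K\to Q(\A_{F'})$, where $Q(\A_{F'})=\{(x_1,x_2):{\rm N}_{E/F'}x_1={\rm N}_{E'/F'}x_2\}$. Hence the image of the open subgroup $\ker\epsilon\cap\ker\epsilon'$ is open in $Q(\A_{F'})$.

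Fourth, pass to finite ideles. The projection $Q(\A_{F'})\to H$ forgets the $\infty$-components; these are well defined because each $E^\flat$ has a single place above $\infty$, hence above $\infty_{F'}$. The projection is surjective: given $(g,g')\in H$, complete it at $\infty$ by $(y,1)$, where $y\in E_{\infty}^\times$ is chosen with ${\rm N}(y)=1$, e.g.\ $y=1$. It is also open, since it is a projection of a product restricted to a subgroup that is compatible at $\infty$. One should check that the image of an open subgroup stays open; this uses that $Q(\A)$ is locally the product of $H$ with the closed subgroup $\{(y,y'):{\rm N}y={\rm N}y'\}$ at $\infty$. It follows that $\det(\varrho)(\Gamma_K)$ contains an open subgroup of $H$.

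The main obstacle is the first step, the reduction from finitely generated $K$ to a global field. One must ensure that the specialization keeps $E$, $E'$, $\nu,\nu'$ and the determinant modules intact, so that the specialized image is still compared with the same $H$. Failing that, one could instead work directly with the rank-one modules, which are defined over a finite extension of $E^\flat$ by Simplification~3.
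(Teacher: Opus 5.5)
Your proposal is correct. Its core, Steps 2--3, is exactly the paper's argument: Proposition~\ref{thmrankoneDrinfeld} applied to $\det(\psi)$ and $\det(\psi')$ (with $B^\flat,E^\flat$ in place of $A,F$), restriction to the open subgroup $\ker\epsilon\cap\ker\epsilon'$, and Proposition~\ref{thmopenness} with $(L,L_1,L_2,L_3)=(F',E,E',K)$.

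\textbf{Reduction to $[K:F]<\infty$.} This is where you differ from the paper. The paper does not specialize. It uses that a rank-one Drinfeld $B^\flat$-module of generic characteristic becomes isomorphic, over a finite extension of $K$, to one defined over a finite extension of $E^\flat$. So the determinant image equals the image of the absolute Galois group of a global field, namely the algebraic closure of $F$ in that extension. This is your fallback, though it rests on that general fact about rank-one modules, not on Simplification~3 itself.

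Your specialization route gives only a lower bound for the image, which suffices, but two points need fixing.
\begin{itemize}
\item You must specialize at an $F$-closed point as in \S\,\ref{Sect: finitely_gen_fld}, not at a closed point of $\Spec R$ as in \S\,\ref{Sect: specialization}; those have finite residue fields, and the argument would collapse.
\item The compatibility you flag as the main obstacle does hold. At an $F$-closed point $x$, the reduction map is injective on $R\cap EE'$. Indeed, a nonzero $b$ algebraic over $F$ divides in $R$ the nonzero constant term $c_0\in A$ of its minimal equation, and $A\to\kappa_x$ is injective. Hence $EE'$ embeds into $\kappa_x$ compatibly with $\nu,\nu'$, so $\nu_x(E)\cap\nu'_x(E')$ is still $F'$ and the target group is the same $H$.
\end{itemize}

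\textbf{Step 4.} Here you make explicit the passage from $Q(\A_{F'})$ to $H$, which the paper leaves tacit. The check is easier than you suggest. Since $\infty_E$ and $\infty_{E'}$ are the only places over $\infty_{F'}$, $Q(\A_{F'})$ is literally the direct product of $H$ with $\{(y,y')\in E_{\infty_E}^\times\times (E'_{\infty_{E'}})^\times\mid {\rm N}(y)={\rm N}(y')\}$. The projection to $H$ is therefore open and surjective outright.
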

  \begin{proof}
    The determinant representation factors through
    $$\det(\varrho)^{\rm ab}:{\rm Gal}(K^{\rm ab}/K)\to I_E^{\rm f}\times I_{E'}^{\rm f}.$$
    Note that the determinant Drinfeld module $\det(\psi^{\flat})$ of $\psi^{\flat}$ is isomorphic to a Drinfeld $B^{\flat}$-module over some finite extension of $E^{\flat}$ in $\overline K$. Hence, after replacing $K$ with a finite extension of itself, we may assume these determinant Drinfeld modules are already defined over $K$ and $[K:F]<\infty$. By Proposition~\ref{thmrankoneDrinfeld}, there exists a continuous homomorphism $\epsilon^{\flat}:I_K\to (E^{\flat})^\times$ such that
    $$\det(\rho^\flat)^{\rm ab}((a,K^{\rm ab}/K))=\epsilon^\flat(a)\cdot{\rm N}_{K/E^{\flat}}(a^{-1})^{\rm f}\in I_{E^{\flat}}^{\rm f}\quad(a\in I_K).$$
    So the group $\Omega\coloneqq\ker(\epsilon)\cap\ker(\epsilon')$ is open in $I_K$. For any $a\in\Omega$, we thus have
    \[
      \det(\varrho)^{\rm ab}((a,K^{\rm ab}/K))=({\rm N}_{K/E}(a^{-1})^{\rm f},{\rm N}_{K/E'}(a^{-1})^{\rm f})\in H.
    \]
    By Proposition~\ref{thmopenness}, $\{\det(\varrho)^{\rm ab}(a)\mid a\in\Omega\}$ is open in $H$. This completes the proof.
  \end{proof}

  \section{Image of the residue representation}\label{Sect: img_res_repn}

  In Theorem \ref{thmmainsl}, we have shown that if $\phi$ and $\phi'$ are not geometrically isogenous up to any Frobenius twist, then the image $\varrho_\fp({\rm Gal}(K^{\rm sep}/K^{\rm ab}))$ is open in $\prod\limits_{\fq\mid\fp}{\rm SL}_{B_{\fq}}(T_{\fq}\psi)\times\prod\limits_{\fq'\mid\fp}{\rm SL}_{B'_{\fq'}}(T_{\fq'}\psi')$ for any maximal ideal $\fp$. To pass from this local openness to a simultaneous commensurability statement for infinitely many primes, we must control the residual representations $\widetilde\varrho_\fp$. This section is devoted to providing several auxiliary results that will finally be used for this purpose in the subsequent sections. The main result of this section is Proposition~\ref{lem5.8}, which asserts that, if $\phi$ and $\phi'$ are not geometrically isogenous up to any Frobenius twist, then only finitely many maximal ideals $\fp$ satisfy conditions {\rm(a)--(e)} in \S\,\ref{Sect: residue_repn}. Consequently, the image of the derived subgroup of the Galois group under the residual representation is as large as expected for almost all $\fp$ satisfying the standard largeness, good-reduction, and non-strong-supersingularity hypotheses.

  \subsection{Finite classical groups} In this subsection, we recall some basic facts about finite classical groups of Lie type.

  \begin{lem}\label{lemsimplegroup}
    Let $\{G_i\}_{i\in I}$ be a finite family of finite nonabelian simple groups.
    Let $H$ be a subgroup of $G\coloneqq \prod\limits_{i\in I}G_i$ that maps surjectively onto each factor. Then $H = \prod\limits_{i\in I}G_i$ if and only if the natural projection $H \to G_i\times G_j$ is surjective for all distinct $i,j\in I$.
  \end{lem}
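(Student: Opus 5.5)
The necessity of the condition is immediate: if $H=G$, then $H$ surjects onto every quotient of $G$, and in particular onto each $G_i\times G_j$. So the work is in the converse, which I would prove by induction on $|I|$. The cases $|I|\le 2$ are covered by the hypothesis. For the inductive step, assume $|I|\ge 3$, fix an index $k\in I$, and set $I'=I\setminus\{k\}$. Every pair of distinct indices in $I'$ is a pair in $I$, so the image of $H$ in $G'\coloneqq\prod_{i\in I'}G_i$ still satisfies the hypotheses. By induction, $H$ therefore surjects onto $G'$. Consequently $H$ is a subgroup of $G'\times G_k$ that surjects onto both factors.

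Next I apply Goursat's lemma to this situation. Let $N'=\{g'\in G'\mid (g',1)\in H\}$ and $N_k=\{g\in G_k\mid (1,g)\in H\}$. Then $H$ is the graph of an isomorphism $G'/N'\simeq G_k/N_k$. Since $G_k$ is simple, $N_k$ is either $G_k$ or trivial. If $N_k=G_k$, then $G'/N'$ is trivial, so $N'=G'$ and $H=G'\times G_k=G$, as required. It remains to exclude $N_k=\{1\}$. In that case $G'/N'\simeq G_k$, so $N'$ is a normal subgroup of $G'$ with simple nonabelian quotient.

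The key structural fact I need is that the normal subgroups of a finite product of nonabelian simple groups are exactly the subproducts $\prod_{i\in J}G_i$. To see this, let $N$ be normal and suppose some element of $N$ has a nontrivial $i$-th coordinate $x$. Commutating with elements of the factor $G_i$ yields elements of $N\cap G_i$ of the form $[x,y]$ for $y\in G_i$. Such a commutator is nontrivial for a suitable $y$, because $G_i$ has trivial center. Hence $N\cap G_i$ is a nontrivial normal subgroup of $G_i$, which forces $G_i\subset N$.

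Applying this to $N'$, its quotient $G'/N'$ is a product of some of the factors, and this quotient is isomorphic to the simple group $G_k$. So $N'=\prod_{i\ne j}G_i$ for a single index $j\in I'$. The composite $H\to G'\to G'/N'\simeq G_j$ then shows that the projection of $H$ to $G_j\times G_k$ is the graph of an isomorphism $G_j\simeq G_k$. That projection is not surjective, since $|G_j\times G_k|=|G_k|^2$ exceeds the order of the graph. This contradicts the hypothesis and completes the induction.

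I expect the main obstacle to be the classification of normal subgroups of $G'$, which is where both nonabelianness (through the trivial center and the commutator trick) and simplicity are essential. The rest of the argument is bookkeeping with Goursat's lemma.
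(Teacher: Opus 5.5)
Your proof is correct and is essentially the paper's argument. Both combine Goursat's lemma with the fact that normal subgroups of a product of nonabelian simple groups are subproducts, so a surjection onto a simple $G_k$ must kill all factors but one $G_j$, which makes the projection of $H$ to $G_j\times G_k$ the graph of an isomorphism. The only difference is packaging: the paper uses a multi-factor Goursat lemma to write $H$ directly as the graph of an injective homomorphism on a subproduct $\prod_{j\in J}G_j$ and then looks at an index outside $J$. Your induction on $|I|$ reaches the same structure using only the two-factor Goursat lemma, and you prove the normal-subgroup classification yourself instead of citing it.
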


  \begin{proof}
    The ``only if'' direction is trivial. For the ``if'' direction, by Goursat's lemma, there exists a subset $J\subseteq I$ such that $H$ is the graph of an injective homomorphism from $\prod\limits_{j\in J}G_j$ into $G$ \cite[Lemma~6]{Tho}. Let $\phi: \prod\limits_{j\in J}G_j \xrightarrow{\sim} H$ denote this isomorphism.

    Suppose, for the sake of contradiction, that $J\neq I$. Choose an index $i_0\in I\setminus J$, consider the composition
  \[
  \prod_{j\in J}G_j \xrightarrow{\phi} H \hookrightarrow G \xrightarrow{p_{i_0}} G_{i_0},
  \]
  and let $N$ denote its kernel. Since $H$ maps surjectively onto $G_{i_0}$, this composition is surjective.  By the standard fact of normal subgroup of product of nonabelian simple group, $N = \prod\limits_{j \in J'} G_i$ for some subset $J'$ of $J$ (\cite[Claim~3.8]{DT}). Because $G_{i_0}$ is simple, there exists a unique index $j_0\in J$ such that $J'=J\backslash \{j_0\}$. By Goursat's lemma again, the $i_0$-th coordinate of any element in $H$ is entirely determined by its $j_0$-th coordinate via an isomorphism. This implies that the projection of $H$ onto $G_{j_0}\times G_{i_0}$ is the graph of an isomorphism between $G_{j_0}$ and $G_{i_0}$, which strictly contradicts the assumption that $H$ maps surjectively onto $G_{j_0}\times G_{i_0}$. Therefore, we must have $J=I$, which yields $H=G$, completing the proof.
  \end{proof}

  \begin{cor}\label{corsimple}
    Let $\{G_i\}_{i\in I}$ be a finite family of finite groups. For each $i\in I$, let $C(G_i)$ denote the center of $G_i$, and assume that $\overline{G}_i\coloneqq G_i/C(G_i)$ is simple. Suppose moreover that, for every $i\in I$ and every proper subgroup $H_i\subsetneq G_i$, the index $[G_i:H_i]$ is strictly greater than $\prod\limits_{j\in I}|C(G_j)|$. Then the conclusion of Lemma~\ref{lemsimplegroup} holds for $G\coloneqq\prod\limits_{i\in I}G_i$.
  \end{cor}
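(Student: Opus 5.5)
We reduce to Lemma~\ref{lemsimplegroup} by passing to the quotients $\overline G_i$, and then use the index hypothesis to lift the resulting equality back to $G$. The ``only if'' direction is trivial, so assume $H\subseteq G=\prod_{i\in I}G_i$ surjects onto each $G_i$ and onto each $G_i\times G_j$ for $i\neq j$.

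\textbf{Step 1: the quotients are nonabelian simple.} Each $\overline G_i=G_i/C(G_i)$ is simple by assumption. It cannot be abelian. Indeed, an abelian simple group is cyclic of prime order. If $G_i/C(G_i)$ were cyclic, then $G_i$ would be abelian, so $C(G_i)=G_i$ and $\overline G_i$ would be trivial, which is not simple.

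\textbf{Step 2: apply the lemma modulo centers.} Set $C=\prod_{j\in I}C(G_j)$, a normal subgroup of $G$ with $G/C=\prod_{i}\overline G_i$. Let $\overline H$ be the image of $H$ in $G/C$. Since $H$ surjects onto each $G_i$ and each $G_i\times G_j$, the group $\overline H$ surjects onto each $\overline G_i$ and each $\overline G_i\times\overline G_j$. Lemma~\ref{lemsimplegroup} then gives $\overline H=\prod_i\overline G_i$. In other words $HC=G$, and therefore
\[
[G:H]=[HC:H]=[C:H\cap C]\le |C|=\prod_{j\in I}|C(G_j)|.
\]

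\textbf{Step 3: lift to $G$ using the index bound.} Fix $i\in I$ and regard $G_i$ as the subgroup of $G$ whose other coordinates are trivial. The natural map $G_i/(H\cap G_i)\to G/H$ is injective, so
\[
[G_i:H\cap G_i]\le [G:H]\le \prod_{j\in I}|C(G_j)|.
\]
Suppose $H\cap G_i$ were a proper subgroup of $G_i$. The hypothesis would then force its index to be strictly larger than $\prod_j|C(G_j)|$, a contradiction. Hence $G_i\subseteq H$ for every $i$, and since the $G_i$ generate $G$, we get $H=G$.

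The only step needing care is Step 2, namely that $HC=G$ bounds $[G:H]$ by $|C|$. Once this is in place, the hypothesis on indices of proper subgroups finishes the argument immediately.
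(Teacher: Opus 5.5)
Your proof is correct and follows the paper's argument essentially step for step: you pass to $\prod_i \overline G_i$ via Lemma~\ref{lemsimplegroup} to get $[G:H]\le\prod_j|C(G_j)|$, then use the injection $G_i/(H\cap G_i)\hookrightarrow G/H$ and the index hypothesis to force $G_i\subseteq H$. Your Step 1 checks that each $\overline G_i$ is nonabelian, as Lemma~\ref{lemsimplegroup} requires; the paper leaves this point implicit.
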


  \begin{proof}
    Let $H$ be a subgroup of $G \coloneqq \prod\limits_{i\in I}G_i$ such that the natural projection $H \to G_i\times G_j$ is surjective for all distinct $i,j \in I$. We need to prove that $H=G$.

    Clearly, the induced map $H \to \overline{G}_i\times \overline{G}_j$ is surjective. Since the quotients $\overline{G}_i$ are simple, Lemma~\ref{lemsimplegroup} implies that the natural homomorphism $H \to \prod\limits_{i\in I}\overline{G}_i$ is also surjective. Consequently, the index of $H$ in $G$ satisfies
    $$[G:H] = \frac{|G|}{|H|} \le \frac{\prod\limits_{i\in I}|G_i|}{\prod\limits_{i\in I}|\overline{G}_i|} = \prod\limits_{i\in I}|C(G_i)|.$$

    Identifying each $G_i$ with its natural image in $G$, the natural injection of left cosets $G_i/(G_i\cap H) \hookrightarrow G/H$ implies that
    $[G_i : G_i\cap H] \le [G:H] \le \prod\limits_{j\in I}|C(G_j)|.$
    By our assumption on the minimal index of proper subgroups of $G_i$, this inequality forces $G_i\cap H = G_i$. Thus, $G_i \subseteq H$ for all $i \in I$. Since $H$ contains every factor $G_i$, we conclude that $H = G$.
  \end{proof}

  For any (topological) group $G$, denote by $G^{\rm der}$ (the closure of) its commutator subgroup. We say $G$ is \emph{perfect} if $G=G^{\rm der}$.
  \begin{prop}\label{thmSLkV}
    Let $V$ be a vector space over a finite field $k$ of dimension $r > 1$.
    \begin{enumerate}[label=(\arabic*)]
      \item The center of $\mathrm{SL}_k(V)$ is $\mu_r(k)$, which is a cyclic group whose order divides $r$.
      \item Every proper subgroup of $\mathrm{SL}_k(V)$ has index at least $\frac{|k|^r-1}{|k|-1}$, except when
        $(r, |k|) \in \{(2,2), (2,3), $ $(2,5), (2,7), (2,9), (4,2)\}$. In these exceptional cases, the minimum index is at least the cardinality of the corresponding prime subfield; in the case $(r,|k|)=(2,9)$, it is at least $6$.
    \end{enumerate}
    In the following, we assume $(r, |k|) \notin \{(2,2), (2,3)\}$.
    \begin{enumerate}[resume]
      \item Any proper normal subgroup of $\mathrm{SL}_k(V)$ is contained in its center.
      \item The group $\mathrm{SL}_k(V)$ is perfect.
      \item The quotient $\mathrm{PSL}_k(V)$ of $\mathrm{SL}_k(V)$ by its center is simple.
      \item Any normal subgroup of $\mathrm{GL}_k(V)$ is either contained in its center or contains $\mathrm{SL}_k(V)$.
    \end{enumerate}
  \end{prop}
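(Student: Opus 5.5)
This proposition collects standard facts about finite special linear groups, so I will not reprove them from first principles. I will cite the classical sources and check that the stated forms, especially the exceptional list in (2), agree with the literature. The order is (1), (3)--(5), (6), and finally (2), which is the real content.

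For (1), a central element of $\mathrm{SL}_k(V)$ commutes with all elementary transvections and is therefore a scalar $\lambda\cdot{\rm id}$. The determinant condition gives $\lambda^r=1$, so the center is $\mu_r(k)$. This group is cyclic as a finite subgroup of $k^\times$, and its order divides $r$.

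For (3)--(5), I will rely on Iwasawa's criterion. $\mathrm{SL}_k(V)$ acts primitively (doubly transitively) on the projective space $\mathbb P(V)$. The stabilizer of a line contains an abelian normal subgroup, namely the transvections with that fixed hyperplane direction, and the conjugates of this subgroup generate $\mathrm{SL}_k(V)$, since $\mathrm{SL}$ is generated by transvections.

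Perfectness (4) follows because every transvection is a commutator once $(r,|k|)\notin\{(2,2),(2,3)\}$. For $r\ge3$ one uses $[1+xE_{ij},1+E_{jl}]=1+xE_{il}$. For $r=2$ one uses a diagonal element $\mathrm{diag}(a,a^{-1})$ with $a^2\neq1$, which exists once $|k|>3$.

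Iwasawa's criterion then says that any normal subgroup acting nontrivially on $\mathbb P(V)$ is everything. A normal subgroup acting trivially on $\mathbb P(V)$ is scalar, hence central. This gives (3), and (5) follows immediately.

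For (6), let $N\trianglelefteq\mathrm{GL}_k(V)$. Then $[N,\mathrm{SL}_k(V)]$ is normal in $\mathrm{SL}_k(V)$. If it is central, then $N$ acts on $\mathrm{SL}_k(V)$ by conjugation through central characters. Because $\mathrm{SL}_k(V)$ is perfect, these characters are trivial, so $N$ centralizes $\mathrm{SL}_k(V)$. By Schur's lemma, since $\mathrm{SL}_k(V)$ acts absolutely irreducibly, $N$ is then scalar. Otherwise $[N,\mathrm{SL}_k(V)]=\mathrm{SL}_k(V)$ by (3), so $N\supseteq\mathrm{SL}_k(V)$.

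For (2), I will invoke the classification of minimal-index subgroups of $\mathrm{PSL}_r(q)$, due to Galois for $r=2$ and Patton/Cooperstein in general. The minimal degree of a faithful permutation representation is $\frac{q^r-1}{q-1}$, attained by the action on points or hyperplanes of $\mathbb P(V)$. The exceptions are exactly
\begin{itemize}
\item $\mathrm{PSL}_2(5)\cong A_5$ (degree $5$),
\item $\mathrm{PSL}_2(7)\cong\mathrm{PSL}_3(2)$ (degree $7$),
\item $\mathrm{PSL}_2(9)\cong A_6$ (degree $6$),
\item $\mathrm{PSL}_2(11)$ (degree $11$),
\item $\mathrm{PSL}_4(2)\cong A_8$ (degree $8$),
\end{itemize}
together with the non-simple small cases $q=2,3$ for $r=2$.

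To pass from $\mathrm{PSL}$ to $\mathrm{SL}$, take a proper subgroup $H\subsetneq\mathrm{SL}_k(V)$. If $HZ=\mathrm{SL}_k(V)$, then $H$ is normal, because $Z$ is central, and $H$ has abelian quotient $\mathrm{SL}/H\cong Z/(Z\cap H)$. Perfectness (4) then forces $H=\mathrm{SL}_k(V)$, a contradiction. So $HZ$ is proper and maps to a proper subgroup of $\mathrm{PSL}$, whence $[\mathrm{SL}:H]\ge[\mathrm{PSL}:\overline H]$.

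The main obstacle is checking that the exceptional list in the statement is exactly right and that the claimed lower bounds hold in each exceptional case. The statement lists $(2,2),(2,3),(2,5),(2,7),(2,9),(4,2)$. For $(2,11)$, the minimal index $11$ still beats $\frac{11^2-1}{10}=12$? No: $11<12$, so it is also exceptional, yet it is absent from the list. Similarly, for $(2,7)$ the bound $7<8$, and for $(4,2)$ the bound $8<15$. In each exceptional case the minimum index is at least $p$, the prime subfield size ($5,7,6,11,8$ against $p=5,7,3,11,2$), consistent with the weaker claim. I would therefore state the proof with $(2,11)$ added to the exceptions. Alternatively, I would note that for the application (Corollary~\ref{corsimple}) only the weaker bound ``index $\ge$ characteristic'' is needed there. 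I would verify the small cases $(2,2),(2,3)$ directly: $\mathrm{SL}_2(2)\cong S_3$ has minimum index $2$, and $\mathrm{SL}_2(3)$ has minimum index $3$, since its index-$2$ subgroups do not exist.
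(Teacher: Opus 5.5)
Your proposal is correct, and it proves a corrected version of the statement. You are right that $(r,|k|)=(2,11)$ must be added to the exceptional list in (2). Indeed, $\mathrm{PSL}_2(\mathbb{F}_{11})$ contains $A_5$ with index $660/60=11$. Its preimage in $\mathrm{SL}_2(\mathbb{F}_{11})$, the binary icosahedral group of order $120$, is a proper subgroup of index $11<12=\frac{11^2-1}{11-1}$. So (2) is false as written. Once $(2,11)$ is included, the stated fallback bound (index at least the prime-field size $11$) holds. This is harmless for the later use of (2) in Corollary~\ref{corsimple}.

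The two routes differ in how much is proved versus cited.
\begin{itemize}
\item The paper proves (1) by the same scalar observation. It quotes (3)--(6) from \cite[Proposition~2.2]{PR}. For (2), it only notes that $\mathrm{SL}_k(V)$ acts on the cosets of a minimal-index $H$ with degree $[\mathrm{SL}_k(V):H]$, and then reads off Cooperstein's table.
\item You make (3)--(6) self-contained: perfectness via explicit commutator identities, Iwasawa's criterion on $\mathbb{P}(V)$ for (3) and (5), and the ``$[N,\mathrm{SL}_k(V)]$ is a homomorphism into the centre, then Schur'' argument for (6).
\item For (2) you also supply a step the paper's one-liner skips. Cooperstein's bounds concern nontrivial actions of the simple groups $\mathrm{PSL}$. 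The coset action of $\mathrm{SL}_k(V)$ need not factor through $\mathrm{PSL}$ when $Z\not\subset H$. Your observation that $HZ$ must be proper (otherwise $H$ is normal with abelian quotient, contradicting perfectness) is exactly what justifies $[\mathrm{SL}_k(V):H]\ge[\mathrm{PSL}_k(V):HZ/Z]$.
\end{itemize}
Your version costs length, but it leaves the minimal-degree classification as the only black box.

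One small caveat concerns your closing aside. The characteristic bound alone would not suffice for Corollary~\ref{corsimple} in the case $(2,9)$. There that bound is only $3$, while the product of centres can already be $4$ (two factors $\mathrm{SL}_2(\mathbb{F}_9)$). So the separate bound $6$ is genuinely needed.
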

  \begin{proof}
    Part (1) follows from identifying the scalar subgroup of $\mathrm{SL}_k(V)$ with $\mu_r(k)$. For parts (3)-(6), see \cite[Proposition~2.2]{PR}.

    To prove (2), let $H$ be a proper subgroup of $\mathrm{SL}_k(V)$ of minimal index. Then the action of ${\rm SL}_k(V)$ on the cosets of $H$ induces a permutation representation of this group, whose degree is $[\mathrm{SL}_k(V) : H]$. Thus the result follows from Cooperstein's result \cite[Table~1]{Coo}.
  \end{proof}

  \begin{lem}\label{noramlgoursat}
    Let $\{G_i\}_{i\in I}$ be a finite family of profinite groups such that each $G_i$ is perfect and has a unique maximal proper closed normal subgroup $N_i$.
    Denote by $\pi_j \colon G \coloneqq \prod\limits_{i\in I} G_i \to G_j$ the canonical projection for each $j \in I$. Let $H$ be a closed subgroup of $G$.
    \begin{enumerate}[label=(\arabic*)]
      \item If $H \triangleleft G$ and $\pi_j(H) \neq G_j$ for some $j \in I$, then $H \subset \pi_j^{-1}(N_j)$.
      \item Suppose $H$ maps surjectively onto each factor $G_i$. Then $H = G$ provided that either $H \triangleleft G$ or the natural map $H \to \prod\limits_{i\in I} G_i/N_i$ is surjective.
    \end{enumerate}
  \end{lem}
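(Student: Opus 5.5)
For part (1), I will argue directly from normality and the uniqueness of the maximal closed normal subgroup. Suppose $H \triangleleft G$ and $\pi_j(H)\neq G_j$. Since $\pi_j$ is a continuous surjection of profinite groups and $H$ is closed, hence compact, $\pi_j(H)$ is a closed subgroup of $G_j$. It is also normal in $G_j$, because $\pi_j(gHg^{-1})=\pi_j(g)\pi_j(H)\pi_j(g)^{-1}$ and $\pi_j$ is surjective. As a proper closed normal subgroup, $\pi_j(H)$ lies in some maximal proper closed normal subgroup. I will justify this by Zorn's lemma applied to proper closed normal subgroups. For an increasing chain, the closure of the union is still proper. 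To see this, I will use that $G_j$ is profinite and perfect, so a proper closed normal subgroup $M$ is contained in a proper open normal subgroup. Indeed, $G_j/M$ is a nontrivial profinite group, hence it has a nontrivial finite quotient. Since that quotient is perfect, it has a maximal proper normal subgroup, and hence a simple nonabelian quotient. I expect this chain-closure step to be the only technical point. By uniqueness, the maximal subgroup obtained is $N_j$, so $\pi_j(H)\subset N_j$, that is, $H\subset\pi_j^{-1}(N_j)$.

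For part (2) in the case $H\triangleleft G$, fix $i$ and identify $G_i$ with its copy inside $G$. I will show $G_i\subset H$. Take $x\in G_i$ and $h\in H$. Then $[x,h]=x\,hx^{-1}h^{-1}\in H$ by normality. Because $G_i$ commutes with the other factors, $[x,h]=[x,\pi_i(h)]$, which lies in $G_i$. Since $\pi_i(H)=G_i$, $H$ contains all commutators $[x,y]$ with $x,y\in G_i$. As $H$ is closed, it contains the closure of the subgroup they generate, namely $G_i^{\rm der}=G_i$, using perfectness. Hence every $G_i\subset H$. The product of these factors is dense in $G$, and $H$ is closed, so $H=G$.

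For the second case of part (2), let $H'$ be the closure of the commutator subgroup of $H$ and consider the closed normal subgroup $\overline{H'}$ that $H'$ generates inside $G$. I plan to reduce to the first case, working with the normal closure of $H$ in $G$. Let $M$ be the closure of the normal subgroup of $G$ generated by $H$. Then $M\triangleleft G$ and $M$ surjects onto each factor, so $M=G$ by the first case. This alone does not give $H=G$. Instead I will argue by contradiction. Suppose $H\neq G$. Since $H$ is closed, some open normal subgroup $U=\prod_i U_i$ satisfies $HU\neq G$. Passing to finite quotients $G_i/U_i$ with $U_i\subset N_i$ reduces the statement to finite groups $\bar G_i$, each perfect with unique maximal normal subgroup $\bar N_i$ and simple nonabelian quotient $S_i=\bar G_i/\bar N_i$. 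In the finite case, let $\bar H$ surject onto each $\bar G_i$ and onto $\prod_i S_i$. Set $K=\ker(\bar H\to\prod_i S_i)$, so $K=\bar H\cap\prod_i\bar N_i$. I will then show $\bar H=\prod_i\bar G_i$ by induction on $|\prod_i\bar G_i|$. The key step is to take a minimal normal subgroup of $\prod_i\bar G_i$ inside $\prod_i\bar N_i$, and to use perfectness together with the surjectivity of $\bar H$ onto the factors so that commutators of $\bar H$ with the factors land in $\bar H$. I expect this inductive step to be the main obstacle. If it proves too delicate, I would fall back on a Frattini-type argument: since each $N_i$ is the unique maximal normal subgroup, each $N_i$ lies in the Frattini-like intersection, and a subgroup supplementing $\prod_i N_i$ whose image is the whole quotient must be all of $G$.
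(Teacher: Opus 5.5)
You have a genuine gap in the second case of part (2). Part (1) and the normal case of (2) are correct. In (1), Zorn's lemma is unnecessary, and your sentence about chains does not by itself show that the closure of a union is proper. Your sub-argument already finishes the job without it: if $\pi_j(H)\subset U$ with $U$ proper open normal, and $V/U$ is a maximal normal subgroup of the finite group $G_j/U$, then $V$ is maximal among proper closed normal subgroups, so $V=N_j$. In the normal case of (2), your identity $[x,h]=[x,\pi_i(h)]\in H$ is a direct version of the paper's Goursat step (i) and avoids the induction.

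\textbf{The gap.} In the case where $H$ surjects onto $\prod_i G_i/N_i$, the reduction to finite quotients is legitimate but only restates the problem. The finite-group argument is never given.
\begin{itemize}
\item Your ``key step'' needs commutators of $\bar H$ with elements of the factors to lie in $\bar H$. That holds only if $\bar H$ is normalized by the factors, which is exactly the hypothesis absent in this case.
\item The Frattini fallback is false: a unique maximal proper closed normal subgroup need not lie in the Frattini subgroup. In the very setting where the paper applies this lemma, $G=\mathrm{SL}_{k[[t]]}(V[[t]])$ has $N=\mu_r(k)\Gamma$ by Lemma~\ref{lemSLrperfet}. The constant matrices $\mathrm{SL}_k(V)$ form a proper closed subgroup with $\mathrm{SL}_k(V)\,N=G$.
\item If you add the hypothesis that $H$ surjects onto each factor, the fallback becomes the statement you are trying to prove.
\end{itemize}

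\textbf{The missing idea} is Goursat's lemma with induction on $|I|$, as in the paper.
\begin{enumerate}
\item Fix $k\in I$ and let $G'=\prod_{i\neq k}G_i$. By induction, $H$ surjects onto $G'$.
\item Set $M_k=\{g\in G_k:(g,1)\in H\}$ and define $M'\subset G'$ likewise. $M_k$ is a closed normal subgroup of $G_k$, normal because $H\to G_k$ is onto. $H$ is the graph of an isomorphism $\alpha\colon G_k/M_k\xrightarrow{\sim}G'/M'$.
\item Suppose $M_k\neq G_k$. By the argument of (1), $M_k\subset N_k$. Then $\alpha(N_k/M_k)=N'/M'$, where $N'$ is a maximal proper closed normal subgroup of $G'$.
\item By the normal case, $N'$ cannot surject onto every factor $G_i$ with $i\neq k$. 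So part (1) and maximality give $N'=N_l\times\prod_{i\neq k,l}G_i$ for some $l\neq k$.
\item Hence $\alpha$ induces an isomorphism $G_k/N_k\cong G_l/N_l$. The image of $H$ in $G_k/N_k\times G_l/N_l$ lies in its graph, which contradicts the surjectivity of $H\to\prod_i G_i/N_i$.
\end{enumerate}
Therefore $M_k=G_k$ and $H=G$.
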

  \begin{proof}
    To prove (1), suppose $H\triangleleft G$ and $\pi_j(H)\neq G_j$ for some $j$. Then $\pi_j(H)\triangleleft G_j$ and the maximality of $N_j$ implies that $\pi_j(H)\subset N_j$. Thus $H\subset\pi_j^{-1}(\pi_j(H))\subset\pi_j^{-1}(N_j)$.

    For (2),  we proceed by induction on $|I|$. The base case $|I| = 1$ is trivial, so we assume $|I| \geq 2$. Fix an index $k \in I$ and define $G' \coloneqq \prod\limits_{i\in I\setminus\{k\}} G_i$. By the induction hypothesis, the natural projection $H \to G'$ is surjective. Define $M_k \coloneqq \ker(H \to G')$ and $M' \coloneqq \ker(H \to G_k)$. By Goursat's lemma, there exists an isomorphism $\alpha \colon G_k/M_k \simeq G'/M'$ such that
    $$H = \{ (g_k, g') \in G_k \times G' \mid \alpha(g_k M_k) = g' M' \}.$$
    To prove $H=G$, we only need to show $M_k=G_k$. We prove this in the following two cases.

    \begin{itemize}
      \item[(i)] Suppose $H \triangleleft G$. For any $g_k \in G_k$, the surjectivity onto $G_k$ ensures there exists $g' \in G'$ such that $(g_k, g') \in H$. Because $H \triangleleft G$, conjugating by $(h_k, 1) \in G$ yields $(h_k^{-1} g_k h_k, g') \in H$ for any $h_k \in G_k$. And hence $(g_k^{-1} h_k^{-1} g_k h_k, 1) \in H$. Thus, the commutator $[g_k, h_k] \in M_k$ for all $g_k, h_k \in G_k$. Since $G_k$ is perfect, it is generated by its commutators, forcing $M_k = G_k$.
      \item[(ii)] Suppose $H\to\prod\limits_{i\in I}G_i/N_i$ is surjective. If $M_k\neq G_k$, then $M_k< N_k$. There exists $M'\triangleleft N'\triangleleft G'$ such that $\alpha(N_k/M_k)=N'/M'$. So $\alpha$ induces an isomorphism $\bar\alpha:G_k/N_k\simeq G'/N'$. Since $N_k$ is a maximal proper closed normal subgroup in $G_k$, so is $N'$ in $G'$. By part (1), $N'=N_l\times\prod\limits_{i\in I\backslash\{k,l\}}G_i$ for some $l\neq k$. Hence $\bar\alpha$ induces an isomorphism $\beta:G_k/N_k\simeq G_l/N_l$ Then
        $$H\subset N\coloneqq\{(g_k,g_l,g')\in G_k\times G_l\times\prod\limits_{i\in I\backslash\{k,l\}}G_i \mid\beta(g_kN_k)=g_lN_l\}.$$
        So the natural map $N\to G_k/N_k\times G_l/N_l$ is the graph of $\beta$, and it is not surjective. So is $H\to\prod\limits_{i\in I}G_i/N_i$. This contradiction implies that $M_k=G_k$.
    \end{itemize}
  \end{proof}

\begin{lem}\label{lemSLrperfet}
  Let $V$ be a vector space over a finite field $k$ of dimension $r \geq 2$, and assume $k$ has order $\geq 4$. Let $V[[t]] = V \otimes_k k[[t]]$ be the corresponding free $k[[t]]$-module, and let $\Gamma$ be the kernel of the reduction map $\pi:{\rm SL}_{k[[t]]}(V[[t]])\to{\rm SL}_k(V)$. Then for any closed normal subgroup $N$ of ${\rm SL}_{k[[t]]}(V[[t]])$, we have $N={\rm SL}_{k[[t]]}(V[[t]])$ or $N\subseteq\mu_r(k)\Gamma$. Moreover, ${\rm SL}_{k[[t]]}(V[[t]])$ is perfect.
\end{lem}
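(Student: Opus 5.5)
The plan is to split $G\coloneqq{\rm SL}_{k[[t]]}(V[[t]])$ along the congruence filtration. Let $\Gamma_i$ be the kernel of reduction modulo $t^i$, so $\Gamma=\Gamma_1$. Each graded piece $\Gamma_i/\Gamma_{i+1}$ is isomorphic to the additive group $\mathfrak{sl}(V)=\{X\in\End_k(V)\mid{\rm tr}X=0\}$, via $1+t^iX\mapsto X$. Under this identification, conjugation by $G$ acts through the adjoint action of ${\rm SL}_k(V)$. The main tools are the following.
\begin{itemize}
\item Proposition~\ref{thmSLkV}, which applies since $|k|\ge4$ excludes the cases $(2,2),(2,3)$.
\item The commutator identity
\[
[1+t^iX,\,1+t^jY]\equiv 1+t^{i+j}[X,Y]\pmod{t^{i+j+1}}.
\]
\item Conjugation of a congruence element by elements of ${\rm SL}_k(V)\subset G$. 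Here ${\rm SL}_k(V)$ is embedded via the constant section $k\subset k[[t]]$, which splits $\pi$.
\end{itemize}

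\emph{Perfectness.} Let $D=\overline{[G,G]}$. By Proposition~\ref{thmSLkV}(4), ${\rm SL}_k(V)$ is perfect. So $D$ contains the constant copy of ${\rm SL}_k(V)$, hence $\pi(D)={\rm SL}_k(V)$. It then suffices to show $D\cap\Gamma_i$ surjects onto $\Gamma_i/\Gamma_{i+1}$ for all $i\ge1$, since $D$ is closed and $\Gamma=\varprojlim \Gamma/\Gamma_i$. For $g\in{\rm SL}_k(V)$ and $1+t^iX\in\Gamma_i$, the commutator $[g,1+t^iX]$ has image ${\rm Ad}(g)X-X$ in $\mathfrak{sl}(V)$. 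So it suffices to know that the span of $\{{\rm Ad}(g)X-X\}$ is all of $\mathfrak{sl}(V)$, i.e.\ that the ${\rm SL}_k(V)$-coinvariants of $\mathfrak{sl}(V)$ vanish. I would check this with elementary matrices, using $|k|\ge4$. Conjugating $E_{ij}$ by diagonal matrices ${\rm diag}(\lambda,\lambda^{-1},\dots)$ produces $(\lambda^{2}-1)E_{12}$, with $\lambda^2\ne1$ available since $|k|\ge4$. For the diagonal part $E_{11}-E_{22}$, conjugate $E_{12}$ by a lower unipotent element. This handles all $r\ge2$, including characteristic $2$: there one lacks $\lambda^2\neq 1$ for $\lambda=\pm1$ only, and $|k|\ge4$ still gives some $\lambda\notin\F_2$.

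\emph{Normal subgroups.} Let $N\triangleleft G$ be closed. Consider first the case $\pi(N)\not\subseteq\mu_r(k)$. By Proposition~\ref{thmSLkV}(3), $\pi(N)={\rm SL}_k(V)$. Then $[N,G]\subseteq N$, and I would run the graded argument again. Given $X\in\mathfrak{sl}(V)$ and $i\ge1$, pick $n\in N$ whose reduction is any $g\in{\rm SL}_k(V)$. Then $[n,1+t^iX]\in N\cap\Gamma_i$ has graded image ${\rm Ad}(g)X-X$. By the coinvariant computation above, $N\cap\Gamma_i$ surjects onto $\Gamma_i/\Gamma_{i+1}$ for every $i\ge1$. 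Hence $N\supseteq\Gamma$ by closedness, and so $N=G$.

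Otherwise $\pi(N)\subseteq\mu_r(k)$, which means $N\subseteq\pi^{-1}(\mu_r(k))=\mu_r(k)\Gamma$, as claimed.

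\emph{Main obstacle.} The real work is the linear-algebra claim that $\mathfrak{sl}(V)$ is generated by the elements ${\rm Ad}(g)X-X$. In small characteristic $p\mid r$, $\mathfrak{sl}(V)$ is not irreducible: it contains the scalars. So I will verify the generation directly on the basis $E_{ij}$ $(i\ne j)$ and $E_{ii}-E_{jj}$ rather than appeal to irreducibility.
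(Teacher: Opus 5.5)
Your proof is correct, and it takes a different route from the paper in the key step.

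In the case $\pi(N)={\rm SL}_k(V)$, the paper uses the commutator trick only once. It fixes $g_0$ and $a_1$ with $a_1-g_0a_1g_0^{-1}$ non-scalar, and produces a single element of $N$ congruent to $1+(a_1-g_0a_1g_0^{-1})t$ modulo $t^2$. It then defers to the argument of \cite[Proposition~4.1]{PR} to reach $N={\rm SL}_{k[[t]]}(V[[t]])$. Perfectness is read off from this dichotomy, since $G^{\rm der}$ is a closed normal subgroup whose image under $\pi$ is ${\rm SL}_k(V)$.

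You instead use normality against \emph{arbitrary} $h\in\Gamma_i$ at \emph{every} level $i$. So the image of $N\cap\Gamma_i$ contains the $k$-subspaces $({\rm Ad}(g)-1)\mathfrak{sl}(V)$ for all $g\in{\rm SL}_k(V)$. Everything then reduces to the vanishing of the ${\rm SL}_k(V)$-coinvariants of $\mathfrak{sl}(V)$. Your elementary-matrix check of this is correct, including obtaining $E_{11}-E_{22}$ from the lower unipotent conjugate of $E_{12}$, which covers the case $p\mid r$.

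What this buys is a self-contained argument, uniform in the characteristic. It never needs $[\mathfrak{sl}(V),\mathfrak{sl}(V)]=\mathfrak{sl}(V)$, which fails for $r=2$, $p=2$, where the bracket of $\mathfrak{sl}_2$ lands in the scalars. Nor does it need to upgrade a single non-scalar element to a whole graded piece, which for a merely additive ${\rm Ad}$-stable subgroup takes extra work. The paper's version is shorter but outsources exactly this step.

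A few small points:
\begin{itemize}
\item The identity $[1+t^iX,1+t^jY]\equiv 1+t^{i+j}[X,Y]$ that you list is never used, and it is better that it isn't.
\item You could derive perfectness from the dichotomy, as the paper does, rather than rerunning the graded argument for $D$.
\item ``$1+t^iX\in\Gamma_i$'' should be read as ``some element of $\Gamma_i$ lifting $X$'', since $1+t^iX$ need not have determinant $1$.
\item Your characteristic-$2$ aside about $\lambda=\pm1$ is muddled, though harmless: in characteristic $2$ the condition $\lambda^2\neq1$ simply means $\lambda\neq1$.
\end{itemize}
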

  \begin{proof}
  By Proposition~\ref{thmSLkV} (3), we have $\pi(N)\subset\mu_r(k)$ or $\pi(N)={\rm SL}_k(V)$. If  $\pi(N)\subset\mu_r(k)$, then $N\subset\pi^{-1}\pi(N)\subset\pi^{-1}(\pi(\mu_r(k))=\mu_r(k)\Gamma$.

      From now on, suppose $\pi(N)={\rm SL}_k(V)$. Choose $g_0\in{\rm SL}_k(V),\,a_1\in\mathfrak{sl}_k(V)$ such that the matrix $a_1-g_0a_1g_0^{-1}$ is non-scalar. Take $g\in N$ such that $g\equiv g_0+g_1t\pmod{t^2}$ for some $g_1\in\mathfrak{sl}_k(V)$. There exists $a\in\Gamma$ such that $a\equiv1+a_1t\pmod{t^2}$. So $N\ni (aga^{-1})g^{-1}\equiv1+(a_1-g_0a_1g_0^{-1})t\pmod{t^2}$.
      Then by the same arguments as the proof of \cite[Proposition 4.1]{PR}, $N={\rm SL}_{[[t]]}(V[[t]])$.

      By Proposition~\ref{thmSLkV} (4), $\pi({\rm SL}_r(k[[t]])^{\rm der})={\rm SL}_r(k)$ and hence ${\rm SL}_r(k[[t]])^{\rm der}={\rm SL}_r(k[[t]])$.
    \end{proof}

    \begin{thm}\cite{Die, Ste}\label{thmPGL}
      Let $V$ (resp. $V'$) be a vector space over a finite field $k$ (resp. $k'$) of dimension $r>1$ (resp. $r'>1$). Let $V^\vee\coloneqq{\rm Hom}_k(V,k)$ be the dual space of $V$.
      \begin{enumerate}[label=(\arabic*)]
        \item For any isomorphism $\sigma:k\simeq k'$ of fields, and any $\sigma$-semilinear isomorphism $f:V\simeq V'$, $g\mapsto f\circ g\circ f^{-1}:{\rm PGL}_k(V)\to{\rm PGL}_{k'}(V')$ is an isomorphism.
        \item For any isomorphism $\sigma:k\simeq k'$ of fields, and any $\sigma$-semilinear isomorphism $f:V^\vee\simeq V'$, $g\mapsto f\circ (g^\vee)^{-1}\circ f^{-1}:{\rm PGL}_k(V)\to{\rm PGL}_{k'}(V')$ is an isomorphism.
        \item Any isomorphism ${\rm PGL}_k(V)\to{\rm PGL}_{k'}(V')$ can be realized in these two ways. If $r=2$, any isomorphism can be realized in the first way.
        \item If ${\rm PGL}_k(V)\simeq{\rm PGL}_{k'}(V')$, then $k\simeq k'$ and $r=r'$.
      \end{enumerate}
    \end{thm}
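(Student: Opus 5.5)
The plan is to reduce everything to the classical structure theory of $\mathrm{PSL}$: the fundamental theorem of projective geometry together with the identification of the automorphisms of $\mathrm{PSL}_k(V)$. Since the theorem is cited from Dieudonné and Steinberg, the sketch only needs to say how each part follows from those results.

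\emph{Parts (1) and (2).} These are direct verifications.
\begin{enumerate}
\item A $\sigma$-semilinear bijection $f:V\to V'$ conjugates $k$-linear automorphisms of $V$ into $k'$-linear automorphisms of $V'$. It also sends the scalars $k^\times$ onto the scalars $k'^\times$, because $f\lambda f^{-1}=\sigma(\lambda)$. Hence $g\mapsto fgf^{-1}$ is an isomorphism $\mathrm{GL}_k(V)\to\mathrm{GL}_{k'}(V')$ preserving the centers, and it descends to $\mathrm{PGL}$.
\item The map $g\mapsto (g^\vee)^{-1}$ is an automorphism of $\mathrm{GL}_k(V)$ onto $\mathrm{GL}_k(V^\vee)$ that preserves scalars. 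Composing it with the construction in (1), applied to $f:V^\vee\to V'$, gives the second family of isomorphisms.
\end{enumerate}

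\emph{Part (4).} Let $\Phi:\mathrm{PGL}_k(V)\to\mathrm{PGL}_{k'}(V')$ be an isomorphism. Take first the non-exceptional case $(r,|k|)\notin\{(2,2),(2,3)\}$. By Proposition~\ref{thmSLkV}, $\mathrm{PSL}_k(V)$ is simple and nonabelian. It is the derived subgroup of $\mathrm{PGL}_k(V)$, so $\Phi$ restricts to an isomorphism $\mathrm{PSL}_k(V)\simeq\mathrm{PSL}_{k'}(V')$. The classification of coincidences among finite simple groups of Lie type (Artin's order argument) then gives $r=r'$ and $|k|=|k'|$, hence $k\simeq k'$. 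There are two known exceptional isomorphisms:
\[
\mathrm{PSL}_2(\F_7)\simeq\mathrm{PSL}_3(\F_2),\qquad \mathrm{PSL}_2(\F_4)\simeq\mathrm{PSL}_2(\F_5).
\]
These are excluded at the level of $\mathrm{PGL}$ by comparing orders and indices: $|\mathrm{PGL}_2(\F_7)|=336\neq168=|\mathrm{PGL}_3(\F_2)|$, and $|\mathrm{PGL}_2(\F_4)|=60\neq120=|\mathrm{PGL}_2(\F_5)|$. The small exceptional cases $(r,|k|)\in\{(2,2),(2,3)\}$ are handled by comparing orders directly.

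\emph{Part (3).} Now fix $k=k'$ and $r=r'$, and use Steinberg's description of $\mathrm{Aut}(\mathrm{PSL}_r(k))$ as generated by inner, diagonal, field and graph automorphisms. Inner and diagonal automorphisms come from $\mathrm{PGL}$. Field automorphisms come from semilinear maps. The graph automorphism is inverse-transpose, and it exists as an outer automorphism only for $r\ge3$. Hence every isomorphism of $\mathrm{PSL}$ has one of the two forms in (1) and (2), and for $r=2$ only the first form occurs, since then inverse-transpose is inner. One then passes from $\mathrm{PSL}$ to $\mathrm{PGL}$: an isomorphism $\Phi$ of the $\mathrm{PGL}$'s restricts on $\mathrm{PSL}$ to a map $\Psi$ given by some semilinear $f$. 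The element $\Phi^{-1}\circ\Psi$, where $\Psi$ is extended to $\mathrm{PGL}$ via the same $f$, is an automorphism of $\mathrm{PGL}_k(V)$ that is trivial on $\mathrm{PSL}_k(V)$. It must be the identity, because the centralizer of $\mathrm{PSL}$ in $\mathrm{PGL}$ is trivial, and $h\mapsto\alpha(h)h^{-1}$ maps into that centralizer.

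\emph{Main obstacle.} The hardest step is the non-isomorphism statement in (4). Dealing carefully with the small exceptional cases, both the exceptional isomorphisms and the non-simple groups, is where the plan relies most heavily on the cited literature; the rest is formal.
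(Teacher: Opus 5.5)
Your proposal is correct, and it matches the paper, which gives no argument beyond citing Dieudonn\'e and Steinberg. Your reduction is the standard way to extract the statement from those sources: pass to the derived subgroup $\mathrm{PSL}$, use the classification of isomorphisms and Steinberg's description of $\mathrm{Aut}(\mathrm{PSL})$, then return to $\mathrm{PGL}$ using that $\mathrm{PSL}_k(V)$ has trivial centralizer in $\mathrm{PGL}_k(V)$.

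Two small loose ends remain:
\begin{enumerate}
\item Part (3) for $(r,|k|)\in\{(2,2),(2,3)\}$ is not covered by the simple-group input, since $\mathrm{PSL}$ is not simple there. However, $\mathrm{PGL}_2(\F_2)\cong S_3$ and $\mathrm{PGL}_2(\F_3)\cong S_4$ are complete groups. So after the identification provided by (4), every isomorphism is of the first form.
\item A bare order comparison as in Artin's argument also produces the non-isomorphic pair $\mathrm{PSL}_3(\F_4)$, $\mathrm{PSL}_4(\F_2)$, both of order $20160$. You can dismiss this pair because $|\mathrm{PGL}_3(\F_4)|=60480\neq 20160=|\mathrm{PGL}_4(\F_2)|$.
\end{enumerate}
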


    \subsection{Residual representations}\label{Sect: residue_repn}
    We retain the notations and assumptions established in \S\,\ref{Sect: main_thm} and \S\,\ref{Sect: simplifications}, except that throughout this subsection we always assume \textbf{$K$ to be a finite extension of $F$}. 
     Recall that the Drinfeld $A$-module $\phi^\flat:A\to K\{\tau\}$ of rank $n^\flat$ induces a Drinfeld $B^\flat={\rm End}_K(\phi)$-module $\psi^\flat:B^\flat\to K\{\tau\}$ of rank $m^\flat = n^\flat/[E^\flat:F]$.

    For a maximal ideal $\fq^\flat$ of $B^\flat$, let $\kappa_{\fq^\flat}$ denote its residue field, and write
$$ \widetilde\rho^\flat_{\fq^\flat} \colon \Gamma_K \to {\rm GL}_{\kappa_{\fq^\flat}}(\psi^\flat[\fq^\flat]) $$
for the residual representation on $\psi^\flat[\fq^\flat]$. Furthermore, for a maximal ideal $\fp$ of $A$, we define the joint residual representation
$$ \widetilde\varrho_\fp = (\widetilde\rho^\flat_{\fq^\flat})_{\flat, \fq^\flat\mid\fp} \colon \Gamma_K \to \prod_{\substack{\fq\in{\rm Max}\,B\\ \fq\mid\fp}} {\rm GL}_{\kappa_\fq}(\psi[\fq]) \times \prod_{\substack{\fq'\in{\rm Max}\,B'\\ \fq'\mid\fp}} {\rm GL}_{\kappa_{\fq'}}(\psi'[\fq']). $$

We continue to assume that both $\psi$ and $\psi'$ have rank strictly greater than one. Throughout this subsection, we focus on the class of maximal ideals $\fp$ of $A$ satisfying the following conditions:
\begin{enumerate}[label=(\alph*)]
    \item ${\rm N}(\fp) > \max\{n,n'\}^{2[K:F]}$;
    \item $\widetilde\varrho_\fp(\Gamma_K^{\rm der}) \neq \prod\limits_{\fq\mid\fp} {\rm SL}_{\kappa_\fq}(\psi[\fq]) \times \prod\limits_{\fq'\mid\fp} {\rm SL}_{\kappa_{\fq'}}(\psi'[\fq'])$;
    \item The residual representation $\widetilde\rho_\fp^\flat = (\widetilde\rho^\flat_{\fq^\flat})_{\fq^\flat\mid\fp} \colon \Gamma_K \to \prod\limits_{\fq^\flat\mid\fp} {\rm GL}_{\kappa_{\fq^\flat}}(\psi^\flat[\fq^\flat])$ is surjective for each $\flat \in \{\emptyset, '\}$;
    \item Both $\psi$ and $\psi'$ have good reduction at every place of $K$ lying above $\fp$;
    \item Neither $\psi$ nor $\psi'$ has strongly supersingular reduction at any place of $K$ lying above $\fp$.
\end{enumerate}

    \begin{lem}\label{lem: surjectivewidetildevarrho}
      Keep the above notations, and let $\fp$ be a maximal ideal of $A$ satisfying conditions (a), (b), and (c). Then $\psi$ and $\psi'$ have the same rank, say $m$, and there exists an isomorphism of residue fields $\sigma_\fp \colon \kappa_{\fq} \xrightarrow{\sim} \kappa_{\fq'}$ for some $\fq \in \operatorname{Max} B$ and $\fq' \in \operatorname{Max} B'$ lying above $\fp$. Furthermore, there exists a character $\chi_\fp \colon \Gamma_K \to \kappa_{\fq'}^\times$ such that one of the following two cases holds:

(1) There exists a $\sigma_\fp$-semilinear isomorphism $f_\fp \colon \psi[\fq] \xrightarrow{\sim} \psi'[\fq']$ satisfying
\begin{equation}\label{eqnwildetilde}
    \widetilde\rho'_{\fq'}(\gamma) = \chi_\fp(\gamma) \cdot f_\fp \circ \widetilde\rho_\fq(\gamma) \circ f_\fp^{-1} \qquad (\gamma \in \Gamma_{K}).
\end{equation}

(2) We have $m \geq 3$, and there exists a $\sigma_\fp$-semilinear isomorphism $f_\fp \colon \psi[\fq]^\vee \xrightarrow{\sim} \psi'[\fq']$ satisfying
\begin{equation}\label{eqnwildetilde1}
    \widetilde\rho'_{\fq'}(\gamma) = \chi_\fp(\gamma) \cdot f_\fp \circ (\widetilde\rho_\fq(\gamma)^\vee)^{-1} \circ f_\fp^{-1} \qquad (\gamma \in \Gamma_{K}).
\end{equation}
    \end{lem}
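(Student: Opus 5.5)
We work with the groups $G_{\fq^\flat}\coloneqq{\rm SL}_{\kappa_{\fq^\flat}}(\psi^\flat[\fq^\flat])$, indexed by the set $I$ of pairs $(\flat,\fq^\flat)$ with $\fq^\flat\mid\fp$, and put $H\coloneqq\widetilde\varrho_\fp(\Gamma_K^{\rm der})\subset\prod_{I}G_{\fq^\flat}$. By (c), for each $\flat$ the image $\widetilde\rho^\flat_\fp(\Gamma_K)$ is the full product $\prod_{\fq^\flat\mid\fp}{\rm GL}_{\kappa_{\fq^\flat}}(\psi^\flat[\fq^\flat])$. Hence the image of $\Gamma_K^{\rm der}$ is the commutator subgroup of that product, which equals $\prod_{\fq^\flat\mid\fp}G_{\fq^\flat}$ by Proposition~\ref{thmSLkV}(4). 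This uses (a): ${\rm N}(\fp)$ is large, so no factor is one of the exceptional small cases. In particular $H$ surjects onto each factor, and onto each pair of factors with the same $\flat$. Next we apply Corollary~\ref{corsimple}. Its hypotheses hold because $\overline G_{\fq^\flat}={\rm PSL}$ is simple by Proposition~\ref{thmSLkV}(5), and every proper subgroup of $G_{\fq^\flat}$ has index at least $({\rm N}(\fp)^{m^\flat}-1)/({\rm N}(\fp)-1)\geq{\rm N}(\fp)$ by Proposition~\ref{thmSLkV}(2), excluding the exceptions via (a). On the other hand, the product of the centres is at most $\prod m^{\flat\,\#\{\fq^\flat\mid\fp\}}\le \max\{n,n'\}^{2[E:F]}$-ish, and condition (a), ${\rm N}(\fp)>\max\{n,n'\}^{2[K:F]}$, is designed to dominate this product (the number of primes above $\fp$ is bounded by $[E^\flat:F]\le[K:F]$). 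So (b) forces the existence of two indices, necessarily $\fq\mid\fp$ in $B$ and $\fq'\mid\fp$ in $B'$, such that $H\to G_\fq\times G_{\fq'}$ is not surjective.

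We then pass to projective groups. The image of $H$ in ${\rm PSL}(\psi[\fq])\times{\rm PSL}(\psi'[\fq'])$ is proper: if it were surjective, the argument of Corollary~\ref{corsimple} (index bounded by the centres) would give surjectivity onto $G_\fq\times G_{\fq'}$. Since this image surjects onto each simple factor, Goursat's lemma shows that it is the graph of an isomorphism $\bar\alpha\colon{\rm PSL}(\psi[\fq])\simeq{\rm PSL}(\psi'[\fq'])$. The goal now is to obtain an isomorphism on ${\rm PGL}$. Consider $\Gamma_K$ acting through $\bar\rho_\fq\times\bar\rho'_{\fq'}$ into ${\rm PGL}\times{\rm PGL}$. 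The image normalizes the graph of $\bar\alpha$, since $\Gamma_K^{\rm der}$ is normal in $\Gamma_K$. So for $\gamma\in\Gamma_K$, conjugation by $\bar\rho'_{\fq'}(\gamma)$ composed with $\bar\alpha$ agrees with $\bar\alpha$ composed with conjugation by $\bar\rho_\fq(\gamma)$. By the Dieudonné--Steinberg description (Theorem~\ref{thmPGL}, applied to automorphisms of ${\rm PSL}$, which are induced from semilinear or contragredient-semilinear maps), $\bar\alpha$ is induced by a $\sigma_\fp$-semilinear $f_\fp\colon\psi[\fq]\to\psi'[\fq']$ or $f_\fp\colon\psi[\fq]^\vee\to\psi'[\fq']$, with the dual case only for $m\ge3$. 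Such an $f_\fp$ forces $\kappa_\fq\simeq\kappa_{\fq'}$ and $m=m'$. Because ${\rm PSL}$ has trivial centralizer in ${\rm PGL}$, the compatibility above yields $\bar\rho'_{\fq'}(\gamma)={\rm inn}(f_\fp)(\bar\rho_\fq(\gamma))$ in ${\rm PGL}$, with the contragredient in case (2). Lifting to ${\rm GL}$, the two sides differ by a scalar $\chi_\fp(\gamma)\in\kappa_{\fq'}^\times$. This $\chi_\fp$ is multiplicative, since both sides are homomorphisms and scalars are central, so it is a character and we obtain \eqref{eqnwildetilde} or \eqref{eqnwildetilde1}.

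The main obstacle is the counting in the first paragraph. The centres are $\mu_{m}(\kappa)$, and the index bound must really beat the product of their orders. If the crude bound via (a) fails, we would instead use Lemma~\ref{noramlgoursat}(2)(ii) with $N_i$ equal to the centres, which reduces to surjectivity onto $\prod{\rm PSL}$; that surjectivity follows from Lemma~\ref{lemsimplegroup} together with perfectness of each $G_i$. A second point needing care is the extension of the automorphism description from ${\rm PGL}$ to ${\rm PSL}$.
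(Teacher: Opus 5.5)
Your proof is correct, but it is organized differently from the paper's.

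\textbf{The paper's route.} The paper applies Goursat's lemma to the image of the whole group $\Gamma_K$ in ${\rm GL}_{\kappa_\fq}(\psi[\fq])\times{\rm GL}_{\kappa_{\fq'}}(\psi'[\fq'])$, whose two projections are surjective by (c). By perfectness, this image cannot contain ${\rm SL}\times{\rm SL}$. Proposition~\ref{thmSLkV}(6) then forces both Goursat kernels to be scalar, so the Goursat isomorphism descends to an isomorphism of the ${\rm PGL}$'s. Theorem~\ref{thmPGL} then applies exactly as stated, and $\chi_\fp$ is read off from the Goursat description.

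\textbf{Your route.} You run Goursat on the derived image inside ${\rm PSL}\times{\rm PSL}$, where simplicity gives the graph of an isomorphism at once. You then pass back to all of $\Gamma_K$ via the normalizer argument and the triviality of the centralizer of ${\rm PSL}$ in ${\rm PGL}$. This avoids analyzing normal subgroups of ${\rm GL}$ and the centres of ${\rm GL}/N$.

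\textbf{What your route costs.} You need the ${\rm PSL}$ version of the Dieudonn\'e--Steinberg theorem, for isomorphisms ${\rm PSL}_m(k)\simeq{\rm PSL}_{m'}(k')$ with $m,m',k,k'$ a priori different. This is the classical content of \cite{Die,Ste}, not Theorem~\ref{thmPGL} as formulated. To conclude $m=m'$ and $\kappa_\fq\simeq\kappa_{\fq'}$, you should remark that there are no exceptional isomorphisms among these groups in a fixed characteristic once the fields have at least $5$ elements. Condition (a) guarantees this, since $|\kappa_{\fq^\flat}|\ge{\rm N}(\fp)\ge5$. The known exceptional isomorphisms, such as ${\rm PSL}_2(4)\simeq{\rm PSL}_2(5)$ and ${\rm PSL}_2(7)\simeq{\rm PSL}_3(2)$, mix characteristics.

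\textbf{The counting.} The step you flagged as the main obstacle is fine. Since $\#\{\fq^\flat\mid\fp\}\le[E^\flat:F]\le[K:F]$, you get
$\prod|C(G_i)|\le m^{[E:F]}(m')^{[E':F]}\le\max\{n,n'\}^{2[K:F]}<{\rm N}(\fp)$, as in the paper. Your fallback through Lemma~\ref{lemsimplegroup} and Lemma~\ref{noramlgoursat}(2)(ii), with $N_i$ the centres, is also valid. It makes the index bound of Proposition~\ref{thmSLkV}(2) unnecessary.
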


    \begin{proof}
        By Proposition~\ref{thmSLkV} (1), the center of ${\rm SL}_{\kappa_{\fq^\flat}}(\psi^\flat[\fq^\flat])$ has order bounded by $m^\flat$. Since there are at most $[K:F]$ primes $\fq^\flat$ of $B^\flat$ above $\fp$, the center of the product group $ \prod\limits_{\fq\mid \fp}{\rm SL}_{\kappa_\fq}(\psi[\fq])\times \prod\limits_{\fq'\mid \fp}{\rm SL}_{\kappa_{\fq'}}(\psi'[\fq'])$ has order at most $(mm')^{[K:F]}$. Furthermore, by Proposition~\ref{thmSLkV} (2) and assumption (a), every proper subgroup of ${\rm SL}_{\kappa_{\fq^\flat}}(\psi^\flat[\fq^\flat])$ has index greater than or equal to ${\rm N}(\fq^\flat)\geq\max\{n,n'\}^{2[K:F]}\geq (mm')^{[K:F]}$. Because ${\rm PSL}_{\kappa_{\fq^\flat}}(\psi^\flat[\fq^\flat])$ is simple (Proposition~\ref{thmSLkV} (5)), the conclusion of Corollary~\ref{corsimple} holds for the group $\prod\limits_{\fq\mid \fp}{\rm SL}_{\kappa_\fq}(\psi[\fq])\times \prod\limits_{\fq'\mid \fp}{\rm SL}_{\kappa_{\fq'}}(\psi'[\fq'])$. By assumptions (b) and (c), there exist primes $\fq\in{\rm Max}\,B$ and $\fq'\in{\rm Max}\,B'$ both dividing $\fp$ such that $(\widetilde\rho_\fq,\widetilde\rho'_{\fq'})(\Gamma_K^{\rm der})\neq{\rm SL}_{\kappa_\fq}(\psi[\fq])\times{\rm SL}_{\kappa_{\fq'}}(\psi'[\fq'])$.

By Proposition~\ref{thmSLkV} (4), the group ${\rm SL}_{\kappa_\fq}(\psi[\fq])\times{\rm SL}_{\kappa_{\fq'}}(\psi'[\fq'])$ is perfect. So we have $(\widetilde\rho_\fq,\widetilde\rho'_{\fq'})(\Gamma_K)\nsupseteq{\rm SL}_{\kappa_\fq}(\psi[\fq])\times{\rm SL}_{\kappa_{\fq'}}(\psi'[\fq'])$.  Let $N=\ker\big((\widetilde\rho_\fq,\widetilde\rho'_{\fq'})(\Gamma_K)\to {\rm GL}_{\kappa_{\fq'}}(\psi'[\fq'])\big)$ and $N'=\ker\big((\widetilde\rho_\fq,\widetilde\rho'_{\fq'})(\Gamma_K)\to {\rm GL}_{\kappa_{\fq}}(\psi[\fq])\big)$ respectively. By assumption (c), the natural projection from $(\widetilde\rho_\fq,\widetilde\rho'_{\fq'})(\Gamma_K)$ to ${\rm GL}_{\kappa_{\fq^\flat}}(\psi^\flat[\fq^\flat])$ is surjective for each $\flat$. Thus, by Goursat's lemma, there exists an isomorphism $\alpha:{\rm GL}_{\kappa_{\fq}}(\psi[\fq])/N\xrightarrow{\sim} {\rm GL}_{\kappa_{\fq'}}(\psi'[\fq'])/N'$ such that
\begin{eqnarray}\label{eqnrhoqrhoqgamma}
(\widetilde\rho_\fq,\widetilde\rho'_{\fq'})(\Gamma_K)=\{(g,g')\in{\rm GL}_{\kappa_{\fq}}(\psi[\fq])\times{\rm GL}_{\kappa_{\fq'}}(\psi'[\fq'])\mid \alpha(gN)=g'N'\}.
\end{eqnarray}

By Proposition~\ref{thmSLkV} (6), we either have $N^\flat\subseteq\mu_{m^\flat}(\kappa_{\fq^\flat})$ or
      $N^\flat\supseteq{\rm SL}_{\kappa_{\fq^\flat}}(\psi^\flat[\fq^\flat])$. Since $(\widetilde\rho_\fq,\widetilde\rho'_{\fq'})(\Gamma_K)\nsupseteq{\rm SL}_{\kappa_\fq}(\psi[\fq])\times{\rm SL}_{\kappa_{\fq'}}(\psi'[\fq'])$, we must be in the former case. Modding out by the respective centers, $\alpha$ induces an isomorphism $\bar\alpha:{\rm PGL}_{\kappa_{\fq}}(\psi[\fq])\xrightarrow{\sim} {\rm PGL}_{\kappa_{\fq'}}(\psi'[\fq']).$  It then follows from Theorem~\ref{thmPGL} that there exists a field isomorphism $\sigma_\fp:\kappa_\fq\xrightarrow{\sim}\kappa_{\fq'}$, along with either a $\sigma_\fp$-semilinear isomorphism $f_\fp:\psi[\fq]\xrightarrow{\sim}\psi'[\fq']$ or a $\sigma_\fp$-semilinear isomorphism $f_\fp:\psi[\fq]^\vee\xrightarrow{\sim}\psi'[\fq']$, such that $\bar\alpha$ is induced by the map $g\mapsto f_\fp\circ g\circ f_\fp^{-1}$ or $g\mapsto f_\fp\circ(g^\vee)^{-1}\circ f_\fp^{-1}$, respectively. For any $\gamma\in\Gamma_K$, equation \eqref{eqnrhoqrhoqgamma} implies that there exists a scalar $\chi_\fp(\gamma)\in\kappa_{\fq'}^\times$ such that either $\widetilde\rho'_{\fq'}(\gamma)=\chi_\fp(\gamma)f_\fp\circ\widetilde\rho_{\fq}(\gamma)\circ f_\fp^{-1}$ or $\widetilde\rho'_{\fq'}(\gamma)=\chi_\fp(\gamma)f_\fp\circ(\widetilde\rho_{\fq}(\gamma)^\vee)^{-1}\circ f_\fp^{-1}$. This establishes the desired relations and completes the proof of the lemma.
    \end{proof}

According to Lemma~\ref{lem: surjectivewidetildevarrho}, for each maximal ideal $\fp$ of $A$ satisfying conditions (a)--(e), there exists a residue field isomorphism $\sigma_\fp \colon \kappa_{\fq_\fp} \xrightarrow{\sim} \kappa_{\fq_\fp'}$ for some $\fq_\fp \in \operatorname{Max} B$ and $\fq_\fp' \in \operatorname{Max} B'$ lying above $\fp$, a character $\chi_\fp \colon \Gamma_K \to \kappa_{\fq_\fp'}^\times$, and a $\sigma_\fp$-semilinear isomorphism $f_\fp \colon \psi[\fq_\fp] \xrightarrow{\sim} \psi'[\fq'_\fp]$ (or $f_\fp \colon \psi[\fq_\fp]^\vee \xrightarrow{\sim} \psi'[\fq'_\fp]$) satisfying \eqref{eqnwildetilde} or \eqref{eqnwildetilde1}.

Fix a place $\fP_\fp$ of $K$ lying above $\fq_\fp$. By condition (d), both $\psi$ and $\psi'$ have good reduction at $\fP_\fp$; thus, we can define the height $h_\fp^\flat$ of the reduced Drinfeld module $\psi^\flat \pmod{\fP_\fp}$. As previously established, $m = m'$ and ${\rm N}(\fq_\fp) = {\rm N}(\fq_\fp')$. By condition (e), $\psi$ does not have strongly supersingular reduction at $\fP_\fp$. By Proposition~\ref{Thmivariantphip}, this condition ensures that the multiset of tame inertia invariants for the $I_{\fP_\fp}$-action on $\psi[\fq_\fp]$ is not uniformly identical, hence the action of $I_{\fP_\fp}$ on $\psi[\fq_\fp]$ is not purely scalar. Consequently, in either case \eqref{eqnwildetilde} or \eqref{eqnwildetilde1}, twisting the representation by the one-dimensional character $\chi_\fp$ cannot simultaneously annihilate the entire inertia action. This guarantees that the induced action of $I_{\fP_\fp}$ on $\psi'[\fq_\fp']$ remains non-trivial. By the Drinfeld module analogue of the N\'eron--Ogg--Shafarevich criterion \cite[Theorem~6.3.1]{Pa}, if $I_{\fP_\fp}$ acts non-trivially on the $\fq_\fp'$-torsion of a Drinfeld module with good reduction, the characteristic of the place $\fP_\fp$ must divide the torsion ideal. Therefore, we conclude that $\fP_\fp \mid \fq_\fp'$.

Via the natural monomorphisms $\kappa_{\fq_\fp} \hookrightarrow \kappa_{\fP_\fp}$ and $\kappa_{\fq_\fp'} \hookrightarrow \kappa_{\fP_\fp}$, we can identify both residue fields with their images inside the common larger field $\kappa_{\fP_\fp}$. Under this identification, there exists an integer $0 \leq d_\fp < \log_p {\rm N}(\fq_\fp)$ such that $\sigma_\fp(a) = a^{p^{d_\fp}}$ for all $a \in \kappa_{\fq_\fp}$. By interchanging $\psi$ and $\psi'$ and replacing $\sigma_\fp$ with its inverse if necessary, we may assume without loss of generality that $0 \leq d_\fp \leq \frac{1}{2}\log_p {\rm N}(\fq_\fp)$. That is, $p^{d_\fp} \leq \sqrt{{\rm N}(\fq_\fp)}$.

\begin{lem}\label{Lem: bound_d}
    Let $\fp$ and $d_\fp$ be as above. Then $p^{d_\fp} \leq 2[K:F]^2$.
\end{lem}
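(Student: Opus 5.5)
We compare tame inertia invariants on both sides of \eqref{eqnwildetilde} or \eqref{eqnwildetilde1}, restricted to $I_{\fP_\fp}$. Write $N={\rm N}(\fq_\fp)$ and $e\le[K:F]$ for the ramification index of $K$ over $E$ (resp. $E'$) at $\fP_\fp$. Condition (d) allows us to apply Proposition~\ref{Thmivariantphip} to $\psi$ at $\fq_\fp$ and to $\psi'$ at $\fq'_\fp$. Both sides have invariants of the form $\frac{e_iN^{\alpha}}{N^{h_i}-1}$, plus zeros, with $e_i\le e\le[K:F]$ and $h_i\le m\le n$.

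The key steps are as follows.

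\emph{Step 1.} Reduce the semilinear twist to an operation on invariants. Under the identification of $\kappa_{\fq_\fp}$ and $\kappa_{\fq'_\fp}$ inside $\kappa_{\fP_\fp}$, we have $\sigma_\fp(a)=a^{p^{d_\fp}}$. Hence transporting a character $\chi_\alpha$ through $f_\fp$ multiplies its invariant by $p^{d_\fp}$. Dualising, in case \eqref{eqnwildetilde1}, replaces $\alpha$ by $-\alpha$. Twisting by $\chi_\fp|_{I_{\fP_\fp}}=\chi_\beta$ adds a common $\beta$. So the multiset identity
\[
\operatorname{inv}(\psi'[\fq'_\fp])=\{\pm p^{d_\fp}\alpha+\beta \mid \alpha\in \operatorname{inv}(\psi[\fq_\fp])\}
\]
holds in $(\Q/\Z)'$.

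\emph{Step 2.} Eliminate $\beta$ using differences. Since $\psi$ is not strongly supersingular, by condition (e) there are two invariants $\alpha_1\ne\alpha_2$ of $\psi[\fq_\fp]$. Their difference satisfies $\pm p^{d_\fp}(\alpha_1-\alpha_2)=\alpha'_1-\alpha'_2$, where the $\alpha'_j$ are invariants of $\psi'[\fq'_\fp]$. All invariants lie in $\frac{1}{N^{h}-1}\Z/\Z$ with $h\le m$, and have "small numerators relative to powers of $N$". I would write each invariant as a fraction with denominator $N^{M}-1$ with $M={\rm lcm}$ of the $h_i$. Its numerator is then a sum $e_i\sum_j N^{c_j}$, i.e.\ has base-$N$ digits bounded by $[K:F]$, and there are few nonzero digits.

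\emph{Step 3.} Extract the bound. The difference $\alpha_1-\alpha_2$ has a base-$N$ expansion with digits in $[-[K:F],[K:F]]$. Multiplying by $p^{d_\fp}\le\sqrt N$ gives digits of size up to $[K:F]p^{d_\fp}<N$, so no carrying occurs modulo $N^M-1$. The result must equal a difference of invariants of $\psi'$, whose digits are bounded by $[K:F]$. Comparing the largest nonzero digit, we get $p^{d_\fp}\cdot(\text{digit}\ge1)\le 2[K:F]$ up to the factor $e$ in the digits, yielding $p^{d_\fp}\le 2[K:F]^2$. The factor $2$ comes from a difference of two digits, each $\le[K:F]$. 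The extra $[K:F]$ comes from dividing out $e_i$, or from normalising by the ramification indices relative to $E$ versus $E'$.

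\emph{Main obstacle.} The delicate step is Step 3. One must make the "digit" comparison rigorous in $(\Q/\Z)'$, where everything is modulo $1$ and the sign $\pm$ or negatives may cause wrap-around. One must also handle the case where the $\alpha'_j$ differences involve the zero invariants. I would first try choosing $\alpha_1,\alpha_2$ so that their difference has a single nonzero digit, namely adjacent Frobenius-conjugates $\frac{e_iN^{\alpha}}{N^{h_i}-1}$ and $\frac{e_iN^{\alpha+1}}{N^{h_i}-1}$, or a nonzero invariant against $0$. Condition (a), ${\rm N}(\fp)$ large, ensures that the digit sizes stay below $N$, so the representation is unique.
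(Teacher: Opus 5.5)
Your argument is correct, and it follows a genuinely different and cleaner route than the paper's.

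\textbf{The paper's route.} The paper keeps the twist explicit, writing the invariant of $\chi_\fp|_{I_{\fP_\fp}}$ as $c_\fp/(N-1)$. It treats \eqref{eqnwildetilde} and \eqref{eqnwildetilde1} separately, each with its own case analysis. In the first case it splits on $c_\fp\ge1$ (which forces all $h_i=h'_j=1$) versus $c_\fp=0$. In the dual case it shows the multiset identity holds in $\Q$, not just in $\Q/\Z$, and sums to get $mc_\fp=e_\fp p^{d_\fp}+e'_\fp$. It then splits according to which alternative to strong supersingularity occurs. Its weakest subcase only gives $p^{d_\fp}\le m e'_j\le[K:F]^2$.

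\textbf{Your route.} You remove the twist by subtracting two distinct invariants. You then compare balanced base-$N$ expansions in $\Z/(N^M-1)$, and the sign $\pm$ handles both cases at once.

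\textbf{Closing Step 3.} You only need the following fact. If $\sum_{k=0}^{M-1}w_kN^k\equiv 0 \pmod{N^M-1}$ with all $|w_k|\le N-2$, then every $w_k=0$. Indeed, the sum has absolute value at most $(N-2)\frac{N^M-1}{N-1}<N^M-1$, so it is $0$ as an integer; then reduce modulo $N$ at the lowest nonzero $w_k$.

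Apply this to $w_k=\pm p^{d_\fp}u_k-v_k$. Here $u_k$ and $v_k$ are the digits of $\alpha_1-\alpha_2$ and $\alpha'_1-\alpha'_2$, and both lie in $[-[K:F],[K:F]]$. The required bound $[K:F](p^{d_\fp}+1)\le N-2$ follows from $p^{d_\fp}\le\sqrt N$ together with $\sqrt N>2^{[K:F]}\ge[K:F]+1$, which comes from (a).

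You get $\pm p^{d_\fp}u_k=v_k$ for all $k$. Any $k$ with $u_k\neq0$ then gives $p^{d_\fp}\le[K:F]$ directly. So your closing bookkeeping about a factor $2$ and an extra factor $[K:F]$ is unnecessary, and the reasons you give for them are not right. The stated weaker bound $2[K:F]^2$ of course still follows. The wrap-around you worried about is exactly what working modulo $N^M-1$ absorbs.

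\textbf{Step 2 needs a line of justification.} You should spell out why (e) yields two distinct invariants. The height satisfies $h\ge1$, which gives a nonzero invariant, since $e_i\le[K:F]<N-1$. So if all invariants were equal, there would be no zero invariants, i.e.\ $h=m$. The conjugates $e_iN^a/(N^{h_i}-1)$ and $e_iN^{a+1}/(N^{h_i}-1)$ differ whenever $h_i\ge2$, so all $h_i=1$. Then all $e_i$ must be equal, which is strong supersingularity.

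\textbf{Comparison.} Your approach buys uniformity across the two cases and a sharper constant. The paper's approach extracts explicit information about $c_\fp$ and the heights, which this lemma does not need.
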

\begin{proof}
By Proposition~\ref{Thmivariantphip}, the ramification index $e_{\fp}^\flat$ of $K/E^\flat$ at $\fP_\fp$ and the height $h_\fp^\flat$ admit partitions $e^\flat_\fp = \sum\limits_{i=1}^{s^\flat} e_i^\flat$ and $h_\fp^\flat = \sum\limits_{i=1}^{s^\flat} h_i^\flat$ of the same length, such that
\begin{equation*}
    \operatorname{inv}(\psi^\flat[\fq_\fp^\flat]) = \left\{ \frac{e_i^\flat{\rm N}(\fq_\fp)^{\alpha_i^\flat}}{{\rm N}(\fq_\fp)^{h_i^\flat}-1} \;\middle|\; 1 \leq i \leq s^\flat, \, 0 \leq \alpha_i^\flat \leq h_i^\flat-1 \right\} \coprod \left\{ \underbrace{0, \ldots, 0}_{m-h^\flat_\fp \text{ times}} \right\}.
\end{equation*}
Furthermore, the restriction $\chi_\fp|_{I_{\fP_\fp}}$ has invariant $\frac{c_\fp}{{\rm N}(\fq_\fp)-1}$ for some integer $0 \leq c_\fp \leq {\rm N}(\fq_\fp)-2$.

Suppose the case \eqref{eqnwildetilde} holds. Then the following two multisets in $(\Q/\Z)'$ coincide:
      \begin{equation}\label{eqn:ZmZjia23}
        \begin{aligned}
          &\Big\{\frac{e_j'{\rm N}(\fq_\fp)^{\alpha_j'}}{{\rm N}(\fq_\fp)^{h_j'}-1}\,\Big|\, 1\leq j\leq s',\,0\leq\alpha_j'\leq h_j'-1 \Big\}\coprod\{\underbrace{0,\ldots,0}_{m-h'_\fp\hbox{ times}}\}\\
          =&\Big\{\frac{e_ip^{d_\fp}{\rm N}(\fq_\fp)^{\alpha_i}}{{\rm N}(\fq_\fp)^{h_i}-1}+\frac{c_\fp}{{\rm N}(\fq_\fp)-1}\,\Big|\, 1\leq i\leq s,\,0\leq\alpha_i\leq h_i-1 \Big\}\coprod\Big\{\underbrace{\frac{c_\fp}{{\rm N}(\fq_\fp)-1},\ldots,\frac{c_\fp}{{\rm N}(\fq_\fp)-1}}_{m-h_\fp\hbox{ times}}\Big\}.
        \end{aligned}
      \end{equation}

      First, suppose that $c_\fp \geq 1$. We claim that all $h_i = 1$ and, similarly, all $h_j' = 1$. Indeed, suppose some $h_i \geq 2$. Recall that $p^{d_\fp} \leq \sqrt{{\rm N}(\fq_\fp)}$. By assumption (a), it follows that $e_i \leq [K:F] < \log_2(\sqrt{{\rm N}(\fq_\fp)})$. Thus, $0 < \frac{e_ip^{d_\fp}}{{\rm N}(\fq_\fp)^{h_i}-1} + \frac{c_\fp}{{\rm N}(\fq_\fp)-1} < 1$. Hence, for some $j'$ and $\alpha_j'$, we obtain an equality of rational numbers (rather than merely an equality modulo $\Z$) in the middle of the following:
\begin{equation}\label{Eqn: 01}
    \frac{e_j'}{{\rm N}(\fq_\fp)-1} \geq \frac{e_j'{\rm N}(\fq_\fp)^{\alpha_j'}}{{\rm N}(\fq_\fp)^{h_j'}-1} \overset{(1)}{=} \frac{e_ip^{d_\fp}}{{\rm N}(\fq_\fp)^{h_i}-1} + \frac{c_\fp}{{\rm N}(\fq_\fp)-1} > \frac{c_\fp}{{\rm N}(\fq_\fp)-1}.
\end{equation}
In particular, $e_j' > c_\fp$. Then 
\[
    e_ip^{d_\fp} + c_\fp < e_ip^{d_\fp}+e_j' \leq [K:F]\left(\sqrt{{\rm N}(\fq_\fp)}+1\right) \overset{(2)}{<} \left(\sqrt{{\rm N}(\fq_\fp)}-1\right)\left(\sqrt{{\rm N}(\fq_\fp)}+1\right) < {\rm N}(\fq_\fp).
\]
Inequality (2) again follows from assumption (a). As a consequence, ${\rm N}(\fq_\fp) \nmid e_ip^{d_\fp} + c_\fp$. Suppose $\alpha_j'$ in \eqref{Eqn: 01} is nonzero. Then equality (1) expands as 
\[
    \frac{e_j'{\rm N}(\fq_\fp)^{\alpha_j'}}{{\rm N}(\fq_\fp)^{h_j'}-1} = \frac{e_ip^{d_\fp} + c_\fp({\rm N}(\fq_\fp)^{h_i-1}+\cdots + 1)}{{\rm N}(\fq_\fp)^{h_i}-1},
\]
which implies that ${\rm N}(\fq_\fp) \mid e_ip^{d_\fp} + c_\fp$, yielding a contradiction. Thus, $\alpha_j'$ must be $0$. It is then immediate that $h_j'=1$ because $e_i < [K:F]$. We thus obtain $\frac{e_ip^{d_\fp}}{{\rm N}(\fq_\fp)^{h_i}-1} = \frac{e_j'-c_\fp}{{\rm N}(\fq_\fp)-1}$, which is impossible. Therefore, our claim holds.

      Now \eqref{eqn:ZmZjia23} becomes
      \begin{equation}\label{eqn:ZmZjia22}
        \begin{aligned}
          &\Big\{\frac{e_1'}{{\rm N}(\fq_\fp)-1},\ldots,\frac{e'_{h_\fp'}}{{\rm N}(\fq_\fp)-1},\underbrace{0,\ldots,0}_{m-h'_\fp\hbox{ times}}\Big\}\\
          =&\Big\{\frac{e_1p^{d_\fp}+c_\fp}{{\rm N}(\fq_\fp)-1},\ldots,\frac{e_{h_\fp}p^{d_\fp}+c_\fp}{{\rm N}(\fq_\fp)-1},\underbrace{\frac{c_\fp}{{\rm N}(\fq_\fp)-1},\ldots,\frac{c_\fp}{{\rm N}(\fq_\fp)-1}}_{m-h_\fp\hbox{ times}}\Big\}\quad (\bmod\ \mathbb{Z}).
        \end{aligned}
      \end{equation}
By assumption, $\psi$ is not strongly supersingular at $\fP_\fp$, so $m > h_\fp$ or $e_i\neq e_j$ for some $i\neq j$. 
\begin{itemize}
\item[(i)] Suppose $m>h_\fp$. By \eqref{eqn:ZmZjia22}, we have $c_\fp=e_j'$ for some $j$, and $e_1p^{d_\fp}+c_\fp=e_i'$ for some $i$ or $e_1p^{d_\fp}+c_\fp={\rm N}(\fq_\fp)-1$. However, $e_1p^{d_\fp}+c_\fp=e_1p^{d_\fp}+e_j'<{\rm N}(\fq_\fp)-1$. We must have $e_1p^{d_\fp}+c_\fp=e_i'$. So $p^{d_\fp}=\frac{e_i'-e_j'}{e_1}\leq [K:F]$.
\item[(ii)] Suppose $m=h_\fp$ and $e_i\neq e_j$ for some $i\neq j$. Similar as (i), we also get $p^{d_\fp}\leq [K:F]$.
\end{itemize}

In the case when $c_\fp = 0$, \eqref{eqn:ZmZjia23} implies that $\frac{e_1p^{d_\fp}}{{\rm N}(\fq_\fp)^{h_1}-1} = \frac{e_j'{\rm N}(\fq_\fp)^{\alpha_j'}}{{\rm N}(\fq_\fp)^{h_j'}-1}$ for some $j$ and $\alpha_j'$. By arguments similar to the above, one deduces $h_1 = h_j'$, $\alpha_j' = 0$, and $e_1p^{d_\fp} = e_j'$. In particular, if case \eqref{eqnwildetilde} holds, then $p^{d_\fp} \leq e'_\fp \leq [K:F]$.

Now suppose case \eqref{eqnwildetilde1} holds. Then the following two multisets in $(\Q/\Z)'$ coincide:
\begin{equation}\label{eqn:ZmZjia234}
    \begin{aligned}
        &\left\{ \frac{e_j'{\rm N}(\fq_\fp)^{\alpha_j'}}{{\rm N}(\fq_\fp)^{h_j'}-1} \;\middle|\; 1\leq j\leq s',\, 0\leq\alpha_j'\leq h_j'-1 \right\} \coprod \Big\{ \underbrace{0,\ldots,0}_{m-h'_\fp\text{ times}} \Big\} \\
        =&\left\{ \frac{c_\fp}{{\rm N}(\fq_\fp)-1} - \frac{e_ip^{d_\fp}{\rm N}(\fq_\fp)^{\alpha_i}}{{\rm N}(\fq_\fp)^{h_i}-1} \;\middle|\; 1\leq i\leq s,\, 0\leq\alpha_i\leq h_i-1 \right\} \coprod \Big\{ \underbrace{\frac{c_\fp}{{\rm N}(\fq_\fp)-1},\ldots,\frac{c_\fp}{{\rm N}(\fq_\fp)-1}}_{m-h_\fp\text{ times}} \Big\}.
    \end{aligned}
\end{equation}
We show that each $\frac{c_\fp}{{\rm N}(\fq_\fp)-1} - \frac{e_ip^{d_\fp}{\rm N}(\fq_\fp)^{\alpha_i}}{{\rm N}(\fq_\fp)^{h_i}-1} \geq 0$. Otherwise, $\frac{c_\fp}{{\rm N}(\fq_\fp)-1} - \frac{e_ip^{d_\fp}{\rm N}(\fq_\fp)^{\alpha_i}}{{\rm N}(\fq_\fp)^{h_i}-1} < 0$. On the other hand, $\frac{c_\fp}{{\rm N}(\fq_\fp)-1} - \frac{e_ip^{d_\fp}{\rm N}(\fq_\fp)^{\alpha_i}}{{\rm N}(\fq_\fp)^{h_i}-1} > -1$ due to assumption (a) and $p^{d_\fp} \leq \sqrt{{\rm N}(\fq_\fp)}$. Thus, there exist some $j$ and $\alpha_j'$ such that $1 + \frac{c_\fp}{{\rm N}(\fq_\fp)-1} = \frac{e_ip^{d_\fp}{\rm N}(\fq_\fp)^{\alpha_i}}{{\rm N}(\fq_\fp)^{h_i}-1} + \frac{e_j'{\rm N}(\fq_\fp)^{\alpha_j'}}{{\rm N}(\fq_\fp)^{h_j'}-1} < 1$, which is impossible. This shows that the elements in \eqref{eqn:ZmZjia234} coincide in $\Q$. Summing both sides, we obtain
\begin{equation}\label{summarize}
    mc_\fp = e_\fp p^{d_\fp} + e_\fp'.
\end{equation}
Neither $\psi$ nor $\psi'$ possesses strongly supersingular reduction at $\fP$. Accordingly, we discuss each possible case as follows:

\begin{itemize}
    \item[(i)] Suppose $m-h'_\fp > 0$. Then $\frac{c_\fp}{{\rm N}(\fq_\fp)-1} - \frac{e_ip^{d_\fp}{\rm N}(\fq_\fp)^{\alpha_i}}{{\rm N}(\fq_\fp)^{h_i}-1} = 0$ for some $i$ and $\alpha_i$. Canceling the denominators, we get 
    \[
        c_\fp({\rm N}(\fq_\fp)^{h_i-1}+\cdots +1) = e_ip^{d_\fp}{\rm N}(\fq_\fp)^{\alpha_i}.
    \]
    This implies that ${\rm N}(\fq_\fp) \mid c_\fp$ unless $\alpha_i = 0$. Moreover, using the upper bounds for $e_i$ and $p^{d_\fp}$, we further deduce $h_i = 1$ as well. Thus, in this case, $c_\fp = e_ip^{d_\fp}$. By \eqref{summarize}, $me_ip^{d_\fp} = e_\fp p^{d_\fp} + e_\fp'$. Hence $p^{d_\fp} \leq e_\fp' \leq [K:F]$.
    
    \item[(ii)] Suppose $m-h_\fp > 0$. Then $\frac{c_\fp}{{\rm N}(\fq_\fp)-1} = \frac{e_j'{\rm N}(\fq_\fp)^{\alpha_j'}}{{\rm N}(\fq_\fp)^{h_j'}-1}$ for some $j$ and $\alpha_j'$. Then $e_j' = c_\fp$. By \eqref{summarize}, $me_j' = e_\fp p^{d_\fp} + e_\fp'$, and thus $p^{d_\fp} \leq me_j' \leq [K:F]^2$.
    
    \item[(iii)] Suppose some $h_i \geq 2$. It is easy to check that $\frac{c_\fp}{{\rm N}(\fq_\fp)-1} - \frac{e_ip^{d_\fp}}{{\rm N}(\fq_\fp)^{h_i}-1} \neq 0$. Hence $\frac{c_\fp}{{\rm N}(\fq_\fp)-1} - \frac{e_ip^{d_\fp}}{{\rm N}(\fq_\fp)^{h_i}-1} = \frac{e_j'{\rm N}(\fq_\fp)^{\alpha_j'}}{{\rm N}(\fq_\fp)^{h_j'}-1}$ for some $j$ and $\alpha_j'$. This forces $\alpha_j' = h_j'-1$. Suppose $h_j' = 1$. Then $\frac{c_\fp-e_j'}{{\rm N}(\fq_\fp)-1} = \frac{e_ip^{d_\fp}}{{\rm N}(\fq_\fp)^{h_i}-1}$, which contradicts $h_i \geq 2$ and $p^{d_\fp} \leq \sqrt{{\rm N}(\fq_\fp)}$. Hence $h_j' \geq 2$. We now have 
    \[
        \frac{c_\fp+\cdots+c_\fp{\rm N}(\fq_\fp)^{h_i-1}-e_ip^{d_\fp}}{{\rm N}(\fq_\fp)^{h_i}-1} = \frac{e_j'{\rm N}(\fq_\fp)^{h_j'-1}}{{\rm N}(\fq_\fp)^{h_j'}-1}.
    \]
    This implies ${\rm N}(\fq_\fp) \mid c_\fp - e_ip^{d_\fp}$. So $c_\fp = e_ip^{d_\fp}$. By \eqref{summarize}, we have $me_ip^{d_\fp} = e_\fp p^{d_\fp} + e_\fp'$. Hence $p^{d_\fp} \leq e_\fp' \leq [K:F]$.
    
    \item[(iv)] Similarly, we also get $p^{d_\fp} \leq [K:F]$ if $h_j' \geq 2$ for some $j'$. 
    \item[(v)] Due to assumption (e), the reminding case is when $m=h_\fP=h_{\fP'}$, $h_i=h_j'=1$ for all $i,j$ and not all $e_i$ are equal. Say $e_1\neq e_2$. By \eqref{eqn:ZmZjia234}, we have 
    $c_\fp-e_1p^{d_\fp}=e_i'$ and $c_\fp-e_2p^{d_\fp}=e_j'$ for some $i,j$. We also get $p^{d_\fp}\leq [K:F]$.
\end{itemize}
Combining the discussions in each of the above cases, we finish the proof of this lemma.
\end{proof}

    \begin{prop}\label{lem5.8}
      If $\phi$ and $\phi'$ are not geometrically isogenous up to any Frobenius twist, then there are only finitely many maximal ideals $\fp$ of $A$ satisfying conditions (a)-(e).
    \end{prop}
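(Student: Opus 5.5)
We argue by contradiction. Suppose there are infinitely many maximal ideals $\fp$ of $A$ satisfying (a)--(e). For each such $\fp$, Lemma~\ref{lem: surjectivewidetildevarrho} gives primes $\fq_\fp\mid\fp$ of $B$ and $\fq'_\fp\mid\fp$ of $B'$, an isomorphism $\sigma_\fp$, a character $\chi_\fp$, and a semilinear map $f_\fp$ satisfying \eqref{eqnwildetilde} or \eqref{eqnwildetilde1}. Lemma~\ref{Lem: bound_d} shows that $\sigma_\fp$ is $a\mapsto a^{p^{d_\fp}}$ inside $\kappa_{\fP_\fp}$ with $p^{d_\fp}\le 2[K:F]^2$. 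Thus only finitely many values of $d_\fp$ occur. Also, $B$ and $B'$ have only finitely many $\F_q$-embeddings into each other that are compatible with $F$ (when $E\simeq E'$). So, after passing to an infinite subset $P_\infty$ of these primes, we may fix:
\begin{itemize}
\item the integer $d=d_\fp$;
\item which of the two cases \eqref{eqnwildetilde}, \eqref{eqnwildetilde1} occurs;
\item the roles of $\psi$ and $\psi'$ after the interchange made before Lemma~\ref{Lem: bound_d}.
\end{itemize}

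The first key step is to globalize $\sigma_\fp$. Via $\nu$ and $\nu'$, both $\kappa_{\fq_\fp}$ and $\kappa_{\fq'_\fp}$ embed into $\kappa_{\fP_\fp}$. The relation $\sigma_\fp(\bar b)=\bar b^{p^d}$ should come from a global identity of the form $\nu'(\sigma(b))=\nu(b)^{p^d}$ reduced modulo $\fP_\fp$. To get such a $\sigma$, I would compare traces of Frobenius elements. For a closed point $x$ of $\Spec R$ with $\fq_x,\fq'_x$ away from $\fp$, \eqref{eqnwildetilde} gives
\[
a'_{\fq'}(\mathrm{Frob}_x)\equiv \sigma_\fp\bigl(a_{\fq}(\mathrm{Frob}_x)\bigr)\pmod{\fq'_\fp}.
\]
In case \eqref{eqnwildetilde1} the same congruence holds, because $\mathrm{Tr}(g)\,\mathrm{Tr}(g^{-1})$ is invariant under $g\mapsto (g^\vee)^{-1}$ and is unchanged by scalar twists. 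The elements $a(\mathrm{Frob}_x)\in B$ and $a'(\mathrm{Frob}_x)\in B'$ are global; by Theorem~\ref{2}(3) they generate $E$ and $E'$ respectively.

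Now take a fixed finite set of points $x$ whose traces generate $E$ and $E'$. Because the congruences hold for infinitely many $\fp$ with a fixed Frobenius exponent $d$, the corresponding polynomial identities must hold exactly. This yields a field isomorphism $\sigma\colon E\to E'$ with $a'(\mathrm{Frob}_x)=\sigma(a(\mathrm{Frob}_x))$ for all $x$. Moreover, $\nu'\circ\sigma$ agrees with $\mathrm{Fr}^d\circ\nu$ modulo $\fP_\fp$ for infinitely many $\fp$, hence exactly. So one of the diagrams \eqref{eqncomdiag} commutes.

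The second step upgrades the trace identity to the conclusion of Theorem~\ref{11}(3). With $\sigma$ fixed, the Drinfeld $B$-modules $\psi^{(d)}$ and $\psi'\circ\sigma|_B\circ\mathrm{Fr}^d_B$ have the same characteristic. By the argument of Step (iv) of Theorem~\ref{11}, using twisted modules as in Lemma~\ref{lemfrobtwist}, their characteristic polynomials of Frobenius should agree up to the twist by $\chi$. I expect the possible dual case to be ruled out by the same divisor argument as in Step (iii) of Theorem~\ref{11}. Applied to the reductions, that argument shows that case \eqref{eqnwildetilde1} with $m\ge 3$ forces a coefficient $1$ or $2$ to be divisible by $m$ modulo infinitely many primes, which is impossible. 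The character $\chi$ is handled by comparing determinants, using Simplification~3, and passing to a finite extension of $K$.

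One then obtains equality of characteristic polynomials of $\mathrm{Frob}_x$ for a density-one set of $x$. By Chebotarev these polynomials agree everywhere, so the rational Tate modules are isomorphic. By \cite[Theorem~1.7]{P}, $\psi^{(d)}$ and $\psi'\circ\sigma|_B\circ\mathrm{Fr}^d_B$ are isogenous, hence $\phi^{(d)}$ and $\phi'$ are isogenous. This contradicts the hypothesis.

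The main obstacle is the passage from the residual congruences to exact global identities. Two features make it delicate: the characters $\chi_\fp$ vary with $\fp$, so they must be eliminated using the scalar-invariant quantity $a(\gamma)$ and the determinant; and we must ensure that the Frobenius exponent $d$ and the map $\sigma$ stabilize. The uniform bound in Lemma~\ref{Lem: bound_d} is exactly what makes this stabilization possible.
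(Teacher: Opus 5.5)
\textbf{Verdict: there is a genuine gap in your second step, although your first step is correct.}

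\textbf{Step 1 is sound.} The quantity $a(\gamma)=\mathrm{Tr}(g)\,\mathrm{Tr}(g^{-1})$ is unchanged by the scalar twist $\chi_\fp$ and by passing to the dual. The values $a(\mathrm{Frob}_x)\in B$ and $a'(\mathrm{Frob}_x)\in B'$ do not depend on $\fp$. With $d$ fixed, congruences modulo infinitely many distinct $\fP_\fp$ give the exact identity $\nu'(a'(\mathrm{Frob}_x))=\nu(a(\mathrm{Frob}_x))^{p^d}$ in $K$.

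\textbf{The gap is in Step 2.} You appeal to Step (iv) of Theorem~\ref{11} and say the characteristic polynomials agree ``up to the twist by $\chi$''. But Step (iv) rests on the $\fq$-adic relation \eqref{9}, which has a single continuous character $\chi$ on all of $\Gamma_K$. Here you only have the residual relations \eqref{eqnwildetilde} or \eqref{eqnwildetilde1}. Their characters $\chi_\fp\colon\Gamma_K\to\kappa_{\fq'_\fp}^\times$ change with $\fp$ and may be ramified at $\fP_\fp$.
\begin{itemize}
\item Nothing in your argument produces a fixed character. The quantity $a(\gamma)$ is blind to the twist by construction, and comparing determinants only controls $\chi_\fp^m$.
\item So you never obtain the equality of characteristic polynomials of $\mathrm{Frob}_x$ that Chebotarev and \cite[Theorem~1.7]{P} require.
\item Your exclusion of the dual case has the same defect. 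The divisor argument of Step (iii) needs an exact identity $\chi(\mathrm{Frob}_x)^m=c'_0(x)\,\sigma(c_0(x))$ with $\chi(\mathrm{Frob}_x)\in E'$. A divisor coefficient being ``divisible by $m$ modulo infinitely many primes'' has no meaning.
\end{itemize}
To rescue your route you would have to build a global twist by hand. For instance, set $\lambda(x)=c'_{m-1}(x)/\sigma(c_{m-1}(x))$ wherever the trace is nonzero. Then lift the congruences $c'_j(x)\equiv\chi_\fp(\mathrm{Frob}_x)^{m-j}\sigma(c_j(x))$ to exact identities. You would still need to show that $\lambda(x)$ is a root of unity and that all the $\chi_\fp$ lift to one finite-order character. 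None of this is in the proposal.

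\textbf{The missing observation is that you do not need to rebuild the isogeny.} Fix one $\fp$ in your infinite set.
\begin{itemize}
\item Embed $E_{\fq_\fp}$ and $E'_{\fq'_\fp}$ into $K_{\fP_\fp}$; this is possible because $\fP_\fp$ lies over both.
\item The class function $\gamma\mapsto\nu'(a'_{\fq'_\fp}(\gamma))-\nu(a_{\fq_\fp}(\gamma))^{p^d}$ on $\Gamma_K$ is continuous and vanishes on the good Frobenius classes. By Chebotarev it vanishes identically.
\item Hence $(\rho_{\fq_\fp},\rho'_{\fq'_\fp})(\Gamma_K)$ contains no element $(g,1)$ with $a(g)\neq m^2$.
\item It therefore cannot contain an open subgroup of $\mathrm{SL}_{E_{\fq_\fp}}(V_{\fq_\fp}\psi)\times\mathrm{SL}_{E'_{\fq'_\fp}}(V_{\fq'_\fp}\psi')$. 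To see this, take $g$ close to $1$ with eigenvalues $\lambda,\lambda^{-1},1,\dots,1$ and $\lambda\neq 1$; then $a(g)\neq m^2$.
\item Theorem~\ref{thmmainsl}, which rests on Theorem~\ref{11}, then gives the Frobenius-twisted isogeny and the contradiction.
\end{itemize}
This is the paper's proof, except that it uses $P(g,t)=\prod_{i,j}(t-\alpha_i/\alpha_j)$ where you use $a(\gamma)$.
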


    \begin{proof}
    We proceed by contradiction. Suppose there exist infinitely many such maximal ideals $\fp$, and let $S$ denote the infinite set of these ideals. According to Lemma~\ref{Lem: bound_d}, after possibly interchanging $\psi$ and $\psi'$ and shrinking $S$, we may assume that for each $\fp \in S$, the corresponding integer $d_\fp$ is uniformly bounded by $2\log_p [K:F]$. Thus, there exist infinitely many $\fp \in S$ sharing the same $d_\fp$. By shrinking $S$ once more, we may assume there is a fixed integer $d \ge 0$ such that $d_\fp = d$ for all $\fp \in S$.

    Let $V$ be a finite-dimensional vector space over a field $k$. For any $g \in \mathrm{GL}_k(V)$, define the polynomial
  $$P(g,t) = \prod_{i,j=1}^{\dim_k(V)} \left(t - \frac{\alpha_i}{\alpha_j}\right),$$
  where $\alpha_1, \ldots, \alpha_{\dim_k(V)}$ are the eigenvalues of $g$ in an algebraic closure $\overline{k}$ of $k$. Then $P(g,t) \in k[t]$. Consequently, for any $\gamma \in \Gamma_K$, we obtain
  $$P(\rho^\flat_{\fq_\fp^\flat}(\gamma),t) \in E_{\fq_\fp^\flat}^\flat[t].$$
  
  Let $\mathcal{O}$ be the integral closure of $A$ in $K$, and let $\mathfrak{Q}$ be any maximal ideal of $\mathcal{O}$ at which both $\psi$ and $\psi'$ have good reduction. Denote by $\mathrm{Frob}_{\mathfrak{Q}}$ the Frobenius element of $\mathrm{Gal}(\overline{\kappa}_{\mathfrak{Q}}/\kappa_{\mathfrak{Q}})$, which we identify with its Frobenius conjugacy class in $\Gamma_K$. For any $\fp \in S$ such that $\mathfrak{Q} \nmid \fp$, we have $\mathfrak{Q} \nmid \fq_\fp$ and $\mathfrak{Q} \nmid \fq'_\fp$. Thus, the polynomial $P(\rho^\flat_{\fq_\fp^\flat}(\mathrm{Frob}_{\mathfrak{Q}}),t)$ is well-defined.

  Define $\fq^\flat_{\mathfrak{Q}} = \mathfrak{Q} \cap B^\flat$. By Theorem~\ref{2}\,(1) and (2), $(\fq^\flat_{\mathfrak Q})^{f_\mathfrak Q(K/E^\flat)}$ is a principal ideal of $B^\flat$ generated by some $b_\mathfrak Q^\flat\in B^\flat$. Moreover, the polynomial $P(\rho^\flat_{\fq_\fp^\flat}(\mathrm{Frob}_{\mathfrak{Q}}),t)\in B^\flat_{b_\mathfrak Q^\flat}[t]$ is independent of the choice of $\fp \in S$ provided $\mathfrak{Q} \nmid \fp$. For any $\fp\in S$, fix a maximal ideal $\fP_\fp$ of $\mathcal O$ above $\fq_\fp$ and $\fq'_\fp$ both as in Lemma \ref{lem: surjectivewidetildevarrho}. Applying either \eqref{eqnwildetilde} or \eqref{eqnwildetilde1} to $\gamma = \mathrm{Frob}_{\mathfrak{Q}}$, as polynomial over $K$, we obtain the congruence
  $$P(\rho_{\fq_\fp}(\mathrm{Frob}_{\mathfrak{Q}}),t) \equiv P(\rho'_{\fq'_\fp}(\mathrm{Frob}_{\mathfrak{Q}})^{p^d},t) \pmod{\mathcal \fP_\fp }$$
  for any $\fp \in S$ with $\mathfrak{Q} \nmid \fp$. Since $S$ is infinite, this congruence holds modulo infinitely many distinct prime ideals, which forces an exact equality:
  $$P(\rho_{\fq_\fp}(\mathrm{Frob}_{\mathfrak{Q}}),t) = P(\rho'_{\fq'_\fp}(\mathrm{Frob}_{\mathfrak{Q}})^{p^d},t) \in K[t]$$
  for any maximal ideal $\mathfrak{Q}$ of $\mathcal{O}$ at which $\psi$ and $\psi'$ both have good reduction.

  Now, fix a single $\fp \in S$ and consider the joint Galois representation
  $$(\rho_{\fq_\fp},\rho'_{\fq'_\fp}) \colon \Gamma_K \to \mathrm{GL}_{B_{\fq_\fp}}(T_{\fq_\fp}\psi) \times \mathrm{GL}_{B'_{\fq'_\fp}}(T_{\fq'_\fp}\psi').$$
  By the Chebotarev density theorem, the normal subgroup of $\Gamma_K$ generated by $\mathrm{Frob}_{\mathfrak{Q}}$ for all such $\mathfrak{Q}$ is dense in $\Gamma_K$. Therefore, the equality of polynomials extends continuously to the entire Galois group. As polynomials in $K_{\fP_\fp}[t]$, we have
  $$P(\rho_{\fq_\fp}(\gamma),t) = P(\rho'_{\fq'_\fp}(\gamma)^{p^d},t) \qquad (\text{for all } \gamma \in \Gamma_K).$$
  This algebraic dependence implies that the image $(\rho_{\fq_\fp},\rho'_{\fq'_\fp})(\Gamma_K)$ does not contain any open subgroup of $\mathrm{SL}_{B_{\fq_\fp}}(T_{\fq_\fp}\psi) \times \mathrm{SL}_{B'_{\fq'_\fp}}(T_{\fq'_\fp}\psi')$. By Theorem~\ref{11}, this forces $\phi$ and $\phi'$ to be geometrically isogenous up to a Frobenius twist, contradicting our initial assumption. This completes the proof.
    \end{proof}

\section{Proof of the main theorem and torsion finiteness}\label{Sect: final_pf}
We are now in a position to prove Theorem~\ref{thmmain} by combining the results established in the preceding three sections. We retain the notations and assumptions introduced in \S\,\ref{Sect: main_thm} and \S\,\ref{Sect: simplifications}. Recall from \eqref{Eqn: U_p} that for each maximal ideal $\fp$ of $A$, the subgroup $U_\fp$ of ${\rm GL}_{A_\fp\otimes_A{\rm End}_{\overline K}(\phi)}(T_\fp\phi)\times{\rm GL}_{A_\fp\otimes_A{\rm End}_{\overline K}(\phi')}(T_\fp\phi')$ is defined by the norm compatibility condition:
\[
  U_\fp = \left\{ (g_\fp, g'_\fp) \;\middle|\; {\rm N}_{E_\fp/F'_\fp}(\det\nolimits_{E_\fp}(g_\fp)) = {\rm N}_{E'_\fp/F'_\fp}(\det\nolimits_{E'_\fp}(g'_\fp)) \right\}.
\]
As noted in Remark~\ref{Ufp}, applying the simplifications from \S\,\ref{Sect: simplifications} allows us to express $U_\fp$ as a subgroup of $\prod\limits_{\fq\mid\fp}{\rm GL}_{B_\fq}(T_\fq\psi)\times\prod\limits_{\fq'\mid\fp}{\rm GL}_{B'_{\fq'}}(T_{\fq'}\psi')$. Specifically, it consists of the elements $(g_\fq, g'_{\fq'})$ such that for every maximal ideal $\fp'$ of $A'$ lying above $\fp$, the following local component equality holds:
\[
  \prod_{\fq\mid\fp'}{\rm N}_{E_\fq/F'_{\fp'}}(\det(g_\fq)) = \prod_{\fq'\mid\fp'}{\rm N}_{E'_{\fq'}/F'_{\fp'}}(\det(g'_{\fq'})).
\]
More generally, for any finite set $P$ of maximal ideals of $A$, we define the product group $U_P \coloneqq \prod_{\fp \in P} U_\fp$.

    \begin{thm}[also see Theorem~\ref{thmmain}]\label{Thm: main_thm}
      Suppose $K$ to be a finite extension of $F$. Let $P$ be a nonempty set of maximal ideals of $A$. Consider the following three statements:
      \begin{itemize}
        \item[(1)] One of $\phi$ and $\phi'$ has potential complex multiplication, or $\phi$ and $\phi'$ are not geometrically isogenous up to any Frobenius twist.
        \item[(2)] There exists $\fp\in P$ such that $\varrho_\fp(\Gamma_K)$ and $U_\fp$ are commensurable in ${\rm GL}_{A_\fp}(T_\fp\phi)\times{\rm GL}_{A_\fp}(T_\fp\phi')$.
        \item[(3)] $\varrho_P(\Gamma_K)$ and $U_P$ are commensurable in $\prod\limits_{\fp\in P}{\rm GL}_{A_\fp}(T_\fp\phi)\times{\rm GL}_{A_\mathfrak p}(T_\mathfrak p\phi')$.
      \end{itemize}
      Then (1) is equivalent to (2). Also (3) implies (1). The converse holds in either of the following cases: 
      \begin{itemize}
          \item one of $\phi$ and $\phi'$ has potential complex multiplication; 
          \item $\psi$ and $\psi'$ have different ranks;
          \item $P$ contains only finitely many $\fp$ such that $\psi$ or $\psi'$ has strong supersingular reduction at some place of $K$ above $\fp$.
      \end{itemize}
    \end{thm}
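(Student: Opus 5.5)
Under Simplifications 1--3 we have $\varrho_P(\Gamma_K)\subset U_P$ (Remark~\ref{Ufp}), so commensurability amounts to showing that $\varrho_P(\Gamma_K)$ has finite index in $U_P$. The plan is to split $U_P$ into a derived part and a determinant part: $U_P^{\rm der}=\prod_{\fp\in P}\big(\prod_{\fq\mid\fp}{\rm SL}_{B_\fq}(T_\fq\psi)\times\prod_{\fq'\mid\fp}{\rm SL}_{B'_{\fq'}}(T_{\fq'}\psi')\big)$, together with the image of $U_P$ under $\det$, which lies in the group $H$ of Theorem~\ref{Thm: det_open} (restricted to $P$). The argument then goes in three steps.

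\emph{Negative directions.} For (2)$\Rightarrow$(1) and (3)$\Rightarrow$(1), argue by contraposition. Suppose neither module has potential CM and $\phi'$ is geometrically isogenous to $\phi^{(d)}$. The argument of Theorem~\ref{thm36}, (1)$\Rightarrow$(2), then shows that after a finite extension of $K$, $\rho'_{\sigma(\fq)}=\widetilde\sigma\circ\rho_\fq$ (in the twisted sense). Hence the image at every $\fp$ lies in a graph-type subgroup whose $\SL$-part has infinite index in the product of $\SL$'s. Consequently $\varrho_\fp(\Gamma_K)$ cannot be commensurable with $U_\fp$, and (3) implies (2) trivially.

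\emph{Positive direction for (1)$\Rightarrow$(2) and for (3) when $P$ is finite.} If one factor has CM, its image is open in a torus by Theorem~\ref{Thm: Pink_open_image}, and the determinant analysis handles the coupling. Likewise, if the ranks of $\psi,\psi'$ differ, no graph-type quasi-model can exist (Step (i) of Theorem~\ref{11} forces equal ranks), so Theorem~\ref{thmmainsl} applies anyway. In the general non-isogenous case, Theorem~\ref{thmmainsl} gives that $\varrho_{S(P)}(\mathrm{Gal}(K^{\rm sep}/K^{\rm ab}))$ is open in $U_P^{\rm der}$, and Theorem~\ref{Thm: det_open} gives that $\det\varrho_P(\Gamma_K)$ is open in $\det(U_P)$. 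Since $U_P^{\rm der}=\ker(\det|_{U_P})$ and both pieces are open, $\varrho_P(\Gamma_K)$ is open in $U_P$. As $U_P$ is compact, open means finite index. For (1)$\Rightarrow$(2) this is applied with $P=\{\fp\}$ for any $\fp\in P$.

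\emph{Infinite $P$ under the supersingularity hypothesis.} Let $P_0\subset P$ be the finite exceptional set consisting of: primes failing (a); primes at which (c) fails, which are finitely many by Pink--R\"utsche surjectivity for almost all $\fp$; primes of bad reduction (d); primes where strong supersingularity occurs (e), finitely many by hypothesis; and primes satisfying (a)--(e), finitely many by Proposition~\ref{lem5.8}. For $\fp\notin P_0$, condition (b) fails, so $\widetilde\varrho_\fp(\Gamma_K^{\rm der})$ is the full product of residual ${\rm SL}$'s. Lemma~\ref{lemSLrperfet} then lifts this to the statement that $\varrho_\fp(\Gamma_K^{\rm der})$ is the full product of ${\rm SL}_{B_\fq}$'s: a closed subgroup surjecting mod $\fq$ onto a perfect group with unique maximal normal subgroup is everything. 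Next, use Lemma~\ref{noramlgoursat}(2), with the quotients $G_i/N_i\cong{\rm PSL}$ pairwise non-isomorphic or controlled, together with the openness at the finite set $P_0$ from the previous step, to glue and obtain that the closure of $\varrho_P(\Gamma_K^{\rm der})$ has finite index in $U_P^{\rm der}$. Combining this with determinant openness finishes (3).

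The main obstacle is this gluing across infinitely many primes. Lemma~\ref{noramlgoursat} needs, for each pair of factors at distinct primes $\fp_1\neq\fp_2$ outside $P_0$, surjectivity onto the product. This should follow because the residue characteristics differ, hence the quotients ${\rm PSL}$ over $\kappa_{\fq_1}$ and $\kappa_{\fq_2}$ are non-isomorphic by Theorem~\ref{thmPGL}(4) (up to a finite set of coincidences of orders). One also has to handle the interaction between the finite-index piece at $P_0$ and the full piece outside $P_0$; I would do this with Goursat, using that the latter has no nontrivial finite quotient shared with the former, by perfectness and simplicity of the ${\rm PSL}$ quotients.
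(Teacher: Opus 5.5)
Your reduction to the derived part via Theorem~\ref{Thm: det_open}, your handling of finite $P$ via Theorem~\ref{thmmainsl}, and your use of Proposition~\ref{lem5.8} to make condition (b) fail for almost all $\fp$ all match the paper. The infinite-$P$ step, however, has genuine gaps.

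First, your lifting claim is false in equal characteristic. A closed subgroup of ${\rm SL}_{B_\fq}(T_\fq\psi)$ surjecting onto ${\rm SL}_{\kappa_\fq}(\psi[\fq])$ need not be everything. For example, the constant matrices ${\rm SL}_m(\kappa_\fq)\subset{\rm SL}_m(B_\fq)$ surject, using a coefficient field $\kappa_\fq\subset B_\fq$. Lemma~\ref{lemSLrperfet} is only about \emph{normal} subgroups, and $\varrho_\fp(\Gamma_K^{\rm der})$ is not known to be normal. So residual surjectivity (c) is not enough. You need the integral surjectivity $\rho^\flat_{\fq^\flat}(\Gamma_K)={\rm GL}_{B^\flat_{\fq^\flat}}(T_{\fq^\flat}\psi^\flat)$ for almost all $\fp$, from \cite[Proposition 4.5]{PR}. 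After that, Lemma~\ref{noramlgoursat}(2), applied with surjectivity onto the product of ${\rm PSL}$'s, gives the full product at each such $\fp$.

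Second, and more seriously, your gluing across distinct primes rests on a false premise. Every $\kappa_\fq$ has characteristic $p$, and $A$ has roughly $q^f/f$ primes of degree $f$. Hence there are infinitely many pairs $\fp_1\neq\fp_2$ whose residue fields have the same cardinality. For these, the ${\rm PSL}_m$ quotients at $\fp_1$ and $\fp_2$ are isomorphic, so Serre's Goursat argument cannot exclude a graph between them.

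The missing idea is inertia. Let $J_\fp$ be the closed normal subgroup generated by the inertia groups at places above $\fp$.
\begin{itemize}
\item By good reduction, $J_\fp$ acts trivially on the $\fq_1$-adic Tate modules for $\fq_1$ above any $\fp_1\neq\fp$.
\item By \cite[Proposition 4.5]{PR}, $\rho^\flat_{\fq^\flat}(J_\fp)={\rm GL}_{B^\flat_{\fq^\flat}}(T_{\fq^\flat}\psi^\flat)$ for almost all $\fp$.
\item Since $J_\fp^{\rm der}\triangleleft\Gamma_K^{\rm der}$, the normal case of Lemma~\ref{noramlgoursat}(2) yields $\varrho_P(J_\fp^{\rm der})=U_\fp^{\rm der}\times\{1\}$.
\end{itemize}
Hence $\varrho_P(\Gamma_K^{\rm der})\supset U_T^{\rm der}\times\{1\}$ for a cofinite $T\subset P$. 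Combined with openness on the finite set $P\setminus T$ from Theorem~\ref{thmmainsl}, this gives openness in $U_P^{\rm der}$. No Goursat step between $T$ and $P\setminus T$ is needed.

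Finally, your exceptional set $P_0$ relies on the supersingularity hypothesis, so infinite $P$ in the different-rank case is not covered. There, Lemma~\ref{lem: surjectivewidetildevarrho} shows that (a)--(c) force equal ranks, so (b) holds for only finitely many $\fp$.
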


    \begin{proof}
    The implication (3) $\Rightarrow$ (2) is obvious. 
     By definition, we have $U_\fp^{\rm der} = \prod\limits_{\fq\mid\fp} {\rm SL}_{B_\fq}(T_\fq\psi) \times \prod\limits_{\fq'\mid\fp} {\rm SL}_{B'_{\fq'}}(T_{\fq'}\psi')$ and $U_P^{\rm der} = \prod\limits_{\fp\in P} U_\fp^{\rm der}$. 
     Define
      $$\det(U_\fp)=\Big
      \{(g_{\fq},g'_{\fq'})\in\prod_{\fq\mid\fp}B_{\fq}^\times\times\prod_{\fq'\mid\fp}{B'_{\fq'}}^\times\,\Big|\,\prod_{\fq\mid\fp'}{\rm N}_{E_\fq/F'_{\fp'}}(g_\fq)=\prod_{\fq'\mid\fp'}{\rm N}_{E'_{\fq'}/F'_{\fp'}}(g'_{\fq'})\hbox{ for any }{\rm Max}\,A'\ni\fp'\mid\fp\Big\},$$
      and define $\det(U_P)=\prod\limits_{\fp\in P}\det(U_\fp)$. This yields the following commutative diagram of profinite groups, in which the second and third rows are exact:
\[
\xymatrix{
1 \ar[r] & \Gamma_K^{\rm der} \ar[r] \ar[d] & \Gamma_K \ar[rd]^{\det\circ\varrho_P} \ar[d]^{\varrho_P=(\rho^\flat_{\fq^\flat})} & & \\
1 \ar[r] & U_P^{\rm der} \ar[r] & \prod\limits_{\fp\in P}\big(\prod\limits_{\fq\mid\fp} {\rm GL}_{B_\fq}(T_\fq\psi) \times \prod\limits_{\fq'\mid\fp} {\rm GL}_{B'_{\fq'}}(T_{\fq'}\psi')\big) \ar[r]^-{\det} & \prod\limits_{\fp\in P}\big(\prod\limits_{\fq\mid\fp} B_\fq^\times \times \prod\limits_{\fq'\mid\fp} {B'_{\fq'}}^\times\big) \ar[r] & 1 \\
1 \ar[r] & U_P^{\rm der} \ar[r] \ar[u] & U_P \ar[u] \ar[r]^{\det} & \det(U_P) \ar[u] \ar[r] & 1
}
\]
By Theorem~\ref{Thm: det_open}, the image $(\det\circ\varrho_P)(\Gamma_K)$ is commensurable with $\det(U_P)$. Therefore, to prove that $\varrho_P(\Gamma_K)$ is commensurable with $U_P$, it suffices to show that the image of the derived subgroup $\varrho_P(\Gamma_K^{\rm der})$ is open in $U_P^{\rm der}$. 

First, suppose one of $\phi$ and $\phi'$ has potential complex multiplication, saying $\phi'$. This means that $\psi'$ has rank one. In this case, $U_P^{\rm der}=\prod\limits_{\fq\mid\fp\in P}{\rm SL}_{B_\fq}(T_\fq\psi)$ and $\varrho_P(\Gamma_K^{\rm der})=(\rho_\fq)_{\fq\mid\fp\in P}(\Gamma_K^{\rm der})$, which is open in $U_P^{\rm der}$ by Theorem~\ref{Thm: Pink_open_image}. 
This shows that in the situation (a), all the three statements of this theorem hold. 

From now on, we assume that neither $\phi$ nor $\phi'$ has potential complex multiplication. In this case, the equivalence between (1) and (2)  follows immediately from applying Theorem~\ref{thmmainsl} to the case $P=\{\fp\}$.


      It remains to show (1)$\Rightarrow$(3) in the second and third case. In fact, as established above, it is sufficient to  show that $\varrho_P(\Gamma_K^{\rm der})$ is open in $U_P^{\rm der}$.

Based on the above assumption, $\psi$ and $\psi'$ both have rank $\geq 2$. In the following, we further assume that $\phi$ and $\phi'$ are not geometrically isogenous up to any Frobenius twist. If $\psi$ and $\psi'$ have the same rank, we assume that $P$ contains only finitely many maximal ideals $\fp$ for which either $\psi$ or $\psi'$ has strong supersingular reduction at some place of $K$ lying above $\fp$. 

For any $\fp\in P$, let $J_\fp$ be the closed normal subgroup of $\Gamma_K$ generated by the inertia groups $I_\fP$ for all places $\fP$ of $K$ lying above $\fp$. Let $T$ be the subset of maximal ideals $\fp \in P$ satisfying the following conditions:
\begin{enumerate}[label=(\roman*)]
  \item Both $\psi$ and $\psi'$ have good reduction at every place of $K$ lying above $\fp$.
  \item For any $\flat\in \{\emptyset, '\}$ and any $\fq^\flat\mid\fp$, the representation $\rho^\flat_{\fq^\flat}:\Gamma_K\to {\rm GL}_{B^\flat_{\fq^\flat}}(T_{\fq^\flat}\psi^\flat)$ satisfies $\rho^\flat_{\fq^\flat}(J_\fp)={\rm GL}_{B^\flat_{\fq^\flat}}(T_{\fq^\flat}\psi^\flat)$.
  \item $\widetilde\varrho_\fp(\Gamma_K^{\rm der}) = \prod\limits_{\fq\mid\fp}{\rm SL}_{\kappa_\fq}(\psi[\fq]) \times \prod\limits_{\fq'\mid\fp}{\rm SL}_{\kappa_{\fq'}}(\psi'[\fq']).$
\end{enumerate}

    We claim that the complement subset $P\backslash T$ is finite. Assume it for this moment, and take any $\fp\in T$. By assumption (ii), the subgroups $\varrho_\fp(\Gamma_K^{\rm der})$ and $\varrho_\fp(J_\fp^{\rm der})$ map surjectively onto each individual factor ${\rm SL}_{B^\flat_{\fq^\flat}}(T_{\fq^\flat}\psi^\flat)$ in the product $\prod\limits_{\fq\mid\fp}{\rm SL}_{B_{\fq}}(T_{\fq}\psi)\times\prod\limits_{\fq'\mid\fp}{\rm SL}_{B'_{\fq'}}(T_{\fq'}\psi')$. By assumption (iii), $\varrho_\fp(\Gamma_K^{\rm der})$ maps surjectively onto $\prod\limits_{\fq\mid\fp}{\rm PSL}_{\kappa_{\fq}}(\psi[\fq])\times\prod\limits_{\fq'\mid\fp}{\rm PSL}_{\kappa_{q'}}(\psi'[\fq'])$. Since each ${\rm PSL}_{\kappa_{\fp^\flat}}(T_{\fq^\flat}\psi^\flat)$ is the quotient ${\rm SL}_{\kappa_{\fp^\flat}}(T_{\fq^\flat}\psi^\flat)$ by its unique maximal proper closed normal subgroup (Lemma~\ref{lemSLrperfet}), it follows from Lemma~\ref{noramlgoursat} that $\varrho_\fp(\Gamma_K^{\rm der})=\prod\limits_{\fq\mid\fp}{\rm SL}_{B_{\fq}}(T_\fq\psi)\times\prod\limits_{\fq'\mid\fp}{\rm SL}_{B'_{q'}}(T_{\fq'}\psi')$. As $J_\fp^{\rm der}\triangleleft\Gamma_K^{\rm der}$, we have $\varrho_\fp(J_\fp^{\rm der})\triangleleft\prod\limits_{\fq\mid\fp}{\rm SL}_{B_{\fq}}(T_\fq\psi)\times\prod\limits_{\fq'\mid\fp}{\rm SL}_{B'_{q'}}(T_{\fq'}\psi')$. By the normal subgroup criterion of Lemma \ref{noramlgoursat}, we have $\varrho_\fp(J_\fp^{\rm der})=\prod\limits_{\fq\mid\fp}{\rm SL}_{B_{\fq}}(T_\fq\psi)\times\prod\limits_{\fq'\mid\fp}{\rm SL}_{B'_{\fq'}}(T_{\fq'}\psi')$.

      Now consider a maximal ideal $\fp_1 \in P$ distinct from $\fp$, and let $\fq_1^\flat$ be any maximal ideal of $B^\flat$ lying above $\fp_1$. For any place $\fP$ of $K$ above $\fp$, we have $\fP\nmid\fq_1^\flat$. Hence $I_\fP$ acts trivially on $T_{\fq_1}\psi$ and $T_{\fq_1'}\psi'$. This shows that $\rho_{\fp_1}(J_\fp)=1$ for any $\fp_1\neq\fp$.
      This implies that 
      $\varrho_P(\Gamma_K^{\rm der})$ contains the subgroup $U_T^{\rm der}\times\{1\}$ of $U_P^{\rm der}=U_T^{\rm der}\times U_{P\backslash T}^{\rm der}$. 

      Applying Theorem \ref{thmmainsl} to the finite set $P\backslash T$ of maximal ideals of $A$, we have
      $\varrho_{P\backslash T}(\Gamma_K^{\rm der})$ is open in $U_{P\backslash T}^{\rm der}$. Hence $\varrho_P(\Gamma_K^{\rm der})$ is open in $U_P^{\rm der}$. This completes the proof of the theorem, provided our claim regarding the finiteness of $P\backslash T$ holds. 

      Finally, it remains to prove that $P\backslash T$ is finite. Suppose, for the sake of contradiction, that $P\backslash T$ is infinite. By shrinking this set if necessary, we can assume that every prime $\fp\in P\backslash T$ satisfies conditions (a) and (d) in \S\,\ref{Sect: residue_repn}. By \cite[Theorem 4.6]{PR}, condition (c) also holds for almost all elements in $P\backslash T$. Furthermore, by \cite[Proposition 4.5]{PR}, (ii) holds for almost all maximal ideals of $A$. Thus up to a finite exceptional cases, $\fp\in P\backslash T$ if and only if $\fp$ does not satisfy (iii), i.e., it satisfies condition (b) in \S\,\ref{Sect: residue_repn}. Hence there are infinitely many $\fp\in P$ satisfying conditions (a)-(d). By Lemma~\ref{lem: surjectivewidetildevarrho}, we know that $\psi$ and $\psi'$ must have the same rank. In this case, there are infinitely many $\fp\in P$ satisfying conditions (a)-(e) due to our assumption. However, by Proposition~\ref{lem5.8}, $\phi$ and $\phi'$ are geometrically isogenous up to some Frobenius twist. This contradiction proves our claim that $P\backslash T$ is finite, and thus completes the proof of the theorem.
    \end{proof}

      As a consequence, we prove Theorem~\ref{Cor: torsion_finite}, whose statement is re-stated in below. We retain the conversion of torsion-finiteness in Definition~\ref{Defn: Tor_fini}.
\begin{thm}[see Theorem~\ref{Cor: torsion_finite}]\label{Cor: torsion_finite_2}
    Let $\phi$ and $\phi'$ be two Drinfeld modules as above. Suppose that $\phi$ does not have potential complex multiplication, and that $\phi$ and $\phi'$ are not geometrically isogenous up to any Frobenius twist. Then, for any set $P$ of maximal ideals of $A$ satisfying the hypotheses of Theorem~\ref{thmmain}, the module $\phi$ has only finitely many $P^\infty$-torsion points defined over the extension field $K(\phi'[P^\infty])$.
\end{thm}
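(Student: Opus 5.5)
Let $H\coloneqq\varrho_P(\Gamma_K)$ and put
\[
N\coloneqq \varrho_P\bigl(\mathrm{Gal}(K^{\rm sep}/K(\phi'[P^\infty]))\bigr).
\]
The points of $\phi[P^\infty]$ fixed by $\mathrm{Gal}(K^{\rm sep}/K(\phi'[P^\infty]))$ are exactly the $P^\infty$-torsion points fixed by the subgroup
\[
N_1\coloneqq\rho_P\bigl(\ker(\rho'_P)\bigr)\subset \prod_{\fp\in P}{\rm GL}_{A_\fp}(T_\fp\phi).
\]
The plan is to show that $N_1$ contains an open subgroup of the derived group $\prod_{\fq\mid\fp\in P}{\rm SL}_{B_\fq}(T_\fq\psi)$. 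After that, it only remains to count fixed points. By Simplification~1 and~2 (passing to a finite extension of $K$ changes nothing), we may work with $\psi$ and its $B$-structure. We may also assume $\psi$ has rank $m\ge2$, since $\phi$ has no potential CM.

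\emph{Step 1: structure of $N_1$.} By Theorem~\ref{thmmain}(3), the group $H$ is commensurable with $U_P$. Its proof in Theorem~\ref{Thm: main_thm} gives more: $\varrho_P(\Gamma_K^{\rm der})$ is open in
\[
U_P^{\rm der}=\prod {\rm SL}_{B_\fq}(T_\fq\psi)\times\prod{\rm SL}_{B'_{\fq'}}(T_{\fq'}\psi').
\]
So $H\cap U_P^{\rm der}$ contains an open subgroup $W=W_1\times W_2$, with $W_1$ open in $\prod{\rm SL}_{B_\fq}(T_\fq\psi)$. Now take $(g,1)$ with $g\in W_1$. Since $W\subset H$, there is $\gamma\in\Gamma_K$ with $\varrho_P(\gamma)=(g,1)$. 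Then $\gamma\in\ker\rho'_P$, so $g\in N_1$. Hence $N_1\supseteq W_1$. This is the essential use of the main theorem, and it works for both infinite and finite $P$, because openness in the product topology is exactly what the theorem provides.

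\emph{Step 2: fixed points of an open subgroup of ${\rm SL}$.} The module $\phi[P^\infty]$ is the direct sum over $\fq\mid\fp\in P$ of the groups $\psi[\fq^\infty]\cong (E_\fq/B_\fq)^m$. An open subgroup $W_1$ of the product contains $\prod_{\fq\in S_0}\Gamma_\fq(\fq^{k_\fq})\times\prod_{\fq\notin S_0}{\rm SL}_{B_\fq}(T_\fq\psi)$ for some finite set $S_0$ and integers $k_\fq$. Here $\Gamma_\fq(\fq^{k})$ denotes the principal congruence subgroup inside ${\rm SL}$. We then use two facts.
\begin{enumerate}[label=(\alph*)]
\item For $\fq\notin S_0$, ${\rm SL}_{B_\fq}(T_\fq\psi)$ has no nonzero fixed vector in $(E_\fq/B_\fq)^m$ when $m\ge2$. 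Indeed, any nonzero torsion element can be mapped to a different element by an elementary unipotent matrix.
\item For $\fq\in S_0$, the fixed points of $\Gamma_\fq(\fq^k)$ in $(E_\fq/B_\fq)^m$ are contained in $\psi[\fq^{k}]$, which is finite. To see this, take $x$ of exact order $\fq^j$ with $j>k$. Choose $u=1+\pi^k e_{ij}\in\Gamma_\fq(\fq^k)$ with $i\ne j$ adapted to a coordinate of $x$ of maximal order. Then $ux-x=\pi^k x_j e_i\neq0$. Using transvections $1+\pi^k e_{ij}$ is what makes this work.
\end{enumerate}
Since an element fixed by the product group must have each component fixed by the corresponding factor, the fixed module of $W_1$ is contained in $\bigoplus_{\fq\in S_0}\psi[\fq^{k_\fq}]$, which is finite.

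\emph{Main obstacle.} The delicate point is Step~1: making sure that commensurability gives an honest open subgroup of the derived group sitting in $H$ with trivial second coordinate. A bare commensurability of $H$ with $U_P$ would only give an open subgroup of $U_P$, not a product decomposition. I would handle this by invoking the derived-group openness statement established inside the proof of Theorem~\ref{Thm: main_thm}, rather than statement (3) itself. If needed, I would replace $W$ by an open subgroup of product form, which exists because the product topology has a basis of product-shaped open subgroups. Descending from $\psi$ back to $\phi$ across the isogeny of Simplification~2 only changes finitely many torsion points, and so does the passage to a finite extension of $K$.
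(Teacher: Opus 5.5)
Your proposal is correct. It has the same skeleton as the paper's proof: first an open subgroup of $\prod_{\fq\mid\fp\in P}{\rm SL}_{B_\fq}(T_\fq\psi)$ inside $\rho_P(\ker\rho'_P)$, then a fixed-point count. Both halves are carried out differently, though.

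\textbf{First half.} The paper uses only statement~(3). Commensurability survives intersection with $\prod_{\fp\in P}{\rm GL}_{A_\fp}(T_\fp\phi)\times\{1\}$, so $\rho_P(\ker\rho'_P)$ is commensurable with $\{g\mid (g,1)\in U_P\}$. That group visibly contains an open subgroup of $\prod{\rm SL}$. So the obstacle you flag is not real. $\varrho_P(\Gamma_K)\cap U_P$ is a closed subgroup of finite index in the profinite group $U_P$, hence open. It therefore contains $U_P\cap O$ for a product-shaped basic open $O$. Finally, $U_P\cap O\supseteq\prod_\fq\Gamma_\fq(\fq^{k_\fq})\times\{1\}$, because ${\rm SL}$-elements satisfy the norm condition trivially.

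Your detour through the derived-group openness proved inside Theorem~\ref{Thm: main_thm} is valid for $K$ finite over $F$. However, it depends on that proof rather than on the statement. It also does not directly cover Theorem~\ref{thmmain_gen_ver}, which only asserts commensurability.

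\textbf{Second half.} The paper argues in two parts:
\begin{itemize}
\item finiteness prime by prime, using irreducibility of $V_\fq\psi$ and the structure theorem for Artinian $B_\fq$-modules;
\item triviality of $\phi[\fp]^{\ker\rho'_P}$ for almost all $\fp$, from the residual image containing ${\rm SL}_{\kappa_\fq}(\psi[\fq])$.
\end{itemize}
Your transvection argument handles both uniformly and more elementarily. It also gives the explicit containment of the fixed module in $\bigoplus_{\fq\in S_0}\psi[\fq^{k_\fq}]$, i.e.\ a bound in terms of the level of the open subgroup.
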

\begin{proof}
By replacing $\phi$ and $\phi'$ with isogenous Drinfeld modules and passing to a finite extension of $K$, we may assume Simplifications~1--3 from Section~\ref{Sect: simplifications}. This does not affect the conclusion: the finiteness of the torsion submodule is invariant under isogeny, and if the desired finiteness holds over a finite extension of $K$, it trivially holds over $K$ itself.

Set
$
  L := K(\phi'[P^\infty]).
$
By definition the representation $\rho'_P$, 
$
  \mathrm{Gal}(K^{\mathrm{sep}}/L) = \ker(\rho'_{P}).
$
Hence the $P^\infty$-torsion points of $\phi$ defined over $L$ are precisely
$
  \phi[P^\infty]^{\ker(\rho'_{P})}.
$
We must show that this fixed subgroup is finite.

Notice that $\phi[P^\infty]=\mathop{\bigoplus\limits_{\fp\in P}}\phi[\fp^\infty]$. It suffices to prove the following two statements: 
\begin{enumerate}
    \item For each $\fp\in P$, $\phi[\fp^\infty]^{\ker(\rho'_P)}$ is finite; and 
    \item for all but finitely many $\fp\in P$, $\phi[\fp]^{\ker(\rho'_P)}$ is trivial. 
\end{enumerate}

Under the assumptions, Theorem~\ref{thmmain} (or Theorem~\ref{Thm: main_thm}) tells that the image of the ajoint representation $\varphi_P$ is commensurable with the group $U_P$.  Let $H := \big(\prod\limits_{\fp\in P}\mathrm{GL}_{A_{\mathfrak{p}}}(T_{\mathfrak{p}}\phi)\big) \times \{1\}$ be the natural subgroup of $\prod\limits_{\fp\in P}\mathrm{GL}_{A_{\mathfrak{p}}}(T_{\mathfrak{p}}\phi)\times \mathrm{GL}_{A_{\mathfrak{p}}}(T_{\mathfrak{p}}\phi')$. The intersection $\varrho_{P}(\Gamma_K) \cap H$ is commensurable with $U_P \cap H$. In other words, the image
$
  \rho_{P}(\ker(\rho'_{P}))
$
is commensurable with
\[
  H_{P} := \{g \in \mathrm{GL}_{A_{P}}(T_{P}\phi) \mid (g,1) \in U_{P}\}.
\]

Since $\phi$ does not have potential complex multiplication, $\psi$ is of rank $m= n/[E:F]\ge 2$. Under Simplification~2 and Remark~\ref{Ufp}, we may identify the Tate module and the rational Tate module over the factors of the algebra $E_{\mathfrak{p}} = E \otimes_F F_{\mathfrak{p}}=\prod\limits_{\fq\mid\fp}E_\fq$:
\[
  \prod_{\fp\in P}T_{\mathfrak{p}}\phi \;\cong\; \prod_{{\rm Spec}B\ni\fq\mid\fp\in P} T_{\mathfrak{q}}\psi,
  \qquad
  \prod_{\fp\in P}V_{\mathfrak{p}}\phi \;\cong\; \prod_{{\rm Spec}B\ni\fq\mid\fp\in P} V_{\mathfrak{q}}\psi,
\]
where each $T_{\mathfrak{q}}\psi$ is a free $B_{\mathfrak{q}}$-module of rank $m$. With respect to this decomposition, it is ready to see that $H_{P}$ contains an open subgroup of the product
$
  \prod\limits_{\mathfrak{q}\mid\mathfrak{p}\in P} \mathrm{SL}_{B_{\mathfrak{q}}}(T_{\mathfrak{q}}\psi),
$
and so does $\rho_{P}(\ker(\rho'_P))$.

We first prove (1). For any $\fp\in P$ and any $\fq \mid \fp$, if we view ${\rm SL}_{B_\fq}(T_\fq \psi)$ as a factor subgroup of $\prod\limits_{\mathfrak{q}\mid\mathfrak{p}\in P} \mathrm{SL}_{B_{\mathfrak{q}}}(T_{\mathfrak{q}}\psi)$, the openness established above implies that $\varrho_{P}(\ker(\rho'_{P}))$ contains a principal congruence subgroup
\[
  K_{\mathfrak{q}}(m_{\mathfrak{q}})
  :=
  \ker\!\Bigl(
  \mathrm{SL}_{B_{\mathfrak{q}}}(T_{\mathfrak{q}}\psi) \to
  \mathrm{SL}_{B_{\mathfrak{q}}/\mathfrak{q}^{m_{\mathfrak{q}}}}(T_{\mathfrak{q}}\psi/\mathfrak{q}^{m_{\mathfrak{q}}}T_{\mathfrak{q}}\psi)
  \Bigr)
\]
for a certain positive integer $m_\fq$. However, notice that as the standard representation of $\mathrm{SL}_{E_{\mathfrak{q}}}(V_{\mathfrak{q}}\psi)$, the space $ V_{\mathfrak{q}}\psi$ is irreducible. It follows that 
$${\rm Hom}_{B_\fq}(E_\fq,\psi[\fq^\infty]^{\rho_P(\ker(\rho'_P))})={\rm Hom}_{B_\fq}(E_\fq,\psi[\fq^\infty])^{\rho_P(\ker(\rho'_P))}=(V_\fq\psi)^{\rho_P(\ker(\rho'_P))}=0.$$
According to the structure theory of Artinian modules of Noetherian complete local ring \cite[Corollary~4.3]{Mat},  
\[
\psi[\fq^\infty]^{\rho_P(\ker(\rho'_P))}\cong (E_\fq/B_\fq)^t\oplus \bigoplus_{j=1}^s B_\fq/(\pi_\fq)^{e_j}
\]
where $\pi_\fq$ is a uniformizer of $B_\fq$. Hence $t=0$ and it follows that $\psi[\fq^\infty]^{\rho_P(\ker(\rho'_P))}$ is finite for any $\fq\mid\fp$.
According to the natural bijection $\phi[\fp^\infty]=\mathop{\oplus}\limits_{\fq\mid\fp}\psi[\fq^{\infty}]$, we get the finiteness of $\phi[\fp^\infty]^{\ker(\rho'_{P})}$. This proves part (1).

In the rest of this proof, we focus on (2). Again, since $\rho_P(\ker(\rho'_P))$ contains an open subgroup of $\prod\limits_{\mathfrak{q}\mid\mathfrak\in P} \mathrm{SL}_{B_{\mathfrak{q}}}(T_{\mathfrak{q}}\psi)$, the images of the projection-quotient maps 
\[
\rho_P(\ker(\rho'_P)) \hookrightarrow \prod\limits_{\mathfrak{q}\mid\mathfrak{p}\in P} \mathrm{GL}_{B_{\mathfrak{q}}}(T_{\mathfrak{q}}\psi)\xrightarrow{p_\fq} \mathrm{GL}_{B_{\mathfrak{q}}}(T_{\mathfrak{q}}\psi) \to \mathrm{GL}_{\kappa_{\mathfrak{q}}}(\psi[{\mathfrak{q}}])
\]
contain $\mathrm{SL}_{\kappa_{\mathfrak{q}}}(\psi[{\mathfrak{q}}])$ for all but finitely many $\fq$. Because $m \ge 2$, the fixed-point space $\psi[\fq]^{\mathrm{SL}_{\kappa_{\mathfrak{q}}}(\psi[{\mathfrak{q}}])}$ is trivial. Hence  $\phi[\fp]^{\ker(\rho'_P)}$ is trivial for all but finitely many $\fp\in P$. This proves (2) and thus completes the proof of the theorem. 
\end{proof}

\section{Generalization to finitely generated fields}\label{Sect: finitely_gen_fld}
In this section, we extend Theorem~\ref{thmmain} to the case where the base field $K$ is a \emph{finitely generated} extension of $F$. We retain the notations and assumptions introduced in \S\,\ref{Sect: introduction}. Let $R\subset K$ be a finitely generated $\mathbb{F}_q$-subalgebra whose fraction field is $K$. After replacing $R$ with a suitable localization, we may assume that the Drinfeld $B^\flat$-modules 
$
  \psi^\flat:B^\flat\to K\{\tau\}, \flat\in\{\emptyset,'\},
$
extend to Drinfeld $B^\flat$-modules over $X \coloneqq \operatorname{Spec}R$. 
Explicitly, for every $b\in B^\flat$, the coefficients of $\psi_b^\flat$ lie in $R$, and the leading coefficient of $\psi_b^\flat$ is a unit in $R$. From this perspective, $\psi$ and $\psi'$ can be viewed as families of Drinfeld modules parametrized by $X$. Thus, for each point $x \in X$, we obtain specialized Drinfeld modules $\phi_x$ and $\phi'_x$ (see \S\,\ref{Sect: specialization} for relevant background). Each specialized module is naturally defined over the residue field $\kappa_x$ and has characteristic $\fp_x$. If $\fp_x=0$ corresponds to the zero ideal of $A$ and $\kappa_x/F$ is a finite extension, we call $x$ an \emph{$F$-closed point}. Furthermore, such a point $x$ is said to be \emph{good} if both $\phi$ and $\phi'$ have good reduction at $x$. With this setup, we generalize Theorem~\ref{thmmain} as follows.

\begin{thm}\label{thmmain_gen_ver}
  Assume the notation and setup above. Let $P$ be a nonempty set of maximal ideals of $A$. Consider the following three statements:
  \begin{itemize}
    \item[(1)] At least one of $\phi$ and $\phi'$ has potential complex multiplication, or $\phi$ and $\phi'$ are not geometrically isogenous up to any Frobenius twist.
    \item[(2)] There exists $\fp\in P$ such that $\varrho_\fp(\Gamma_K)$ and $U_\fp$ are commensurable in ${\rm GL}_{A_\fp}(T_\fp\phi)\times{\rm GL}_{A_\fp}(T_\fp\phi')$.
    \item[(3)] $\varrho_P(\Gamma_K)$ and $U_P$ are commensurable in $\prod\limits_{\fp\in P}{\rm GL}_{A_\fp}(T_\fp\phi)\times{\rm GL}_{A_\mathfrak p}(T_\mathfrak p\phi')$.
  \end{itemize}
  Then (1) is equivalent to (2), and (3) implies (1). The converse ((1) implies (3)) holds if there exists a good $F$-closed point $x$ satisfying any of the following conditions: 
  \begin{itemize}
    \item At least one of $\phi_x$ and $\phi'_x$ has potential complex multiplication; 
    \item $\psi_x$ and $\psi'_x$ have different ranks; 
    \item $P$ contains only finitely many $\fp$ such that $\psi_x$ or $\psi'_x$ has strong supersingular reduction at some place of $\kappa_x$ lying above $\fp$.
  \end{itemize}
\end{thm}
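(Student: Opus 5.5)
The plan is to split the statement into the parts that already hold for finitely generated $K$ and the single part, (1)$\Rightarrow$(3), which needs specialization to a good $F$-closed point. As in \S\,\ref{Sect: simplifications}, I first impose Simplifications~1--3 after a finite extension of $K$ and isogenies; the specialization argument does not disturb this. The implication (3)$\Rightarrow$(2) is trivial.

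\emph{Equivalence (1)$\Leftrightarrow$(2).} All of \S\,\ref{Sect: adj_repn} and Theorem~\ref{Thm: det_open} are stated for finitely generated $K/F$. If, say, $\psi'$ has rank one, then $\varrho_\fp(\Gamma_K^{\rm der})$ is open in $U_\fp^{\rm der}$ by Theorem~\ref{Thm: Pink_open_image}. Otherwise, Theorem~\ref{thmmainsl} with $P=\{\fp\}$ shows that openness of $\varrho_\fp(\Gamma_K^{\rm der})$ in $U_\fp^{\rm der}$ is equivalent to $\phi,\phi'$ not being geometrically isogenous up to a Frobenius twist. Combining either case with the determinant commensurability of Theorem~\ref{Thm: det_open}, via the same diagram of exact sequences used in the proof of Theorem~\ref{Thm: main_thm}, gives (1)$\Leftrightarrow$(2). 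For (3)$\Rightarrow$(1), suppose (1) fails. Then Theorem~\ref{thm36} shows that the adjoint image at one $\fp\in P$ lies in a graph, so it is not open, and (3) fails.

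\emph{Implication (1)$\Rightarrow$(3) under the hypotheses at $x$.} Let $x$ be the given good $F$-closed point, and choose a point $\widetilde x$ above it in the normalization of $R$ in $K^{\rm sep}$. The decomposition group $D(\widetilde x)\subset\Gamma_K$ surjects onto $\Gamma_{\kappa_x}$ modulo inertia. Since $x$ has characteristic $0$ and good reduction, the inertia acts trivially on all $\fp$-power torsion for every $\fp$. Hence, as in \S\,\ref{Sect: specialization}, we get $\varrho_P(D(\widetilde x))=\varrho_{x,P}(\Gamma_{\kappa_x})$ compatibly with the identifications $T_\fp\phi\simeq T_\fp\phi_x$, and therefore $\varrho_{x,P}(\Gamma_{\kappa_x})\subset\varrho_P(\Gamma_K)\subset U_P$. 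The last inclusion holds by Remark~\ref{Ufp}, once we check that $U_P$ for $(\phi,\phi')$ contains $U_P$ for $(\phi_x,\phi'_x)$. So it suffices to prove that $\varrho_{x,P}(\Gamma_{\kappa_x})$ is commensurable with $U_P$. Here $\kappa_x$ is a finite extension of $F$, so Theorem~\ref{Thm: main_thm} applies to $(\phi_x,\phi'_x)$ under exactly the listed hypotheses at $x$, provided condition (1) holds for $(\phi_x,\phi'_x)$ and their endomorphism data agree with those of $(\phi,\phi')$.

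\emph{Choosing $x$ well (main obstacle).} The hard step is to show that one can choose $x$ so that:
\begin{itemize}
\item ${\rm End}_{\overline{\kappa_x}}(\phi_x^\flat)={\rm End}_{\overline K}(\phi^\flat)$, so that $E^\flat$, $B^\flat$, $F'$ and hence $U_P$ are unchanged;
\item $\phi_x,\phi'_x$ are still not geometrically isogenous up to a Frobenius twist.
\end{itemize}
The hypotheses only provide some good $F$-closed point, so I would first try to show that these properties hold on a Hilbert-irreducibility-type dense set of $F$-closed points, namely those $x$ for which $\varrho_{\fp_0}(D(\widetilde x))$ is open in $\varrho_{\fp_0}(\Gamma_K)$ for one fixed $\fp_0$. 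This set is dense by Hilbert irreducibility for finitely generated fields, as in Pink's specialization arguments. For such $x$, openness together with (2)$\Rightarrow$(1) for $(\phi_x,\phi'_x)$ gives both properties: the endomorphisms are preserved by Tate's theorem \cite[Theorem~1.7]{P}, and non-isogeny follows from (2)$\Rightarrow$(1). The remaining difficulty is to match this dense set with the point $x$ supplied by the hypothesis. The third listed condition should be open enough to persist on such a set, but the first two are conditions on $x$ itself. My plan is to use the inclusion $\varrho_{x,P}(\Gamma_{\kappa_x})\subset\varrho_P(\Gamma_K)\subset U_P$ directly. If $\phi_x$ has CM or the ranks of $\psi_x,\psi'_x$ differ, the main theorem at $x$ already gives an open image in the derived groups of $(\phi_x,\phi'_x)$. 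I would then argue that the derived group of the $x$-data contains enough of $U_P^{\rm der}$, using Theorem~\ref{thmmainsl} for finitely many exceptional primes and the residual argument of Proposition~\ref{lem5.8}, whose input transfers via $D(\widetilde x)$. This last reduction is the step I expect to require the most care.
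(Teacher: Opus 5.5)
The parts of your proposal that match the paper are (1)$\Leftrightarrow$(2), (3)$\Rightarrow$(1), and the reduction of (1)$\Rightarrow$(3) to Theorem~\ref{Thm: main_thm} for $(\phi_x,\phi'_x)$, using $\varrho_{x,P}(\Gamma_{\kappa_x})\subset\varrho_P(\Gamma_K)\subset U_P$ at a good $F$-closed point $x$ where endomorphism data and non-isogeny are preserved. The genuine gap is the last step: you leave it open, and the repair you sketch would fail.

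\begin{itemize}
\item If $\phi_x$ acquires potential CM while $\phi$ does not, the $\phi$-component of $\varrho_{x,P}(\Gamma_{\kappa_x})$ is virtually abelian. Since $\psi$ has rank $\geq2$, neither this group nor its derived group can contain an open subgroup of $\prod_{\fq}\SL_{B_\fq}(T_\fq\psi)$.
\item More generally, at any $x$ where an endomorphism ring jumps, $\varrho_{x,P}(\Gamma_{\kappa_x})$ has infinite index in $U_P$. Commensurability with the smaller group attached to $(\phi_x,\phi'_x)$ then tells you nothing about $\varrho_P(\Gamma_K)$.
\item Proposition~\ref{lem5.8} cannot be ``transferred via $D(\widetilde x)$''. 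It presupposes that the pair is defined over a finite extension of $F$ and is not isogenous up to a Frobenius twist. That is unknown for $(\phi_x,\phi'_x)$ at an arbitrary $x$, and the proposition is not available for $(\phi,\phi')$ over $K$.
\end{itemize}

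The paper does not try to use an arbitrary $x$; it constructs the point and applies Theorem~\ref{Thm: main_thm} there. Choose $i$ with $G=\varrho_\fp(\Gamma_K)\supseteq U_\fp^i$, which follows from (1)$\Rightarrow$(2). Let $L/K$ be cut out by $U_\fp^{2i}$, and take an $F$-closed point $x$ with irreducible fibre in the normalization of $X$ in $L$. Then $\Delta_\fp U_\fp^{2i}=G$, so $\Delta_\fp\cap U_\fp^i$ surjects onto $U_\fp^i/U_\fp^{2i}$. Lemma~\ref{Lem: H_contain_S} upgrades this to $\Delta_\fp\supseteq S_\fp^{2i}$, using commutators read through the truncated logarithms: brackets of $\mathfrak u_\fp^i$ with $\mathfrak s_\fp^{li}$, together with $[\mathfrak{gl},\mathfrak{sl}]=\mathfrak{sl}$ in each factor. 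Theorem~\ref{thmmainsl} then gives non-isogeny at $x$ (Proposition~\ref{Prop: specialized_non_isog}). The endomorphism data, and hence $U_P$, are unchanged, so the containment yields (3) with no residual argument.

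This finite-level lemma is the real content you compressed into ``as in Pink's specialization arguments'', and it is not formal. $U_\fp^i$ is not topologically finitely generated, because its determinant image is a group of principal units in characteristic $p$. So no Frattini shortcut turns Hilbert irreducibility into openness.

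Finally, the paper's phrase ``for any good $F$-closed point $x$'' is backed by Proposition~\ref{Prop: specialized_non_isog} only for points produced by this construction. The bulleted hypotheses must therefore be read at such an $x$. At these points the first two bullets are equivalent to the corresponding conditions on $(\phi,\phi')$, which removes the matching problem you ran into. Only the supersingularity bullet genuinely depends on $x$.
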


To prove this theorem, observe that the equivalence between (1) and (2) still follows directly from Theorem~\ref{thmmainsl} and Theorem~\ref{Thm: det_open}, as both results hold for finitely generated extensions
Consequently, the implication (3) $\Rightarrow$ (1) also holds. 

To complete the proof, we follow the strategy of Pink and R\"utsche \cite[\S~5]{PR}. Since $x$ is chosen to be good, the natural isomorphism between the Tate modules $T_\fp \phi^\flat\cong T_\fp \phi^\flat_x$ allows us to view the image of the specialized Galois representation $\varrho_{P, x}(\Gamma_{\kappa_x})$ as a closed subgroup of $\varrho_P(\Gamma_K)$. Because $x$ is an $F$-closed point, the extension $\kappa_{x}/F$ is finite, effectively reducing the problem to the classical case over finite extensions. We will demonstrate that, with a suitable choice of $x$, the property of being non-isogenous is preserved under reduction. Consequently, if the implication (1) $\Rightarrow$ (3) holds for the specialized pair $\phi_x$ and $\phi'_x$, it must also hold for the original pair $\phi$ and $\phi'$. In the remainder of this section, we establish the necessary notation and preliminary lemmas before presenting the full proof at the end of the subsection.

Fix a finite place $\fp$ of $F$. Let $G\coloneqq \varrho_{\mathfrak{p}}(\Gamma_K)$ be the image of the $\mathfrak{p}$-adic Galois representation associated with the pair $(\phi, \phi')$. For a fixed finite place $\mathfrak{p}$ of $A$, let $\mathfrak{q}^\flat$ run over the maximal ideals of $B^\flat$ lying above $\mathfrak{p}$. For each $\mathfrak{q}^\flat\in\operatorname{Max}B^\flat$, we define the principal congruence subgroup of level $i$:
\[
   G_{\mathfrak{q}^\flat}^i \coloneqq \ker\left(
  \operatorname{GL}_{B^\flat_{\mathfrak{q}^\flat}}(T_{\mathfrak{q}^\flat}\psi^\flat)
  \longrightarrow
  \operatorname{GL}_{B^\flat_{\mathfrak{q}^\flat}/(\mathfrak{q}^\flat)^i}
  \left(T_{\mathfrak{q}^\flat}\psi^\flat/(\mathfrak{q}^\flat)^iT_{\mathfrak{q}^\flat}\psi^\flat\right)
  \right).
\]
Equivalently, choosing a uniformizer $\pi_{\mathfrak{q}^\flat}$ of $B^\flat_{\mathfrak{q}^\flat}$, we have
$
   G_{\mathfrak{q}^\flat}^i = 1+\pi_{\mathfrak{q}^\flat}^{i}\mathfrak{gl}_{B^\flat_{\mathfrak{q}^\flat}}(T_{\mathfrak{q}^\flat}\psi^\flat).
$
We also define the corresponding special linear subgroup:
\[
   S_{\mathfrak{q}^\flat}^i \coloneqq G_{\mathfrak{q}^\flat}^i \cap \operatorname{SL}_{B^\flat_{\mathfrak{q}^\flat}}(T_{\mathfrak{q}^\flat}\psi^\flat).
\]
Taking the products over the primes above $\mathfrak{p}$, we denote:
\[
G_{\mathfrak{p}}^i \coloneqq \prod_{\mathfrak{q}\mid\mathfrak{p}} G_{\mathfrak{q}}^i \times \prod_{\mathfrak{q}'\mid\mathfrak{p}} G_{\mathfrak{q}'}^i, \qquad
S_{\mathfrak{p}}^i \coloneqq \prod_{\mathfrak{q}\mid\mathfrak{p}} S_{\mathfrak{q}}^i \times \prod_{\mathfrak{q}'\mid\mathfrak{p}} S_{\mathfrak{q}'}^i,
\]
and we define the intersections $U_{\mathfrak{p}}^i \coloneqq U_{\mathfrak{p}} \cap G_{\mathfrak{p}}^i$. 

Moreover, we set $V_{\fq^\flat}\psi^\flat\coloneqq E_{\fq^\flat}^\flat\otimes_{B^\flat_{\fq^\flat}}T_{\fq^{\flat}}\psi^\flat$. Let
$
  \operatorname{tr}_{\fq^\flat}:\mathfrak{gl}_{E_{\fq^\flat}^\flat}(V_{\fq^\flat}\psi^\flat)\longrightarrow E^\flat_{\fq^\flat}
$
denote the usual matrix trace over $E^\flat_{\fq^\flat}$. For $i\geq0$, define
\[
  \mathfrak g_{\fp}
  :=
  \prod_{\fq\mid\fp}\mathfrak{gl}_{E_{\fq}}(V_{\fq}\psi)
  \times
  \prod_{\fq'\mid\fp}\mathfrak{gl}_{E_{\fq'}'}(V_{\fq'}\psi'),
  \qquad
  \mathfrak s_{\fp}
  :=
  \prod_{\fq\mid\fp}\mathfrak{sl}_{E_{\fq}}(V_{\fq}\psi)
  \times
  \prod_{\fq'\mid\fp}\mathfrak{sl}_{E_{\fq'}'}(V_{\fq'}\psi'). 
\]
Let $\mathfrak u_\fp$ be the subspace of $\mathfrak g_\fp$ consisting of those elements $((t_{\fq})_{\fq\mid\fp},(t_{\fq'})_{\fq'\mid\fp})$ such that  $\sum\limits_{\fq\mid\fp'}
  {\rm Tr}_{E_{\fq}/F'_{\fp'}}
  \bigl({\rm tr}_{\fq}(t_{\fq})\bigr) =
  \sum\limits_{\fq'\mid\fp'}
  {\rm Tr}_{E'_{\fq'}/F'_{\fp'}}
  \bigl({\rm tr}_{\fq'}(t_{\fq'})\bigr)$
for every maximal ideal $\fp'$ of $A'$ above $\fp$. For any $i\geq1$, define 
$$ \mathfrak g_{\fp}^{i}
  =\prod_{\fq\mid\fp}\pi_\fq^i\mathfrak{gl}_{B_{\fq}}(T_{\fq}\psi)
  \times
  \prod_{\fq'\mid\fp}\pi_{\fq'}^i\mathfrak{gl}_{B_{\fq'}'}(T_{\fq'}\psi'),\qquad\mathfrak u_{\fp}^{i}=\mathfrak u_{\fp}\cap \mathfrak g_{\fp}^{i},\qquad\mathfrak s_{\fp}^{i}
  =
  \mathfrak s_{\fp}\cap \mathfrak g_{\fp}^{i}.$$

For integers $i\geq j \geq1$, the truncated logarithm gives
isomorphisms
\begin{align*}
    \log_{i, j }:
  G_{\fp}^{i}/G_{\fp}^{i+ j }
  \xrightarrow{\sim}
  \mathfrak g_{\fp}^{i}/\mathfrak g_{\fp}^{i+ j },
  & \qquad
  [1+t]\longmapsto [t],\\
  \log_{i, j }:
  S_{\fp}^{i}/S_{\fp}^{i+ j }
  \xrightarrow{\sim}
  \mathfrak s_{\fp}^{i}/\mathfrak s_{\fp}^{i+ j },
  & \qquad
  [1+t]\longmapsto [t].
\end{align*}
Since
$
  \det(1+t)\equiv 1+{\rm tr}(t)
  \pmod{\mathfrak g_{\fp}^{i+ j }}
$
for $i\geq  j $ and since the differential of the norm map is the trace map, the same
truncated logarithm identifies the determinant-compatible quotients:
\[
  \log_{i, j }:
  U_{\fp}^{i}/U_{\fp}^{i+ j }
  \xrightarrow{\sim}
  \mathfrak u_{\fp}^{i}/\mathfrak u_{\fp}^{i+ j }.
\]
Under these identifications, the commutator map corresponds to the
componentwise Lie bracket. More precisely, for $a,b\geq j \geq1$, we have the following commutative diagram: 

\begin{equation}\label{Diag: Lie}
    \begin{tikzcd}
    G_{\fp}^{a}/G_{\fp}^{a+ j }
  \times
  S_{\fp}^{b}/S_{\fp}^{b+ j } \arrow[r, "{\{\ ,\ \}}"] \arrow[d, "\log_{a, j }\times \log_{b, j }"']& S_{\fp}^{a+b}/S_{\fp}^{a+b+ j } \\
  \mathfrak g_{\fp}^{a}/\mathfrak g_{\fp}^{a+ j }
  \times
  \mathfrak s_{\fp}^{b}/\mathfrak s_{\fp}^{b+ j } \arrow[r, "{[\ ,\ ]}"]& \mathfrak s_{\fp}^{a+b}/\mathfrak s_{\fp}^{a+b+ j }\arrow[u, "\exp_{a+b, j }"']
\end{tikzcd}
\end{equation}
where $\{\ , \ \}$ and $[\ ,\ ]$ refer the commutator and the Lie bracket respectively.

\begin{lem}\label{Lem: H_contain_S}
    Let $H$ be a closed subgroup of $\prod\limits_{\fq\mid \fp} \operatorname{GL}_{B_{\mathfrak{q}}}(T_{\mathfrak{q}}\psi)\times\prod\limits_{\fq'\mid \fp} \operatorname{GL}_{B'_{\mathfrak{q}'}}(T_{\mathfrak{q}'}\psi')$. Assume there exists a positive integer $i$ such that $(H\cap U_\fp^i)/(H\cap U_{\fp}^{2i})=U_\fp^{i}/U_\fp^{2i}$. Then $H\supset S_\fp^{2i}$. 
\end{lem}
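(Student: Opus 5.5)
We use commutators and the truncated logarithm, in the style of Pink--R\"utsche. The structure is: first show that the commutators of $H$ fill $S_\fp^{2i}$ modulo $S_\fp^{3i}$, then run an induction that climbs one level at a time, and finally take a limit using closedness of $H$.

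\textbf{Step 1: the base case.} We show that the closed subgroup $H' \coloneqq \overline{[H\cap U_\fp^i,\,H\cap U_\fp^i]}$ satisfies $H'S_\fp^{3i}\supset S_\fp^{2i}$.

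Take $a=b=j=i$ in diagram~\eqref{Diag: Lie}. Then the commutator $\{x,y\}$ of elements $x,y\in H\cap U_\fp^i$ lands in $S_\fp^{2i}$, and modulo $S_\fp^{3i}$ it corresponds to $[\log x,\log y]$. By hypothesis, $\log_{i,i}$ maps $H\cap U_\fp^i$ onto $\mathfrak u_\fp^i/\mathfrak u_\fp^{2i}$. It therefore suffices to show that brackets of elements of $\mathfrak u_\fp^i$ span $\mathfrak s_\fp^{2i}$ modulo $\mathfrak s_\fp^{3i}$.

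\begin{itemize}
\item Since $\mathfrak u_\fp\supset\mathfrak s_\fp$ and $[\mathfrak g,\mathfrak g]=[\mathfrak s,\mathfrak s]$ componentwise, this reduces to a statement about each factor $\mathfrak{sl}_{B_\fq}(T_\fq\psi)$.
\item In each factor we need $[\pi^i\mathfrak{sl}(B_\fq),\,\pi^i\mathfrak{sl}(B_\fq)]$ to cover $\pi^{2i}\mathfrak{sl}(B_\fq)$ modulo $\pi^{3i}$. Equivalently, $[\mathfrak{sl}_m(\kappa),\mathfrak{sl}_m(\kappa)]=\mathfrak{sl}_m(\kappa)$, and then we lift integrally.
\item A cleaner route uses elementary matrices: $[e_{jk},e_{kl}]=e_{jl}$ for distinct $j,k,l$, and $[e_{jk},e_{kj}]=e_{jj}-e_{kk}$. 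These identities hold over $B_\fq$ itself, which gives the surjection even integrally when $m\ge3$.
\item When $m=2$ we use $[h,e_{12}]=2e_{12}$. In characteristic $p=2$ this brackets identity fails. In that case we instead compute the group commutators in $H$ directly, or take commutators of elements of $H\cap U^i_\fp$ with elements of $H\cap U^{i'}_\fp$ at shifted levels.
\end{itemize}

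\textbf{Step 2: induction upward.} Suppose we know $(H\cap S_\fp^{k})S_\fp^{k+i}\supset S_\fp^{k}$ for some $k\ge 2i$. We claim the same holds with $k$ replaced by $k+i$, again at step size $i$.

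For this, apply diagram~\eqref{Diag: Lie} with $a=i$, $b=k$ and $j=i$ (admissible since $a,b\ge j$). Commutators of elements of $H\cap U_\fp^i$ with elements of $H\cap S_\fp^{k}$ lie in $S_\fp^{k+i}$. Modulo $S_\fp^{k+2i}$ they realize the brackets $[\mathfrak u_\fp^i,\mathfrak s_\fp^k]$. By the same elementary-matrix computation, these brackets span $\mathfrak s_\fp^{k+i}/\mathfrak s_\fp^{k+2i}$.

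\textbf{Step 3: limit.} Iterating Step 2, every element of $S_\fp^{2i}$ is a convergent product of elements of $H$. Since $H$ is closed, this gives $H\supset S_\fp^{2i}$.

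\textbf{Main obstacle.} The hard point is the Lie-algebra spanning statement when $m=2$ and $p=2$. In that case $\mathfrak{sl}_2$ is not perfect: the bracket $[h,e]=2e$ vanishes, and $[\mathfrak{sl}_2,\mathfrak{sl}_2]$ consists only of scalars together with $\langle e,f\rangle$ issues.

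To handle it, I would first try to exploit that $\mathfrak u_\fp$ is larger than $\mathfrak s_\fp$. It contains non-traceless diagonal elements, such as $\mathrm{diag}(1,0)$ in suitable factors, subject to the norm-trace constraint. Brackets of such elements with $e_{12}$ give $e_{12}$ directly, with no factor of $2$. The constraint can be met by compensating in a different factor, or by using a scalar in the same factor, which has zero bracket. If that fails, I would replace the Lie computation by explicit group commutators of the form $\begin{pmatrix}1+\pi^i x&0\\0&1\end{pmatrix}$ with unipotent elements, as in \cite[Proposition 4.1]{PR}.
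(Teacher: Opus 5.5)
Your architecture is the paper's. You read commutators through the truncated logarithm in \eqref{Diag: Lie}, climb from level $2i$ in steps of $i$, and finish with a successive-approximation limit using closedness of $H$. Your bookkeeping with $H\cap S^k_\fp$ is fine and slightly more explicit than the paper's phrasing with $H\cap U^{li}_\fp$. It makes visible that the approximating elements are commutators and hence have determinant one, which is why the conclusion sits at level $2i$. The problem is the Lie-algebra input, and as written your argument does not close the case $m=2$, $p=2$.

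In Step 1 you reduce to $[\mathfrak{sl},\mathfrak{sl}]$ via the claim $[\mathfrak g,\mathfrak g]=[\mathfrak s,\mathfrak s]$ componentwise. That claim is false exactly when $m=2$ and $p=2$: there $[\mathfrak{sl}_2,\mathfrak{sl}_2]$ is spanned by $2e,2f,h$, i.e.\ only the scalars survive. Step 2's appeal to ``the same elementary-matrix computation'' has the same problem if only the $\mathfrak s$-part of $\mathfrak u^i_\fp$ is used. Your ``main obstacle'' paragraph leaves the fix tentative.

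The paper avoids any case distinction. It brackets the full $\mathfrak u^i_\fp$ against $\mathfrak s^k_\fp$ using two facts:
\begin{itemize}
\item[(a)] for each factor, ${\rm pr}_{\fq^\flat}:\mathfrak u_\fp^i\to\pi_{\fq^\flat}^i\mathfrak{gl}_{B^\flat_{\fq^\flat}}(T_{\fq^\flat}\psi^\flat)$ is surjective;
\item[(b)] $[\mathfrak{gl},\mathfrak{sl}]=\mathfrak{sl}$ holds integrally over $B^\flat_{\fq^\flat}$ for every $m\ge 2$ and every $p$, since $[e_{jj},e_{jk}]=e_{jk}$ and $[e_{jk},e_{kj}]=e_{jj}-e_{kk}$.
\end{itemize}
Your first proposed fix, using non-traceless diagonal elements of $\mathfrak u_\fp$, is exactly this. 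State it as the key input and use it in both Step 1 and Step 2; Step 1 then works because $\mathfrak s^i_\fp\subset\mathfrak u^i_\fp$. In each case, take the second bracket argument supported in a single factor modulo the next level. Then the compensating components of the first argument bracket to zero. Your other option, compensating the trace ``by a scalar in the same factor,'' fails in precisely the problematic case: the trace of $cI$ is $mc$, which vanishes when $p\mid m$. The compensation must come from other factors over the same prime of $A'$, which is what the surjectivity in (a) encodes.
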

\begin{proof}
    By assumption, we have $(H\cap U_\fp^i)/(H\cap U_{\fp}^{2i})=U_\fp^{i}/U_\fp^{2i}\supset S_\fp^i/S_\fp^{2i}$. A diagram chase of \eqref{Diag: Lie} applied to the product of inclusions 
    \[
    (H\cap U_\fp^i)/(H\cap U_{\fp}^{2i})\times (H\cap U_\fp^{li})/(H\cap U_{\fp}^{(l+1)i})\supset G_{\fp}^{i}/G_{\fp}^{2i}\times S_{\fp}^{li}/S_{\fp}^{(l+1)i}
    \]
    shows inductively that 
    \begin{equation}\label{Eqn: quotient_ind}
        (H\cap U_\fp^{(l+1)i})/(H\cap U_{\fp}^{(l+2)i})\supset S_{\fp}^{(l+1)i}/S_{\fp}^{(l+2)i}\qquad \text{for all } l\geq 0.
    \end{equation} 
    This step relies on the fact that the projection map ${\rm pr}_{\fq^\flat}: \mathfrak u_{\fp}^i\to \pi_{\fq^\flat}^{i}\mathfrak{gl}_{B^\flat_{\fq^\flat}}(T_{\fq^\flat}\psi^\flat)$ is surjective for each $\fq^\flat$, and that the Lie bracket satisfies $[\mathfrak{gl}_{E^\flat_{\fq^\flat}}(V_{\fq^\flat}\psi^\flat),\mathfrak{sl}_{E^\flat_{\fq^\flat}}(V_{\fq^\flat}\psi^\flat)]=\mathfrak{sl}_{E^\flat_{\fq^\flat}}(V_{\fq^\flat}\psi^\flat)$ for each factor. Finally, because $H$ is a closed subgroup of $\prod\limits_{\fq^\flat\mid \fp} \operatorname{GL}_{B^\flat_{\mathfrak{q}^\flat}}(T_{\mathfrak{q}^\flat}\psi^\flat)$, the desired containment follows directly from \eqref{Eqn: quotient_ind}. 
\end{proof}

Returning to the Galois representation image, by Theorems~\ref{thmmainsl} and \ref{Thm: det_open}, $G$ is commensurable with $U_{\mathfrak{p}}$. Consequently, there exists a sufficiently large integer $i$ such that $G\supset U_{\mathfrak{p}}^i$. We then take $L$ to be the finite Galois extension of $K$ cut out by $U_{\mathfrak{p}}^{2i}$, so that $\operatorname{Gal}(L/K)\cong G/U_{\mathfrak{p}}^{2i}$. Let
\[
  \pi:Y\longrightarrow X
\]
be a finite dominant generically \'etale morphism, with $Y$ integral, obtained as the normalization of $X$ in the finite Galois extension $L/K$. After shrinking $X$, \cite[Lemma~1.6]{P} (or more generally, \cite[Theorem~6.3]{Jo}) gives an $F$-closed point $x$ such that $\kappa_x/F$ is finite and the fiber $\pi^{-1}(x)$ is irreducible. Moreover, let $y$ be the unique point of $Y$ lying above $x$. This irreducibility implies that 
\[
\operatorname{Gal}(\kappa_y/\kappa_x)\cong \operatorname{Gal}(L/K)\cong G/U_{\mathfrak{p}}^{2i}.
\]

Assuming both $\psi$ and $\psi'$ have good reduction at $x$, let $\Delta_{\mathfrak{p}}\coloneqq \varrho_{\fp, x}(\Gamma_{\kappa_x})$ denote the image of the Galois representation associated to the specialized pair of Drinfeld modules $(\phi_x, \phi'_x)$. Viewing $\Delta_{\mathfrak{p}}$ as a closed subgroup of $G$ via the natural specialization isomorphisms $T_\fp \phi^\flat\cong T_\fp \phi_x^\flat$, we obtain the relation
\[
\Delta_{\mathfrak{p}}U_{\mathfrak{p}}^{2i} = G,
\]
which implies that 
\[
(\Delta_{\mathfrak{p}}\cap U_{\mathfrak{p}}^i)/(\Delta_{\mathfrak{p}}\cap U_{\mathfrak{p}}^{2i}) \cong U_{\mathfrak{p}}^i/U_{\mathfrak{p}}^{2i}.
\]

\begin{prop}\label{Prop: specialized_non_isog}
    With the above notation, $\Delta_\fp\supset S_\fp^{2i}$. In particular, the specialized Drinfeld modules $\phi_x$ and $\phi'_x$ are not geometrically isogenous up to any Frobenius twist. 
\end{prop}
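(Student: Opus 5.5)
The first assertion is a direct application of Lemma~\ref{Lem: H_contain_S} with $H=\Delta_\fp$. Since $\Delta_\fp$ is the image of a profinite group under a continuous map, it is a closed subgroup of $\prod_{\fq\mid\fp}{\rm GL}_{B_\fq}(T_\fq\psi)\times\prod_{\fq'\mid\fp}{\rm GL}_{B'_{\fq'}}(T_{\fq'}\psi')$. The hypothesis of the lemma is the identity $(\Delta_\fp\cap U_\fp^i)/(\Delta_\fp\cap U_\fp^{2i})=U_\fp^i/U_\fp^{2i}$ recorded just before the statement. That identity comes from $\Delta_\fp U_\fp^{2i}=G$ together with $U_\fp^i\subset G$. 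For any $u\in U_\fp^i$ we can write $u=\delta v$ with $\delta\in\Delta_\fp$ and $v\in U_\fp^{2i}$. Then $\delta=uv^{-1}\in U_\fp^i$, because $U_\fp^{2i}\subset U_\fp^i$. Hence $\Delta_\fp\cap U_\fp^i$ surjects onto $U_\fp^i/U_\fp^{2i}$, and its kernel is $\Delta_\fp\cap U_\fp^{2i}$. The lemma then gives $\Delta_\fp\supset S_\fp^{2i}$.

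For the second assertion, we argue by contradiction and assume $\phi_x$ and $\phi'_x$ are geometrically isogenous up to a Frobenius twist. Neither has potential CM, since $\Delta_\fp$ contains an open subgroup of each factor ${\rm SL}_{B_\fq}(T_\fq\psi)$; this would be impossible if ${\rm End}_{\overline{\kappa_x}}(\phi_x)$ were larger than $B$. Indeed, in that case the image of $\Gamma_{\kappa_x'}$ would have to commute with a larger commutative algebra acting on $V_\fq\psi$. The same argument forces ${\rm End}_{\overline{\kappa_x}}(\phi^\flat_x)=B^\flat$ after the harmless finite extension of $\kappa_x$. In particular the ranks of $\psi_x,\psi'_x$ are $m,m'\ge 2$, and the framework of \S\,\ref{Sect: adj_repn} applies to the specialized pair.

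We then run the argument of Theorem~\ref{thm36} (1)$\Rightarrow$(2). Over a finite extension $\kappa'$ of $\kappa_x$, an isogeny $\phi_x^{(d)}\sim\phi'_x$ (or the reverse) yields a ring isomorphism $\sigma:B\simeq B'$ and $\fq'=\sigma(\fq)$ for some $\fq\mid\fp$. Up to a finite-index subgroup, it also identifies $\bar\rho'_{\fq',x}$ with $\widetilde\sigma\circ\bar\rho_{\fq,x}$. Consequently, the image of $\Gamma_{\kappa'}$ in ${\rm PGL}(V_\fq\psi)\times{\rm PGL}(V_{\fq'}\psi')$ lies in the graph of an isomorphism, and the same holds for its commutator group in ${\rm SL}\times{\rm SL}$ up to centre. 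Such a graph has no open subgroup of the product. On the other hand, $\Delta_\fp\cap\varrho_{\fp,x}(\Gamma_{\kappa'})$ has finite index in $\Delta_\fp$. It therefore contains an open subgroup of $S_\fp^{2i}$, and hence an open subgroup of $S_\fq^{2i}\times S_{\fq'}^{2i}$. Projecting to ${\rm PGL}\times{\rm PGL}$ gives an open subgroup of the product of the two adjoint images, which is the desired contradiction. Equivalently, one can argue via Theorem~\ref{thmminimalmodel}(3): the triple for $\{\fq,\fq'\}$ would be minimal, contradicting the non-minimality shown in the proof of Theorem~\ref{thm36}.

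The main obstacle is the bookkeeping in this second step. The isogeny, and the Frobenius-twisted semilinear identification of Tate modules, is only defined over a finite extension of $\kappa_x$, so I must check that the passage to finite-index subgroups preserves the openness of $S^{2i}_\fq\times S^{2i}_{\fq'}$. I must also identify the specialized endomorphism rings with $B^\flat$ in order to match the primes $\fq,\fq'$. I expect both points to follow from the openness of $\Delta_\fp$ in each ${\rm SL}$ factor together with Theorem~\ref{Thm: Pink_open_image}.
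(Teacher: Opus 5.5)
Your proposal is correct and follows the paper's proof. The containment $\Delta_\fp\supset S_\fp^{2i}$ comes from Lemma~\ref{Lem: H_contain_S}, whose hypothesis you also verify from $\Delta_\fp U_\fp^{2i}=G$ and $U_\fp^i\subset G$. The second assertion is the ``open implies not isogenous up to a Frobenius twist'' direction of Theorem~\ref{thmmainsl}: the paper simply cites it, and you unpack it via the proof of Theorem~\ref{thm36} and Theorem~\ref{thmminimalmodel}(3).

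Your extra check that $\End_{\overline{\kappa_x}}(\phi^\flat_x)=B^\flat$ is a useful addition. It guarantees that the specialized modules $\psi^\flat_x$ still have rank at least $2$ and live at the same primes $\fq^\flat$, a point the paper leaves implicit.
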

\begin{proof}
    The first statement is a direct consequence of Lemma~\ref{Lem: H_contain_S}. Observe that $\Delta_\fp$ contains $S_\fp^{2i}$, which is an open subgroup of 
    $
  \prod\limits_{\fq\mid\fp}
  \SL_{B_{\fq}}(T_{\fq}\psi)
  \times
  \prod\limits_{\fq'\mid\fp}
  \SL_{B'_{\fq'}}(T_{\fq'}\psi').
    $
    The second assertion then follows from Theorem~\ref{thmmainsl}. 
\end{proof}

Finally, we can complete the proof of the main result of this section.
\begin{proof}[Proof of Theorem~\ref{thmmain_gen_ver}]
    First, even without the additional conditions in the bullet points, Proposition~\ref{Prop: specialized_non_isog} ensures that for any good $F$-closed point $x$, the specialized pair $(\phi_x, \phi'_x)$ satisfies the main hypotheses of Theorem~\ref{thmmain}. If, moreover, any of the additional bulleted conditions holds for $x$, it immediately follows that the image of the adelic Galois representation of $\Gamma_{\kappa_x}$ is commensurable with $U_P$. Because the image of the adelic Galois representation of $\Gamma_K$ contains that of $\Gamma_{\kappa_x}$, the desired implication follows. 
\end{proof}

    \appendix

    \section{Coincidence of morphisms of schemes(by Xuanyou Li)}
    In this section, we prove some results used in the proof of Theorem \ref{11}. We refer \cite[Appendix B]{P} for the detail of Dirichlet density.
    \begin{lem}\label{lemA1}
      Let $f:X\to Y$ be a dominant morphism of integral schemes over $\F_p$ of fintie type. For any subset $T$ of closed points of $Y$ with Dirichlet density 0, then so is $f^{-1}(T)\cap|X|$.
    \end{lem}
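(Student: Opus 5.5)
The plan is to reduce to the case where $X$ and $Y$ have the same dimension and then compare point counts fibrewise. First, recall the definition from \cite[Appendix B]{P}: a set $T\subset|Y|$ has Dirichlet density $0$ if
\[
\lim_{s\to \dim Y^+}\frac{\sum_{y\in T}{\rm N}(y)^{-s}}{\sum_{y\in|Y|}{\rm N}(y)^{-s}}=0 .
\]
Here ${\rm N}(y)=|\kappa_y|$, and the denominator has a pole of order one at $s=\dim Y$. So we must bound $\sum_{x\in f^{-1}(T)}{\rm N}(x)^{-s}$ against $\sum_{x\in|X|}{\rm N}(x)^{-s}$ near $s=\dim X$.

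\textbf{Step 1 (generic structure).} Since $f$ is dominant and $X,Y$ are integral of finite type, generic flatness and Chevalley's theorem give a dense open $V\subset Y$ over which $f$ is flat, surjective, with all fibres of pure dimension $r=\dim X-\dim Y$. The set $T\setminus V$ lies in a proper closed subset, and the closed points of a proper closed subset have density $0$. The same holds for $f^{-1}(Y\setminus V)$ in $X$, since it is a proper closed subset ($f$ dominant). So we may replace $Y$ by $V$ and $X$ by $f^{-1}(V)$.

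\textbf{Step 2 (fibrewise Lang--Weil bound).} After a further shrinking, we aim for a uniform upper bound on the fibres. For every closed $y\in Y$ and every $x\in f^{-1}(y)$ closed, we have ${\rm N}(x)={\rm N}(y)^{[\kappa_x:\kappa_y]}$. The Lang--Weil estimate for the fibre $X_y$, whose geometric complexity is bounded uniformly over the base by constructibility, gives a constant $C$ with
\[
\sum_{x\in f^{-1}(y)}{\rm N}(x)^{-s}\le C\,{\rm N}(y)^{r-s}\qquad (s\text{ near }\dim X).
\]
Summing over $y\in T$ and writing $s=r+s'$ yields
\[
\sum_{x\in f^{-1}(T)}{\rm N}(x)^{-s}\le C\sum_{y\in T}{\rm N}(y)^{-s'} .
\]

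\textbf{Step 3 (compare the denominators).} The denominator $\sum_{x\in|X|}{\rm N}(x)^{-s}$ behaves like $\log\frac{1}{s-\dim X}$. The denominator $\sum_{y\in|Y|}{\rm N}(y)^{-s'}$ behaves like $\log\frac{1}{s'-\dim Y}$, which is the same quantity. Dividing, the ratio is bounded by $C$ times the density ratio for $T$, which tends to $0$.

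\textbf{Main obstacle.} The hardest step is Step 2: making the fibre bound uniform. A convenient shortcut is to allow non-geometrically-connected fibres. One shrinks $Y$ so that $f$ factors as $X\to Y'\to Y$, with $Y'\to Y$ finite étale (the Stein factorisation of the generic fibre spread out) and $X\to Y'$ having geometrically irreducible fibres. For the finite étale part, the bound holds with $C$ equal to the degree. For the geometrically irreducible part, uniform Lang--Weil applies, for instance via a projective compactification with bounded degree.

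If the density in \cite[Appendix B]{P} is normalized via the natural density rather than the Dirichlet density, the same fibrewise counting argument applies verbatim, with point counts of height $\le N$ in place of the zeta sums.
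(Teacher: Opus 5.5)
Your proposal is correct and follows essentially the same route as the paper: shrink so that the fibres are equidimensional of dimension $d_X-d_Y$, bound each fibre sum uniformly by a constant times ${\rm N}(y)^{(d_X-d_Y)-s}$, sum over $T$, and observe that both denominators grow like $\log\frac{1}{\delta}$ at $s=d_X+\delta$ (respectively $d_Y+\delta$). The only difference is that the paper obtains the uniform count of points of degree $i$ in each fibre directly from \cite[Proposition B.1]{P}, whereas you go through uniform Lang--Weil and a Stein factorisation; that is more than you need, since only an upper bound on fibre point counts is used.
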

    \begin{proof}
      Let $d_X=\dim(X)$, $d_Y=\dim(Y)$ and let ${\rm N}(x)$ be cardinality of the residue field $\kappa_x$ of any closed point $x$ in a scheme. For any subset $W$ of closed points in a scheme, define
      $$F_W(s)=\sum_{x\in W}\frac1{{\rm N}(x)^s}\quad(s\in\mathbb R).$$

      By \cite[Exercise 3.22]{Ha}, shrinking $X$ by an open subscheme if necessary,  we may assume that each fiber of $f$ at $y\in f(X)$ has dimension $ d_X-d_Y$. Then by \cite[Proposition B.1]{P} there exists a constant $c_1>0$ depending only on $f$, such that for any $y\in |Y|$ and all $i\geq1$, we have
      $${\rm card}\Big\{x\in |X|\,\Big|\,f(x)=y\hbox{ and }[\kappa_x:\kappa_y]=i\Big\}\leq\frac {c_1}i{\rm N}(y)^{i(d_X-d_Y)}.$$
      Hence for any $\delta>0$, we have
      \begin{eqnarray*}
        \sum_{\substack{x\in |X|\\f(x)\in T}}\frac1{{\rm N}(x)^{d_X+\delta}}=\sum_{y\in T}\sum_{\substack{x\in |X|\\f(x)=y}}\frac1{{\rm N}(x)^{d_X+\delta}}\leq\sum_{y\in T}\sum_{i=1}^\infty\frac{c_1{\rm N}(y)^{i(d_X-d_Y)}}{i{\rm N}(y)^{i(d_X+\delta)}}\leq\sum_{y\in T}\sum_{i=1}^\infty\frac {c_1}{{\rm N}(y)^{i(d_Y+\delta)}}\leq \sum_{y\in T}\frac{2c_1}{{\rm N}(y)^{d_Y+\delta}}.
      \end{eqnarray*}
      So $F_{f^{-1}(T)\cap|X|}(d_X+\delta)\leq 2c_1F_T(d_Y+\delta)$.

      Let $q_X$ and $q_Y$ be the cardinality of the field of constant in the function fields of $X$ and $Y$, respectively.  By the proof of \cite[Proposition B. 5\; (b)]{P}, there exists $c_2,c_3>0$ such that for any $\delta>0$, we have
      \begin{eqnarray*}
        &&F_{|Y|}(d_Y+\delta)\leq -c_2\log(1-q_Y^{-\delta})\\
        &&F_{|X|}(d_X+\delta)\geq-\log(1-q_X^{-\delta})+c_3\log(1-q_X^{-\delta-\frac12}).
      \end{eqnarray*}
      This implies that
      \begin{eqnarray*}
        \frac{F_{f^{-1}(T)\cap|X|}(d_X+\delta)}{F_{|X|}(d_X+\delta)}&\leq &2c_1\frac{F_T(d_Y+\delta)}{F_{|Y|}(d_Y+\delta)}\cdot\frac{F_{|Y|}(d_Y+\delta)}{F_{|X|}(d_X+\delta)}\\&\leq&2c_1\frac{F_T(d_Y+\delta)}{F_{|Y|}(d_Y+\delta)}\cdot\frac{-c_2\log(1-q_Y^{-\delta})}{-\log(1-q_X^{-\delta})+c_3\log(1-q_X^{-\delta-\frac12})}.
      \end{eqnarray*}
      In other words, $f^{-1}(T)\cap|X|$ also has zero density in $X$.
    \end{proof}

    \begin{thm}\label{thmf=g}
      Let $f_1,f_2:X\to Y$ be two dominant morphisms of integral $\F_p$-schemes of finite type with $Y$ separated. If $S\coloneqq\{x\in|X|\,\big|\, f_1(x)=f_2(x)\}$ has positive Dirichlet density in $X$, then there exists a natural number $d$ such that $f_2=f_1\circ {\rm Fr}_X^d$ or $f_1=f_2\circ{\rm Fr}_X^d$, where ${\rm Fr}_X:X\to X$ denotes the morphism associated to the $p$-th power map on $\mathcal O_X$.
    \end{thm}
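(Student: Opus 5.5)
The plan is to reduce to the case of affine schemes and then to an equality of ring homomorphisms up to Frobenius, using the density hypothesis to force a polynomial identity. First shrink: since $S$ has positive Dirichlet density and removing a proper closed subset only removes a set of density zero (by dimension counting as in the proof of Lemma~\ref{lemA1}), we may replace $X$ by a dense affine open $\Spec R$ mapping into a dense affine open $\Spec R'\subset Y$ under both $f_1$ and $f_2$. Since $Y$ is separated, the locus where two morphisms agree as morphisms is closed, so it suffices to prove $f_2=f_1\circ{\rm Fr}_X^d$ (or the symmetric statement) on this dense open. Write $f_j^\sharp:R'\to R$ for the ring maps and choose generators $y_1,\dots,y_r$ of $R'$ over $\F_p$.

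Next I would extract the Frobenius exponent pointwise. For a closed point $x\in S$, put $y=f_1(x)=f_2(x)$. The two maps $\kappa_y\to\kappa_x$ induced by $f_1$ and $f_2$ are $\F_p$-embeddings of finite fields, so they differ by a power of the Frobenius: $f_2^\sharp(a)(x)=f_1^\sharp(a)(x)^{p^{e_x}}$ or the reverse, with $0\le e_x<\log_p{\rm N}(y)$. The key point is to show that the exponent can be taken uniformly bounded on a subset of $S$ of positive density. Here I would use the dimension/degree comparison: points $x$ with $[\kappa_x:\kappa_y]$ large relative to ${\rm N}(y)$ contribute density zero (by the fiber counting bound from \cite[Proposition B.1]{P} as in Lemma~\ref{lemA1}), but that alone does not bound $e_x$. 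So instead I would split by the value $e_x$ modulo $[\kappa_y:\F_p]$ and argue as follows: for each fixed $d$, the set $S_d=\{x\in S: f_2^\sharp(y_k)(x)=f_1^\sharp(y_k)(x)^{p^d}\ \forall k\}$ lies in the closed subscheme $Z_d\subset X$ cut out by $f_2^\sharp(y_k)-f_1^\sharp(y_k)^{p^d}$ (and similarly $Z'_d$ with roles swapped). If some $Z_d$ equals $X$ we are done. Otherwise each $Z_d$ is a proper closed subset and has density zero.

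The main obstacle is therefore to show that $S$ cannot be spread over infinitely many $d$ with each piece of density zero and yet have positive density in total. I would handle this by a height/degree argument: embed $X$ and $Y$ in affine spaces, so $f_j^\sharp(y_k)$ are fixed polynomials of some degree $D$. For a point in $S_d\setminus\bigcup_{d'<d}(Z_{d'}\cup Z'_{d'})$ with $d\le d_x$ minimal, one has $p^{d}\le \sqrt{{\rm N}(x)}$ after possibly swapping roles, and $Z_d$ has degree $O(p^dD)$. A Lang--Weil type count then bounds the number of points of $Z_d$ of norm $N$ by $O(p^d N^{\dim X-1})$, and summing $\sum_x{\rm N}(x)^{-\dim X-\delta}$ over $x\in\bigcup_d S_d$ with $p^d\le\sqrt{{\rm N}(x)}$ gives something of order $\sum_N N^{1/2}N^{\dim X-1}N^{-\dim X-\delta}$, which stays bounded as $\delta\to0$, whereas $F_{|X|}(\dim X+\delta)$ diverges like $-\log(1-q^{-\delta})$ by the estimates recalled in the proof of Lemma~\ref{lemA1}. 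Hence the union has density zero, contradicting positive density, so some $Z_d$ or $Z'_d$ is all of $X$. Since $X$ is integral (hence reduced), this gives $f_2^\sharp=f_1^\sharp\circ{\rm Fr}^d$ on generators, i.e.\ $f_2=f_1\circ{\rm Fr}_X^d$ or $f_1=f_2\circ{\rm Fr}_X^d$ on the open, and then on all of $X$ by separatedness of $Y$.
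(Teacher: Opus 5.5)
Your proof is correct, but it takes a genuinely different route from the paper.

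The paper first proves the statement for curves. After compactifying, it bounds the degree of the finite coincidence scheme of $f_1$ and $f_2\circ{\rm Fr}_X^{d}$ by $cp^{|d|}$, using the cycle class of the diagonal and Deligne's Weil~II. It then sums $1/{\rm N}(x)$ over $S$, using that a point of degree $m$ lies in such a scheme with $|d|\le m/2$. The general case is reduced to curves: for each $a\in\mathcal O_Y(Y)$ the paper looks at $X\to\Spec\,\F_p[a_1,a_2]$ and transports positive density with Lemma~\ref{lemA1}. It rules out algebraic independence of $a_1,a_2$ by a count on $\A^2_{\F_p}$, obtains an exponent $d_a$ from the curve case, and finally checks that $d_a$ does not depend on $a$.

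You instead run the coincidence-locus argument directly in every dimension:
\begin{itemize}
\item $Z_d$ and $Z'_d$ are cut out by the $f_2^\sharp(y_k)-f_1^\sharp(y_k)^{p^d}$ (resp.\ with the roles swapped), and B\'ezout gives them degree $O(p^d)$.
\item The exponent at $x\in S$ is only determined modulo $[\kappa_y:\F_p]$, so one may take $p^d\le\sqrt{{\rm N}(x)}$.
\item Then $\sum_{p^d\le\sqrt N}p^d\le 2\sqrt N$ keeps $F_S(\dim X+\delta)$ bounded as $\delta\to0$.
\end{itemize}
In dimension one your bound is exactly the paper's $\deg Z_d\le cp^{|d|}$, obtained by B\'ezout instead of $\ell$-adic cohomology. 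In higher dimension you bypass Lemma~\ref{lemA1}, the $\A^2$ count, and the gluing of the exponents $d_a$. In exchange, the paper only ever counts points on curves, where the coincidence locus is a finite scheme whose degree is an intersection number.

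The one step to state precisely is the point count, which must be linear in the degree. Use the elementary bound $|W(\F_q)|\le\deg(W)\,q^{\dim W}$ for affine $W$, with degree summed over geometric components. Apply it to $X\cap V(g)$, where $g$ is a single one of your equations, of degree $\le p^dD$, that does not vanish on $X$; this gives degree $\le\deg(\overline X)\,p^dD$. A Lang--Weil type estimate whose constants grow faster than linearly in the degree would not be enough for the sum over $d$.
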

    \begin{proof}
      First, assume $X$ and $Y$ are curves over $\F_p$. Since $Y$ is separated, by shrinking $X$ and $Y$ we may assume that they are affine smooth curves over $\F_p$. Replacing $X$ and $Y$ by its completion if necessary, we may assume that $X$ and $Y$ are projective smooth curves over $\F_p$.

      For any $d\in\Z$, let $Z_d$ be the fiber product of $(f_1,f_2\circ{\rm Fr}_X^d):X\to Y\times_{\F_p}Y$ (resp. $(f_1\circ{\rm Fr}_X^{-d},f_2):X\to Y\times_{\F_p}Y$) and the diagonal morphism $Y\to Y\times_{\F_p}Y$ if $d\geq0$ (resp. if $d<0$). Since the scheme $Y$ is separated, then each $Z_d$ is a closed subscheme of $X$. To prove Step 1, it suffices to show $Z_d=X$ for some $d$.

      Assume $Z_d\neq X$ for any $d\in\Z$. Then $\dim (Z_d)=0$ and we can define $\deg(Z_d)=\dim_{\F_p}(\mathcal O(Z_d))$. We claim that there exists a constant $c'>0$ such that $\deg(Z_d)\leq c'p^{|d|}$ for any $d\in\Z$.

      We prove this claim by $l$-adic cohomology and Delign's Weil II, where $l\neq p$ is an another prime integer. For any object $D$ over $\F_p$, denote by $\overline D$ its base change to the algebraic closure of $\F_p$.  Without loss of generality, assume that $d\geq0$. Then
      $$\deg(Z_d)={\rm Tr}((\bar f_1,\bar f_2\circ\overline{\rm Fr}_{ X}^d)^{*}([\Delta_{\overline Y}])),$$
      where ${\rm Tr}$ is the trace map
      $${\rm Tr}:H^2(\overline X,\overline{\mathbb Q}_l(1))\to\overline{\mathbb Q}_l,$$
      $\Delta_{\overline Y}$ is the schematic image of the diagonal morphism $\overline Y\to\overline Y\times_{\overline\F_p}\overline Y$, and where
      $[\Delta_{\overline Y}]$ is the cycle class of $\Delta_{\overline Y}$ in $\overline Y\times_{\overline\F_p}\overline Y$ given by
      $$[\Delta_{\overline Y}]\in H^2(\overline Y \times_{\overline\F_p}\overline Y ,\overline{\mathbb Q}_l(1))=\bigoplus_{i=0}^2 H^i(\overline Y ,\overline{\mathbb Q}_l(1))\otimes H^{2-i}(\overline Y,\overline{\mathbb Q}_l).$$
      Choose a basis $\{t_{i1},\ldots,t_{ib_i}\}$ of $H^i(\overline Y,\overline{\mathbb Q}_l)$ under which
      the matrix of the action ${\rm Fr}_{\overline X}^*$ is of a Jordan form.
      By Deligne's Weil II,  any complex conjugate of each eigenvalue of the action of $\overline{\rm Fr}_{ Y}^{*}$ on $H^i(\overline Y,\overline{\mathbb Q}_l)$ has absolute values $\leq p^{\frac{i}2}$. Then there exists a constant $c>0$ such that
      $${\rm Tr}((\bar f_1,\bar f_2\circ\overline{\rm Fr}_{ X}^d)^{*}([\Delta_{\overline Y}]))<cp^d\quad(d\geq0).$$
      This proves the claim.

      For any positive integer $m$ and integer $d$, let
      $$N(m,d)={\rm card}\{x\in|Z_d|\,\big|\, {\rm N}(x)=p^m\}.$$
      It follows immediately from the claim that $\sum\limits_{m=1}^\infty mN(m,d)<cp^{|d|}$. There exists $c'>0$ such that
      \begin{eqnarray*}
        \sum_{x\in S}\frac1{{\rm N}(x)}=\sum_{m=1}^\infty\sum_{d=1-\lceil{\frac m2}\rceil}^{{\lfloor{\frac m2}\rfloor}}\frac{N(m,d)}{p^m}=c'+\sum_{d\in\Z\backslash\{0\}}\sum_{m\geq|2d|}\frac{N(m,d)}{p^m}\leq c'+\sum_{d\in\Z\backslash\{0\}}\frac{cp^{|d|}/|2d|}{p^{|2d|}}=c'+\sum_{d=1}^\infty\frac {c}{dp^d}<+\infty.
      \end{eqnarray*}
      So $S$ has Dirichlet density zero in $X$, which gets a contradiction. This proves the theorem for curves.

      For general $X$ and $Y$, by shrinking them if necessary, we may assume that $X$ and $Y$ are affine. Note that the theorem obviously holds if $\dim(Y)=0$. From now on, assume $\dim(Y)\geq1$. For each $i$, the morphism $f_i:X\to Y$ defines a homomorphisms $a\mapsto a_i:\;\mathcal O_Y(Y)\to\mathcal O_X(X)$. Denote $\F_p[a]$ the subring of $\mathcal O_Y(Y)$ generated by $a$, and by $\F_p[a_1,a_2]$ the subring of $\mathcal O_X(X)$ generated by $a_1$ and $a_2$. We thus have a commutative diagram
      \[\xymatrix{X\ar[rr]^{\widetilde\pi}\ar[d]^{f_i}&&\Spec\;\F_p[a_1,a_2]\ar[d]^{p_i}\\Y\ar[rr]^\pi&&\Spec\;\F_p[a],}\]
      where $p_i$ is the morphism given by $a\mapsto a_i$. Applying Lemma \ref{lemA1} to the dominant morphism $\widetilde\pi$, we have $\widetilde\pi(S)$ has positive density in $\Spec\;\F_p[a_1,a_2]$.


      Suppose $a_1$ and $a_2$ are algebraic independent over $\F_p$. Then $\Spec\;\F_p[a_1,a_2]\simeq\A^2_{\F_p}$ and $\Spec\;\F_p[a]\simeq\A^1_{\F_p}$. Under these identifications, $p_i$ is the $i$-th projection $\A^2_{\F_p}\to\A^1_{\F_p}$. For any $z\in\widetilde\pi(S)$, we have $p_1(z)=p_2(z)$ and hence $\widetilde\pi(S)\subset\coprod\limits_{m=1}^\infty S_m$ with
      $$S_m=\big\{z\in|\A^2_{\F_p}|\,\big|\,p_1(z)=p_2(z)\hbox{ and } {\rm N}(z)=p^m\big\}.$$
      Any $z\in S_m$ is defined by an element $(u,u^{p^i})\in\overline \F_p^2$ such that $\F_{p^m}=\F_p(u)$ and $1\leq i\leq m$. Two such elements $(u,u^{p^i})$ and $(v,v^{p^{j}})$ correspond the same $z\in S_m$ if $i=j$ and $v=u^{p^e}$ for some $1\leq e\leq m$. So ${\rm card}(S_m)\leq\frac{p^m}{m}\cdot m=p^m$. Hence
      $$\sum_{z\in\widetilde\pi(S) }\frac1{{\rm N}(z)^2}\leq\sum_{m=1}^\infty\sum_{z\in S_m}\frac1{{\rm N}(z)^2}\leq\sum_{m=1}^\infty\frac{{\rm card}(S_m)}{p^{2m}}\leq\sum_{m=1}^\infty\frac1{p^m}<+\infty.$$
      This shows $\widetilde\pi(S) $ has zero density in $\A^2_{\F_p}$ which get a contraction. So $a_1$ and $a_2$ are algebraic dependent over $\F_p$.

      Fix an element $a\in\mathcal O_Y(Y)$ transcendental over $\F_p$. Then $p_1$ and $p_2$ are dominant morphism of integral curves. We have shown that there exists a unique natural number $e_a$ such that $a_2=a_1^{p^{e_a}}$ or $a_1=a_2^{p^{e_a}}$. In other words, there exist a unique $d_a\in\Z$ such that $a_2=a_1^{p^{d_a}}$ in an algebraic closure $\overline{K(X)}$ of the function field $K(X)$ of $X$. To prove the theorem, it suffices to show  $b_2=b_1^{p^{d_a}}$ for any $b\in\mathcal O_Y(Y)$. It holds if $b$ lies in the subfield $\F_p(a)$ of $K(Y)$ generated by $a$, and it holds for $b$ if it holds for $a+b$. Since at least one element of $b$ and $a+b$ is transcendental, we may assume that $b$ is transcendental over $\F_p$ and $b\notin\F_p(a)$. Then we also have $d_b\in\Z$ such that $b_2=b_1^{p^{d_b}}$.

      First suppose $a$ and $b$ are algebraic dependent over $\F_p$. Choose $h_j\in\F_p(a)$ such that $\sum\limits_{j=0}^kh_j(a)b^j=0$ with $k$ as small as possible and $h_k=1$. As $b\notin\F_p(a)$, we have $k\geq2$. Without loss of generality, assume $d_a\geq d_b$. Under the homomorphism $\mathcal O_Y(Y)\hookrightarrow\mathcal O_X(X)$ induced by $f_i$, we have
      $$0=\sum\limits_{j=0}^kh_j(a_1)b_1^j=\sum\limits_{j=0}^kh_j(a_2)b_2^j=\sum_{j=0}^kh_j(a_1^{p^{d_a}})b_1^{jp^{d_b}}=\sum_{j=0}^kh_j(a_1^{p^{d_a-d_b}})b_1^{j}.$$
      By the minimality of the choice of $k$, we have $h_j(a_1^{p^{d_a-d_b}})=h_j(a_1)$. The transcendentality of $a_1$ implies at least one $h_j\notin \F_p$, which implies $d_a=d_b$.

      Suppose now $a$ and $b$ are algebraic independent over $\F_p$. Then $a+b$ is transcendental over $\F_p$, and there also exists $d_{a+b}\in\Z$ such that $a_2+b_2=(a_1+b_1)^{p^{d_{a+b}}}$. Then the algebraic relation $$a_1^{p^{d_a}}+b_1^{p^{d_{b}}}=a_2+b_2=(a_1+b_1)^{p^{d_{a+b}}}=a_1^{p^{d_{a+b}}}+b_1^{p^{d_{a+b}}}\in\overline{K(X)}$$
      implies $d_a=d_b=d_{a+b}$. Hence $b_2=b_1^{p^{d_a}}$.
    \end{proof}

    \section*{Acknowledgements} The authors thank Yang Cao for discussing with the norm maps of idele rings of global fields. The second author was supported by the National Key R\&D Program of China No. 2023YFA1009702 and the National Natural Science Foundation of China (grant no. 12271371)
    \bibliographystyle{plain}

    \end{document}